\tikzset{nomorepostaction/.code={\let\tikz@postactions\pgfutil@empty}}
\newtheorem{thm}{Theorem}[section]
\newtheorem{cor}[thm]{Corollary}
\newtheorem{lem}[thm]{Lemma}
\newtheorem{prop}[thm]{Proposition}
\newtheorem{defn}[thm]{Definition}
\newcommand{\To}{\longrightarrow}
\newcommand{\C}{\mathcal{C}}
\newcommand{\E}{\mathcal{E}}
\newcommand{\Z}{\mathbb{Z}}
\newcommand{\D}{\mathcal{D}}
\newcommand{\B}{\mathcal{B}}
\newcommand{\X}{\mathcal{X}}
\newcommand{\Y}{\mathcal{Y}}
\newcommand{\A}{\mathcal{A}}
\renewcommand{\S}{\mathcal{S}}
\DeclareMathOperator{\im}{Im}
\renewcommand{\Im}{\text{Im}\;}
\newcommand{\An}{\mathbb{A}}
\DeclareMathOperator{\Byp}{Byp}
\DeclareMathOperator{\Tor}{Tor}
\DeclareMathOperator{\Conj}{Conj}
\begin{document}

\title{Strings, fermions and the topology of curves on annuli}

\author{Daniel V. Mathews}


\maketitle

\begin{abstract}
In previous work with Schoenfeld, we considered a string-type chain complex of curves on surfaces, with differential given by resolving crossings, and computed the homology of this complex for discs.

In this paper we consider the corresponding ``string homology" of annuli. We find this homology has a rich algebraic structure which can be described, in various senses, as fermionic. While for discs we found that an isomorphism between string homology and the sutured Floer homology of a related 3-manifold, in the case of annuli we find the relationship is more complex, with string homology containing further higher-order structure.
\end{abstract}

\tableofcontents

\section{Introduction}

\subsection{Overview}

This paper considers basic objects of classical topology --- namely, curves on surfaces --- and some rich algebraic structure that arises from the simple operation of resolving their crossings. It is a continuation of previous work of the author with Schoenfeld \cite{Mathews_Schoenfeld12_string}.

A \emph{marked surface} $(\Sigma,F)$ is a compact oriented surface $\Sigma$ with non-empty boundary, together with a set $F$ of \emph{marked points}, which are signed points on $\partial \Sigma$. A \emph{string diagram} on $(\Sigma,F)$ is a collection of oriented curves up to homotopy on $\Sigma$, including both closed curves (``closed strings") and arcs (``open strings"), with endpoints given by $F$. 

In \cite{Mathews_Schoenfeld12_string}, we introduced a \emph{vector space} $\widehat{CS}(\Sigma,F)$ generated by string diagrams on $(\Sigma,F)$ (up to two different types of homotopy). We defined a \emph{differential} operator $\partial$ on these vector spaces: given a string diagram $s$ with transverse intersections, its differential $\partial s$ is the sum, over all intersection points, of the string diagram obtained by \emph{resolving} that intersection as in figure \ref{fig:resolving_crossing}. 

\begin{figure}[h]
\begin{center}
\begin{tikzpicture}[scale=1, string/.style={thick, draw=red, -to}]

\draw [string] (-1,0) -- (1,0);
\draw [string] (0,-1) -- (0,1);

\draw [shorten >=1mm, -to, decorate, decoration={snake,amplitude=.4mm, segment length = 2mm, pre=moveto, pre length = 1mm, post length = 2mm}]
(1.5,0) -- (2.5,0);

\draw [string] (3,0) -- (3.7,0) to [bend right=45] (4,0.3) -- (4,1);
\draw [string] (4,-1) -- (4,-0.3) to [bend left=45] (4.3,0) -- (5,0);
\end{tikzpicture}
\caption{Resolution of a crossing}
\label{fig:resolving_crossing}
\end{center}
\end{figure}

In \cite{Mathews_Schoenfeld12_string} we showed that we obtained chain complexes and explicitly computed the resulting ``string homology" $\widehat{HS}(\Sigma,F)$ in the case where $\Sigma$ is a disc $D^2$.

In this paper, we extend our results to \emph{annuli}. In a certain sense, we are able to calculate ``string homology" explicitly in \emph{all but one case}. We find that the resulting algebraic structure is much more intricate in the annular than in the disc case. As the title suggests, this algebraic structure is curiously ``fermionic", in a sense that we can make precise.

We found in \cite{Mathews_Schoenfeld12_string} a close connection between our ``string homology" for $D^2$, and Floer-theoretic invariants such as \emph{sutured Floer homology} and \emph{embedded contact homology} for $D^2 \times S^1$. We will show here that the ``string homology" of an annulus $\An$ is a more elaborate object than sutured Floer homology of $\An \times S^1$, though still clearly closely related. This paper focuses on computations of ``string homology" of annuli and lays a basis for further work examining connections to, and possible extensions of, Floer theory.

In this introduction, we give a brief overview of our results and an indication of their connections to other work. Throughout this paper, $\An$ denotes an annulus and $\Sigma$ denotes a general compact oriented surface with nonempty boundary. We always work over $\Z_2$ coefficients.

\subsection{Alternating marked surfaces}
\label{sec:alternating_surfaces}

A marked surface is \emph{alternating} if its marked points alternate in sign around each boundary component. In \cite{Mathews_Schoenfeld12_string} we showed that ``string homology" is trivial for many non-alternating marked surfaces, and in particular for any non-alternating marked disc. 

In this paper we show that this result carries over to annuli.
\begin{thm}
\label{thm:non-sutured_zero_iso}
Let $(\An,F)$ be a weakly marked annulus which is not alternating. Then $\widehat{HS}(\An,F) = 0$.
\end{thm}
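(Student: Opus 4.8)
The plan is to reduce the annular case to the disc case treated in \cite{Mathews_Schoenfeld12_string} by a cutting argument. Concretely, if $(\An, F)$ is not alternating, then along some boundary component of $\An$ there are two adjacent marked points $p, q$ of the same sign. I would cut $\An$ along a properly embedded arc $\gamma$ whose endpoints lie on that boundary component, situated so that $p$ and $q$ end up on different sides of $\gamma$ (or more precisely so that the cut isolates a bigon-like region containing the offending adjacency); cutting an annulus along a properly embedded arc with both endpoints on one boundary component yields either a disc together with an annulus, or just a disc, depending on whether $\gamma$ is boundary-parallel. I would then try to set up a chain homotopy equivalence (or at least a spectral-sequence / long-exact-sequence argument) relating $\widehat{CS}(\An,F)$ to the string complex of the cut surface, which is non-alternating as a disc, hence has vanishing homology by the disc result, and deduce $\widehat{HS}(\An,F) = 0$.

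The key steps, in order, would be: (1) Locate the same-sign adjacency on $\partial \An$ and name the relevant marked points; handle the ``weakly marked'' technicalities (boundary components with zero or one marked point) as degenerate sub-cases, probably absorbed into the disc result or handled by an even simpler direct argument. (2) Build an explicit algebraic gadget — the natural candidate is a degree $\pm 1$ map $h$ on $\widehat{CS}(\An,F)$ that ``adds a small turnback near $p$'' (a curve hugging the boundary between $p$ and $q$), so that $\partial h + h \partial = \Id$ up to terms that can be controlled, exhibiting the identity as null-homotopic. This is exactly the shape of argument used for non-alternating discs in \cite{Mathews_Schoenfeld12_string}, and I would aim to reuse that machinery essentially verbatim, since the null-homotopy is local near the boundary and does not ``see'' the global topology of the surface. (3) Verify that the local homotopy operator is well-defined on homotopy classes of string diagrams (both flavors of homotopy), and that the bookkeeping of signs — trivial over $\Z_2$ — and of which resolutions appear works out, so that $\partial h + h \partial = \Id$ on the nose.

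The main obstacle I anticipate is step (2)/(3): making the local null-homotopy genuinely local. On a disc, every string diagram can be isotoped into a standard position near the marked boundary arc, but on an annulus a curve can wind around the core, and one must check that the turnback-insertion operator $h$ interacts correctly with such winding — in particular that $h$ still squares appropriately and that no extra crossings created by $h$ spoil the identity $\partial h + h\partial = \Id$. A clean way to handle this is to filter $\widehat{CS}(\An,F)$ by some complexity measure (e.g. intersection number with a fixed arc, or winding number) so that $h$ is filtered and the associated graded computation is literally the disc computation; then an acyclicity argument on the associated graded, together with a convergence statement for the resulting spectral sequence, gives the result. I would also need to double-check that ``not alternating'' for an annulus genuinely forces a same-sign adjacency usable for the cut — this is immediate from the definition, but the weakly marked case (where a boundary component might carry $F$-points only on one side, or unevenly) requires care, and I expect a short separate lemma handling boundary components with an odd or zero number of marked points.
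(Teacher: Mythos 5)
Your main-case argument is essentially the right idea, but not for the reason you think, and there is a genuine gap in the case analysis.

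On the main case: when some boundary component of $\An$ carries two adjacent marked points of the same sign, the paper does exactly what you describe in step (2)/(3) --- it defines a local ``switching'' operator $W$ near the offending pair and shows $\partial W + W\partial = 1$ (proposition \ref{prop:alternating_distinct_zero}). This argument is genuinely local and carries over from the disc verbatim: $W$ inserts exactly one new crossing, resolving it recovers the original diagram plus $W\partial s$, and no filtration or spectral sequence is needed because the identity holds on the nose. The cutting construction you propose is therefore unnecessary overhead; worse, cutting an annulus along an arc changes the underlying surface and forces you to relate two different chain complexes, which the paper never does. You should abandon the cut and just apply the switching homotopy directly on $\An$, as in proposition \ref{prop:alternating_distinct_zero}.

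The genuine gap is your assertion that ``not alternating'' forces a same-sign adjacency, which you call ``immediate from the definition.'' It is not. The marking $F_{1,1}$, with a single marked point on each boundary component, is not alternating in the sense of the paper (an alternating boundary component must carry an even number of points, possibly zero), yet no boundary component has two adjacent same-sign points --- each has only one point. No local turnback operator, and no cut, produces a null-homotopy here. The paper's proof explicitly isolates this as the one remaining case and disposes of it by theorem \ref{thm:one_one_homology}, which is not a short lemma: it requires the source/creation operators $\alpha_{(i,0)}^*$ of section \ref{sec:source_creation}, the commutation relations $[\partial, \alpha_{(i,0)}^*]=1$, and hence a chain homotopy built from algebra specific to $\widehat{CS}(\An,F_{1,1}) \cong \X\otimes\C$. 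Your ``short separate lemma handling boundary components with an odd or zero number of marked points'' is, in reality, the main technical content of the theorem, and your proposal gives no mechanism for it.
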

The proof, which appears in section \ref{sec:non-alternating_annuli}, uses some techniques from our previous work \cite{Mathews_Schoenfeld12_string}, and also some new ideas. After this result, we only need consider alternating $(\An,F)$.

The alternating case is important because a marked alternating surface, also known as a \emph{sutured background} (\cite{Me10_Sutured_TQFT, Me11_torsion_tori}, see also \cite{Zarev09}), naturally forms a boundary condition for a \emph{set of sutures} on a surface. The study of sutures goes back at least to work of Gabai on 3-manifolds and foliations \cite{Gabai83}; sutures also play a crucial role as dividing sets in contact geometry \cite{Gi91, HKM02, Hon00I} and sutured 3-manifolds are the subject matter of \emph{sutured Floer homology} ($SFH$) \cite{Ju06}. In our previous work we have shown that sutures on surfaces can be used to give insight into contact categories \cite{Me09Paper}. 

Moreover, previous work of ourselves and Honda--Kazez--Mati\'{c} \cite{HKM08, Me09Paper, Me10_Sutured_TQFT, Me11_torsion_tori, Me12_elementary, Me12_itsy_bitsy, Me14_twisty_itsy_bitsy} has shown that $SFH$ of products $(\Sigma \times S^1, F \times S^1)$ is isomorphic, or at least closely related to (depending on the context) a vector space based on sets of sutures, modulo \emph{bypass triples} --- triples of sutures which are related by the natural contact-geometric operation of \emph{bypass surgery}. This vector space is the Grothendieck group of the contact category of $(\Sigma,F)$ (as defined by Honda \cite{HonCat}). As discussed in \cite{Mathews_Schoenfeld12_string}, such triples naturally arise in our ``string homology" as boundaries. Thus our ``string homology" should be strongly related to sutured Floer homology via the construction of the ``sutures modulo bypass triples" Grothendieck group. In \cite{Mathews_Schoenfeld12_string} we showed a direct isomorphism when $\Sigma$ is a disc, so that our elementary ``string homology", based on nothing more than curves and their crossings, is equivalent to a Floer-theoretic invariant based on symplectic and contact geometry and holomorphic curves. 

In this paper, as discussed below in section \ref{sec:relations_to_floer}, we show that ``string homology" is a richer and more complicated structure than the contact Grothendieck group or sutured Floer homology (even with twisted coefficients) of $(\Sigma \times S^1, F \times S^1)$, in the case of annuli.

\subsection{No marked points and homology of fermions}

After theorem \ref{thm:non-sutured_zero_iso}, we only need consider alternating marked surfaces --- hence with an even number of marked points on each boundary components. We can write $F_{2m,2n}$ for an alternating set of marked points with $2m$ and $2n$ points on the two boundary components of the annulus $\An$. We find a great deal of interesting --- and not yet fully understood --- structure in $\widehat{HS}(\An, F_{2m,2n})$.

In cases where $F$ is small, we can compute $\widehat{HS}(\An, F)$ explicitly; when $F$ is larger, we can still give some description.

First, consider the case of \emph{no} marked points: $F_{0,0} = \emptyset$. (In \cite{Mathews_Schoenfeld12_string} we required marked surfaces to have marked points on each boundary component; in this paper we relax this requirement and allow the case of an empty marked point set.) In this case, we show that $\widehat{HS}(\An, \emptyset)$ has the structure of a (commutative) $\Z_2$-\emph{algebra}, with multiplication corresponding to juxtaposition of curves. A string which runs $k$ times around the core of the annulus is denoted $x_k$, and its homology class (if it exists) in $\widehat{HS}(\An, \emptyset)$ by $\bar{x}_k$. Throughout we will denote by $\bar{z}$ the homology class of an element $z$.
\begin{thm}
\label{thm:annulus_no_marked_points_calculation}
The homology $\widehat{HS}(\An, \emptyset)$ is generated as a $\Z_2$-algebra by the homology classes $\bar{x}_k$ of $x_k$, over all odd integers $k$, subject to the relation that each $\bar{x}_k^2 = 0$. That is,
\[
\widehat{HS}(\An, \emptyset) = \frac{\Z_2[\ldots, \bar{x}_{-3}, \bar{x}_{-1}, \bar{x}_1, \bar{x}_3, \ldots] }{ (\ldots, \bar{x}_{-3}^2, \bar{x}_{-1}^2, \bar{x}_1^2, \bar{x}_3^2, \ldots) }.
\]
\end{thm}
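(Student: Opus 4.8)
The plan is to analyze the subcomplex of $\widehat{CS}(\An, \emptyset)$ spanned by string diagrams on the annulus with no marked points. Such a diagram is, up to homotopy, a disjoint collection of essential simple closed curves (each homotopic to some power $x_k$ of the core, $k \in \Z$), together with possibly null-homotopic curves; since a null-homotopic closed curve contributes zero (as in \cite{Mathews_Schoenfeld12_string}) and a curve $x_k$ is homotopic to $x_{-k}$ only up to orientation, the natural generating set is the set of finite multisets of nonzero integers, where $x_k$ and $x_{-k}$ are the two oriented versions of the same underlying curve. First I would set up coordinates: isotope all curves to be ``horizontal'' (graphs over the core circle) so that a generic juxtaposition of $x_j$ and $x_k$ meets in exactly $|j-k|$ points, and each such crossing resolves into a curve homotopic to $x_{j+k}$ wrapped appropriately — this is the key local computation, and it shows the differential is ``additive'' on wrapping numbers. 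The upshot I expect is that $\widehat{CS}(\An,\emptyset)$, as a $\Z_2$-algebra under juxtaposition, is generated by the $x_k$, with $\partial$ a derivation satisfying $\partial x_k = 0$ and $\partial(x_j x_k) = x_{j+k}$ when $j \ne k$ (reading off the resolutions), and $\partial(x_k x_k)$ a sum of terms that must be controlled separately.

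Next I would identify this algebra with a standard one. Writing $V$ for the $\Z_2$-vector space with basis $\{x_k : k \in \Z\}$, the complex $\widehat{CS}(\An,\emptyset) \cong \Sym^\bullet V$ with the differential above is exactly the Koszul-type complex whose homology is computed by pairing off $x_j$ with $x_k$ into $x_{j+k}$. The cleanest route is to change basis: group the generators by the value of $j+k$ and by parity, and show the complex decomposes as a tensor product over these ``sectors'' of elementary two-term or polynomial complexes. In particular, for each even integer $s$, the generators $x_j x_k$ with $j+k=s$, $j\neq k$, together with the would-be ``square'' term, assemble into an acyclic piece; what survives is precisely the contribution of the diagonal squares $x_k x_k$ for $k$ odd (when $k$ is even, $x_k = (x_{k/2})^2$-type degeneracies and the parity argument kills the class, or rather $\bar x_k$ is expressible via lower classes), and one checks $\bar x_k^2 = 0$ because $x_k x_k = \partial(\text{something})$ — explicitly $x_k^2$ is the resolution-boundary of a nearby diagram, giving the stated relation. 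The surviving homology is then freely generated as a commutative algebra by $\{\bar x_k : k \text{ odd}\}$ modulo $\bar x_k^2$, which is the claimed quotient of $\Z_2[\ldots,\bar x_{-1},\bar x_1,\ldots]$.

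I would organize the argument as: (1) reduce to the diagram-combinatorics of multisets of integers and establish the local resolution rule $x_j \# x_k \rightsquigarrow x_{j+k}$; (2) present $\widehat{CS}(\An,\emptyset)$ as $(\Sym^\bullet V, \partial)$ with $\partial$ the derivation extending the pairing $x_j \otimes x_k \mapsto x_{j+k}$; (3) filter/decompose by total wrapping number $s = \sum$ and compute homology sector by sector, using a spectral-sequence or explicit-contraction-homotopy argument to kill everything except the odd diagonal classes; (4) verify $\bar x_k^2 = 0$ directly from a local picture and check that no further relations survive, e.g. by comparing Poincaré series or exhibiting an explicit splitting. The main obstacle I anticipate is step (3), specifically the honest bookkeeping of the diagonal terms $\partial(x_k x_k)$: resolving the $|k - k| = 0$ ``self-intersections'' requires first perturbing $x_k \# x_k$ into $x_k \# x_{k'}$ with $k' \to k$, and one must check that the relevant boundaries are consistent and that the even-wrapping classes genuinely die rather than merely being shuffled — this is where the previous disc computations in \cite{Mathews_Schoenfeld12_string} and the homotopy-invariance conventions for closed strings must be invoked with care.
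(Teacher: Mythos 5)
There is a genuine gap, and it occurs at the very first step: your description of the differential on $\widehat{CS}(\An,\emptyset)$ is backwards. On an annulus any two distinct closed curves $x_j$ and $x_k$ can be homotoped to be \emph{disjoint} (e.g.\ placed in disjoint sub-annuli), so there are no crossings between distinct closed strings to resolve; the Goldman bracket vanishes and $\partial$ is a derivation for juxtaposition. Your claimed local rule --- that $x_j$ and $x_k$ meet in $|j-k|$ points each resolving to $x_{j+k}$, so that $\partial x_k=0$ and $\partial(x_jx_k)=x_{j+k}$ --- is not the differential of this complex. What you have omitted entirely is the \emph{self}-intersections: a closed curve in class $n$ has $|n|-1$ of them minimally, and resolving them \emph{splits} the curve, giving $\partial x_n=\sum_{i+j=n,\,ij>0}x_ix_j$, which mod $2$ is $x_{n/2}^2$ for $n$ even and $0$ for $n$ odd. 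This sign of the operation matters decisively: in the correct complex the relation $\bar{x}_k^2=0$ holds because $x_k^2=\partial x_{2k}$ is a boundary, whereas in your complex the image of $\partial$ consists of single curves, so $x_k^2$ could never be a boundary and the homology would come out entirely different. Your parenthetical explanation of why even classes disappear is likewise not salvageable: $x_{2k}$ is not a cycle at all, rather than being "expressible via lower classes."

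Even granting the corrected differential, your step (3) is too thin to carry the computation. Grading by total wrapping number is compatible with $\partial$ but does not by itself decompose the complex into tractable pieces; the sector $\sum = s$ still mixes infinitely many monomials. The decomposition that works is $2$-adic: partition $\Z\setminus\{0\}$ into the sets $\{j,2j,4j,\dots\}$ for $j$ odd, which splits $\X$ as a tensor product of differential algebras $\Z_2[x_j,x_{2j},x_{4j},\dots]$ with $\partial x_{j\cdot 2^k}=x_{j\cdot 2^{k-1}}^2$; one then needs an explicit contracting homotopy (built from operators inverting $x_{j\cdot 2^{k-1}}^2\mapsto x_{j\cdot 2^k}$, with some care about which monomials they act on as the identity) to show everything dies except $1$ and $\bar{x}_j$ with $\bar{x}_j^2=0$, and finally K\"{u}nneth to reassemble. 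Your anticipated difficulty about "perturbing $x_k\# x_k$ to resolve $|k-k|=0$ intersections" is a non-issue --- two parallel disjoint copies of $x_k$ have nothing to resolve and $\partial(x_k^2)=2x_k\partial x_k=0$ --- but it signals that the underlying picture of which crossings exist needs to be rebuilt before the rest of the argument can be attempted.
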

In other words, $\widehat{HS}(\An, \emptyset)$ has the structure of a polynomial algebra over $\Z_2$, in infinitely many commuting variables $\bar{x}_{2j+1}$, such that each variable satisfies $\bar{x}_{2j+1}^2 = 0$.

This algebra is ``fermionic" in two senses:
\begin{enumerate}
\item
Only curves with ``odd spin" $x_{2j+1}$ survive in homology: curves which proceed around the annulus an even number of times are not cycles, and do not even have homology classes.
\item
Even for the ``odd spin" curves $x_{2j+1}$, which do survive in homology, their square is zero, $\bar{x}_{2j+1}^2 = 0$. Two such curves cause a ``pair annihilation" and give zero in homology. Thus there is a ``Pauli exclusion principle" for these curves.
\end{enumerate}
This ``fermionic polynomial" algebra, which we denote $H(\X)$ as explained in section \ref{sec:algebraic_structure}, is central to all further calculations. This is partly because of the technicalities of the algebra, but can be seen explicitly for example in the following proposition, which will be proved (in greater generality) in section \ref{sec:differential_algebra}.
\begin{prop}
\label{prop:HX-modules}
For any $n \geq 0$, $\widehat{HS}(\An, F_{0,2n})$ is an $H(\X)$-module.
\end{prop}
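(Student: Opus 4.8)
The plan is to realize the $H(\X)$-module structure on $\widehat{HS}(\An, F_{0,2n})$ geometrically, by juxtaposition: given a string diagram $s$ on $(\An, F_{0,2n})$ and a closed string $c$ on $(\An, \emptyset)$, place $c$ in a collar neighbourhood of the ``unmarked'' boundary component (the one carrying $F_{0,0}$), disjoint from $s$ and from the marked points. This defines a bilinear pairing $\widehat{CS}(\An, \emptyset) \otimes \widehat{CS}(\An, F_{0,2n}) \to \widehat{CS}(\An, F_{0,2n})$ at the chain level. First I would check that this operation is well-defined on the quotient vector spaces --- i.e. that it respects the two types of homotopy imposed in the definition of $\widehat{CS}$ --- which is immediate since the homotopies of $c$ and of $s$ take place in disjoint regions. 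Then I would verify the Leibniz rule $\partial(c \cdot s) = (\partial c)\cdot s + c \cdot (\partial s)$: because $c$ and $s$ are disjoint, every transverse intersection point of $c \cdot s$ is either an intersection within $c$, an intersection within $s$, and resolving it does not interact with the other diagram; the two terms on the right collect exactly these two families of resolutions. (One must first isotope $c \cdot s$ to be transverse; generically crossings are confined to the two pieces.) This makes the pairing a chain map out of the tensor product of complexes, so it descends to homology, giving $\widehat{HS}(\An,\emptyset) \otimes \widehat{HS}(\An, F_{0,2n}) \to \widehat{HS}(\An, F_{0,2n})$; associativity and unitality (the empty diagram acting as identity) are inherited from the chain level. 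By Theorem \ref{thm:annulus_no_marked_points_calculation}, $\widehat{HS}(\An,\emptyset) = H(\X)$, so this is precisely an $H(\X)$-module structure.

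Since the proposition promises a proof ``in greater generality'' in Section \ref{sec:differential_algebra}, the cleaner route is probably to set up the juxtaposition pairing at the level of \emph{differential graded algebras/modules}: show that $(\widehat{CS}(\An,\emptyset), \partial)$ is a DGA under juxtaposition near the boundary, that $(\widehat{CS}(\An, F_{0,2n}), \partial)$ is a DG-module over it, and then pass to homology, invoking that a DG-module over a DGA induces a module over its homology algebra. The algebra structure on $\widehat{HS}(\An,\emptyset)$ was already asserted in the discussion preceding Theorem \ref{thm:annulus_no_marked_points_calculation} (``multiplication corresponding to juxtaposition of curves''), so most of the work is just noting that the same juxtaposition makes sense when one factor has marked points, provided those marked points all lie on the \emph{other} boundary component — which is exactly the hypothesis $F = F_{0,2n}$.

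The main obstacle is making ``juxtaposition'' genuinely well-defined and associative at the chain level rather than merely up to homotopy. Two subtleties need care. First, to take the differential one must perturb $c \cdot s$ into transverse position, and a priori different perturbations could give different chains; one must argue (as in \cite{Mathews_Schoenfeld12_string} for the disc) that the resulting homology class is independent of the perturbation, typically by checking that any two transverse representatives are related by a sequence of elementary moves whose effect on $\partial$ is controlled. Second, one must ensure the product respects the \emph{spin}/grading bookkeeping and the homotopy relations defining $\widehat{CS}$ — in particular that pushing $c$ off the core and into the collar does not change its class, and that no hidden interaction with the marked points occurs. Once these are dispatched, the Leibniz identity and hence the module structure follow formally, and I expect the verification to parallel closely the algebra-structure argument already used for $\widehat{HS}(\An,\emptyset)$ itself.
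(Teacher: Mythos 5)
Your proposal is correct and follows essentially the same route as the paper: Section \ref{sec:differential_algebra} makes $\widehat{CS}(\Sigma,F)$ a free $\X(\Sigma)$-module by disjoint union of closed multicurves with string diagrams, identifies the Goldman bracket $[x,s^O]$ as precisely the obstruction to the Leibniz rule, and observes that on $(\An,F_{0,2n})$ this bracket vanishes because every closed curve can be homotoped off the open strings, so the complex is a differential $\X$-module and its homology an $H(\X)$-module. The technical worries you flag (transversality, homotopy-invariance of $\partial$) are already dispatched by the lemmas of \cite{Mathews_Schoenfeld12_string}, and spin homotopy plays no role since only plain homotopy classes are used here.
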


The computation of $\widehat{HS}(\An, \emptyset)$ is rather involved and occupies section \ref{sec:annuli_no_marked_points} of this paper. Along the way we see an analogy to ``decay chains" of particles, with corresponding ``decay" and ``fusion" operators (sections \ref{sec:decay_chains} and \ref{sec:creation_operators}) and Weyl algebra representations (section \ref{sec:Weyl_hierarchy}).

\subsection{Further homology computations: Two and four marked points}

We then proceed to annuli with the next smallest set of alternating marked points, namely two marked points, both on the same boundary component, $(\An, F_{0,2})$. From proposition \ref{prop:HX-modules}, this is a $H(\X)$-module, i.e. a module over a polynomial ring in infinitely many variables $\bar{x}_{2j+1}$ where each $\bar{x}_{2j+1}^2=0$.

A string diagram on $(\An, F_{0,2})$ contains a single open string between the two points of $F_{0,2}$, as well as some (possibly none) closed strings.
\begin{thm}
\label{thm:annulus_two_marked_points_same_boundary_calculation}
The homology $\widehat{HS}(\An, F_{0,2})$, as an $H(\X)$-module, is given by
\begin{align*}
\widehat{HS} (\An, F_{0,2}) 
&\cong
\bar{x}_1 H(\X) \oplus \bar{x}_{-1} H(\X) \\
&\cong
\bar{x}_1 \frac{\Z_2[\ldots, \bar{x}_{-3}, \bar{x}_{-1}, \bar{x}_1, \bar{x}_3, \ldots] }{ (\ldots, \bar{x}_{-3}^2, \bar{x}_{-1}^2, \bar{x}_1^2, \bar{x}_3^2, \ldots) } \oplus \bar{x}_{-1} \frac{\Z_2[\ldots, \bar{x}_{-3}, \bar{x}_{-1}, \bar{x}_1, \bar{x}_3, \ldots] }{ (\ldots, \bar{x}_{-3}^2, \bar{x}_{-1}^2, \bar{x}_1^2, \bar{x}_3^2, \ldots) }
).
\end{align*}
\end{thm}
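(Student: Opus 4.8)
The plan is to build an explicit chain-level description of $\widehat{CS}(\An, F_{0,2})$ that mirrors the structure already established for $\widehat{HS}(\An, \emptyset)$, and then run essentially the same spectral-sequence / filtration argument that produced Theorem \ref{thm:annulus_no_marked_points_calculation}, carrying the $H(\X)$-module structure along for free. A string diagram on $(\An, F_{0,2})$ consists of one open string $a$ joining the two marked points, together with a multicurve of closed strings. The open string $a$ is classified up to homotopy by how many times it wraps the core; write $a_j$ for the open string of ``spin'' $j$. Juxtaposing closed strings makes $\widehat{CS}(\An, F_{0,2})$ into a module over the algebra $CS(\An,\emptyset)$, and hence on homology an $H(\X)$-module (this is exactly Proposition \ref{prop:HX-modules}). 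So the task reduces to understanding the module generators, i.e. computing the homology of the ``open part'' and keeping track of which $a_j$ survive and what relations the closed-string action imposes.

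First I would set up the grading/filtration by total winding (``spin'') number, as in section \ref{sec:annuli_no_marked_points}, and identify the differential. The key structural input is that resolving a crossing between the open string $a_j$ and a closed string $x_k$ behaves just like the closed-closed resolutions governing $H(\X)$: an $a_j$--$x_k$ crossing resolves to $a_{j+k}$ (plus possible lower-order terms), so the open string transforms under the same ``decay/fusion'' operators used in sections \ref{sec:decay_chains}--\ref{sec:creation_operators}. This lets me reuse the Weyl-algebra hierarchy of section \ref{sec:Weyl_hierarchy}: the open-string sector decomposes as a representation of the same Weyl algebra, and the homology of the open sector is computed by the same acyclicity arguments that kill the even-spin closed curves. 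I expect the outcome to be that $a_j$ is a cycle (up to closed-string corrections) precisely when $j$ is odd, and that in homology the whole module is generated by $\bar a_1$ and $\bar a_{-1}$, with the higher odd $\bar a_{2j+1}$ expressible as $\bar x_{\text{stuff}}\cdot \bar a_{\pm 1}$ — but with each such product killed by the Pauli relation $\bar x_k^2 = 0$, so that no further collapsing occurs and the two summands $\bar a_1 H(\X)$ and $\bar a_{-1} H(\X)$ are free and disjoint.

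The two things that need genuine care are: (a) showing the sum $\bar x_1 H(\X) \oplus \bar x_{-1} H(\X)$ is \emph{direct}, i.e. that no nonzero closed-string polynomial multiple of $\bar a_1$ equals one of $\bar a_{-1}$; and (b) showing these two elements generate \emph{freely}, i.e. that the annihilator of $\bar a_{\pm 1}$ is exactly zero in $H(\X)$ (no extra relations beyond those already in $H(\X)$). For (a) I would use a winding-number/spin argument: elements of $\bar a_1 H(\X)$ and $\bar a_{-1} H(\X)$ live in disjoint spin-parity or spin-range classes after accounting for the base open string, so they cannot interfere. For (b) I would construct an explicit $\Z_2$-basis of the chain complex adapted to the filtration, compute the $E_1$ (or $E_\infty$) page, and match dimensions in each spin degree against $\bar x_1 H(\X)\oplus \bar x_{-1}H(\X)$; a dimension count in each graded piece, combined with the surjection coming from the module action, forces the map to be an isomorphism.

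The main obstacle I anticipate is bookkeeping the ``lower-order corrections'': when $a_j$ is resolved against a closed curve or when two pieces of the open string cross, the result is not simply $a_{j+k}$ but $a_{j+k}$ plus diagrams with extra closed components of lower total winding, so $\bar a_j$ is only a cycle after adding correction terms, and identifying the correct corrected cycles (the honest homology classes) is the delicate combinatorial heart of the argument — exactly the kind of ``decay chain'' analysis carried out for $\widehat{HS}(\An,\emptyset)$, now in the presence of the extra open strand. Once the corrected cycles $\bar a_{\pm 1}$ are pinned down and the Weyl-algebra/filtration machinery of section \ref{sec:annuli_no_marked_points} is invoked verbatim, the statement should follow.
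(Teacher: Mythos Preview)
Your proposal rests on a wrong picture of the differential. In $(\An,F_{0,2})$ both marked points lie on the \emph{same} boundary component, so the open string $a_n$ can be homotoped to lie in a collar of that boundary and made disjoint from every closed string $x_k$. There are no $a$--$x$ crossings to resolve; the Goldman bracket vanishes and the Leibniz rule holds exactly. (You seem to be importing the picture from $(\An,F_{1,1})$, where the open string traverses the annulus and must hit every $x_k$.) Consequently the ``decay/fusion'' and Weyl-algebra machinery of section~\ref{sec:annuli_no_marked_points}, which was built to handle the $x_k$--$x_k$ interactions, does not act on the open string the way you describe, and the argument that ``$a_j$ is a cycle iff $j$ is odd'' has no basis. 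Note also that the open strings are naturally indexed by half-integers $a_{n+\frac{1}{2}}$, not integers: only $a_{\pm\frac{1}{2}}$ are crossingless, hence cycles; every other $a_n$ has self-crossings and $\partial a_n=\sum_{i+j=n,\,ij>0}a_i x_j$.

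The paper's approach is quite different from yours and rather more direct. First, the sign of $n$ in $a_n$ is preserved by $\partial$, so $\widehat{CS}(\An,F_{0,2})=(\A_+\otimes\X)\oplus(\A_-\otimes\X)$ splits as chain complexes; the direct sum in the theorem is this chain-level splitting, not a parity argument in homology. Second, on $\A_+\otimes\X$ one filters by ``$a$-degree'' (the largest $n$ with $a_n$ appearing) and shows by an explicit computation with the two leading coefficients that any cycle is homologous to one of strictly smaller $a$-degree; iterating, every cycle is homologous to $a_{\frac{1}{2}}p$ with $p$ a clean polynomial (i.e.\ fermionic with no $x_1$). Third, the identification with $\bar{x}_1 H(\X)$ comes from an explicit chain map $\Phi:\A_+\otimes\X\to\X$, $a_{n-\frac{1}{2}}\mapsto x_n$, which geometrically closes the open string into a loop by gluing an annulus along the marked boundary; one checks $\Phi$ is injective on homology with image $\bar{x}_1 H(\X)$. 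Your spectral-sequence plan is not wrong in spirit, but the specific mechanism you propose (open string absorbing closed strings via crossings) does not occur here, so the argument as written would not go through.
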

More precise statements are given in proposition \ref{prop:AX_injective} and theorem \ref{thm:A+-X_homology}. Thus $\widehat{HS}(\An, F_{0,2})$ is naturally a non-free rank-2 module over the ``fermionic" polynomial ring $H(\X)$, and ``fermionic" behaviour persists in this case. 

As we will see, the two summands correspond to whether the open string in a string diagram on $(\An, F_{0,2})$ runs around the annulus in a positive or negative direction. In this way, to continue with loose physical analogies, ``open strings with different chirality do not interact". These computations occupy sections \ref{sec:two_points_single_boundary} and \ref{sec:homology_A_+_tensor_X} of this paper.

Next we consider annuli with four marked points, two on each boundary component $(\An, F_{2,2})$. This is the most difficult case we consider. While we do not have an explicit generators-and-relations description of homology in this case, we can detail a significant amount of structure. The computations occupy the longest section of the paper, section \ref{sec:annuli_two_marked_points_both_boundaries}.

We summarise some of our description of $\widehat{HS}(\An, F_{2,2})$ as follows.
\begin{thm}
\label{thm:22_annulus_description}
\
\begin{enumerate}
\item
The chain complex $\widehat{CS}(\An,F_{2,2})$ naturally splits into two summands
\[
\widehat{CS}(\An,F_{2,2}) \cong (\A \otimes \X \otimes \B) \oplus ( \C \otimes \X \otimes \D ),
\]
corresponding respectively to ``insular" string diagrams in which the open strings begin and end at the same boundary component, and ``traversing" string diagrams in which the open strings run between distinct boundary components. The first summand is a subcomplex, indeed a differential $\X$-module; the second is not.
\item
The subcomplex $\A \otimes \X \otimes \B$ naturally splits as a direct sum of four further subcomplexes (also differential $\X$-submodules), corresponding to the direction of open strings:
\begin{align*}
\A \otimes \X \otimes \B &\cong \left( \A_+ \oplus \A_- \right) \otimes \X \otimes (\B_+ \oplus \B_-) \\
& \cong (\A_+ \otimes \X \otimes \B_+) \oplus (\A_+ \otimes \X \otimes \B_-) \oplus (\A_- \otimes \X \otimes \B_+) \oplus (\A_- \otimes \X \otimes \B_-).
\end{align*}
The homology of each subcomplex $\A_\pm \otimes \X \otimes \B_\pm$ is naturally an $H(\X)$-module; they are non-free $H(\X)$-modules of rank  $\infty, 1, 1, \infty$ respectively.
\item
The summand $\A \otimes \X \otimes \B$ contains cycles which are not boundaries in $\widehat{CS}(\An, F_{2,2})$, and which are not homologous to elements of $\C \otimes \X \otimes \D$.
\item
The summand $\C \otimes \X \otimes \D$ contains cycles which are not boundaries in $\widehat{CS}(\An, F_{2,2})$, and which are not homologous to elements of $\A \otimes \X \otimes \B$.
\end{enumerate}
\end{thm}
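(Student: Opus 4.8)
The plan is to establish the four parts in order, using that Part (i) supplies the short exact sequence of complexes that organises Parts (iii)--(iv).

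\emph{Part (i).} The starting point is the topological observation that the two open strings of a string diagram on $\widehat{CS}(\An,F_{2,2})$ are well defined up to homotopy, so that being ``insular'' or ``traversing'' is a homotopy invariant of the underlying pair of arcs. I would first show that an insular diagram admits a transverse representative in which the two open strings lie in disjoint collar neighbourhoods $N_0,N_1$ of the two boundary components, and in which all closed strings lie in the middle region $\An\setminus(N_0\cup N_1)$: an arc with both endpoints on a single boundary circle is homotopic rel endpoints into a collar of that circle (it may wind around the core, but the collar is itself an annulus), and every closed curve on $\An$ is isotopic into the middle. In such a representative the only crossings are self-crossings of a single open string, or crossings among closed strings; the oriented resolution of each of these preserves the endpoint-matching of the open strings, hence preserves insularity, so $\A\otimes\X\otimes\B$ is a subcomplex. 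For the converse I would exhibit a traversing diagram whose two open strings wind around the core and therefore must cross, and check that the oriented resolution of such a crossing re-matches the four endpoints into an insular configuration (which can be taken reduced, hence nonzero in $\widehat{CS}$); thus $\partial$ does not preserve $\C\otimes\X\otimes\D$. The tensor descriptions then come from reading off, from the good representative above, the winding data of the open string(s) near boundary component $0$ (giving $\A$, resp.\ $\C$), the isotopy class of the closed-curve system (a basis element of $\X$, where $\X=\widehat{CS}(\An,\emptyset)$), and the open-string data near boundary component $1$ (giving $\B$, resp.\ $\D$). That $\A\otimes\X\otimes\B$ is a differential $\X$-module is then a bookkeeping check: multiplication by a closed-curve pattern is realised by inserting core-parallel curves at radii disjoint from the other curves and from the collars, so the only new crossings are internal to the inserted pattern, which yields the Leibniz rule over $\Z_2$.

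\emph{Part (ii).} For the four-fold splitting I would note that in the good representative each open string carries a winding sign around the core (relative to the fixed cyclic order of the two marked points on its boundary component), and that this sign is invariant under $\partial$: the oriented resolution of a self-crossing of an arc winding in a given direction splits off a closed curve and leaves an arc, each winding in that same direction, while resolutions among closed strings do not touch the open strings. Hence $\A=\A_+\oplus\A_-$ and $\B=\B_+\oplus\B_-$ as differential $\X$-modules, giving the decomposition into four subcomplexes. Computing the homology of each $\A_\pm\otimes\X\otimes\B_\pm$ as an $H(\X)$-module is the substance of section~\ref{sec:annuli_two_marked_points_both_boundaries}: I would proceed as in the proof of Theorem~\ref{thm:annulus_two_marked_points_same_boundary_calculation}, filtering by the winding magnitude of the open strings (each self-resolution lowers it and feeds a closed curve into the $\X$-factor) and running the associated spectral sequence, reducing everything to iterated use of $H(\X)$ from Theorem~\ref{thm:annulus_no_marked_points_calculation}; the co-oriented cases $(+,+)$ and $(-,-)$ retain an infinite tower of surviving classes while the mixed cases $(+,-)$ and $(-,+)$ collapse to rank-$1$ modules. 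I expect this step --- identifying precisely which combinations of open-string windings and closed curves survive in the co-oriented cases, and verifying non-freeness --- to be the main obstacle, both in length and in delicacy.

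\emph{Parts (iii) and (iv).} Part (i) gives the short exact sequence of complexes
\[
0 \longrightarrow \A\otimes\X\otimes\B \longrightarrow \widehat{CS}(\An,F_{2,2}) \longrightarrow \C\otimes\X\otimes\D \longrightarrow 0,
\]
and I would pass to its long exact sequence and reformulate: Part (iii) is the statement that the connecting homomorphism $\delta\colon H(\C\otimes\X\otimes\D)\to H(\A\otimes\X\otimes\B)$ is not surjective (a class outside its image maps nontrivially to $H(\widehat{CS})$ and, lying in the kernel of the map to $H(\C\otimes\X\otimes\D)$, is not homologous to any traversing cycle), and Part (iv) is the statement that $\delta$ is not injective (a traversing cycle in $\ker\delta$ survives in $H(\widehat{CS})$ without being homologous to anything insular). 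To pin down $\delta$ concretely I would use the grading of $\widehat{CS}(\An,F_{2,2})$ by winding around the core, which $\partial$ preserves, together with the explicit generating cycles produced in section~\ref{sec:annuli_two_marked_points_both_boundaries}: in degrees where only one of the two summands contributes, the nonvanishing classes of Part (ii) immediately give the required cycles, and in the overlapping degrees I would instead rule out cancellation directly from the $H(\X)$-module structure, by comparing the module invariants of an insular (resp.\ traversing) class with those of the image of $\delta$ (resp.\ of an insular boundary). The delicate point is exactly this control of $\delta$ in the overlapping degrees --- equivalently, knowing enough of $H(\widehat{CS}(\An,F_{2,2}))$ itself to certify that the exhibited classes are not killed --- and this is where the partial computations of the long section do the real work.
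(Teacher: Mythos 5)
Your part (i) and the four-fold splitting in part (ii) follow the paper's route: separate the strings of an insular diagram, observe that resolutions preserve insularity and the signs of the windings, and note that $\partial(c_id_j)$ produces the insular term $[c_i,d_j]=s_{i+j}$, so $\C\otimes\X\otimes\D$ is not a subcomplex. But the homology computation in part (ii) is only gestured at: you invoke a filtration by winding magnitude and ``the associated spectral sequence'' without setting it up, and the substance of the theorem --- that $H(\A_+\otimes\X\otimes\B_+)$ is spanned by the classes $\bar s_i\bar q$ with $\bar q$ \emph{clean} (no $\bar x_1$), that $\bar x_1$ acts by the nontrivial formula $\bar x_1\bar s_n=\bar x_3\bar s_{n-2}+\bar x_5\bar s_{n-4}+\cdots$ (whence non-freeness), and that the mixed cases collapse to $\bar x_1\bar x_{-1}H(\X)$ --- is exactly what such a sketch leaves unproved. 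The paper does this by an explicit ``standard form'' reduction (lemma \ref{lem:ABX_standard_form}), a uniqueness argument in truncated complexes $\A_+^{<N}\otimes\X\otimes\B_+$ (proposition \ref{prop:truncated_injectivity}), and a direct limit; no spectral sequence appears, and nothing in your outline substitutes for the uniqueness step.

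The more serious gap is in parts (iii)--(iv). Your long-exact-sequence reformulation is legitimate in principle, but the two mechanisms you propose for controlling $\delta$ both fail. First, the total winding grading does not isolate the summands: for every winding degree $n$ there are both insular generators ($a_ib_jx^e$) and traversing generators ($c_id_jx^e$), so there are no ``degrees where only one of the two summands contributes.'' Second, ``comparing $H(\X)$-module invariants'' is unavailable, because neither $\widehat{CS}(\An,F_{2,2})$ nor the quotient complex $\C\otimes\X\otimes\D$ is a differential $\X$-module (the Goldman brackets $[c_i,x_k]$, $[d_j,x_k]$ obstruct the Leibniz rule), so $H(\widehat{CS}(\An,F_{2,2}))$, the quotient homology, and $\delta$ carry no $H(\X)$-module structure to compare. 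The paper's actual arguments are chain-level and concrete: for (iii), the differential of a traversing generator only hits insular terms $a_ib_j$ with $i,j$ of the \emph{same} sign, so $\A_+\otimes\X\otimes\B_-$ and $\A_-\otimes\X\otimes\B_+$ are direct summands of the \emph{entire} complex and their homology survives intact; for (iv), the diagonal-sum chain maps $\sigma_n\colon\widehat{CS}(\An,F_{2,2})\to\X$ (which kill all insular terms and send $c_md_{-m}\mapsto 1$) and the chain maps $\Phi,\Psi$ to and from the small complex $\E$ certify that $c_md_{-m}$ and its companions are non-bounding and not homologous to anything insular. You would need to supply tools of this kind --- your proposal as written does not produce a single explicit surviving cycle in either summand.
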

The algebraic objects $\A, \A_\pm, \B, \B_\pm, \C, \D, \X$ will be defined in due course. Broadly, these objects keep track of various topological classes of curves in string diagrams on $(\An, F_{2,2})$. The last two parts of the theorem show that both the ``insular $\A$--$\B$ sector" and ``traversing $\C$--$\D$ sector" of $\widehat{HS}(\An,F_{2,2})$ contribute nontrivially to homology. We shall give such homology classes explicitly.

Note also that, even though $\widehat{HS}(\An, F_{2,2})$ is not an $H(\X)$-module, the ``$\A$--$\B$ sector" is, and the ``fermionic" structure of a module over ``fermionic polynomials" $H(\X)$ appears once more.

Part (i) is established in section \ref{sec:ABCDX_chain_complex}, as is the decomposition of part (ii). The calculations of the $H(\A_\pm \otimes \X \otimes \B_\pm)$ are involved and occupy sections \ref{sec:ABX_homology} to \ref{sec:completing_insular_calculation}, where precise results are obtained in theorems \ref{thm:++_homology} and \ref{thm:+-_homology}. Parts (iii) and (iv) are elaborated in section \ref{sec:towards_full_22_homology} and shown in sections \ref{sec:diagonal_sums} and \ref{sec:homomorphism_from_disc}. In sections \ref{sec:diagonal_sums} to \ref{sec:homomorphism_from_disc} we also develop several tools for analysing the $\C \otimes \X \otimes \D$ part of the complex.

\subsection{Adding more marked points: creation operators and doubling}

As we do not have an explicit description of $\widehat{HS}(\An,F_{2,2})$, we cannot expect an explicit description of $\widehat{HS}(\An, F_{2m,2n})$ for larger $(2m,2n)$. However, we can do the next best thing: we can give a complete description of all $\widehat{HS}(\An, F_{2m,2n})$ either explicitly with generators and relations, or \emph{in terms of} $\widehat{HS}(\An,F_{2,2})$.

Indeed, once we have $2$ marked points on a boundary component, as we add more marked points to that boundary component, we can describe the effect on $\widehat{HS}$ explicitly. To this end we will define \emph{creation and annihilation operators}, as used in \cite{Me09Paper, Me10_Sutured_TQFT, Mathews_Schoenfeld12_string}, in section \ref{sec:creation_annihilation}. 

In section \ref{sec:effect_of_creation} we give explicit isomorphisms on string homology; a precise statement is theorem \ref{thm:HS_doubling}
\begin{thm}
\label{thm:tensor_with_Z_2_squared}
Let $(\Sigma,F)$ be an alternating marked surface and suppose $F'$ is an alternating marking obtained from $F$ by adding two marked points on a boundary component already containing marked points. Then
\[
\widehat{HS} (\Sigma, F') \cong \left( \Z_2 \oplus \Z_2 \right) \otimes_{\Z_2} \widehat{HS}(\Sigma, F)
\]
\end{thm}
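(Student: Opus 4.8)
The plan is to analyze the effect of a single creation operator on the chain complex level and then pass to homology. Let $F'$ be obtained from $F$ by inserting two new marked points $p_+, p_-$ (of opposite sign, to preserve alternation) on a boundary component $C$ of $\Sigma$ that already carries marked points. A creation operator, in the sense of \cite{Me09Paper, Me10_Sutured_TQFT, Mathews_Schoenfeld12_string}, is a chain map (or pair of chain maps) $\widehat{CS}(\Sigma,F) \to \widehat{CS}(\Sigma,F')$ built by attaching a small ``collar piece'' near $C$ that absorbs the two new endpoints; its adjoint annihilation operator runs the other way. First I would recall the precise definition of these operators from section \ref{sec:creation_annihilation}, verify they are chain maps, and record how they interact — typically one has relations like (annihilation)$\circ$(creation) $= \Id$ on one factor, so that the creation operators split the complex.

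The key structural claim is that $\widehat{CS}(\Sigma,F')$ decomposes, as a chain complex, into two copies of something chain-homotopy-equivalent to $\widehat{CS}(\Sigma,F)$. The natural way to see this: a string diagram on $(\Sigma,F')$ is determined by a string diagram on $(\Sigma,F)$ together with a finite amount of ``local data'' describing how the arc(s) ending at $p_+, p_-$ behave near $C$. Because $C$ already has marked points, the two new endpoints can be connected in essentially two topologically distinct local configurations (an innermost arc joining $p_+$ to $p_-$, versus the two new endpoints being routed to join the pre-existing strand structure), and this dichotomy is respected by the differential up to the usual bypass/resolution relations. I would make this precise by exhibiting an explicit filtration or direct-sum splitting of $\widehat{CS}(\Sigma,F')$ indexed by this binary local data, showing the differential is block-triangular (or block-diagonal after a change of basis), with each diagonal block isomorphic as a complex to $\widehat{CS}(\Sigma,F)$, possibly up to a grading shift. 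Taking homology of each block then gives $\widehat{HS}(\Sigma,F) \oplus \widehat{HS}(\Sigma,F)$, which is exactly $(\Z_2 \oplus \Z_2)\otimes_{\Z_2}\widehat{HS}(\Sigma,F)$.

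The main obstacle I anticipate is controlling the ``off-diagonal'' part of the differential: resolving a crossing in a string diagram on $(\Sigma,F')$ may a priori mix the two local configurations of the new endpoints, so the splitting is unlikely to be literally a direct sum of subcomplexes on the nose. The standard remedy — and the step that will require genuine care — is either (a) to choose a clever basis in which the two summands become honest subcomplexes, or (b) to run a spectral-sequence / Gaussian-elimination argument: show the mixed terms are upper-triangular with respect to a suitable complexity filtration (e.g.\ number of times the new arc wraps or intersects the pre-existing diagram), so that on the $E_1$ (or post-cancellation) page the complex is the direct sum of two copies of $\widehat{CS}(\Sigma,F)$ and higher differentials vanish for degree or grading reasons. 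I would lean on exactly this kind of argument, citing the analogous computation for discs in \cite{Mathews_Schoenfeld12_string} as a template, and reduce the present statement to a purely local lemma about the collar region — the rest of the diagram being a spectator that contributes the ``$\otimes_{\Z_2}\widehat{HS}(\Sigma,F)$'' factor verbatim.
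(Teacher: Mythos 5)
Your overall strategy --- creation operators $a_\pm^*$, their adjoint annihilation operators, the relations $a_\pm a_\pm^* = 1$ and $a_\mp a_\pm^* = 0$, and a decomposition of cycles into the two images --- is the same as the paper's. But there is a genuine problem with the way you set up the decomposition. You assert that a string diagram on $(\Sigma,F')$ is determined by a string diagram on $(\Sigma,F)$ together with a \emph{finite} amount of local data near $C$, giving a two-block splitting of $\widehat{CS}(\Sigma,F')$ at the chain level with each block isomorphic to $\widehat{CS}(\Sigma,F)$. This is false: the strand(s) ending at the two new marked points are global objects, and their homotopy classes are not controlled by a collar of $C$ (e.g.\ on an annulus the new arc can wind arbitrarily many times around the core, and can be traversing rather than boundary-parallel). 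So $\widehat{CS}(\Sigma,F')$ is genuinely much larger than two copies of $\widehat{CS}(\Sigma,F)$, and no change of basis makes it a direct sum of two such subcomplexes. The correct statement only holds after passing to homology.

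The paper's proof isolates exactly this point as the \emph{crossed wires lemma}: every cycle $x \in \widehat{CS}(\Sigma,F')$ can be written as $x = a_-^* y + a_+^* z + \partial u$ with $y,z$ cycles in $\widehat{CS}(\Sigma,F)$. Granting that lemma, the images $a_\pm^* \widehat{HS}(\Sigma,F)$ span $\widehat{HS}(\Sigma,F')$; the relations $a_- a_+^* = a_+ a_-^* = 0$ and $a_\pm a_\pm^* = 1$ then show the two images intersect trivially and that each $a_\pm^*$ is injective on homology, giving the direct sum. Your fallback suggestion --- a filtration by how much the new strands wrap/intersect the rest of the diagram, with a Gaussian-elimination or spectral-sequence argument --- is a plausible route to \emph{proving} the crossed wires lemma, but as written your proposal defers precisely this step, which is the entire content of the theorem; everything else is formal algebra with the creation/annihilation relations. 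To complete the argument you would need to actually carry out that reduction (the paper imports it from lemma 8.4 of the earlier disc paper, where it is done by explicit chain homotopies of switching type).
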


Repeated use of the above theorem gives a description of these vector spaces for general alternating $(\An, F_{2m,2n})$, as follows. The precise formulations are propositions \ref{prop:F_0_2n+2_homology} and \ref{prop:F_2m+2_2n+2_homology}.
\begin{thm} \
\label{thm:general_annulus_computations}
\begin{enumerate}
\item
If $n \geq 0$ then
\[
\widehat{HS}(\An, F_{0,2n+2}) \cong \left( \Z_2 \oplus \Z_2 \right)^{\otimes n } \otimes_{\Z_2} \left( \bar{x}_1 H(\X) \oplus \bar{x}_{-1} H(\X) \right).
\]
\item
If $m,n \geq 0$ then
\[
\widehat{HS}(\An, F_{2m+2,2n+2}) \cong \left ( \Z_2 \oplus \Z_2 \right)^{\otimes (m+n)} \otimes_{\Z_2} \widehat{HS}(\An, F_{2,2}).
\]
\end{enumerate}
\end{thm}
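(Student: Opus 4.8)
The plan is to deduce Theorem~\ref{thm:general_annulus_computations} by iterating Theorem~\ref{thm:tensor_with_Z_2_squared}, starting from the base cases already computed. First I would check that every intermediate marking arising in the iteration is alternating: inserting one positive and one negative marked point into a boundary component whose marked points already alternate (or which carries two or more points) again yields an alternating marking. Hence at each stage Theorem~\ref{thm:tensor_with_Z_2_squared} applies, provided the boundary component to which the two points are added already carries marked points.

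For part (i), the base case $n=0$ is precisely Theorem~\ref{thm:annulus_two_marked_points_same_boundary_calculation}, giving $\widehat{HS}(\An, F_{0,2}) \cong \bar{x}_1 H(\X) \oplus \bar{x}_{-1} H(\X)$. For the inductive step, for $n \geq 1$ the marking $F_{0,2n+2}$ is obtained from $F_{0,2n}$ by adding two marked points to the boundary component already carrying $2n \geq 2$ of them, so Theorem~\ref{thm:tensor_with_Z_2_squared} gives $\widehat{HS}(\An, F_{0,2n+2}) \cong (\Z_2 \oplus \Z_2) \otimes_{\Z_2} \widehat{HS}(\An, F_{0,2n})$. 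Induction on $n$, together with associativity of $\otimes_{\Z_2}$, yields the stated formula. I would remark that the induction cannot be started at $F_{0,0} = \emptyset$, since passing from $F_{0,0}$ to $F_{0,2}$ adds points to an \emph{empty} boundary component, which is outside the hypothesis of Theorem~\ref{thm:tensor_with_Z_2_squared}; this is exactly why the separate computation of $\widehat{HS}(\An, F_{0,2})$ serves as the base case.

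For part (ii), the base case $m=n=0$ is vacuous. For $m+n \geq 1$, I would first add $m$ pairs of marked points, one pair at a time, to the first boundary component of $(\An, F_{2,2})$, passing through $F_{2,2}, F_{4,2}, \ldots, F_{2m+2,2}$, and then add $n$ pairs one at a time to the second boundary component, passing through $F_{2m+2,2}, F_{2m+2,4}, \ldots, F_{2m+2,2n+2}$. At every step the relevant boundary component already contains at least two marked points, so Theorem~\ref{thm:tensor_with_Z_2_squared} applies and contributes one tensor factor $\Z_2 \oplus \Z_2$. Since $\Z_2$ is a field, $\otimes_{\Z_2}$ is exact and associative, so composing the $m+n$ isomorphisms gives $\widehat{HS}(\An, F_{2m+2,2n+2}) \cong (\Z_2 \oplus \Z_2)^{\otimes(m+n)} \otimes_{\Z_2} \widehat{HS}(\An, F_{2,2})$.

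I do not expect a serious obstacle here once Theorem~\ref{thm:tensor_with_Z_2_squared} is available: the only care needed is bookkeeping --- exhibiting a chain of alternating markings interpolating between the base case and the target, verifying that at each step points are added to a non-empty boundary component, and noting that the isomorphisms compose (immediate, as only an abstract isomorphism of $\Z_2$-vector spaces is asserted). The genuine content lies in Theorem~\ref{thm:tensor_with_Z_2_squared} and in the base-case computations of $\widehat{HS}(\An, F_{0,2})$ and $\widehat{HS}(\An, F_{2,2})$; the precise inductive formulations are recorded in Propositions~\ref{prop:F_0_2n+2_homology} and~\ref{prop:F_2m+2_2n+2_homology}.
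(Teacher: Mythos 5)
Your proposal is correct and follows essentially the same route as the paper: Propositions \ref{prop:F_0_2n+2_homology} and \ref{prop:F_2m+2_2n+2_homology} are proved precisely by iterating Theorem \ref{thm:tensor_with_Z_2_squared} from the base cases $\widehat{HS}(\An,F_{0,2})$ and $\widehat{HS}(\An,F_{2,2})$, adding one pair of marked points at a time to a boundary component that already carries marked points. The only cosmetic difference is that the paper records the resulting decomposition explicitly as a direct sum over images of composite creation operators $a_w^*$ indexed by words $w \in \{-,+\}^n$ (and notes the part (i) isomorphism is one of $H(\X)$-modules), whereas you phrase it as a straightforward induction; your observation that the induction cannot start at $F_{0,0}$ matches the paper's caveat about creation operators requiring pre-existing marked points.
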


\subsection{Relations to Floer-theoretic invariants}
\label{sec:relations_to_floer}

As mentioned in section \ref{sec:alternating_surfaces}, there are reasons to expect a connection between the ``string homology" discussed in this paper, and Floer-theoretic invariants such as sutured Floer homology and embedded contact homology, as well as a contact-topological invariant, namely the Grothendieck group of the contact category, consisting of ``sutures modulo bypass triples".

In \cite{Mathews_Schoenfeld12_string} we showed that for an alternating marked disc $(D^2, F)$, our ``string homology" $\widehat{HS}(D^2, F)$, the Grothendieck group of ``sutures modulo bypasses", and the sutured Floer homology of $(D^2 \times S^1, F \times S^1)$ are naturally isomorphic:
\[ \begin{array}{ccccc}
\widehat{HS}(D^2,F) &\cong& \frac{ \widehat{CS}_{sut}(D^2,F) }{ \widehat{\Byp}(D^2,F) } &\cong& SFH(D^2 \times S^1, F \times S^1) 
\\
\text{String homology} && \text{Sutures mod bypasses} && \text{Sutured Floer homology} 
\end{array} \]

The computations of this paper, outlined in this introduction, show that such a direct isomorphism no longer exists for annuli, and the relationship is more complex. 

For instance, suppose, following \cite{Tian12_Uq, Tian13_UT}, we regard the annulus with no marked points $(\An, \emptyset)$ as corresponding to the sutured 3-manifold $(\An \times S^1, \Gamma)$, where $\Gamma$ consists of a pair of sutures of the form $C \times \{\cdot\}$, oppositely oriented, on each boundary component $C \times S^1$ of $\An \times S^1$. Then in fact the sutured manifold $(\An \times S^1, \Gamma)$ is homeomorphic to $(\An \times S^1, F_{2,2} \times S^1)$ and computations of \cite{HKM09} or \cite{HKM08} give $SFH(\An \times S^1, \Gamma) \cong \Z_2^4$. With $\Z$ or twisted coefficients $SFH(\An \times S^1, \Gamma)$ is a free $\Z$ or $\Z[q,q^{-1}]$-module of rank $4$ \cite{Me14_twisty_itsy_bitsy}. But on the other hand, $\widehat{HS}(\An, \emptyset) = H(\X)$ is the ``fermionic polynomial algebra", which has infinite rank as a vector space or algebra over $\Z_2$.
\[
\widehat{HS}(\An, \emptyset) = H(\X) \cong \frac{\Z_2[\ldots, \bar{x}_{-3}, \bar{x}_{-1}, \bar{x}_1, \bar{x}_3, \ldots] }{ (\ldots, \bar{x}_{-3}^2, \bar{x}_{-1}^2, \bar{x}_1^2, \bar{x}_3^2, \ldots) }
\]
There is a basis of sutured Floer homology in this case consisting of contact elements of contact structures with dividing sets given by the string diagrams $1$, $x_1$, $x_{-1}$, $x_1 x_{-1}$. In some sense, then, $\widehat{HS}(\An, \emptyset)$ gives a vast generalisation of $SFH$, with higher order structure from the ``higher spin" strings $\bar{x}_{\pm 3}$ and above. Setting all $\bar{x}_{\pm 3}, \bar{x}_{\pm 5}, \ldots$ to $0$, $\widehat{HS}(\An, \emptyset)$ reduces to $SFH(\An \times S^1, \Gamma)$ with $\Z_2$ coefficients.

Similarly, if we consider the annulus with two marked points on a single boundary component $(\An, F_{0,2})$, the corresponding sutured manifold $(\An \times S^1, \Gamma)$ is a \emph{basic slice} \cite{Hon00I}, which has $SFH$ isomorphic to $\Z_2^4$ \cite{HKM09}. But again our computation of $\widehat{HS}$ is a far richer structure,
\[
\widehat{HS}(\An, F_{0,2}) \cong 
\bar{x}_1 \frac{\Z_2[\ldots, \bar{x}_{-3}, \bar{x}_{-1}, \bar{x}_1, \bar{x}_3, \ldots] }{ (\ldots, \bar{x}_{-3}^2, \bar{x}_{-1}^2, \bar{x}_1^2, \bar{x}_3^2, \ldots) } \oplus \bar{x}_{-1} \frac{\Z_2[\ldots, \bar{x}_{-3}, \bar{x}_{-1}, \bar{x}_1, \bar{x}_3, \ldots] }{ (\ldots, \bar{x}_{-3}^2, \bar{x}_{-1}^2, \bar{x}_1^2, \bar{x}_3^2, \ldots) }.
\]
Again setting all $\bar{x}_3, \bar{x}_5, \ldots$ to $0$ reduces this homology to $\Z_2^4$ in a manner which appears to be consistent with contact elements generating $SFH$.

Finally, $(\An, F_{2,2})$ has corresponding sutured manifold $(\An \times S^1, F_{2,2} \times S^1)$ which has $SFH$ free of rank $4$ (with $\Z_2$, $\Z$ or twisted $\Z[q,q^{-1}]$ coefficients), as computed in \cite{HKM08, Me11_torsion_tori, Me12_itsy_bitsy}. But $\widehat{HS}(\An, F_{2,2})$ is more complicated. Although we cannot compute the homology explicitly, in section \ref{sec:homomorphism_from_disc} we construct a homomorphism $\Phi$ from $\widehat{HS}(\An, F_{2,2})$ onto $\Z_2^4$, with nonzero kernel.

The constructions of this paper, therefore, give an algebraic object with an elementary definition, based on curves on surfaces and their intersections, but which may contain strictly more information than Floer-theoretic invariants of corresponding 3-manifolds. The above remarks suggest that some subspace, or quotient, of $\widehat{HS}(\Sigma, F)$ may be isomorphic to the corresponding sutured Floer homology, or Grothendieck group of ``sutures modulo bypass triples". They also suggest potential ``higher order structure" which may be found in sutured Floer homology, or the equivalent Floer-theoretic invariant of embedded contact homology, or contact categories.

On a different note, as remarked in our previous work \cite{Mathews_Schoenfeld12_string}, the construction of our differential is strongly reminiscent of the work of Goldman \cite{Goldman86} and Turaev \cite{Turaev91}. Goldman defined a Lie bracket on the abelian group $\Z \widehat{\pi}$ freely generated by homotopy classes of loops on a surface --- the bracket resolves crossings between two loops, much like our differential. From this bracket, now known as the \emph{Goldman bracket}, he defined a Lie algebra homomorphism to the space of smooth real-valued functions on the Teichm\"{u}ller space of the surface (and more generally to spaces of flat $G$-connections). Under this map the Goldman bracket corresponds to the Poisson bracket on real-valued functions with respect to the Weil-Petersson symplectic form on Teichm\"{u}ller space. Turaev went further and defined a cobracket on the same abelian group $\Z \widehat{\pi}$, defined by resolving self-intersections of a loop. This cobracket, now known as the \emph{Turaev cobracket}, gives $\Z \widehat{\pi}$ a Lie bialgebra structure and we obtain a Poisson algebra homomorphism to the space of smooth real-valued functions on Teichm\"{u}ller space. 

Our differential, taking a string diagram and resolving its crossings --- both intersections between distinct strings and self-intersections of each string --- thus incorporates both the Goldman bracket and Turaev cobracket. 

A similar combination of both Lie bialgebra operations also arises in the \emph{symplectic field theory} of Eliashberg--Givental--Hofer \cite{EGH}. Cieliebak--Latschev in \cite{CL} studied symplectic field theory with Lagrangian boundary conditions, and found that the master equation of symplectic field theory 
$e^{{\bf F}} \overleftarrow{ {\bf H}^+ } - \overrightarrow{ {\bf H}^-} e^{{\bf F}} = 0$ is generalised in the Lagrangian boundary case to the equation
\[
e^{{\bf F}} \overleftarrow{ {\bf H}^+ } - \overrightarrow{ {\bf H}^-} e^{{\bf F}} = (\partial + \Delta + \hbar \nabla) e^{{\bf L}},
\]
where $\nabla$ and $\Delta$ are essentially the Goldman bracket and Turaev cobracket respectively. In other words, our differential resembles boundary phenomena in symplectic field theory. As discussed in \cite{Mathews_Schoenfeld12_string}, the relationships known to exist between symplectic field theory, embedded contact homology, and Heegaard Floer homology suggest that none of these similarities is a coincidence.

For now, however, we leave the pursuit of these connections to subsequent work, and for the remainder of this paper concentrate on the calculations of various $\widehat{HS}(\An, F)$, hoping that the above considerations provide sufficient motivation to follow them.

\subsection{Structure of this paper}

We will proceed by first establishing some preliminaries and definitions, in section \ref{sec:preliminaries}. Much of this section recalls \cite{Mathews_Schoenfeld12_string} but there is some significant generalisation of basic notions and re-working of definitions in the present context. For instance, our notion of \emph{marked surface} in section \ref{sec:marked_surfaces} allows boundary components without marked points. Also in section \ref{sec:chain_complex_and_subspaces} we define the chain complex $\widehat{CS}(\Sigma,F)$ as a \emph{quotient} by contractible loops. And in sections  \ref{sec:algebraic_structure} and \ref{sec:differential_algebra} we discuss additional algebraic structure.

In section \ref{sec:annuli_no_marked_points} we consider the annulus $(\An, F_{0,0}=\emptyset)$ with no marked points, and calculate the resulting homology. The computation is rather long and involves a decomposition over odd integers (section \ref{sec:odd_integer_decomposition}), a notion of ``decay" and ``fusion" operators (sections \ref{sec:decay_chains}--\ref{sec:creation_operators}), and Weyl algebra representations (section \ref{sec:Weyl_hierarchy}).

In section \ref{sec:annuli_two_marked_points} we turn to annuli with two marked points. There are two cases to consider. In sections \ref{sec:one_marked_point_each_boundary}--\ref{sec:source_creation} we consider the case of one marked point on each boundary component; this is the remaining case we need, in section \ref{sec:non-alternating_annuli}, to prove homology is zero for non-alternating annuli. Then in sections \ref{sec:two_points_single_boundary}--\ref{sec:homology_A_+_tensor_X} we consider two marked points on a single boundary component.

Section \ref{sec:annuli_two_marked_points_both_boundaries} is by far the longest and most difficult part of the paper; it consists of computations relating to $\widehat{HS}(\An, F_{2,2})$. In sections \ref{sec:ABCDX_chain_complex}--\ref{sec:insular_differential} we describe the chain complex and differential. Then in sections \ref{sec:ABX_homology}--\ref{sec:completing_insular_calculation} we present a long argument to calculate the homology of the ``insular" part of the chain complex. Finally in sections \ref{sec:towards_full_22_homology}--\ref{sec:homomorphism_from_disc} we consider the full homology $\widehat{HS}(\An, F_{2,2})$, make some general statements, and develop some tools to analyse it, including a \emph{diagonal sum sequence} (section \ref{sec:diagonal_sums}) and homomorphisms to and from a simpler chain complex (section \ref{sec:homomorphism_from_disc}).

Finally, in section \ref{sec:adding_marked_points} we consider adding further marked points, defining creation operators (section \ref{sec:creation_annihilation}) and analysing their effect on $\widehat{HS}$ (section \ref{sec:effect_of_creation}), before giving results for $\widehat{HS}(\An, F_{0,2n+2})$ and $\widehat{HS}(\An, F_{2m+2,2n+2})$ (section \ref{sec:results_for_annuli_with_more}).

\section{Preliminaries and definitions}
\label{sec:preliminaries}

\subsection{Marked surfaces and string diagrams}
\label{sec:marked_surfaces}

Here, as elsewhere, $\Sigma$ denotes a compact oriented surface with nonempty boundary.

\begin{defn}\
\begin{enumerate}
\item
A \emph{(weak) marking} $F$ on $\Sigma$ is a set of $2n$ distinct points on $\partial \Sigma$, where $n \geq 0$, with $n$ points $F_{in}$ labelled ``in" and $n$ points $F_{out}$ labelled ``out". The pair $(\Sigma, F)$ is called a \emph{(weakly) marked surface}.
\item
A \emph{strong marking} $F$ on $\Sigma$ is a weak marking with at least one point on each component of $\Sigma$. The pair $(\Sigma,F)$ is called a \emph{strongly marked surface}.
\end{enumerate}
\end{defn}
Note $F = F_{in} \sqcup F_{out}$. The points of $F$ are \emph{marked points}; the \emph{sign} of a marked point is ``in" or ``out" accordingly as it lies in $F_{in}$ or $F_{out}$. Note that a weakly marked surface may have no marked points at all. However we always require $\Sigma$ to have nonempty boundary.

In \cite{Mathews_Schoenfeld12_string}, we only considered strong markings and strongly marked surfaces, referring to them as ``markings" and ``marked surfaces". However in this paper we use the adjectives ``strong" and ``weak" to distinguish the two types of markings. When applied without an adjective, by a \emph{marking} or \emph{marked surface} we mean a weak one.

\begin{defn}
A marking (strong or weak) is called \emph{alternating} if for each component of $\partial \Sigma$, the points of $F$ are alternately labelled (in, out, ..., in, out).
\end{defn}
In the weak case, a boundary component with no marked points is alternating (``the null alternation"). The empty marking $F = \emptyset$ is always an alternating weak marking.

An alternating marking (strong or weak) has an even number of points on each boundary component. An alternating strong marking has at least 2 points on each boundary component. 

An alternating strong marking forms a natural boundary condition for \emph{sutures}, as discussed in \cite{Me10_Sutured_TQFT, Me12_itsy_bitsy}; see also \cite{Zarev09}. Sutures can be regarded as dividing sets on a convex surface in a contact 3-manifold \cite{Gi91}. A boundary component $C$ is regarded as Legendrian, with Thurston-Bennequin number $-\frac{1}{2} |C \cap F|$ \cite{Hon00I}.  However in this paper we only consider sutures in passing.

\begin{defn}
Let $(\Sigma,F)$ be a (weakly) marked surface. A \emph{string diagram} $s$ on $(\Sigma,F)$ is an immersed oriented compact 1-manifold in $\Sigma$ such that $\partial s = F$, as signed 0-manifolds, with all self-intersections in the interior of $\Sigma$.

The components of a string diagram are called \emph{strings}; arc components are called \emph{open strings}, and closed curve components are called \emph{closed strings}.
\end{defn}
By $\partial s = F$ as signed 0-manifolds, we mean that strings point in and out of $\Sigma$ at $F_{in}$ and $F_{out}$ respectively. Clearly each open string runs from a point of $F_{in}$ to a point of $F_{out}$.

The condition that all self-intersections occur in the interior of $\Sigma$ avoids unnecessary technicalities. In particular it ensures that precisely one arc of $s$ begins (resp. ends) at each point of $F_{in}$ (resp. $F_{out}$). Generically a string diagram contains only transverse double self-intersections. 

Note compactness implies that a string diagram only has finitely many strings. (In \cite{Mathews_Schoenfeld12_string} we did not require string diagrams to be compact; but once contractible curves on a disc are ruled out, compactness follows for free. Here we require compactness explicitly.)

In \cite{Mathews_Schoenfeld12_string} we defined several notions of homotopy for string diagrams; in this paper we consider only homotopy per se (not regular homotopy, spin homotopy, or ambient isotopy). Two string diagrams $s_0, s_1$ on a (weakly) marked surface $(\Sigma, F)$ are \emph{homotopic} if there is a homotopy relative to endpoints from $s_0$ to $s_1$. (Such a homotopy may introduce or remove self-intersections in the string diagram, it need not be through immersions, and it may change the writhe of strings.) Although strictly speaking the string diagram is an explicit immersion of a disjoint union of copies of $S^1$ and $[0,1]$ into $(\Sigma,F)$, we abuse notation and identify this immersion with its image in $\Sigma$.

It is useful to consider various sets of string diagrams on a (weakly) marked surface $(\Sigma,F)$.
\begin{defn}\
\label{def:set_of_string_diagrams}
\label{def:more_sets_of_string_diagrams}
\begin{enumerate}
\item
Let $\S(\Sigma,F)$ denote the set of homotopy classes of string diagrams on $(\Sigma,F)$.
\item
Let $\S^O (\Sigma,F)$ denote the set of  homotopy classes of string diagrams containing only open strings.
\item
Let $\S^C (\Sigma,F)$ denote the set of homotopy classes of string diagrams containing a contractible closed curve.
\end{enumerate}
\end{defn}

\subsection{The chain complex}
\label{sec:chain_complex_and_subspaces}

Following \cite{Mathews_Schoenfeld12_string} we define a chain complex $(\widehat{CS}(\Sigma,F), \partial)$ out of homotopy classes of string diagrams. It is described in terms of free vector spaces over the sets of definition \ref{def:more_sets_of_string_diagrams}. For any set $S$, denote by $\Z_2 \langle S \rangle$ the free $\Z_2$ vector space on $S$, i.e. with basis $S$.

\begin{defn}
Let $(\Sigma,F)$ be a (weakly) marked surface. The $\Z_2$-vector space $\widehat{CS}(\Sigma,F)$ is
\[
\widehat{CS}(\Sigma,F) = \frac{ \Z_2 \langle \S (\Sigma, F) \rangle }{ \Z_2 \langle \S^C (\Sigma, F) \rangle }.
\]
\end{defn}
As $\S^C (\Sigma, F) \subseteq \S (\Sigma, F)$, we have a well-defined quotient $\Z_2$-vector space. A basis of $\widehat{CS}(\Sigma,F)$ is given by homotopy classes of string diagrams on $(\Sigma,F)$ which contain no contractible closed curves. (In fact in \cite{Mathews_Schoenfeld12_string} we defined $\widehat{CS}(\Sigma, F)$ this way.) Our definition of $\widehat{CS}(\Sigma,F)$ as a quotient makes clear that any string diagram $s$ on $(\Sigma, F)$ can (by taking its homotopy class) be regarded as an element of $\widehat{CS}(\Sigma, F)$; that element is zero precisely when $s$ contains a contractible closed curve. (In \cite{Mathews_Schoenfeld12_string} we had to enforce this condition by fiat.)

Reasons for setting contractible closed curves to zero are given in \cite{Mathews_Schoenfeld12_string}. Briefly: the triviality of contractible closed curves is consistent with the nature of overtwisted discs in contact topology; moreover we find that we \emph{may} allow contractible curves, but at the cost of restricting to spin homotopy classes, obtaining a different vector space $CS^\infty(\Sigma,F)$. (In this paper, as we do not consider spin homotopy, we do not consider $CS^\infty (\Sigma,F)$ either. We can however note that $CS^\infty$ may be defined for weakly marked surfaces.) 

We next define the differential on $\widehat{CS}(\Sigma,F)$. Let $s$ be a string diagram on a marked surface $(\Sigma,F)$. Assuming $s$ is generic, all intersection points of $s$ are transverse double crossings; let $x$ be such an intersection point. As the curves of $s$ are oriented, there is a string diagram $r_x(s)$ obtained by resolving $s$ at $x$, as shown in figure figure \ref{fig:resolving_crossing}. 
\begin{defn}
Let $s$ be a string diagram. Then
\[
\partial s = \sum_{x \text{ crossing of } s} r_x (s).
\]
\end{defn}
In \cite{Mathews_Schoenfeld12_string} we showed that $\partial$ is well-defined on homotopy classes of string diagrams without contractible loops, so that we obtain an operator
\[
\partial \; : \; \widehat{CS}(\Sigma,F) \To \widehat{CS}(\Sigma,F).
\]
Moreover, we showed that $\partial^2 = 0$. The proofs in \cite{Mathews_Schoenfeld12_string} for strongly marked surfaces extend without change to the weak case. Essentially, $\partial^2 = 0$ holds since $\partial^2 s$ consists of a sum of string diagrams, each obtained by resolving two crossings $x$ and $y$ of a string diagram $s$; but $x$ and $y$ may be resolved in either order, so terms come in pairs and mod $2$ the result is $0$.

Thus for any (weakly) marked surface $(\Sigma, F)$ we have a chain complex $(\widehat{CS}(\Sigma, F), \partial)$.
\begin{defn}
Let $(\Sigma, F)$ be a weakly marked surface. The \emph{string homology} of $(\Sigma, F)$ is
\[
\widehat{HS}(\Sigma, F) = \frac{ \ker \partial }{ \im \partial }.
\]
\end{defn}
Clearly $\widehat{HS}(\Sigma, F)$ is a $\Z_2$-vector space.

\subsection{Algebraic structure}
\label{sec:algebraic_structure}

All of the above essentially appears in \cite{Mathews_Schoenfeld12_string}, apart from the extension to weakly marked surfaces. In this section we introduce some multiplicative structures on the various vector spaces involved, making them into modules and/or algebras. This material has not appeared before, so far as we are aware.

For a (weakly) marked surface with \emph{no} marked points, $\widehat{CS}(\Sigma, \emptyset)$ is naturally a \emph{ring}, which we denote $\X(\Sigma)$. Multiplication in this ring is given by disjoint union of string diagrams, which necessarily only contain closed strings. For a general (weakly) marked surface, $\widehat{CS}(\Sigma,F)$ is a \emph{free $\X(\Sigma)$-module}, with basis given by $\S^O(\Sigma, F)$, string diagrams containing only open strings. We can describe these structures formally in terms of the fundamental group and/or loop space of $\Sigma$. 

Consider the free homotopy classes of closed loops on $\Sigma$; these correspond to the connected components $\pi_0 (\Lambda \Sigma)$ of the free loop space $\Lambda \Sigma$. When $\Sigma$ is connected, these also correspond to conjugacy classes $\Conj \pi_1 (\Sigma)$ of the fundamental group $\pi_1 (\Sigma)$.

We define the algebra $\tilde{\X}(\Sigma)$ to be the symmetric $\Z_2$-algebra on the set $\pi_0 (\Lambda \Sigma)$. That is, $\tilde{X}(\Sigma) = \Z_2[\pi_0 (\Lambda \Sigma)]$. Elements of $\tilde{X}(\Sigma)$ can be regarded as polynomials in ``variables" given by free homotopy classes of closed curves on $\Sigma$. As a $\Z_2$-vector space, $\tilde{\X}(\Sigma)$ has basis given by free homotopy classes of closed multicurves on $\Sigma$. Multiplication in $\tilde{\X}(\Sigma)$ corresponds to disjoint union of (free homotopy classes of) multicurves. When $\Sigma$ is connected, $\tilde{X}(\Sigma) \cong \Z_2 [ \Conj \pi_1 (\Sigma) ]$.

The ring $\tilde{X}(\Sigma)$ has an ideal $\pi_0(\Sigma) \tilde{X}(\Sigma)$ generated by the homotopy classes of closed contractible curves on $\Sigma$. (There is one such homotopy class for each connected component of $\Sigma$.) The ideal $\pi_0 (\Sigma) \tilde{X}(\Sigma)$ is a free $\Z_2$-vector subspace of $\tilde{X}(\Sigma)$ with basis $\S^C(\Sigma, \emptyset)$, the homotopy classes of closed multicurves containing a contractible closed curve. When $\Sigma$ is connected, this ideal is isomorphic to $\{1\} \Z_2[ \Conj \pi_1 (\Sigma) ] \subset \Z_2 [\Conj \pi_1 (\Sigma)]$.

The algebra $\X(\Sigma)$ is then the quotient of $\tilde{\X}(\Sigma)$ by this ideal.
\begin{defn}
The $\Z_2$-algebra $\X(\Sigma)$ is
\[
\X(\Sigma) = \frac{ \tilde{\X}(\Sigma) }{ \pi_0 (\Sigma) \tilde{\X}(\Sigma) }
= \frac{ \Z_2 [ \pi_0 (\Lambda \Sigma) ] }{ \pi_0 (\Sigma) \Z_2 [ \pi_0 (\Lambda \Sigma)] }.
\]
\end{defn}
As a $\Z_2$-algebra, $\X(\Sigma)$ is freely generated by free homotopy classes of non-contractible closed curves on $\Sigma$; contractible curves have been set to zero. As a $\Z_2$-vector space, $\X(\Sigma)$ has basis given by free homotopy classes of closed multicurves on $\Sigma$ with no contractible components. This means that despite our algebraic circumvolutions, $\X(\Sigma)$ is just isomorphic to $\widehat{CS}(\Sigma, \emptyset)$ as a $\Z_2$-vector space; indeed it endows $\widehat{CS}(\Sigma,\emptyset)$ with the structure of a $\Z_2$-algebra.
\[
\widehat{CS}(\Sigma, \emptyset) \cong \X(\Sigma).
\]

Turning to a general (possibly nonempty) marking $F$, it is useful to consider a vector space, like $\widehat{CS}$, but using $\S^O (\Sigma, F)$ (definition \ref{def:more_sets_of_string_diagrams}) rather than $\S(\Sigma,F)$.
\begin{defn}
For a (weakly) marked surface $(\Sigma, F)$, the vector space $\widehat{CS}^O (\Sigma, F)$ is the subspace of $\widehat{CS}(\Sigma, F)$ generated by homotopy classes of string diagrams with no closed curves.
\[
\widehat{CS}^O (\Sigma, F) = \frac{ \Z_2 \langle \S^O (\Sigma, F) \rangle }{ \Z_2 \langle \S^C (\Sigma, F) \cap \S^O (\Sigma, F) \rangle } 
= \Z_2 \langle \S^O (\Sigma, F) \rangle
\]
\end{defn}
(Note $\S^C (\Sigma, F) \cap \S^O (\Sigma, F) = \emptyset$, giving the equalities above; having no closed curves implies no contractible closed curves!)
Thus, $\widehat{CS}^O (\Sigma, F)$ is freely generated a s $\Z_2$ vector space by string diagrams with open strings only.

Now suppose we have a (homotopy class of) string diagram $s$ without contractible curves, so $s$ is a generator of $\widehat{CS}(\Sigma,F)$. We can decompose $s$ into its open and closed string components. Precisely, there exists a (free homotopy class of) closed multicurve without contractible components $x \in \X(\Sigma)$, and a (homotopy class of) open string diagram $s^O \in \widehat{CS}^O (\Sigma,F)$, such that $s$ is the union of the curves of $x$ and $s^O$. Moreover this $x$ and $s^O$ are unique. In keeping with the notion that multiplication corresponds to disjoint union of string diagrams, we may write $s = x s^O$. Moreover, this notation is compatible with the multiplication in $\X(\Sigma)$: if $x = x_1 x_2$ for $x_1, x_2 \in \X(\Sigma)$, we have $x s^O = (x_1 x_2) s^O = x_1 (x_2 s^O)$. Thus, $\widehat{CS}(\Sigma, F)$ naturally has the structure of an $\X(\Sigma)$-module; indeed it is a free $\X(\Sigma)$-module with basis $\S^O(\Sigma,F)$. As $\widehat{CS}^O(\Sigma,F)$ is freely generated by $\S^O(\Sigma,F)$, we immediately have the following result.
\begin{lem}
\label{lem:CS_as_module}
For any (weakly) marked surface $(\Sigma, F)$,
\[
\widehat{CS}(\Sigma, F) \cong \X(\Sigma) \otimes_{\Z_2} \widehat{CS}^O (\Sigma, F).
\]
\qed
\end{lem}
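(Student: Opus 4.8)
The plan is to exhibit an explicit $\Z_2$-linear isomorphism
\[
\Phi \; : \; \X(\Sigma) \otimes_{\Z_2} \widehat{CS}^O(\Sigma, F) \To \widehat{CS}(\Sigma, F),
\]
and then observe that it is a map of $\X(\Sigma)$-modules, using the module structure on the left given by acting on the first factor. The map $\Phi$ is the obvious one: on basis elements $x \otimes s^O$, where $x$ ranges over free homotopy classes of closed multicurves on $\Sigma$ with no contractible components (a $\Z_2$-basis of $\X(\Sigma)$) and $s^O$ ranges over $\S^O(\Sigma, F)$ (a $\Z_2$-basis of $\widehat{CS}^O(\Sigma, F)$), set $\Phi(x \otimes s^O) = x s^O$, the string diagram obtained by taking the disjoint union of the curves of $x$ and $s^O$, regarded as an element of $\widehat{CS}(\Sigma, F)$.

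The verification breaks into three steps. First, I would check $\Phi$ is well-defined: $x s^O$ is a genuine string diagram on $(\Sigma, F)$ (the open strings supply exactly the endpoints in $F$, the closed strings of $x$ are disjoint from $F$), and since $x$ has no contractible components and $s^O$ has no closed components at all, $x s^O$ contains no contractible closed curve, hence is a nonzero generator of $\widehat{CS}(\Sigma, F)$; extending $\Z_2$-bilinearly over the tensor product gives a well-defined linear map. Second, I would show $\Phi$ is bijective on bases. This is exactly the unique-decomposition statement already established in the paragraph preceding the lemma: every generator $s$ of $\widehat{CS}(\Sigma, F)$ — i.e. every homotopy class of string diagram with no contractible closed curve — decomposes uniquely as $s = x s^O$ with $x \in \X(\Sigma)$ a closed multicurve without contractible components and $s^O \in \widehat{CS}^O(\Sigma, F)$ an open string diagram, obtained simply by separating the closed-string components from the open-string components of a representative. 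Uniqueness gives injectivity of $\Phi$ on basis elements; existence of the decomposition gives surjectivity onto the basis. Hence $\Phi$ carries a $\Z_2$-basis bijectively to a $\Z_2$-basis and is an isomorphism of vector spaces. Third, for the module structure: the $\X(\Sigma)$-action on $\widehat{CS}(\Sigma, F)$ is $x_1 \cdot (x s^O) = (x_1 x) s^O$, which is precisely $\Phi(x_1 \cdot (x \otimes s^O))$ since $x_1 \cdot (x \otimes s^O) = (x_1 x) \otimes s^O$; compatibility with products in $\X(\Sigma)$ was already noted ($(x_1 x_2) s^O = x_1(x_2 s^O)$, following from associativity of disjoint union). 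So $\Phi$ is an $\X(\Sigma)$-module isomorphism, and in particular the stated vector-space isomorphism holds.

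The only genuine content — and thus the "hard part" — is the unique-decomposition claim, that a homotopy class of string diagram splits uniquely into a closed-multicurve part and an open part. This is really a statement about homotopy classes rather than individual immersions: one must argue that a homotopy of $s$ rel endpoints can be taken to respect the partition into components, so that the free homotopy class of the closed part and the (rel-endpoint) homotopy class of the open part are well-defined invariants of $[s]$. This is exactly the point flagged in the surrounding text ("Moreover this $x$ and $s^O$ are unique"), and since the excerpt permits me to assume results stated earlier, I would cite that discussion directly rather than re-prove it; everything else in the argument is bookkeeping with free bases. $\qed$
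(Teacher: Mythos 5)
Your proposal is correct and follows essentially the same route as the paper: the lemma is stated there with no separate proof precisely because it is the immediate consequence of the unique decomposition $s = x s^O$ established in the preceding paragraph (which also notes that $\widehat{CS}(\Sigma,F)$ is a free $\X(\Sigma)$-module with basis $\S^O(\Sigma,F)$ and that $\widehat{CS}^O(\Sigma,F)$ is freely generated by the same set). Your identification of the unique-decomposition claim as the only genuine content, and your decision to cite the surrounding discussion for it, matches the paper exactly.
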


In this paper we only need to take $\Sigma = \An$. In this case $\pi_0 (\Lambda \An) \cong \pi_1 (\An) \cong \Conj \pi_1 (\An) \cong \Z$, as the fundamental group is abelian. For any integer $n$, let us write $x_n$ for the free homotopy class of a closed curve in $\An$ which runs $n$ times around the core of the annulus, i.e. corresponding to $n \in \Z \cong \pi_1 (\An)$. So $\tilde{\X}(\An) = \Z_2 [ \ldots, x_{-2}, x_{-1}, x_0, x_1, x_2, \ldots ]$ is the free polynomial algebra in $x_n$, over all integers $n$. 

The only homotopy class of closed curve on $\An$ which is contractible is $x_0$, corresponding to $0 \in \Z \cong \pi_1 (\An)$. So $\X(\An)$ is the quotient of $\tilde{\X}(\An)$ by the principal ideal generated by $x_0$; this corresponds to setting $x_0 = 0$. As a $\Z_2$-algebra, we thus have
\[
\X(\An) \cong \Z_2 [\ldots, x_{-2}, x_{-1}, x_1, x_2, \ldots],
\]
i.e. the free polynomial algebra in the infinitely many indeterminates $x_n$, over all integers $n \neq 0$. This ring occurs so frequently in the sequel that we simply write $\X$ for $\X(\An)$.

Applying the above to $\An$, we immediately have the following, for integers $m,n \geq 0$.
\begin{align*}
\widehat{CS}(\An, \emptyset) &\cong \X \cong \Z_2[\ldots, x_{-2}, x_{-1}, x_1, x_2, \ldots] \\
\widehat{CS}(\An, F_{2m,2n}) &\cong \X \otimes_{\Z_2} \widehat{CS}^O (\An, F_{2m,2n}) \\
&\cong \Z_2 [\ldots, x_{-2}, x_{-1}, x_1, x_2, \ldots] \otimes_{\Z_2} \widehat{CS}^O (A, F_{2m,2n}).
\end{align*}

\subsection{Goldman bracket, Leibniz rule and differential algebra}
\label{sec:differential_algebra}

For a marked surface with no marked points $(\Sigma, \emptyset)$, we have seen that $\widehat{CS}(\Sigma, \emptyset) \cong \X(\Sigma)$ is naturally both a ring, and a chain complex. Thus it appears to be a \emph{differential algebra} (indeed, a \emph{differential graded algebra} with respect to an intersection grading). However, in general the differential does not obey the Leibniz rule.  We now explain why.

Suppose we have two closed curves on $\Sigma$, with homotopy classes $x, y \in \X(\Sigma) \cong \widehat{CS}(\Sigma, \emptyset)$. Then $xy$ is the disjoint union of these two curves, and $\partial(xy)$ is obtained by resolving the intersection points of $xy$. These intersection points may be of three types: (i) self-intersections of $x$; (ii) self-intersections of $y$; and (iii) intersections of $x$ with $y$. Resolving the intersections of the first and second types gives $(\partial x) y$ and $x (\partial y)$ respectively. So the Leibniz rule $\partial(xy) = (\partial x) y + x (\partial y)$ holds if and only if there are no intersections of the third type, i.e. $x$ and $y$ are disjoint, or at least their resolutions cancel mod $2$.

Thus $\X(\Sigma) \cong \widehat{CS}(\Sigma, \emptyset)$ is a differential algebra if any closed curve in $\Sigma$ can be homotoped to be separate from any other; this occurs precisely when $\Sigma$ is a union of discs and annuli. In this case the homology $H(\X(\Sigma)) \cong \widehat{HS}(\Sigma, \emptyset)$ has the structure of a $\Z_2$-algebra.

The deviation of $\partial$ from obeying the Leibniz rule is thus measured by the intersection of distinct closed curves. To this end we recall the \emph{Goldman bracket} \cite{Goldman86}, which is in fact part of the original motivation for our differential.

\begin{defn}
Let $s,s'$ be two immersed oriented curves on $\Sigma$ intersecting transversely. Their \emph{Goldman bracket} is given by resolving the crossings of $s$ and $s'$:
\[
[s,s'] = \sum_{x \in s \cap s'} r_x (ss').
\]
\end{defn}
The proofs of lemmas 4.1 and 4.2 of \cite{Mathews_Schoenfeld12_string} apply immediately to show that the bracket is well-defined, and the homotopy class of the result depends only on the homotopy class of $s$ and $s'$. Note that the Goldman bracket applies to any immersed oriented \emph{curves}, not just closed curves, and is in fact well-defined on \emph{multicurves}.

Restricting to closed multicurves, the Goldman bracket gives an operation
\[
[\cdot, \cdot] \; : \; \X(\Sigma) \otimes \X(\Sigma) \To \X(\Sigma).
\]

Working over $\Z_2$, the Goldman bracket is both antisymmetric and symmetric. We have the following identity, for any multicurves $x$ and $y$.
\begin{equation}
\label{eqn:general_product_rule}
\partial(xy) = (\partial x) y + x (\partial y) + [x,y]
\end{equation}
This equation expresses precisely that the Goldman bracket is an obstruction to $\partial$ satisfying the Leibniz rule.

Turning to nonempty markings $F$, we have seen that $\widehat{CS}(\Sigma,F) \cong \X(\Sigma) \otimes \widehat{CS}^O (\Sigma,F)$ is a free $\X(\Sigma)$-module. We may write a (homotopy class of) string diagram $s$ on $(\Sigma,F)$ as $x s^O$, where $x \in \X(\Sigma)$ is a (homotopy class of) closed multicurve and $s^O \in \S^O(\Sigma,F)$. We have
\begin{equation}
\label{eqn:product_rule_open_closed}
\partial (x s^O) = (\partial x) s^O + x (\partial s^O) + [x, s^O],
\end{equation}
where $\partial x$ denotes the differential in $\X(\Sigma)$. Thus, if the Leibniz rule is satisfied, then $[x, s^O]=0$ for every closed curve $x$ and every open string diagram $s^O$ on $(\Sigma, F)$. This implies that for connected $(\Sigma,F)$ we have $\Sigma$ a disc, or $\Sigma = \An$ and all points of $F$ are on the same boundary component of $\An$. In fact in these cases $\X(\Sigma)$ is a differential algebra, and the Leibniz rule is satisfied, so that $\widehat{CS}(\Sigma,F)$ is a \emph{differential $\X(\Sigma)$-module}. It follows that the homology $\widehat{HS}(\Sigma,F)$ is an $H(\X(\Sigma))$-module.

When $\Sigma = \An$, we denote the alternating marking points consisting of $2m, 2n \geq 0$ points on the respective boundary components by $F_{2m,2n}$. We then have the following.
\begin{prop}
The $\X$-module $\widehat{CS}(\An, F_{2m,2n})$ is a differential $\X$-module if and only if $m=0$ or $n=0$. In these cases $\widehat{HS}(\An, F_{2m,2n})$ is an $H(\X)$-module.
\qed
\end{prop}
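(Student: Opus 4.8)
The plan is to reduce the statement to the Leibniz-rule analysis already carried out in this section, applied to the specific surface $\An$. The key observation is equation \eqref{eqn:product_rule_open_closed}: for a closed multicurve $x \in \X$ and an open string diagram $s^O \in \S^O(\An, F_{2m,2n})$, we have $\partial(x s^O) = (\partial x) s^O + x(\partial s^O) + [x, s^O]$, so $\widehat{CS}(\An, F_{2m,2n})$ is a differential $\X$-module precisely when the Goldman bracket term $[x, s^O]$ vanishes for all such $x$ and $s^O$ (up to mod-$2$ cancellation of resolutions). So the whole proposition comes down to deciding, as a purely topological matter on the annulus, when every closed curve can be homotoped off every open string, and conversely exhibiting a surviving crossing when it cannot.

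First I would treat the easy direction. If $m = 0$ or $n = 0$, all marked points of $F_{2m,2n}$ lie on a single boundary component of $\An$. An open string then runs between two points of the same boundary component, so (being an arc with both endpoints on one side) it is homotopic rel endpoints into a collar neighbourhood of that boundary component, hence disjoint from the core; more generally any open string diagram can be homotoped into such a collar. A closed curve $x_n$ ($n \neq 0$) is homotopic to an $n$-fold cover of the core, which lies in the complementary region, so $x$ and $s^O$ are disjoint and $[x, s^O] = 0$. This is exactly the case already flagged in the text ("$\Sigma = \An$ and all points of $F$ are on the same boundary component"), so I would simply invoke that discussion: $\X$ is a differential algebra here, the Leibniz rule holds, $\widehat{CS}(\An, F_{2m,2n})$ is a differential $\X$-module, and therefore $\widehat{HS}(\An, F_{2m,2n})$ is an $H(\X)$-module by passing to homology.

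For the converse — $m \geq 1$ and $n \geq 1$ — I would produce an explicit witness to failure of the Leibniz rule. Take $x = x_1$, the core curve, and let $s^O$ be a string diagram containing a single open string $\gamma$ running from a marked point on one boundary component of $\An$ to a marked point on the other (a "traversing" string, in the terminology of Theorem~\ref{thm:22_annulus_description}). Such a $\gamma$ has nonzero algebraic intersection number with the core, so it cannot be homotoped off $x_1$; placing $x_1$ and $\gamma$ transversely, they meet in a single point (realising the geometric intersection number $1$), and this crossing survives. Resolving it yields a single string diagram, so $[x_1, s^O]$ is a single nonzero generator of $\widehat{CS}(\An, F_{2m,2n})$ — in particular $\partial(x_1 s^O) \neq (\partial x_1) s^O + x_1 (\partial s^O)$, so the Leibniz rule fails and the module is not a differential $\X$-module. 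The main obstacle, such as it is, is making the "single geometric intersection point, surviving after resolution" claim airtight: one must check that a minimal-position representative of $x_1$ and $\gamma$ really meets in exactly one point (standard, since the algebraic intersection number is $\pm 1$ and geometric intersection number is bounded below by its absolute value), and that the lone resolved diagram is not homotopic to one with a contractible closed curve (it is not: resolving merges the core and $\gamma$ into a single arc traversing the annulus, with no closed components at all). With that in hand the proposition follows.
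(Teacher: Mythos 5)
Your overall route is the same as the paper's: the proposition is exactly the statement that the Leibniz rule holds iff the Goldman bracket $[x,s^O]$ vanishes for all closed multicurves $x$ and open string diagrams $s^O$ (equation \eqref{eqn:product_rule_open_closed}), the vanishing when $m=0$ or $n=0$ coming from homotoping all open strings into a collar of the marked boundary component, and the failure otherwise coming from a traversing string meeting the core. The ``if'' direction is fine as you state it.

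There is, however, a concrete slip in your witness for the ``only if'' direction. On $(\An,F_{2m,2n})$ with $m,n\geq 1$ there is \emph{no} string diagram containing a single traversing open string: matching ``in'' and ``out'' endpoints on each boundary component forces the number of traversing strings to be even (this is why the paper's traversing diagrams always come in pairs $c_i d_j$). Consequently your claim that $x_1$ and $s^O$ meet in exactly one point and that $[x_1,s^O]$ is a single generator is false as stated --- the core meets \emph{each} traversing string once, so for the minimal witness $s^O$ containing the pair $c_i,d_j$ (with all other open strings insular and pushed off the core) one gets $[x_1,s^O]=(c_{i+1}d_j+c_i d_{j+1})\cdot(\text{insular part})$. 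This is still a sum of two distinct basis elements and hence nonzero, so the conclusion survives, but the justification needs to account for all crossings of $x_1$ with $s^O$, not just with one chosen string. With that one-line repair the argument matches the paper's.
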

This proves proposition \ref{prop:HX-modules}

In the next section we compute $H(\X)$.

\section{Strings on annuli with no marked points}
\label{sec:annuli_no_marked_points}

We now turn our attention to the marked surface $(\An, \emptyset)$. From section \ref{sec:algebraic_structure} above, we have, as a $\Z_2$-algebra
\[
\widehat{CS}(\An, \emptyset) \cong \X \cong \Z_2[\ldots, x_{-2}, x_{-1}, x_1, x_2, \ldots].
\]

\subsection{Description of the differential}

We first compute the differential. As discussed in section \ref{sec:differential_algebra}, the Goldman bracket vanishes on $(\An, \emptyset)$ so the Leibniz rule is satisfied and $\X$ is a differential algebra.
\begin{lem}
\label{lem:differential_no_marked_pts}
The differential $\partial$ on $\widehat{CS}(A, \emptyset)$ satisfies the following properties.
\begin{enumerate}
\item
For each integer $k \neq 0$,
\[
\partial x_{2k} = x_k^2.
\]
\item
For each integer $k$,
\[
\partial x_{2k+1} = 0.
\]
\end{enumerate}
\end{lem}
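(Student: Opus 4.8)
The plan is to understand precisely the minimal-position representative of the closed curve $x_n$ on the annulus $\An$ and count its self-intersections. Represent $\An$ as $S^1 \times [0,1]$, with the core being $S^1 \times \{1/2\}$. A curve representing $x_n$ runs $n$ times around the annulus; drawn as a "$(1,n)$-torus-curve style" arc it can be taken to have exactly $|n|-1$ transverse double self-intersections when $n\neq 0$ (for $n = \pm 1$ it is embedded, with zero crossings), arranged in a row along the annulus. This is the standard minimal self-intersection count for a primitive-multiple curve of winding number $n$; I would either cite the minimal position facts for curves on surfaces or simply exhibit the explicit diagram. So $\partial x_n$ is the sum over these $|n|-1$ crossings of the resolution $r_x(x_n)$.

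The key computation is then: what does resolving a single crossing of this diagram produce? Orienting the curve consistently, resolving any one of the $|n|-1$ crossings of the standard diagram for $x_n$ splits it into two closed curves whose winding numbers add to $n$. I would check, from the explicit "staircase" picture, that resolving the $j$-th crossing yields the multicurve $x_j x_{n-j}$ — or more carefully, that the $|n|-1$ resolutions, listed in order, give $x_1 x_{n-1}, x_2 x_{n-2}, \ldots, x_{n-1} x_1$ (up to relabelling and the sign/orientation bookkeeping, which is irrelevant over $\Z_2$). Hence, over $\Z_2$,
\[
\partial x_n = \sum_{j=1}^{n-1} x_j x_{n-j} \qquad (n > 0),
\]
with the analogous statement for $n<0$, and $\partial x_{\pm 1} = 0$, $\partial x_0 = 0$ (but $x_0 = 0$ anyway in $\X$). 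The main obstacle is getting this resolution bookkeeping exactly right: one must fix a specific diagram for $x_n$, orient it, and verify that resolving the crossings really does produce $x_j x_{n-j}$ and not, say, a connected curve $x_n$ again or a diagram with a contractible component. One should also double-check that, mod $2$, pairs of equal terms $x_j x_{n-j}$ and $x_{n-j} x_j$ cancel where they coincide; this is exactly what makes the odd case vanish.

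With this general formula in hand, both parts follow by substituting $n = 2k$ and $n = 2k+1$ and cancelling mod $2$. For part (i),
\[
\partial x_{2k} = \sum_{j=1}^{2k-1} x_j x_{2k-j}.
\]
In this sum, the term with $j$ and the term with $2k-j$ are equal, so all terms with $j \neq k$ cancel in pairs over $\Z_2$, leaving only the middle term $j = k$, namely $x_k^2$. (For $k<0$ the same cancellation applies to the corresponding negatively-wound diagram, again leaving $x_k^2$; note $x_k = x_{-|k|}$ is a genuine nonzero generator since $k \neq 0$.) For part (ii),
\[
\partial x_{2k+1} = \sum_{j=1}^{2k} x_j x_{2k+1-j},
\]
and now there is no fixed point: $j \mapsto 2k+1-j$ is a free involution on $\{1,\ldots,2k\}$, so every term is matched with a distinct equal term and the whole sum cancels to $0$ over $\Z_2$. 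For $k=0$ we have $\partial x_1 = 0$ directly since $x_1$ has an embedded representative with no crossings, and likewise $\partial x_{-1}=0$. This completes the proof. The Leibniz rule established in section~\ref{sec:differential_algebra} is not needed here, but serves as a consistency check: $\partial(x_k x_k) = 0$ by the Leibniz rule over $\Z_2$, consistent with $\partial x_{2k}$ being a "pure square" that is itself a cycle.
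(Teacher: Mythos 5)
Your proposal is correct and follows essentially the same route as the paper: draw $x_n$ with $|n|-1$ self-intersections, observe that resolving each crossing splits it into $x_j x_{n-j}$ with both windings of the same sign summing to $n$, and then cancel the resulting sum $\sum_{j=1}^{n-1} x_j x_{n-j}$ in pairs mod $2$, leaving $x_k^2$ when $n=2k$ and $0$ when $n$ is odd. The extra care you take over the diagram bookkeeping and the $n<0$ case is fine but adds nothing beyond the paper's argument.
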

These two properties together with the Leibniz rule define $\partial$ completely. As $\partial$ is $\Z_2$-linear, it suffices to define it on (homotopy classes of) string diagrams. Individual strings are dealt with by (i) and (ii) above, and where there are several curves, they are resolved individually via the Leibniz rule.

\begin{proof}
Consider a closed string of homotopy class $n > 0$, represented by $x_n \in \X$. We may draw this loop so that it has $n-1$ self-intersections. Resolving any of these crossings splits the loop into two loops whose homotopy classes are positive and sum to $n$; thus we have
\[
\partial x_n = \sum_{i=1}^{n-1} x_i x_{n-i}.
\]
When $n$ is odd, the number of terms in this sum is even, and terms cancel in pairs mod $2$, so $\partial x_n = 0$. When $n$ is even, $n = 2k$, we have an odd number of terms, and again terms cancel in pairs, except for the ``middle" term $x_k^2$. A similar argument calculates $\partial x_n$ when $n < 0$.
\end{proof}

Thus, computing $\widehat{HS}(A, \emptyset)$ amounts to computing the homology of the polynomial algebra $\X \cong \Z_2 [\ldots, x_{-2}, x_{-1}, x_1, x_2, \ldots]$ with respect to the differential $\partial$ defined by $\partial x_{2k} = x_k^2$, $\partial x_{2k+1} = 0$ and the Leibniz rule.

This description of $\partial$ alone is enough to understand some aspects of the ``fermionic" nature of $\widehat{HS}(A, \emptyset)$. Firstly, as $\partial x_{2k+1} = 0$, but $\partial x_{2k} \neq 0$, only ``odd-spin" strings have homology classes. And secondly, since $\partial x_{2k} = x_j^2$, in homology we have $x_j^2 = 0$; so a loop which appears twice disappears in homology --- reminiscent of the Pauli exclusion principle.

\subsection{Tensor decomposition over odd integers}
\label{sec:odd_integer_decomposition}

Our computation of $\widehat{HS}(A, \emptyset)$ is based on a tensor decomposition of the complex $\widehat{CS}(A, \emptyset)$. We can partition the nonzero integers into subsets $Z_j$, for each odd integer $j$, as follows:
\[
Z_j = \{ j, 2j, 4j, \ldots \} = \{ j \cdot 2^k \; : \; 0 \leq k \in \Z \}.
\]
Obviously any nonzero integer can be written uniquely as $j \cdot 2^k$ where $j \in \Z$ is odd and $0 \leq k \in \Z$, so
\[
\Z \backslash \{0\} = \bigsqcup_{j \text{ odd}} Z_j.
\]

Now $\X$ is a free $\Z_2$-algebra on the $x_n$ over $n \in \Z \backslash \{0\}$. Hence we can define, for each odd $j$,
\[
\X_j = \Z_2 [ \{ x_n \; : \; n \in Z_j \} ] = \Z_2 [x_j, x_{2j}, x_{4j}, \ldots].
\]
That is, $\X_j$ is the polynomial algebra in the $x_n$, over all $n \in Z_j$. Corresponding to the decomposition of $\Z \backslash \{0\}$ into the $Z_j$, we have a tensor decomposition of the $\Z_2$-vector space $\X$:
\[
\X = \bigotimes_{j \text{ odd}} \X_j.
\]
Further, since the differential acts on generators by $x_{2k} \mapsto x_k^2$ and $x_{2k+1} \mapsto 0$, each $\X_j$ is a subcomplex of $\X$; indeed, each $\X_j$ is a differential sub-algebra of $\X$. Since $\partial$ obeys the Leibniz rule, we in fact have a tensor decomposition of chain complexes and differential algebras:
\[
(\X, \partial) \cong \bigotimes_{j \text{ odd}} (\X_j, \partial).
\]

Moreover, all the $\X_j$ are all \emph{isomorphic} differential algebras. We will define a standard differential algebra $\Y$ and show each $\X_j$ is isomorphic to $\Y$.
\begin{defn}
The differential algebra $\Y$ is the polynomial algebra $\Z_2 [y_0, y_1, \ldots]$ in infinitely many indeterminates $y_i$, over all integers $i \geq 0$. The differential on $\Y$ is defined by the Leibniz rule and
\[
\partial y_i = \left\{ \begin{array}{ll} y_{i-1}^2 & i \geq 1 \\ 0 & i = 0. \end{array} \right.
\]
\end{defn}

Thus, the differential on $\Y$ sends each indeterminate $y_i$ ``down" to $y_{i-1}$, squared; and sends the ``lowest" indeterminate $y_0$ to $0$. Similarly, the differential on $\X_j$ sends each indeterminate $x_{j \cdot 2^k}$ ``down" to $x_{j \cdot 2^{k-1}}$, squared, and sends $x_j \mapsto 0$. The following lemma is then immediate.
\begin{lem}
For any odd $j$, $x_{j \cdot 2^k} \mapsto y_k$ induces an isomorphism of differential algebras $\X_j \cong \Y$.
\qed
\end{lem}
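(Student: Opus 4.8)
The plan is to check two things: that the assignment $x_{j\cdot 2^k}\mapsto y_k$ extends to an isomorphism of $\Z_2$-algebras, and that this isomorphism is a chain map. Both are bookkeeping, and together they give the asserted isomorphism of differential algebras.

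First I would use the freeness of polynomial algebras. By construction $\X_j = \Z_2[x_j, x_{2j}, x_{4j}, \ldots]$ is the free commutative $\Z_2$-algebra on the countable indexed family $\{x_{j\cdot 2^k} : k\geq 0\}$, and $\Y = \Z_2[y_0,y_1,\ldots]$ is the free commutative $\Z_2$-algebra on $\{y_k : k\geq 0\}$. Since $k\mapsto k$ is a bijection between the two index sets, sending $x_{j\cdot 2^k}\mapsto y_k$ is a bijection of generating sets, and the universal property of polynomial algebras extends it uniquely to an algebra homomorphism $\phi_j\colon\X_j\to\Y$; the assignment $y_k\mapsto x_{j\cdot 2^k}$ induces a two-sided inverse, so $\phi_j$ is an isomorphism of $\Z_2$-algebras.

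Next I would verify $\phi_j\circ\partial = \partial\circ\phi_j$. The differential on $\X_j$ is the restriction of the differential on $\X\cong\widehat{CS}(\An,\emptyset)$, which obeys the Leibniz rule (as discussed in section \ref{sec:differential_algebra}, since $\An$ is an annulus), and the differential on $\Y$ obeys the Leibniz rule by definition; since $\phi_j$ is multiplicative, it therefore suffices to check the intertwining identity on generators. By Lemma \ref{lem:differential_no_marked_pts}, writing $j\cdot 2^k = 2\,(j\cdot 2^{k-1})$ for $k\geq 1$ we have $\partial x_{j\cdot 2^k} = x_{j\cdot 2^{k-1}}^2$, while $\partial x_j = 0$ because $j$ is odd. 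Thus for $k\geq 1$, $\phi_j(\partial x_{j\cdot 2^k}) = \phi_j(x_{j\cdot 2^{k-1}})^2 = y_{k-1}^2 = \partial y_k = \partial\phi_j(x_{j\cdot 2^k})$, and for $k=0$, $\phi_j(\partial x_j) = 0 = \partial y_0 = \partial\phi_j(x_j)$. Extending by the Leibniz rule (and $\Z_2$-linearity) to arbitrary monomials and then to all of $\X_j$ gives $\phi_j\circ\partial=\partial\circ\phi_j$, so $\phi_j$ is an isomorphism of differential algebras.

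There is essentially no obstacle here; the only point needing a moment's attention is justifying the reduction of the chain-map condition to generators, i.e. invoking that $\partial$ on $\X$ satisfies the Leibniz rule, which is exactly the content recalled in section \ref{sec:differential_algebra} and used to obtain the tensor decomposition $(\X,\partial)\cong\bigotimes_{j\text{ odd}}(\X_j,\partial)$ in the first place.
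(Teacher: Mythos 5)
Your proof is correct and takes the same route the paper indicates; the paper simply calls the lemma ``immediate'' after observing that both $\X_j$ and $\Y$ are free polynomial algebras and that the differential acts identically on generators ($x_{j\cdot 2^k}\mapsto x_{j\cdot 2^{k-1}}^2$ with $x_j\mapsto 0$, matching $y_k\mapsto y_{k-1}^2$ with $y_0\mapsto 0$) and then extends by the Leibniz rule. You have spelled out this bookkeeping carefully, including the appeal to the universal property of polynomial algebras and the reduction of the chain-map check to generators, but there is no substantive difference in method.
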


The computation of $H(\X) \cong \widehat{HS}(\An, \emptyset)$ is now essentially reduced to computing the homology $H(\Y)$ of $\Y$. In the next sections \ref{sec:decay_chains}--\ref{sec:Weyl_hierarchy} we compute $H(\Y)$ by decomposing $\Y$ in various ways.

\subsection{Homology as decay chain}
\label{sec:decay_chains}

For some intuition in computing $H(\Y)$, we can think of the indeterminates $y_k$ of $\Y$ as describing ``particles", and the differential as describing a ``binary decay cascade"
\[
\cdots \mapsto y_k \mapsto y_{k-1} \mapsto \cdots \mapsto y_1 \mapsto y_0 \mapsto 0,
\]
where each particle decays into two of the subsequent particle, $\partial y_k = y_{k-1}^2$. The differential gives the possible decays of a collection of particles.

As a $\Z_2$-vector space, $\Y$ is free with basis the monomials $y_0^{e_0} y_1^{e_1} \cdots y_{n-1}^{e_{n-1}}$, over all positive integers $n$ and all $n$-tuples $(e_0, e_1, \ldots, e_{n-1})$ of integers $e_1, e_2, \ldots, e_n \geq 0$. As a shorthand we can write $e = (e_0, e_1, \ldots, e_{n-1})$ as the vector of exponents and $y^e = y_0^{e_0} \cdots y_{n-1}^{e_{n-1}}$.

We decompose the differential $\partial$ into ``decay" operators $\alpha_k$, which send $x_k \mapsto x_{k-1}^2$. 
\begin{defn}
For each positive integer $k$, the $k$'th \emph{decay} operator $\alpha_k: \Y \To \Y$ is defined by linearity, the Leibniz rule and 
\[
\alpha_k y_k = y_{k-1}^2, \quad \alpha_k y_i = 0 \text{ for $i \neq k$}.
\]

We define $\alpha_0 = 0$.
\end{defn}

The differential $\partial$ on $\Y$ is then the sum of the $\alpha_k$, $\partial = \sum_{k \geq 0} \alpha_k$.

We next investigate the $\alpha_k$ more closely. For $k \geq 1$, the effect of $\alpha_k$ on a monomial $y_i^{e_i}$ is given by
\[
\alpha_k \left( y_k^{e_k} \right) = e_k y_{k-1}^2 y_k^{e_k - 1},
\quad
\alpha_k \left( y_i^{e_i} \right) = 0 \text{ for } i \neq k,
\]
and on a general monomial $y^e$ is given by
\begin{align*}
\alpha_k y^e &
= \alpha_k \left( \prod_{i=0}^{n-1} y_i^{e_i} \right) 
= \left( \prod_{i \neq k} y_i^{e_i} \right) \alpha_k \left( y_k^{e_k} \right) 
= \left( \prod_{i \neq k} y_i^{e_i} \right) e_k y_{k-1}^2 y_k^{e_k - 1} \\
&= e_k y_0^{e_0} \cdots y_{k-1}^{e_{k-1}+2} y_k^{e_k - 1} \cdots y_{n-1}^{e_{n-1}} 
\end{align*}

Abusing notation, we can write this as
\[
\alpha_k y^e = e_k y_{k-1}^2 y_k^{-1} y^e.
\]
Even though $y_k^{-1}$ is obviously not in the polynomial ring $\Y$, we note that if $y_k$ appears with exponent $e_k = 0$ then the factor of $e_k$ results in $0$; and otherwise $e_k \geq 1$, so that $y_k^{-1} y^e$ is a monomial in $\Y$. The exponent of $y_k$ is reduced by one, and the exponent of $y_{k-1}$ is increased by two. We will repeat this abusive, but well-defined, notation throughout the paper as an efficient shorthand.

\begin{lem}
For all $k$, $\alpha_k^2 = 0$.
\end{lem}

\begin{proof}
Obviously $\alpha_0^2 = 0$ since $\alpha_0 = 0$, so assume $k \geq 1$. On a monomial $y^e$ we have $\alpha_k y^e = e_k y_{k-1}^2 y_k^{-1} y^e$ so (with the same abuse of notation)
\[
\alpha_k^2 y^e = \alpha_k \left( e_k y_{k-1}^2 y_k^{-1} y^e \right) = e_k (e_k - 1) y_{k-1}^4 y_k^{-2} x^e.
\]
When $e_k = 0$ or $1$ we have $e_k = 0$ or $e_k - 1 = 0$; if $e_k \geq 2$ then the above monomial makes sense. In any case, mod $2$ we have $e_k (e_k - 1) = 0$ so $\alpha_k^2 = 0$.
\end{proof}


We can also consider two distinct $\alpha_i, \alpha_j$. It is not difficult to show they commute.

\begin{lem}
Let $0 \leq i < j$ be integers. Then $\alpha_i \alpha_j = \alpha_j \alpha_i$.
\end{lem}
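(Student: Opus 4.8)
The plan is to verify $\alpha_i\alpha_j=\alpha_j\alpha_i$ by a direct computation on monomials, using the shorthand established just above. Both operators are $\Z_2$-linear and $\Y$ is spanned as a $\Z_2$-vector space by the monomials $y^e$, so it suffices to check the identity on an arbitrary $y^e=y_0^{e_0}\cdots y_{n-1}^{e_{n-1}}$. If either $i=0$ or $j=0$ the claim is trivial since $\alpha_0=0$, so assume $1\le i<j$.

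First I would record the single-operator formula $\alpha_k y^e = e_k\, y_{k-1}^2 y_k^{-1} y^e$, with the convention (already in force in the excerpt) that the prefactor $e_k$ kills the term when $e_k=0$ and otherwise $y_k^{-1}y^e$ is a genuine monomial. Then I would compute $\alpha_j\alpha_i y^e$: applying $\alpha_i$ changes the exponent vector $e$ to $e' = e - \mathbf{e}_i + 2\mathbf{e}_{i-1}$ (where $\mathbf{e}_k$ is the $k$-th standard basis vector), picking up a scalar $e_i$; then applying $\alpha_j$ picks up a scalar $e'_j = e_j$, since $j\ne i$ and $j\ne i-1$ (here $i<j$ forces $j\ge i+1>i-1$), so the $j$-th exponent is untouched by the first step. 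Thus $\alpha_j\alpha_i y^e = e_i e_j\, y_{i-1}^2 y_i^{-1} y_{j-1}^2 y_j^{-1} y^e$. Symmetrically, computing $\alpha_i\alpha_j y^e$: applying $\alpha_j$ first changes $e$ to $e-\mathbf{e}_j+2\mathbf{e}_{j-1}$ with scalar $e_j$; then $\alpha_i$ sees the $i$-th exponent unchanged provided $i\ne j$ and $i\ne j-1$. The one case needing care is $i=j-1$: then the first step $\alpha_j$ raises the $i$-th exponent from $e_i$ to $e_i+2$, so $\alpha_i\alpha_j y^e = e_j(e_i+2)\,y_{i-1}^2 y_i^{-1} y_{j-1}^2 y_j^{-1} y^e$; but $e_i+2\equiv e_i \pmod 2$, so over $\Z_2$ the scalar is again $e_i e_j$, and the monomial part is the same disjoint reshuffling of exponents. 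Hence both composites equal $e_i e_j\, y_{i-1}^2 y_i^{-1} y_{j-1}^2 y_j^{-1} y^e$ in $\Y$, and the lemma follows.

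I do not expect any serious obstacle here — the only subtlety is the adjacency case $j=i+1$, where the two decay steps interact on index $i=j-1$, and even there the interaction only shifts an exponent by $2$, which is invisible mod $2$. One should also double-check that all four "negative exponent" abuses are legitimate: whenever a prefactor $e_i$ or $e_j$ is nonzero we genuinely have $e_i\ge 1$ or $e_j\ge 1$, so $y_i^{-1}y^e$ or $y_j^{-1}y^e$ is an honest monomial, and since $i\ne j$ the two lowering operations act on distinct variables and do not collide. Thus writing out the chain of equalities in a single \texttt{align*} display, with a parenthetical remark handling $i=j-1$, completes the argument.
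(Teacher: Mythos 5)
Your proposal is correct and follows the same route as the paper: a direct check on monomials using the shorthand $\alpha_k y^e = e_k\, y_{k-1}^2 y_k^{-1} y^e$, showing both composites equal $e_i e_j\, y_{i-1}^2 y_i^{-1} y_{j-1}^2 y_j^{-1} y^e$. The paper simply asserts this equality without spelling out the adjacency case $j=i+1$; your explicit observation that the exponent shift by $2$ is invisible mod $2$ is exactly the detail that makes the asserted computation go through.
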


\begin{proof}
When $i=0$ we have $\alpha_i = 0$ so the result trivially holds; we thus assume $i>0$. We can check directly on a monomial $y^e = y_0^{e_0} \cdots y_{n-1}^{e_{n-1}}$ that 
\[
\alpha_i \alpha_j y^e = \alpha_j \alpha_i y^e = e_i e_j y_{i-1}^2 y_i^{-1} y_{j-1}^2 y_j^{-1} y^e.
\]
\end{proof}

Although we already knew that $\partial^2 = 0$, we can now see it alternatively as follows:
\[
\partial^2 = \left( \sum_{i \geq 0} \alpha_i \right)^2 = \sum_{i \geq 0} \alpha_i^2 + \sum_{0 \leq i < j} \alpha_i \alpha_j + \alpha_j \alpha_i.
\]
The first sum is zero since each $\alpha_i^2=0$, and the second sum is zero since $\alpha_i, \alpha_j$ commute.

\subsection{Fusion operators}
\label{sec:creation_operators}
\label{sec:fusion_operators}

In order to compute the homology of $\Y$ and each $\Y_n$, we will define some ``fusion operators" $\alpha_k^*$, which partly undo, or are ``adjoint" to, the decay operators $\alpha_k$. A decay operator sends $y_k \mapsto y_{k-1}^2$; a fusion puts the two $y_{k-1}$'s back together into a $y_k$, sending $y_{k-1}^2 \mapsto y_k$.

\begin{defn}
For an integer $k \geq 1$ the $k$'th \emph{fusion operator} $\alpha_k^* : \Y \To \Y$ is defined on a monomial $y^e = y_0^{e_0} \cdots y_{n-1}^{e_{n-1}}$ by
\[
\alpha_k^* y^e = \left\{ \begin{array}{ll} 
0 & e_{k-1} \leq 1 \\
y_0^{e_0} \cdots y_{k-1}^{e_{k-1} - 2} y_i^{e_k + 1} \ldots y_{n-1}^{e_{n-1}} = y_{k-1}^{-2} y_k y^e  & e_{k-1} \geq 2
\end{array} \right.
\]
and extended $\Z_2$-linearly over $\Y$. We define $\alpha^*_0 = 0$.
\end{defn}
That is, $\alpha_k^*$ is multiplication by $y_{k-1}^{-2} y_k$, when this gives a polynomial, and is zero otherwise.

We now check how $\alpha_i^*$ interacts with the decay operators.

First, $\alpha_i^* \alpha_i$ sends $y_i^{e_i}$ via $\alpha_i$ to $e_i y_i^{e_i-1} y_{i-1}^2$, and then to $e_i y_i^{e_i}$; this holds even when we multiply by other powers of other variables $y_j$, which gives
\[
\alpha_i^* \alpha_i = e_i.
\]
Next, $\alpha_i \alpha_i^*$ will map to zero, if $e_{i-1} \leq 1$; if $e_{i-1} \geq 2$ then it sends $y_{i-1}^{e_{i-1}} y_i^{e_i}$ via $\alpha_i^*$ to $y_{i-1}^{e_{i-1}-2} y_i^{e_i + 1}$, and then via $\alpha_i$ to $(e_i + 1) y_{i-1}^{e_{i-1}} y_i^{e_i}$. This holds even when we multiply by other variables, giving
\[
\alpha_i \alpha_i^* = \left\{ \begin{array}{ll}
0 & e_{i-1} \leq 1 \\
e_i + 1 & e_{i-1} \geq 2
\end{array} \right.
\]
Hence we have
\[
[\alpha_i, \alpha_i^*] y^e = (\alpha_i \alpha_i^* - \alpha_i^* \alpha_i) y^e = \left\{ \begin{array}{ll}
e_i y^e & e_{i-1} \leq 1 \\
y^e & e_{i-1} \geq 2
\end{array} \right.
= \left\{ \begin{array}{ll}
y^e & e_{i-1} \geq 2 \text{ or } e_i \text{ odd} \\
0 & e_{i-1} \leq 1 \text{ and } e_i \text{ even}
\end{array} \right.
\]
This last distinction between ``$e_{i-1} \geq 2$ or $e_i$ odd", and ``$e_{i-1} \leq 1$ and $e_i$ even" is clearly mutually exclusive and covers all possibilities. Though it may seem to be an obscure distinction, it will be crucial to our computation of homology.

Next we consider the commutativity of $\alpha_i^*$ and $\alpha_j$, where $i \neq j$. When either of $i$ or $j$ is zero, we have $\alpha_i^*$ or $\alpha_j = 0$, so we assume $i,j \geq 1$.

Note $\alpha_i^*$ only affects the variables $y_i$ and $y_{i-1}$, while $\alpha_j$ affects the variables $y_j$ and $y_{j-1}$. If $|i-j| \geq 2$ then these four variables are disjoint. So we have
\[
\alpha_i^* \alpha_j y^e = \alpha_j \alpha_i^* y^e = \left\{ \begin{array}{ll} 0 & e_{i-1} \leq 1 \\ 
e_j  y_{i-1}^{-2} y_i^1 y_{j-1}^2 y_j^{-1} y^e & e_{i-1} \geq 2
\end{array} \right.
\]
In particular,
\[
[\alpha_i^*, \alpha_j] = \alpha_i^* \alpha_j - \alpha_j \alpha_i^* = 0.
\]

This only leaves the case where $i,j$ differ by $1$, and we consider the two cases separately: (i) $\alpha_i^*$ and $\alpha_{i-1}$; and (ii) $\alpha_i^*$ and $\alpha_{i+1}$.
\begin{enumerate}
\item
Consider the effect of $\alpha_i^* \alpha_{i-1}$ on a monomial $y^e$. We first compute 
$\alpha_{i-1} y^e = e_{i-1} y_{i-2}^2 y_{i-1}^{-1} y^e $, which has exponent of $y_{i-1}$ equal to $e_{i-1}-1$. If $e_{i-1} \leq 2$ then applying $\alpha_i^*$ to this gives $0$; if $e_{i-1} \geq 3$ then it gives $e_{i-1} y_{i-2}^2 y_{i-1}^{-3} y_i y^e$. That is,
\[
\alpha_i^* \alpha_{i-1} y^e = \left\{ \begin{array}{ll}
0 & e_{i-1} \leq 2 \\
e_{i-1} y_{i-2}^2 y_{i-1}^{-3} y_i y^e  & e_{i-1} \geq 3
\end{array} \right.
\]
Now consider the effect of $\alpha_{i-1} \alpha_i^*$ on $y^e$. We compute $\alpha_i^* y^e$ is zero if $e_{i-1} \leq 1$, and if $e_{i-1} \geq 2$ then $\alpha_i^* y^e = y_{i-1}^{-2} y_i y^e $. Applying $\alpha_{i-1}$ to this then gives $(e_{i-1} - 2) y_{i-2}^2 y_{i-1}^{-3} y_i y^e $. Since $e_{i-1} - 2 = e_{i-1}$ mod $2$, we have
\[
\alpha_{i-1} \alpha_i^* y^e = \left\{ \begin{array}{ll}
0 & e_{i-1} \leq 1 \\
e_{i-1} y_{i-2}^2 y_{i-1}^{-3} y_i y^e  & e_{i-1} \geq 2
\end{array} \right.
\]
Putting these together gives
\[
[\alpha_i^*, \alpha_{i-1}] y^e = 
(\alpha_i^* \alpha_{i-1} - \alpha_{i-1} \alpha_i^*) y^e = \left\{ \begin{array}{ll}
0 & e_{i-1} \leq 1 \\
2  y_{i-2}^2 y_{i-1}^{-3} y_i y^e = 0 & e_{i-1} = 2 \\
0 & e_{i-1} \geq 3 \end{array} \right\}
= 0
\]
\item
Consider the effect of $\alpha_i^* \alpha_{i+1}$ on $y^e$. We compute $\alpha_{i+1} y^e = e_{i+1}  y_i^2 y_{i+1}^{-1} y^e$. Applying $\alpha_i^*$ to this, when $e_{i-1} \leq 1$ we obtain $0$; when $e_{i-1} \geq 2$ we obtain $e_{i+1} y_{i-1}^{-2} y_i^3 y_{i+1}^{-1} y^e$, so
\[
\alpha_i^* \alpha_{i+1} y^e = \left\{ \begin{array}{ll}
0 & e_{i-1} \leq 1 \\
e_{i+1} y_{i-1}^{-2} y_i^3 y_{i+1}^{-1} y^e & e_{i-1} \geq 2
\end{array} \right.
\]
No consider the effect of $\alpha_{i+1} \alpha_i^*$ on $y^e$. We have $\alpha_i^* y^e$ is zero when $e_{i-1} \leq 1$, and when $e_{i-1} \geq 2$ it is $y_{i-1}^{-2} y_i y^e$. Applying $\alpha_{i+1}$ to this gives $e_{i+1} y_{i-1}^{-2} y_i^3 y_{i+1}^{-1} y^e$. Thus
\[
\alpha_{i+1} \alpha_i^* y^e = \left\{ \begin{array}{ll}
0 & e_{i-1} \leq 1 \\
e_{i+1} y_{i-1}^{-2} y_i^3 y_{i+1}^{-1} y^e & e_{i-1} \geq 2
\end{array} \right.
\]
\end{enumerate}
The results for the operations are the same in any order, and so we conclude
\[
[\alpha_i^*, \alpha_{i+1}] = 0.
\]

We summarise the above discussion with the following statement.
\begin{prop}
\label{prop:decay_creation_commutation}
If $i \neq j$ then $\alpha_i^*$ and $\alpha_j$ commute, i.e. $[\alpha_i^*, \alpha_j] = 0$.

On the other hand, for any $i \geq 0$, the commutator of $\alpha_i^*$ and $\alpha_i$ is
\[
[\alpha_i, \alpha_i^*] = \left\{ \begin{array}{ll}
1 & e_{i-1} \geq 2 \text{ or } e_i \text{ odd} \\
0 & e_{i-1} \leq 1 \text{ and } e_i \text{ even}
\end{array} \right.
\]
\qed
\end{prop}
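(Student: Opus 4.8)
The plan is to verify the whole statement by direct computation on the monomial basis $\{y^e\}$ of $\Y$, using throughout the abusive shorthand already set up: $\alpha_k y^e = e_k\, y_{k-1}^2 y_k^{-1} y^e$, understood to be $0$ when $e_k = 0$, and $\alpha_k^* y^e = y_{k-1}^{-2} y_k\, y^e$, understood to be $0$ when $e_{k-1} \leq 1$. Since $\alpha_0 = \alpha_0^* = 0$, any commutator involving index $0$ vanishes trivially, so in part (i) I may assume $i, j \geq 1$. (The case $i=0$ of part (ii) is likewise degenerate: $[\alpha_0,\alpha_0^*]=0$.) The natural organising principle is the distance $|i-j|$, because $\alpha_k$ and $\alpha_k^*$ each affect only the exponents $e_{k-1}$ and $e_k$.

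When $|i-j| \geq 2$ the four relevant variables $y_{i-1}, y_i, y_{j-1}, y_j$ are pairwise distinct, so $\alpha_i^*$ and $\alpha_j$ act on disjoint exponents and plainly commute: in either order one obtains $e_j\, y_{i-1}^{-2} y_i y_{j-1}^2 y_j^{-1} y^e$, subject to the single vanishing condition $e_{i-1} \geq 2$. This leaves the adjacent cases $j = i \pm 1$, which are the only genuinely interactive ones and where I expect the bookkeeping to need the most care. For $j = i+1$ one checks that $\alpha_i^*\alpha_{i+1} y^e$ and $\alpha_{i+1}\alpha_i^* y^e$ both equal $e_{i+1}\, y_{i-1}^{-2} y_i^3 y_{i+1}^{-1} y^e$ with the \emph{same} vanishing condition $e_{i-1} \leq 1$, so the commutator is identically zero. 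For $j = i-1$ the two orders agree in value, namely $e_{i-1}\, y_{i-2}^2 y_{i-1}^{-3} y_i\, y^e$, but have different thresholds: $\alpha_i^*\alpha_{i-1}$ already vanishes for $e_{i-1}\leq 2$ while $\alpha_{i-1}\alpha_i^*$ vanishes only for $e_{i-1}\leq 1$. They therefore disagree only at $e_{i-1}=2$, where the common coefficient $e_{i-1}=2$ is even, so both vanish mod $2$; hence the commutator is again zero, completing part (i).

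For part (ii), with $i=j\geq 1$, I compute the two products separately. Applying $\alpha_i$ and then $\alpha_i^*$: $\alpha_i$ raises the exponent of $y_{i-1}$ to $e_{i-1}+2\geq 2$ (or annihilates the monomial if $e_i=0$), so $\alpha_i^*$ always applies afterwards and $\alpha_i^*\alpha_i = e_i$ as a multiplication operator. Applying $\alpha_i^*$ and then $\alpha_i$: $\alpha_i^*$ vanishes unless $e_{i-1}\geq 2$, and when it does apply it raises the exponent of $y_i$ by one, so $\alpha_i\alpha_i^* = e_i+1$ when $e_{i-1}\geq 2$ and $0$ otherwise. Subtracting mod $2$: for $e_{i-1}\leq 1$ the commutator equals $e_i$, which is $1$ exactly when $e_i$ is odd; for $e_{i-1}\geq 2$ it equals $(e_i+1)+e_i = 2e_i+1$, which is $1$ mod $2$. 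A trivial check shows the clauses ``$e_{i-1}\geq 2$ or $e_i$ odd'' and ``$e_{i-1}\leq 1$ and $e_i$ even'' are mutually exclusive and exhaustive, so repackaging the computation yields exactly the stated piecewise formula.

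There is no conceptual difficulty in any of this; the one spot that demands attention is the sub-case $j=i-1$, where the two composition orders have genuinely different domains of definition and one has to observe that the discrepancy lives precisely on the locus $e_{i-1}=2$, on which the surviving coefficient is even and so vanishes over $\Z_2$.
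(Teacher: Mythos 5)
Your proposal is correct and follows essentially the same route as the paper: the paper also verifies the relations by direct computation on monomials, splitting into the cases $|i-j|\geq 2$, $j=i+1$, $j=i-1$, and $i=j$, and in the $j=i-1$ case likewise observes that the two composition orders disagree only on the locus $e_{i-1}=2$, where the coefficient $e_{i-1}$ is even and hence vanishes mod $2$. Your computation of $\alpha_i^*\alpha_i=e_i$ and $\alpha_i\alpha_i^*\in\{0,\,e_i+1\}$ and the repackaging into the stated piecewise formula also match the paper's argument exactly.
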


\subsection{A hierarchy of Weyl algebra representations}
\label{sec:Weyl_hierarchy}

We now extend the distinction in the calculation of $[\alpha_i, \alpha_i^*]$ to define a \emph{hierarchy} of \emph{levels} of monomials $y^e$ in $\Y$, depending on the values of the exponents $e_0, e_1, \ldots$. The idea is as follows:
\begin{itemize}
\item
\emph{Level $0$} consists of monomials such that $[\alpha_1, \alpha_1^*] = 1$. This means $e_0 \geq 2$ or $e_1$ is odd.
\item
\emph{Level $1$} consists of monomials such that $[\alpha_1, \alpha_1^*] = 0$ and $[\alpha_2, \alpha_2^*] = 1$. This means that $e_0 \leq 1$ and $e_1$ is even and ($e_1 \geq 2$ or $e_2$ is odd).
\item
Continuing in this fashion, \emph{level $i$} of the hierarchy, for positive integer $i$, consists of monomials such that $[\alpha_1, \alpha_1^*] = \cdots = [\alpha_i, \alpha_i^*] = 0$ but $[\alpha_{i+1}, \alpha_{i+1}^*] = 1$. This means that $e_0 \leq 1$, $e_1 = e_2 = \cdots = e_{i-1} = 0$, $e_i$ is even, and ($e_i \geq 2$ or $e_{i+1}$ is odd).
\item
\emph{Level $\infty$} of the hierarchy consists of the remaining monomials, namely those such that $e_0 \leq 1$ and $e_1 = e_2 = \cdots = 0$. These are just the monomials $1$ and $y_0$.
\end{itemize}

More formally, we define \emph{levels} $\Y^i$ of $\Y$ as follows. They are $\Z_2$-vector subspaces but not sub-algebras; they are not closed under multiplication.
\begin{defn}
The \emph{level $i$ subspace} $\Y^i$ of $\Y \cong \Z_2[y_0, y_1, \ldots]$, for an integer $i \geq 0$ or $i = \infty$, is defined as follows.
\begin{itemize}
\item
$\Y^0$ is the subspace generated by monomials $y^e$ such that $e_0 \geq 2$ or $e_1$ is odd.
\item
For an integer $i \geq 1$, $\Y^i$ is the subspace generated by monomials $y^e$ such that $e_0 \leq 1$, $e_j = 0$ for all integers $j$ with $1 \leq j \leq i-1$, $e_i$ is even, and ($e_i \geq 2$ or $e_{i+1}$ is odd).
\item
$\Y^\infty$ is the subspace generated by $\{1, y_0\}$.
\end{itemize}
\end{defn}

(Note that in the definition of $\Y^1$ the condition on $j$ is vacuous.)

\begin{lem}
Every monomial in $\Y$ lies in precisely one $\Y^i$, so
\[
\Y = \bigoplus_{i=0}^\infty \Y^i
\]
(where the direct sum includes $i = \infty$).
\end{lem}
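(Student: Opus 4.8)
The plan is to show that the defining conditions for the subspaces $\Y^0, \Y^1, \ldots, \Y^\infty$ partition the set of monomials of $\Y$; since these monomials form a $\Z_2$-basis of $\Y$, a partition of the basis yields the claimed direct sum decomposition. So the entire content of the lemma is the combinatorial claim: \emph{every monomial $y^e = y_0^{e_0} y_1^{e_1} \cdots$ satisfies exactly one of the level conditions.}

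First I would fix an arbitrary monomial $y^e$ with exponent vector $e = (e_0, e_1, e_2, \ldots)$ (all but finitely many $e_i$ zero) and argue by cases on the pattern of the $e_i$. The key observation is that the level conditions are organised as a decision procedure reading the exponents from $e_0$ upward. If $e_0 \geq 2$, or if $e_1$ is odd, we are in level $0$ and no other condition can hold (every level $i \geq 1$ requires $e_0 \leq 1$, and every level $i \geq 2$ further requires $e_1 = 0$, while level $1$ requires $e_1$ even). Otherwise $e_0 \leq 1$ and $e_1$ is even, and we proceed: let $i \geq 1$ be the smallest index with $e_i \neq 0$, if one exists. Then $e_1 = \cdots = e_{i-1} = 0$, and $e_i$ is even (this is automatic for $i=1$ from the previous step, and for $i \geq 2$ it is part of the case hypothesis that we have not yet stopped — I would phrase this carefully so that ``$e_i$ even'' is deduced, not assumed). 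One then checks ``$e_i \geq 2$ or $e_{i+1}$ odd'': if $e_i \geq 2$ this holds and $y^e \in \Y^i$; if $e_i = 0$ this contradicts minimality of $i$; so $e_i$ is even and positive, forcing $e_i \geq 2$, and in fact $y^e \in \Y^i$ regardless of $e_{i+1}$. Finally, if no such $i$ exists, then $e_1 = e_2 = \cdots = 0$ and $e_0 \leq 1$, so $y^e \in \{1, y_0\} = $ basis of $\Y^\infty$.

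To get ``exactly one'' I would then verify mutual exclusivity directly from the defining clauses: level $0$ is incompatible with all $\Y^i$, $i \geq 1$, because of the ``$e_0 \geq 2$ or $e_1$ odd'' versus ``$e_0 \leq 1$ and ($e_1$ even or $e_1 = 0$)'' dichotomy; for $1 \leq i < i' \leq \infty$, membership in $\Y^i$ forces $e_i$ even and $e_i \geq 2$ (by the argument above, using minimality), whereas membership in $\Y^{i'}$ forces $e_i = 0$ (as $i \le i'-1$), a contradiction; and the case $i' = \infty$ is the special instance $e_i = 0$ for all $i \geq 1$. Combining the existence and uniqueness parts, the monomials partition into the $\Y^i$, and since $\{y^e\}$ is a $\Z_2$-basis of $\Y$, we get $\Y = \bigoplus_{i=0}^\infty \Y^i \oplus \Y^\infty$, which is what the displayed formula asserts (with $\infty$ included in the sum as stated).

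I expect the only real subtlety — more a matter of bookkeeping than of mathematical difficulty — is handling the ``$e_i$ even'' clause in the definition of $\Y^i$ correctly: one must be sure that when $i$ is the least index $\geq 1$ with $e_i \neq 0$ and we have already ruled out levels $0, 1, \ldots, i-1$, the exponent $e_i$ is genuinely forced to be even (rather than this being an extra hypothesis that could fail, landing the monomial in no level at all). The point is that ruling out level $i-1$ (for $i \ge 2$) under the standing assumption $e_0 \le 1$, $e_1 = \cdots = e_{i-2} = 0$ means: it is \emph{not} the case that ($e_{i-1}$ even and ($e_{i-1} \geq 2$ or $e_i$ odd)); since $e_{i-1} = 0$ is even and $e_{i-1} \not\ge 2$, the negation forces $e_i$ even. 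For $i = 1$ the evenness of $e_1$ comes from having ruled out level $0$. Once this inductive extraction is written cleanly the rest is immediate, so I would present the argument as a short induction/minimal-index argument rather than an enumeration of cases.
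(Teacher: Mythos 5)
There is a genuine gap in the existence half of your argument: the claim that the minimal index $i \geq 1$ with $e_i \neq 0$ necessarily has $e_i$ even is false, and with it the conclusion that $y^e \in \Y^i$. Take $y^e = y_2^3$, so $e_0 = e_1 = 0$ and $e_2 = 3$. This monomial is not in $\Y^0$, and the minimal nonzero index is $i = 2$, but $e_2$ is odd; the monomial lies in $\Y^1$ (since $e_0 \leq 1$, $e_1 = 0$ is even, and $e_2$ is odd), not in $\Y^2$ (which would require $e_2$ even). Your attempted deduction of ``$e_i$ even'' from ``having ruled out level $i-1$'' is circular: level $i-1$ has not been ruled out --- when $e_i$ is odd, level $i-1$ is precisely the level the monomial belongs to. The correct statement, and the one the paper proves, is a case split on the parity of $e_j$ at the minimal nonzero index $j$: if $e_j$ is even then $e_j \geq 2$ and $y^e \in \Y^j$; if $e_j$ is odd then $y^e \in \Y^{j-1}$ (using that $e_{j-1} = 0$ is even and $e_j$ is odd). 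Your procedure, as written, assigns monomials such as $y_2^3$ or $y_3$ to no level at all.

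The same oversight infects your uniqueness argument. You assert that membership in $\Y^i$ (for $i \geq 1$) ``forces $e_i$ even and $e_i \geq 2$''; it does not --- the defining clause is ``$e_i \geq 2$ \emph{or} $e_{i+1}$ odd'', and the monomial $y_2^3 \in \Y^1$ has $e_1 = 0$. The correct disjointness argument for $1 \leq i < i'$ is: membership in $\Y^{i'}$ forces $e_i = 0$ and forces $e_{i+1}$ to be even (either $e_{i+1} = 0$ because $i+1 \leq i'-1$, or $i+1 = i'$ and $e_{i'}$ is even by definition of $\Y^{i'}$), which kills both disjuncts of the clause required for membership in $\Y^i$. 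Everything else in your write-up --- the reduction to a partition of the monomial basis, the treatment of $\Y^0$ and $\Y^\infty$, and the $i = 1$ case --- is fine and matches the paper, but the two points above need to be repaired before the proof is complete.
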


\begin{proof}
If $y^e$ is not in $\Y^0$, then $e_0 \leq 1$ and $e_1$ is even. We will show that such an $y^e$ lies in precisely one $\Y^i$ with $i>0$. 

First, if all $e_1 = e_2 = \cdots = 0$, then $y^e = y_0^{e_0}$; and since $e_0 \leq 1$, we have $y^e = 1$ or $y_0$. In this case, $y^e \in \Y^\infty$ and  clearly $y^e$ lies in no other $\Y^i$.

We may then assume that there is some smallest positive integer $j$ such that $e_j > 0$, so $e_1 = \cdots = e_{j-1} = 0$ but $e_j \neq 0$. 

Then clearly $y^e \notin \Y^i$ for any $i$ such that $1 \leq i \leq j-2$, since such a $\Y^i$ requires either $e_i$ or $e_{i+1}$ to be nonzero, but $i+1 \leq j-1$ and all of $e_1 = \cdots = e_{j-1} = 0$. Conversely, $y^e \notin \Y^i$ for any $i \geq j+1$, since such a $\Y^i$ requires $e_j = 0$.

If $e_j$ is even, then $y^e \in \Y^j$, since $e_j \geq 2$. But $y^e \notin \Y^{j-1}$, since $e_{j-1} < 2$ and $e_j$ is not odd. So $y^e$ is in precisely one of the subspaces $\Y^i$, namely $\Y^j$.

If $e_j$ is odd, then $y^e \in \Y^{j-1}$, since $e_{j-1} = 0$ is even, and $e_j$ is odd. But $y^e \notin \Y^j$, since $e_j$ is not even. So $y^e$ lies in precisely one of the subspaces $\Y^i$, namely $\Y^{j-1}$.
\end{proof}

We note that each $\Y^i$ can be regarded as a \emph{Weyl algebra} representation. Recall that the \emph{Weyl algebra} (over $\Z_2$) on $n$ variables is the $\Z_2$-algebra freely generated by commuting variables $x_1, \ldots, x_n$ and partial derivatives $\frac{\partial}{\partial x_1}, \ldots, \frac{\partial}{\partial x_n}$, so that they obey commutation relations $[x_i, x_j] = [\frac{\partial}{\partial x_i}, \frac{\partial}{\partial x_j}] = 0$ and $[x_i, \frac{\partial}{\partial x_j}] = \delta_{ij}$, the Kronecker delta. The operators $\alpha_i, \alpha_i^*$ we have defined satisfy $[\alpha_i, \alpha_j] = [\alpha_i^*, \alpha_j^*] = 0$ and $[\alpha_i, \alpha_j^*] = 0$ when $i \neq j$. The algebra of these operators is thus very close to the Weyl algebra. We do not have $[\alpha_i, \alpha_i^*] = 1$ on $\Y$ in general, but on $\Y^i$ we have $[\alpha_i, \alpha_i^*] = 1$. (When $j>i$, on $\Y^i$ the commutator $[\alpha_j, \alpha_j^*]$ is sometimes $1$ and sometimes $0$.)

Thus each $\Y^i$ carries a representation of the $1$-variable Weyl algebra generated by $\alpha_i, \alpha_i^*$. As we will see, this relation will provide a chain homotopy demonstrating trivial homology.

This hierarchy of subspaces behave in a ``triangular" way with respect to the decay operators $\alpha_j$.
\begin{lem}
If $j \leq i$ then
\[
\alpha_j \Y^i = 0.
\]
This includes the case $i = \infty$; any integer is less than $\infty$.
\end{lem}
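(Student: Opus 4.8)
The plan is to reduce the statement to a computation on monomials. Since $\Y^i$ is a $\Z_2$-vector subspace spanned by certain monomials $y^e$, and $\alpha_j$ is $\Z_2$-linear, it suffices to show $\alpha_j y^e = 0$ for every monomial $y^e$ generating $\Y^i$, under the hypothesis $j \le i$.

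First I would dispose of the case $j = 0$: by definition $\alpha_0 = 0$, so the claim is trivial, and I may assume $j \ge 1$ from now on. For $j \ge 1$ we have the shorthand $\alpha_j y^e = e_j\, y_{j-1}^2 y_j^{-1} y^e$, and working mod $2$ this is zero exactly when $e_j$ is even. So the entire lemma comes down to the assertion: in every monomial $y^e$ generating $\Y^i$, the exponent $e_j$ is even whenever $1 \le j \le i$.

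Next I would check this assertion against the three cases in the definition of $\Y^i$. If $i = \infty$, the generators are $1$ and $y_0$, both of which have $e_k = 0$ (hence even) for every $k \ge 1$, so there is nothing to prove. If $i = 0$, the condition $1 \le j \le i$ cannot be met, so the statement is vacuous. Finally, if $1 \le i < \infty$, a generating monomial $y^e$ of $\Y^i$ has $e_k = 0$ for all $1 \le k \le i-1$ and $e_i$ even; hence $e_j = 0$ (even) when $1 \le j \le i-1$, and $e_j = e_i$ is even when $j = i$. In all cases $e_j$ is even, so $\alpha_j y^e = 0$, and the lemma follows by linearity.

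There is no genuine obstacle here: the levels $\Y^i$ were constructed precisely so that the decay operators $\alpha_1, \ldots, \alpha_i$ annihilate them, and the only thing requiring care is to confirm that the case split is exhaustive and that the degenerate cases ($i = 0$, $i = \infty$, and the vacuous inner condition in the definition of $\Y^1$) are correctly accounted for.
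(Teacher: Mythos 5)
Your proof is correct and follows essentially the same route as the paper: dispose of $j=0$ via $\alpha_0=0$, then observe that for the generating monomials of $\Y^i$ the exponent $e_j$ is zero when $1\leq j<i$ and even when $j=i$ (and zero for all $j\geq 1$ when $i=\infty$), so each $\alpha_j$ kills them. The explicit reduction to the parity of $e_j$ via the formula $\alpha_j y^e = e_j\, y_{j-1}^2 y_j^{-1} y^e$ is a slightly more uniform packaging of the same case analysis the paper performs.
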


\begin{proof}
When $j = 0$ the statement is clear, since $\alpha_0 = 0$; we thus only need consider $\alpha_j$ with $j \geq 1$. The result holds when $i = 0$, since then $j = 0$. So we may assume $i \geq 1$.

Consider $\Y^i$ for $i \geq 1$. If $1 \leq j < i$, then the statement follows immediately from the fact that each monomial generating $\Y^i$ has $e_j = 0$. For $j=i$, the statement follows from the fact that each monomial generating $\Y^i$ has $e_i$ even.

Finally consider $i = \infty$, i.e. $\Y^\infty$. For any $j \geq 1$, the statement follows since each generating monomial has $e_j = 0$.
\end{proof}

In fact, in the case  $i = \infty$, there are only the two monomials $1, y_0$ to consider, and it is clear that every $\alpha_j$ annihilates these.

We have just seen that $\alpha_j \Y^i = 0$ when $j \leq i$. When $j > i$ the result of applying $\alpha_j$ to a monomial in $\Y^i$ will not usually be zero, but it \emph{does} lie in $\Y^i$. We will then be able to show that the subspaces $\Y^i$ are \emph{subcomplexes} of $\Y$.

\begin{lem}
Applying any decay operator to a monomial results in zero, or a monomial of the same level. In particular, for any $i \geq 0$ or $i=\infty$ and any integer $j \geq 0$,
\[
\alpha_j \Y^i \subseteq \Y^i.
\]
\end{lem}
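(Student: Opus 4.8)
The plan is to dispose of most cases using the previous lemma, and then check what remains by the explicit formula for $\alpha_j$ on monomials. If $j = 0$ then $\alpha_0 = 0$ and there is nothing to prove. If $1 \leq j \leq i$ (in particular, for all $j$ when $i = \infty$), the previous lemma already gives $\alpha_j \Y^i = 0 \subseteq \Y^i$. So the only remaining case is a monomial $y^e \in \Y^i$ with $i$ finite and $j \geq i+1$. Here I would use that $\alpha_j y^e = e_j\, y_{j-1}^2 y_j^{-1} y^e$: over $\Z_2$ this is $0$ when $e_j$ is even, and otherwise it is the monomial $y^{e'}$ obtained from $y^e$ by replacing $e_{j-1}$ with $e_{j-1}+2$ and $e_j$ with $e_j - 1$, all other exponents unchanged. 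The point is that $\alpha_j$ disturbs only the exponents in positions $j-1$ and $j$; since $j \geq i+1$ these positions are both $\geq i$, so $e'_k = e_k$ for every $k \leq i-1$.

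It then remains to verify the defining inequalities of $\Y^i$ for $y^{e'}$. For $i = 0$, one checks that $e'_0 \geq 2$ or $e'_1$ is odd: if $j \geq 3$ then $e'_0 = e_0$ and $e'_1 = e_1$ and the condition is inherited; if $j = 2$ then $e'_0 = e_0$ and $e'_1 = e_1 + 2$, and "$e_0 \geq 2$ or $e_1$ odd" passes to $y^{e'}$; if $j = 1$ then $e'_0 = e_0 + 2 \geq 2$. For $i \geq 1$, one checks $e'_0 \leq 1$, $e'_1 = \cdots = e'_{i-1} = 0$, $e'_i$ even, and ($e'_i \geq 2$ or $e'_{i+1}$ odd): the first two conditions hold since $e'_k = e_k$ for $k \leq i-1$; for the parity of $e'_i$ note that $e'_i = e_i$ if $j \geq i+2$ and $e'_i = e_i + 2$ if $j = i+1$, in both cases even; for the last clause, if $j = i+1$ then $e'_i = e_i + 2 \geq 2$, if $j = i+2$ then $e'_i = e_i$ and $e'_{i+1} = e_{i+1}+2$ so "$e_i \geq 2$ or $e_{i+1}$ odd" again passes to $y^{e'}$, and if $j \geq i+3$ the clause is inherited verbatim. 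In every case $y^{e'} \in \Y^i$, proving $\alpha_j \Y^i \subseteq \Y^i$; since the $\Y^i$ partition the monomials of $\Y$, this is precisely the statement that every decay operator sends a monomial to zero or to a monomial of the same level.

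I do not expect a genuine obstacle here; the one place to be careful is the book-keeping at the two boundary indices $j = i+1$ and $j = i+2$, where $\alpha_j$ touches exactly the exponents $e_i$ and $e_{i+1}$ that occur in the definition of $\Y^i$. There one uses that adding $2$ to an exponent preserves its parity, and that adding $2$ to $e_i$ forces the "$e_i \geq 2$" alternative of the disjunctive condition; everything away from positions $i$ and $i+1$ is untouched because $j > i$.
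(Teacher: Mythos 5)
Your proof is correct and follows essentially the same route as the paper's: reduce to $j > i$ via the preceding lemma, observe that $\alpha_j$ only alters the exponents at positions $j-1$ and $j$, and then check the boundary cases $j = i+1$ and $j = i+2$ (resp.\ $j=1,2$ for level $0$) where those positions meet the exponents appearing in the definition of $\Y^i$. The book-keeping at those two indices matches the paper's verification exactly.
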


\begin{proof}
Since we know that $\alpha_j \Y^i = 0$ when $j \leq i$, it suffices to check the cases $j > i$. When $i = \infty$ there are no $j > i$ to check, so we only need check integers $i \geq 0$. We consider the cases $i = 0$ and $i \geq 1$ separately.

First consider $\Y^0$, and a monomial generator $y^e$ satisfying $e_0 \geq 2$ or $e_1$ odd. Then $\alpha_1 y^e$ is zero, if $e_1$ is even; and if $e_1$ is odd then $\alpha_1 y^e = y_1^{-1} y_0^2 y^e$. Thus the result will either be zero or have $e_0 \geq 2$, and hence will be a level $0$ monomial. Next, $\alpha_2 y^e$, if nonzero, will be equal to $e_2 y_2^{-1} y_1^2 y^e$. If the exponent $e_0 \geq 2$, then after applying $\alpha_2$, the exponent of $y_0$ remains greater than or equal to $2$. If $e_1$ is odd, then after applying $\alpha_2$, the exponent of $y_1$ is increased by $2$ and remains odd. Either way, $\alpha_2 y^e$ is a level $0$ monomial. For $j \geq 3$, $\alpha_j y^e$ either produces zero or only changes the exponents of $y_j$ and $y_{j-1}$, neither of which is $e_0$ or $e_1$; hence the result remains a level $0$ monomial.

Now consider $\Y^i$, for $i \geq 1$, generated by $y^e$ satisfying $e_0 \leq 1, e_1 = \cdots = e_{i-1} = 0$, $e_i$ even, and ($e_i \geq 2$ or $e_{i+1}$ odd); we consider the effect of $\alpha_j$ for $j \geq i+1$. The value of $\alpha_{i+1} y^e$, if nonzero, is $e_{i+1} y_{i+1}^{-1} y_i^2 y^e$. The only exponents that change are those of $y_i$ and $y_{i+1}$; $e_i$ is increased by $2$, so remains even, and becomes $\geq 2$. Thus the result is a level $i$ monomial. The effect of $\alpha_{i+2}$ on $y^e$, if nonzero, is $e_{i+2} y_{i+1}^2 y_{i+2}^{-1} y^e$, so the only exponents that change are those of $y_{i+1}$ and $y_{i+2}$; $e_{i+1}$ is increased by $2$, so if it was odd it remains odd; hence the result is a level $i$ monomial. The effect of $\alpha_j$, for $j \geq i+3$, on $y^e$, if nonzero, only affects exponents of $y_k$ with $k \geq i+2$, hence not those in the defining condition for $\Y^i$, and so results in a level $i$ monomial.
\end{proof}

As the differential $\partial = \sum_{k \geq 0} \alpha_k$ is the sum of the decay operators, it follows that $\partial$ preserves each level in the hierarchy of $\Y$. That is, for each $i \in \{0, 1, \ldots, \infty\}$, $(\Y^i, \alpha)$ is a chain complex, a subcomplex of $\Y$. Slightly abusing notation, we will also write $\partial$ for the restriction of the differential to each $\Y^i$.

\begin{cor}
As chain complexes,
\[
(\Y, \partial) = (\Y^0, \partial ) \oplus (\Y^1, \partial) \oplus \cdots \oplus (\Y^\infty, \partial).
\qed
\]
\end{cor}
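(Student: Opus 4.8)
This is a direct consequence of the two immediately preceding lemmas, and the plan is simply to assemble them. First I would invoke the lemma asserting that every monomial $y^e$ of $\Y$ lies in precisely one of the subspaces $\Y^i$ (for $i\in\{0,1,2,\ldots,\infty\}$); since the monomials form a $\Z_2$-basis of $\Y$, this gives the decomposition $\Y=\bigoplus_{i} \Y^i$ as $\Z_2$-vector spaces.

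Next I would use the lemma stating that $\alpha_j \Y^i \subseteq \Y^i$ for every $i$ and every integer $j\geq 0$. Summing over $j$ and recalling $\partial = \sum_{k\geq 0}\alpha_k$, this yields $\partial \Y^i \subseteq \Y^i$, so each $(\Y^i,\partial)$ is a subcomplex of $(\Y,\partial)$. (One should note that although the sum defining $\partial$ is infinite, it is locally finite: on any fixed monomial $y^e = y_0^{e_0}\cdots y_{n-1}^{e_{n-1}}$ only the finitely many operators $\alpha_1,\ldots,\alpha_{n-1}$ act nontrivially, so $\partial$ is well-defined and the restriction to each $\Y^i$ makes sense.)

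Finally, combining the two facts: the differential $\partial$ respects the direct-sum decomposition $\Y=\bigoplus_i \Y^i$, acting ``block-diagonally'' with respect to it. By the definition of a direct sum of chain complexes, this is exactly the assertion that $(\Y,\partial)\cong\bigoplus_i(\Y^i,\partial)$, including the $i=\infty$ summand.

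I do not anticipate any genuine obstacle here; the statement is purely formal once the two lemmas are in hand, and the only point worth a remark is the local finiteness of $\partial$ noted above, which ensures the infinite direct sum causes no difficulty.
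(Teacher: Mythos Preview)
Your proposal is correct and matches the paper's approach exactly: the corollary is stated with a \qed and no explicit proof, the paper having already observed in the paragraph preceding it that $\partial = \sum_{k\geq 0}\alpha_k$ preserves each $\Y^i$ by the lemma $\alpha_j\Y^i\subseteq\Y^i$, together with the earlier lemma giving the vector-space decomposition $\Y=\bigoplus_i\Y^i$. Your remark on local finiteness of the sum defining $\partial$ is a harmless extra clarification the paper does not bother to make.
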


Referring to proposition \ref{prop:decay_creation_commutation}, we now note that the commutation relations between the fusion and decay operators $\alpha_i, \alpha_j^*$ can be used to provide \emph{chain homotopies} on these chain complexes.

On $\Y^0$, for each generating monomial $y^e$ we have $e_0 \geq 2$ or $e_1$ is odd, so $[\alpha_1, \alpha_1^*] y^e = y^e$ and $[\alpha_j, \alpha_1^*] y^e = 0$ for $j \geq 2$. That is, on $\Y^0$, we have $[\alpha_1, \alpha_1^*] = 1$ and $\alpha_j, \alpha_1^*] = 0$ for $j \geq 2$. It follows that, on $\Y^0$,
\[
[\partial, \alpha_1^*] = \sum_{j=0}^\infty [\alpha_j, \alpha_1^*] = [\alpha_1, \alpha_1^*] = 1.
\]
The resulting equation $\partial \alpha_1^* + \alpha_1^* \partial = 1$ says that $\alpha_1^*$ is a chain homotopy from $1$ to $0$, so that the homology of the complex is zero, $H_* ( \Y^0, \partial ) = 0$.

Similarly, for each monomial generating $\Y^1$ we have $e_0 \leq 1$, $e_1$ even and ($e_1 \geq 2$ or $e_2$ is odd). Thus $[\alpha_2, \alpha_2^*] = 1$ and
\[
[\partial, \alpha_2^*] = \sum_{j=0}^\infty [\alpha_j, \alpha_2^*] = [\alpha_2, \alpha_2^*] = 1.
\]
Thus $\partial \alpha_2^* + \alpha_2^* \partial = 1$, so that $\alpha_2^*$ is a chain homotopy from $1$ to $0$ and the homology $H_* (\Y^1, \partial ) = 0$.

Similarly, on each $\Y^i$, for $i \geq 1$, we consider monomials where $e_i \geq 2$ or $e_{i+1}$ is odd, so that $[\alpha_{i+1}, \alpha_{i+1}^*] = 1$, and hence
\[
[\partial, \alpha_{i+1}^*] = \sum_{j=0}^{n-1} [\alpha_j, \alpha_{i+1}^*] = [\alpha_{i+1}, \alpha_{i+1}^*] = 1,
\]
so that $\partial \alpha_{i+1}^* + \alpha_{i+1}^* \partial = 1$, giving a chain homotopy from $1$ to $0$ and demonstrating that $H_* (\Y^i, \alpha) = 0$.

On $\Y^\infty$, we have that every $\alpha_j = 0$, so $\partial = 0$ and hence $H_* (\Y^\infty, \partial) = \Y^\infty$. This $\Y^\infty$ is rather small, generated by $1$ and $y_0$.

We have now computed the homology of each summand of $(\Y, \partial)$, hence of $\Y$. As $\Y$ is a differential $\Z_2$-algebra (even though the $\Y^i$ are not), the homology $H(\Y)$ is a $\Z_2$-algebra. Denoting the homology class of $y \in \Y$ by $\bar{y}$, we have the following.
\begin{thm}
The homology of $(\Y, \partial)$, as a $\Z_2$-algebra, is
\[
H(\Y) = \frac{ \Z_2 [ \bar{y}_0 ] }{ ( \bar{y}_0^2 ) }.
\]
\end{thm}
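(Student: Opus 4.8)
The plan is to assemble the theorem from the homology computations of the summands $\Y^i$ already carried out, and then pin down the multiplicative structure using the single relation $\partial y_1 = y_0^2$.

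First I would invoke the Corollary giving the direct-sum decomposition of chain complexes $(\Y,\partial) = \bigoplus_{i=0}^\infty (\Y^i,\partial)$ (including $i = \infty$), so that $H(\Y) = \bigoplus_{i=0}^\infty H_*(\Y^i,\partial)$. By the chain-homotopy arguments above (the fusion operators $\alpha_{i+1}^*$ furnish a homotopy from $1$ to $0$ on each $\Y^i$ with $i$ finite), $H_*(\Y^i,\partial) = 0$ for every finite $i$, while $\partial$ vanishes identically on $\Y^\infty$, so $H_*(\Y^\infty,\partial) = \Y^\infty$, which is the $\Z_2$-span of the two monomials $1$ and $y_0$. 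Hence, as a $\Z_2$-vector space, $H(\Y)$ is two-dimensional, spanned by $\bar 1$ and $\bar y_0$. It is worth recording explicitly that $\bar y_0 \neq 0$: since the decomposition is one of chain complexes, the boundaries of $\Y$ lying in the $\Y^\infty$ summand are precisely $\partial \Y^\infty = 0$, so $y_0$, being a cycle (as $\partial y_0 = 0$), is not a boundary.

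Next I would use that $\Y$ is a differential $\Z_2$-algebra, the differential obeying the Leibniz rule, so $H(\Y)$ inherits a $\Z_2$-algebra structure with unit $\bar 1$. The only product not forced by bilinearity and the unit is $\bar y_0 \cdot \bar y_0 = \overline{y_0^2}$. By the definition of the differential on $\Y$ we have $\partial y_1 = y_0^2$, so $y_0^2$ is a boundary and therefore $\overline{y_0^2} = 0$ in homology, i.e. $\bar y_0^2 = 0$. Combining this with the vector-space computation, $H(\Y)$ is the $\Z_2$-algebra generated by $\bar y_0$ subject to $\bar y_0^2 = 0$, that is, $H(\Y) \cong \Z_2[\bar y_0]/(\bar y_0^2)$.

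I do not expect a genuine obstacle: the substantive work (the hierarchy decomposition and the chain homotopies killing the finite levels) has already been done, and the remainder is bookkeeping. The one point deserving a moment's care is confirming that the algebra is genuinely two-dimensional rather than collapsing — that is, that $\bar y_0$ is nonzero — which, as noted, is immediate from the chain-complex-level direct sum together with $\partial y_0 = 0$.
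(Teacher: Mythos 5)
Your proposal is correct and follows essentially the same route as the paper: sum the homologies of the levels $\Y^i$ (all zero except $\Y^\infty$, which has trivial differential and basis $\{1, y_0\}$), then kill $\bar y_0^2$ via $\partial y_1 = y_0^2$. The extra remark confirming $\bar y_0 \neq 0$ is a harmless (and sensible) addition that the paper leaves implicit.
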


\begin{proof}
The only summand $\Y^i$ of $\Y$ with nontrivial homology is $\Y^\infty$, which is 2-dimensional over $\Z_2$ with basis $\{1, y_0\}$ and trivial differential. So as a $\Z_2$-vector space, $H(\Y)$ has basis $\{ \bar{1}, \bar{y}_0 \}$. As an algebra, $H(\Y)$ must be generated by $\bar{y}_0$, inheriting multiplication from $\Y$. Since $y_0^2 = \partial y_1$, in homology we have $\bar{y}_0^2 = 0$, and as an algebra the homology is as claimed.
\end{proof}

\subsection{Putting the chain complexes back together}

We now return to the original chain complex $\widehat{CS}(A, \emptyset) \cong \X$, and reassemble it from the various $\X_j$. We have the tensor decomposition of differential algebras $\X \cong \bigotimes_j \X_j$, and each $\X_j \cong \Y$ under an isomorphism which takes the ``infinite decay chain" of $\Y$
\[
y_k \mapsto y_{k-1} \mapsto \cdots \mapsto y_1 \mapsto y_0 \mapsto 0
\]
to the corresponding ``decay chain" of $\X_j$
\[
x_{j \cdot 2^k} \mapsto x_{j \cdot 2^{k-1}} \mapsto \cdots \mapsto x_{2j} \mapsto x_j \mapsto 0.
\]

We will keep track of how some structure in the $\X_j$ translates back into the original $\X$. In particular, each operator $\alpha_k$ on each $\X_j$ translates to a corresponding operator $\alpha_{(j,k)}$ on $\X$. As $\alpha_k$ has the effect of sending $y_k \mapsto y_{k-1}^2$, the corresponding operator $\alpha_{(j,k)}$ has the effect of sending $y_{j \cdot 2^k} \mapsto y_{j \cdot 2^{k-1}}^2$, for each $j$ and $k$. Similarly we can define $\alpha_{(j,k)}^*$ to be the operator on $\X$ corresponding to the operator $\alpha_k^*$ on $\X_j$.
\begin{defn}
\label{def:alpha_j_k}
For each odd integer $j$ and each positive integer $k$, the following operators $\X \To \X$ are defined on monomials as follows, and extended over $\X$ by linearity.
\begin{enumerate}
\item
The operator $\alpha_{(j,k)}$ is defined by $\Z_2$-linearity, the Leibniz rule, and
\[
\alpha_{(j,k)} x_{j \cdot 2^k} = x_{j \cdot 2^{k-1}}^2,
\quad
\alpha_{(j,k)} x_i = 0 \text{ for $i \neq j \cdot 2^k$}.
\]
\item
The operator $\alpha_{(j,k)}^*$ is defined on a monomial $x^e = \prod_{i \in \Z \backslash \{0\}} x_i^{e_i}$ by
\[
\alpha_{(j,k)}^* x^e = \left\{ \begin{array}{ll}
0 & e_{j \cdot 2^{k-1}} \leq 1 \\
x_{j \cdot 2^{k-1}}^{-2} x_{j \cdot 2^k} & e_{j \cdot 2^{k-1}} \geq 2
\end{array} \right.
\]
\end{enumerate}
\end{defn}
We then have
\[
\partial = \sum_{j \text{ odd}} \sum_{k=1}^\infty \alpha_{(j,k)}.
\]

Since $\alpha_{(j,k)}$ and $\alpha_{(j,k)}^*$ mainly affect $x_i$ where $i = j \cdot 2^k$ or $j \cdot 2^{k-1}$, we can also write
\[
\alpha_{(j,k)} = \alpha_{j \cdot 2^k}, \quad
\alpha_{(j,k)}^* = \alpha_{j \cdot 2^k}^*.
\]
With this notation, we have an $\alpha_i$ defined on $\X$ for each even integer $i$. (In section \ref{sec:one_marked_point_each_boundary}, we will see a natural definition for odd $i$, when $F$ is nonempty.)

The commutation relations on the operators $\alpha_k, \alpha_k^*$ on each $\X_j$ also translate directly into commutation relations on the $\alpha_{(j,k)}$ and $\alpha_{(j,k)}^*$. From proposition \ref{prop:decay_creation_commutation} we have $[\alpha_k^*, \alpha_l] = 0$ when $k \neq l$, and $[\alpha_k, \alpha_k^*] = 1$ or $0$ accordingly as ($e_{k-1} \geq 2$ or $e_k$ is odd) or ($e_{k-1} \leq 1$ and $e_k$ is even). In $\X$, we see that $\alpha_{(j,k)}$ and $\alpha_{(j,k)}^*$ only affect those $x_i$ where $i = j \cdot 2^k$ or $j \cdot 2^{k-1}$; thus we obtain the following lemma.

\begin{lem}
\label{lem:alpha_j_k_commutations}
Let $j,j'$ be odd integers and $k,k'$ positive integers.
\begin{enumerate}
\item
For $j \neq j'$ or $k \neq k'$ we have
\[
[\alpha_{(j,k)}, \alpha_{(j',k')}^*] = 0.
\]
\item
For a monomial $x^e \in \X$,
\[
[\alpha_{(j,k)}, \alpha_{(j,k)}^*] = 
\left\{ \begin{array}{ll}
1 & e_{j \cdot 2^{k-1}} \geq 2 \text{ or } e_{j \cdot 2^k} \text{ odd} \\
0 & e_{j \cdot 2^{k-1}} \leq 1 \text{ and } e_{j \cdot 2^k} \text{ even}
\end{array} \right.
\]
\end{enumerate}
\qed
\end{lem}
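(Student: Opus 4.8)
The plan is to deduce the lemma directly from Proposition \ref{prop:decay_creation_commutation} by tracking the operators through the tensor decomposition $\X \cong \bigotimes_{j \text{ odd}} \X_j$ and the isomorphisms $\X_j \cong \Y$ established in section \ref{sec:odd_integer_decomposition}. The first step is to make precise the claim that each $\alpha_{(j,k)}$ and $\alpha_{(j,k)}^*$ ``only affects'' the variables $x_i$ with $i \in \{ j \cdot 2^{k-1}, j \cdot 2^k \} \subseteq Z_j$: concretely, I want to show that under the decomposition $\X = \X_j \otimes \bigl( \bigotimes_{j' \ne j} \X_{j'} \bigr)$ each of these operators acts as (the corresponding operator on the factor $\X_j$) $\otimes$ (the identity on the remaining factors). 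For $\alpha_{(j,k)}$ this follows because it is the unique $\Z_2$-linear derivation of $\X$ with the prescribed values on generators: writing a monomial $x^e$ as $uv$ with $u \in \X_j$ and $v$ lying in the complementary tensor factor, the Leibniz rule together with $\alpha_{(j,k)} v = 0$ gives $\alpha_{(j,k)}(uv) = (\alpha_{(j,k)} u) v$, and under $\X_j \cong \Y$ the restriction of $\alpha_{(j,k)}$ to $\X_j$ is precisely $\alpha_k$. For $\alpha_{(j,k)}^*$, which is not a derivation, this is read straight off its defining monomial formula: on $x^e$ it is either $0$ or multiplication by the monomial $x_{j\cdot 2^{k-1}}^{-2} x_{j \cdot 2^k}$, and which alternative occurs depends only on $e_{j \cdot 2^{k-1}}$; hence $\alpha_{(j,k)}^*$ is likewise supported on the $\X_j$ factor and corresponds to $\alpha_k^*$ there.

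With this compatibility in hand, the commutator computations are immediate. If $j \ne j'$, then $\alpha_{(j,k)}$ and $\alpha_{(j',k')}^*$ act nontrivially on different tensor factors (the subalgebras $\X_j$ and $\X_{j'}$ are distinct, since the index sets $Z_j$ are disjoint), and operators of the forms $A \otimes \Id$ and $\Id \otimes B$ on a tensor product commute, so $[\alpha_{(j,k)}, \alpha_{(j',k')}^*] = 0$. If $j = j'$ but $k \ne k'$, both operators act on the same factor $\X_j$, namely as $\alpha_k$ and $\alpha_{k'}^*$ after identification with $\Y$, so Proposition \ref{prop:decay_creation_commutation} gives $[\alpha_k, \alpha_{k'}^*] = 0$ on $\X_j$ and tensoring with identities yields $[\alpha_{(j,k)}, \alpha_{(j,k')}^*] = 0$ on $\X$ (recall that over $\Z_2$ the commutator is symmetric). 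These two cases establish part (i). For part (ii) we take $j = j'$ and $k = k'$: both operators act on $\X_j$ as $\alpha_k$ and $\alpha_k^*$, and Proposition \ref{prop:decay_creation_commutation} says $[\alpha_k, \alpha_k^*]$ equals $1$ when $e_{k-1} \ge 2$ or $e_k$ is odd, and $0$ when $e_{k-1} \le 1$ and $e_k$ is even, with $e_{k-1}, e_k$ the exponents of $y_{k-1}, y_k$ in the monomial of $\Y$. Translating back through $y_l \leftrightarrow x_{j \cdot 2^l}$, these exponents become $e_{j \cdot 2^{k-1}}$ and $e_{j \cdot 2^k}$ in $x^e$, which is exactly the claimed dichotomy.

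I do not anticipate any genuine difficulty: all the real content sits in Proposition \ref{prop:decay_creation_commutation}, and the only step requiring care is the bookkeeping in the first paragraph --- confirming that the operators defined ``by hand'' on $\X$ (one via the Leibniz rule, one via an explicit monomial formula) genuinely coincide with the expected operators on the $\X_j$ factor tensored with identities, so that commutators may legitimately be computed factor by factor. Once that compatibility is pinned down, the rest is formal.
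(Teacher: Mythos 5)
Your proposal is correct and follows essentially the same route as the paper, which derives the lemma by observing that $\alpha_{(j,k)}$ and $\alpha_{(j,k)}^*$ only affect the variables $x_{j\cdot 2^{k-1}}, x_{j\cdot 2^k}$ and then translating the commutation relations of Proposition \ref{prop:decay_creation_commutation} through the identifications $\X_j\cong\Y$. Your extra care in verifying that each operator really has the form (operator on $\X_j$) $\otimes$ (identity on the complementary factors) — via the Leibniz rule for $\alpha_{(j,k)}$ and the explicit monomial formula for $\alpha_{(j,k)}^*$ — is exactly the bookkeeping the paper leaves implicit.
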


\subsection{Computing the total homology}
\label{sec:computing_total_homology}

We have found that the homology $\Y$ is generated as a $\Z_2$-algebra by $\bar{y}_0$, the homology class of $y_0$, with the relation $\bar{y}_0^2 = 0$. Correspondingly, for each odd integer $j$, the homology of $\X_j$ is generated as a $\Z_2$-algebra by the homology class $\bar{x}_j$ of $x_j$, whose square is zero.
\[
H (\X_j) = \frac{ \Z_2 [ \bar{x}_j ] }{ ( \bar{x}_j^2) }
\]

Calculating the homology of $\widehat{CS}(\An, \emptyset) \cong \X$ now amounts to an application of the K\"{u}nneth theorem. As usual, write $\bar{x}_j$ for the homology class of $x_j$.

\begin{thm}
\label{thm:homology_of_X}
As a $\Z_2$-algebra, $H(\X)$ is generated by the homology classes $\bar{x}_j$ of $x_j$ over all odd $j$, where each $\bar{x}_j^2 = 0$. 
\[
\widehat{HS}(\An, \emptyset) = H(\X) = \frac{ \Z_2 [ \ldots, \bar{x}_{-3}, \bar{x}_{-1}, \bar{x}_1, \bar{x}_3, \ldots ] }{ ( \ldots, \bar{x}_{-3}^2, \bar{x}_{-1}^2, \bar{x}_1^2, \bar{x}_3^2, \ldots ) }
\]
\end{thm}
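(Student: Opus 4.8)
The plan is to build $H(\X)$ out of the factors $H(\X_j)$ computed above by a K\"unneth argument, taking care that there are infinitely many factors. Recall from section \ref{sec:odd_integer_decomposition} that there is an isomorphism of differential $\Z_2$-algebras $\X \cong \bigotimes_{j \text{ odd}} \X_j$, and that each $\X_j \cong \Y$ as a differential algebra, so by the computation of $H(\Y)$ above each factor has homology $H(\X_j) \cong \Z_2[\bar{x}_j]/(\bar{x}_j^2)$, the $2$-dimensional $\Z_2$-algebra with basis $\{\bar{1}, \bar{x}_j\}$ and $\bar{x}_j^2 = 0$. The class $\bar{x}_j$ here is the homology class of the loop $x_j$.

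Next I would apply the K\"unneth theorem. Since $\Z_2$ is a field, for chain complexes of $\Z_2$-vector spaces the K\"unneth theorem gives $H(C \otimes D) \cong H(C) \otimes H(D)$ with no $\Tor$ term, and when $C$ and $D$ are differential algebras this is an isomorphism of $\Z_2$-algebras. Iterating, for every finite set $S$ of odd integers we get $H\bigl(\bigotimes_{j \in S} \X_j\bigr) \cong \bigotimes_{j \in S} H(\X_j)$ as $\Z_2$-algebras. To pass to the infinite product, write $\X = \varinjlim_S \bigotimes_{j \in S} \X_j$, the filtered colimit over finite subsets $S$ of the odd integers ordered by inclusion, with structure maps $z \mapsto z \otimes 1 \otimes \cdots \otimes 1$; this is a direct system of differential $\Z_2$-algebras with colimit $(\X, \partial)$. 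Homology commutes with filtered colimits, so
\[
H(\X) \;\cong\; \varinjlim_S H\Bigl(\bigotimes_{j \in S} \X_j\Bigr) \;\cong\; \varinjlim_S \bigotimes_{j \in S} H(\X_j) \;\cong\; \bigotimes_{j \text{ odd}} H(\X_j),
\]
all as $\Z_2$-algebras. Finally, $\bigotimes_{j \text{ odd}} \Z_2[\bar{x}_j]/(\bar{x}_j^2)$ is, as a $\Z_2$-algebra, generated by the commuting classes $\bar{x}_j$ subject only to $\bar{x}_j^2 = 0$ (the classes commute because $\X$ is a commutative ring and we work over $\Z_2$, so no signs intervene), which is precisely the ring $\Z_2[\ldots, \bar{x}_{-1}, \bar{x}_1, \ldots]/(\ldots, \bar{x}_{-1}^2, \bar{x}_1^2, \ldots)$ claimed for $\widehat{HS}(\An, \emptyset) = H(\X)$.

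The main obstacle is the careful handling of the infinite tensor product rather than any deep computation: one must check that the colimit presentation of $\X$ is genuinely a colimit of \emph{differential algebras} (so that the resulting homology isomorphism respects multiplication), that the transition maps $z \mapsto z \otimes 1$ are compatible with the identifications $\X_j \cong \Y$, and that the generator appearing in the statement really is the homology class of the loop $x_j$ inside $H(\X)$, so that the names used in the theorem are the intended topological ones. The remaining ingredients --- the K\"unneth formula over a field, the fact that homology commutes with filtered colimits, and the identification of $\varinjlim_S \bigotimes_{j \in S} H(\X_j)$ with the infinite tensor product $\bigotimes_{j} H(\X_j)$ --- are all standard.
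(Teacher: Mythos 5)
Your proposal is correct and follows essentially the same route as the paper: decompose $\X \cong \bigotimes_{j \text{ odd}} \X_j$, apply the K\"unneth theorem over the field $\Z_2$ (no $\Tor$ term) to finite subproducts, and pass to the infinite tensor product via a direct limit using the fact that homology commutes with filtered colimits. The only cosmetic difference is that the paper takes the limit over the cofinal chain $\X^{\leq 1} \subset \X^{\leq 3} \subset \cdots$ of symmetric truncations rather than over all finite subsets of odd integers, which changes nothing.
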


Note that the theorem just asserts that the $H (\X) = H(\bigotimes \X_j)$ is the tensor product of the $H ( \X_j)$:
\[
\bigotimes_{j \text{ odd}} H ( \X_j ) = \bigotimes_{j \text{ odd}} \frac{ \Z_2 [ \bar{x}_j ] }{ (\bar{x}_j^2) } = \frac{ \Z_2 [ \ldots, \bar{x}_{-3}, \bar{x}_{-1}, \bar{x}_1, \bar{x}_3, \ldots ] }{ ( \ldots, \bar{x}_{-3}^2, \bar{x}_{-1}^2, \bar{x}_1^2, \bar{x}_3^2, \ldots ) }.
\]

For the purposes of the proof, we will define a differential algebra $\X^{\leq N}$, for any odd positive integer $N$: it is the tensor product of the $\X_j$ with $|j| \leq N$,
\[
\X^{\leq N} = \X_{-N} \otimes \cdots \otimes \X_{-3} \otimes \X_{-1} \otimes \X_1 \otimes \X_3 \otimes \cdots \otimes \X_{N} = \bigotimes_{j=-N}^N \X_j.
\]
Note that $\X$ is the direct limit of the $\X^{\leq N}$:
\[
\X^{\leq 1} \subset \X^{\leq 3} \subset \cdots \subset \X
\quad \text{with} \quad
\bigcup_{N=1}^\infty \X^{\leq N} = \X.
\]

\begin{proof}
We repeatedly apply the K\"{u}nneth theorem (for instance as in theorem V.2.1 of \cite{HS}), which implies the following statement: if $A,B$ are chain complexes over $\Z_2$, then $H(A \otimes B) \cong H(A) \otimes H(B)$. (Any chain complex over $\Z_2$ is a $\Z_2$-module, hence $\Z_2$-vector space, hence free, hence projective, hence flat; and its homology is also a $\Z_2$-vector space, so $\Tor(H(A), H(B)) = 0$, and hence the map $H(A) \otimes H(B) \To H(A \otimes B)$ is an isomorphism.)

Applying the K\"{u}nneth theorem to $\X_{-N}, \ldots, \X_{-1}, \X_1, \ldots, \X_N$ gives immediately
\[
H \left( \X^{\leq N} \right) = H \left( \bigotimes_{j=-N}^N \X_j \right) \cong \bigotimes_{j=-N}^N H \left( \X_j \right)
\cong \frac{ \Z_2 [ \bar{x}_{-N}, \ldots, \bar{x}_{-1}, \bar{x}_1, \ldots, \bar{x}_N ] }{ ( \bar{x}_{-N}^2, \ldots, \bar{x}_{-1}^2, \bar{x}_1^2, \ldots, \bar{x}_N^2 ) }.
\]

Now homology commutes with direct limits, and so the homology of $\X$ is the direct limit of the homologies of the $\X^{\leq N}$, hence is as claimed.
\end{proof}

Thus, the string homology of $(\An, \emptyset)$ is generated by the homology classes of closed curves $x_j$ which run an odd number $j$ of times around the core of $\An$. As a $\Z_2$-module, this homology is generated by collections of closed curves with distinct odd homotopy classes. The multiplicative structure on $\widehat{HS}(A, \emptyset)$ can be regarded as arising from disjoint union of curves; or, equivalently, by gluing two string diagrams in distinct annuli together by gluing the annuli together along their boundaries. 

In any case, we have proved theorem \ref{thm:annulus_no_marked_points_calculation}.

It will be useful in the sequel to make various definitions based on various types of monomials and polynomials in the $x_i$ and $\bar{x}_i$ that arise in $\X$ and $H(\X)$. To this end we make the following definitions.
\begin{defn} \
\label{def:clean}
\begin{enumerate}
\item
A \emph{fermionic monomial} is an element of $\X$ of of the form $x_{j_1} \cdots x_{j_k}$ where $j_1, \ldots, j_k$ are all odd and pairwise distinct. 
\item
A \emph{(positively) clean monomial} is a fermionic monomial not containing $x_1$.
\item
A \emph{negatively clean monomial} is a fermionic monomial not containing $x_{-1}$.
\item
A \emph{totally clean monomial} is a fermionic monomial not containing $x_1$ or $x_{-1}$.
\item
A \emph{fermionic polynomial} (resp. positively clean, negatively clean, totally clean polynomial) is a finite sum of fermionic monomials (resp. positively clean, negatively clean, totally clean monomials).
\end{enumerate}
\end{defn}
Thus, for instance, a positively clean polynomial is a polynomial in $\{x_j \; : \; j \text{ odd}, \; j \neq 1\}$, linear in each $x_j$. When we refer to a clean monomial or polynomial, by default we mean a positively clean one. The polynomials are ``fermionic" in the sense that only odd variables (``particles") appear, and that each variable (``particle") can only appear once in each monomial, so obeys a ``Pauli exclusion principle".

Note that any fermionic polynomial $p \in \X$ satisfies $\partial q = 0$ (hence also any positively or negatively or totally clean polynomial). Thus we may speak of fermionic, positive and negatively and totally clean polynomials in $H(\X)$ as those represented by such polynomials in $\X$. Here the variables have the further ``fermionic" property that $\bar{x}_j^2 = 0$.

Our computations show that $H(\X)$ is a free $\Z_2$-module with basis the fermionic monomials, and every homology class has a unique fermionic polynomial representative in $\X$. The result in $H(\X)$ of multiplying two fermionic monomials is their usual product, if that product is another fermionic monomial; otherwise some variable appears twice, and since $\bar{x}_j^2=0$, the product in $H(\X)$ is zero.

The $\Z_2$-submodule of $H(\X)$ generated by (positively) clean monomials is in fact a subring; its elements are precisely the (positively) clean polynomials. Similarly there are subrings of negatively clean and totally clean polynomials.
\begin{defn}
\label{defn:polynomial_subrings}
The subring of $\left\{ \begin{tabular}{c} (positively) clean \\ negatively clean \\ totally clean \end{tabular} \right \}$ polynomials in $H(\X)$ is denoted $\left\{ \begin{tabular}{c} $H(\X)_{\neq 1}$ \\ $H(\X)_{\neq -1}$ \\ $H(\X)_{\neq -1,1}$ \end{tabular} \right\}$.
\end{defn}

Explicitly,
\[
H(\X)_{\neq 1} = \frac{ \Z[\ldots, \bar{x}_{-3}, \bar{x}_{-1}, \bar{x}_3, \bar{x}_5, \ldots] }{ (\ldots, \bar{x}_{-3}^2, \bar{x}_{-1}^2, \bar{x}_3^2, \bar{x}_5^2, \ldots) },
\quad
H(\X)_{\neq -1} = \frac{ \Z[\ldots, \bar{x}_{-5}, \bar{x}_{-3}, \bar{x}_{1}, \bar{x}_3, \ldots] }{ (\ldots, \bar{x}_{-5}^2, \bar{x}_{-3}^2, \bar{x}_{1}^2, \bar{x}_3^2, \ldots) },
\]
\[
H(\X)_{\neq -1,1} = \frac{ \Z[\ldots, \bar{x}_{-5}, \bar{x}_{-3}, \bar{x}_3, \bar{x}_5, \ldots] }{ (\ldots, \bar{x}_{-5}^2, \bar{x}_{-3}^2, \bar{x}_3^2, \bar{x}_5^2, \ldots) }.
\]

\section{Strings on annuli with two marked points}
\label{sec:annuli_two_marked_points}

We next turn to marked annuli $(\An, F)$ where $|F|=2$, so that $|F_{in}|=|F_{out}|=1$. A nonzero string diagram $s$ in $\widehat{CS}(\An,F)$ consists of a single open string from $F_{in}$ to $F_{out}$, and some number (possibly zero) of closed curves, each running some nonzero number of times around the annulus. We again write $x_n$ for the (homotopy class of the) closed curve which runs $n$ times around the annulus.

There are two cases: $F$ either consists of one marked point on each boundary component; or both marked points are on a single boundary component. We write $F=F_{1,1}$ and $F=F_{0,2}$ accordingly. We consider the first case in sections \ref{sec:one_marked_point_each_boundary} to \ref{sec:source_creation}; and the second case in sections \ref{sec:two_points_single_boundary} to \ref{sec:homology_A_+_tensor_X}.

\subsection{Annuli with one marked point on each boundary}
\label{sec:one_marked_point_each_boundary}

When $F=F_{1,1}$ consists of one point on each boundary component, the arc connecting these two points runs from one boundary component to the other. There are infinitely many such homotopy classes (relative to endpoints) of such arcs, each corresponding to running some number of times around the annulus. We denote (the homotopy classes of) these curves $c_n$, for $n \in \Z$, as shown in figure \ref{fig:one_open_string_diagrams}; they form a $\Z_2$-basis for $\widehat{CS}^O (\An,F_{1,1})$.

\begin{figure}[ht]
\begin{center}
\begin{tikzpicture}[
scale=1.2, 
string/.style={thick, draw=red, postaction={nomorepostaction, decorate, decoration={markings, mark=at position 0.5 with {\arrow{>}}}}}]

\draw (3,0) circle (1 cm); 	
\draw (3,0) circle (0.2 cm);
\draw (0,0) circle (1 cm);
\draw (0,0) circle (0.2 cm);
\draw (-3,0) circle (1 cm);
\draw (-3,0) circle (0.2 cm);

\draw [xshift=-3 cm, string] (0,0.2) -- (0,0.3) .. controls (0,0.5) and (0.6,0.5) .. (0.6,0) arc (0:-180:0.6) .. controls (-0.6,0.5) and (0,0.7) .. (0,0.9) -- (0,1);

\draw [string] (0,0.2) -- (0,1);

\draw [xshift=3cm, string] (0,0.2) -- (0,0.3) .. controls (0,0.5) and (-0.6,0.5) .. (-0.6,0) arc (-180:0:0.6) .. controls (0.6,0.5) and (0,0.7) .. (0,0.9) -- (0,1);

\draw (-3,-1.5) node {$c_{-1}$};
\draw (0,-1.5) node {$c_0$};
\draw (3,-1.5) node {$c_1$};

\end{tikzpicture}
\caption{Open strings on $(\An, F_{1,1})$.}
\label{fig:one_open_string_diagrams}
\end{center}
\end{figure}

It will be useful later to write $\C$ for the free $\Z_2$-module on $\{c_n \; : \; n \in \Z\}$. So $\C = \widehat{CS}^O(\An, F_{1,1})$. 

From lemma \ref{lem:CS_as_module} we have $\widehat{CS}(\An,F_{1,1}) \cong \X \otimes_{\Z_2} \widehat{CS}^O (\An,F_{1,1}) = \X \otimes_{\Z_2} \C$. Thus so $\widehat{CS}(\An,F_{1,1})$ is a free $\X$-module with basis the $c_n$. As a $\Z_2$-module, $\widehat{CS}(\An,F_{1,1})$ is free with basis given by elements
\[
c_n x_{k_1} x_{k_2} \cdots x_{k_m},
\]
where $n \in \Z$ and $k_1, \ldots, k_m \in \Z \backslash \{0\}$. We can also use the exponential notation
\[
c_n x^e = c_n \prod_{i \in \Z \backslash \{0\}} x_i^{e_i}
\]
where each $e_i \geq 0$ and only finitely many $e_i$ are nonzero.

We now describe the differential on $\widehat{CS}(\An,F_{1,1}) \cong \X \otimes \C$; as the Goldman bracket is nonzero, $\partial$ does not obey the Leibniz rule and we have (equation \ref{eqn:product_rule_open_closed})
\[
\partial (c_n x^e) = (\partial c_n) x^e + c_n (\partial x^e) + [c_n, x^e].
\]

Here $\partial x^e$ is as in $\X$. Just as in the case $F = \emptyset$, after a homotopy the closed strings can be made pairwise disjoint, with each $x_i$ having $|i|-1$ self-intersection points. The arc $c_n$ can be drawn without self-intersections, intersecting each closed curve $x_i$ precisely $|i|$ times. Thus $\partial c_n = 0$. When we resolve a crossing between a $c_n$ and $x_i$, we obtain the open string $c_{n+i}$; there are $|i|$ such crossings, so (mod 2) $[c_n, x_i] = i c_{n+i}$.

In general, resolving all the crossings between $c_n$ and a closed multicurve $x^e$, we obtain (using our standard abusive notation) the Goldman bracket as
\[
[c_n, x^e] = \sum_{i \in \Z \backslash \{0\}} e_i [c_n, x_i] x_i^{-1} x^e
= \sum_{i \in \Z \backslash \{0\}} i e_i c_{n+i} x_i^{-1} x^e,
\]
since each $x_i$ occurs $e_i$ times, and resolving a crossing between $c_n$ and an $x_i$ leaves the closed multicurve $x_i^{-1} x^e$ remaining.

Thus, on a string diagram $s = c_n x^e$, the differential is given as follows.
\begin{equation}
\label{eqn:partial_s_description}
\partial s = \partial \left( c_n x^e \right) = c_n \left( \partial x^e \right) + \sum_{i \in \Z \backslash \{0\}} i e_i c_{n + i} x_i^{-1} x^e
\end{equation}

Recall definition \ref{def:alpha_j_k} of the operators $\alpha_{(j,k)}$ on $\X$, for each odd $j$ and each positive integer $k$, which sends $x_{j \cdot 2^k} \mapsto x_{j \cdot 2^{k-1}}^2$; recall that sum of all the $\alpha_{(j,k)}$ is the differential on $\X$. These operators extend naturally over $\X \otimes \C$, sending each $c_n x^e \mapsto c_n \alpha_{(j,k)} x^e$. Applied to $s$, they give the first term of equation \ref{eqn:partial_s_description}.

We alternatively wrote $\alpha_{(j,k)} = \alpha_{j \cdot 2^k}$; with this notation, there is an $\alpha_i$ for every nonzero \emph{even} integer $i$. On the other hand, the terms in the second sum of equation \ref{eqn:partial_s_description} are only nonzero when $i$ is \emph{odd}, as there is a factor of $i$ (mod $2$), corresponding to the number of intersections between the arc $c_n$ and a closed curve in homotopy class $i$. Hence it is natural to extend the definition of the $\alpha_i$ to odd $i$ to produce the second term of equation \ref{eqn:partial_s_description}.

\begin{defn}
For each odd $j$, define $\alpha_j = \alpha_{(j,0)}$ on monomials (extended  by linearity) as
\[
\alpha_{(j,0)} \left( c_n x^e \right) = e_j c_{n+j} x_j^{-1} x^e.
\]
\end{defn}

Equation \ref{eqn:partial_s_description} then becomes
\[
\partial s = \sum_{j \text{ odd}} \sum_{k=1}^\infty \alpha_{(j,k)} s + \sum_{j \text{ odd}} \alpha_{(j,0)} s
= \sum_{j \text{ odd}} \sum_{k=0}^\infty \alpha_{(i,j)} s
= \sum_{i \in \Z \backslash \{0\}} \alpha_i s,
\]
so we have a compact description of the chain complex as
\[
\widehat{CS}(\An, F_{1,1}) = \X \otimes \C, \quad
\partial = \sum_{i \in \Z \backslash \{0\}} \alpha_i.
\]
We compute its homology we will compute in the next section.

\subsection{Source operators}
\label{sec:source_creation}

Returning to our particle analogy, the operation of $\alpha_{(j,k)}$ for positive $k$, which sends $x_{j \cdot 2^{k}} \mapsto x_{j \cdot 2^{k-1}}^2$, can be seen as a ``decay". The new operator $\alpha_{(j,0)}$ corresponds to a ``decay" $x_j \mapsto c_{\cdot + j}$. We can regard this as the ``end of a decay chain"
\[
x_{j \cdot 2^k} \mapsto x_{j \cdot 2^{k-1}} \mapsto \cdots \mapsto x_2j \mapsto x_j \mapsto c_{\cdot},
\]
where the $c_n$ terms are ``sinks" or ``ground states" into which the $x_i$ are finally absorbed.

The fusion operators $\alpha_{(j,k)}^* = \alpha_{j \cdot 2^k}^*$ on $\X$, for positive $k$, reverse this decay process and send $x_{j \cdot 2^{k-1}}^2 \mapsto x_{j \cdot 2^k}$. These operators extend to $\X \otimes \C$, sending $c_n x^e \mapsto c_n \alpha_{(j,k)}^* x^e$. Note an $\alpha_{j \cdot 2^k}^* = \alpha_i^*$ is defined for all nonzero even $i$.

We can now extend this definition to $\alpha_i^*$ for odd $i$, reversing the ``sinking" of an $x_i$ particle into a $c_n$. Instead, $\alpha_i^*$ will ``create" an $x_i$ out of a $c_n$ ``source" by sending $c_n \mapsto c_{n - i} x_{i}$. 
\begin{defn}
For each odd $j$, define $\alpha_j^* = \alpha_{(j,0)}^*$ on monomials (extended by linearity) as
\[
\alpha_j \left( c_n x^e \right) = c_{n-j} x_j x^e.
\]
\end{defn}

We now investigate the commutativity of the various $\alpha_{(j,k)}$ and $\alpha_{(j,k)}^*$. For $k>0$ these were given in lemma \ref{lem:alpha_j_k_commutations} and it remains to find commutators involving $k=0$; we give these now.
\begin{lem}
For any odd integer $i$ and any non-negative integers $j,k$,
\[
[ \alpha_{(i,0)}, \alpha_{(j,k)}^* ] = \delta_{i,j} \delta_{k,0}
\]
\[
[ \alpha_{(j,k)}, \alpha_{(i,0)}^* ] = \delta_{i,j} \delta_{k,0}
\]
\end{lem}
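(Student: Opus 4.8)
The plan is to verify both identities by a direct computation on the $\Z_2$-basis $\{c_n x^e\}$ of $\widehat{CS}(\An,F_{1,1}) \cong \X \otimes \C$, splitting into the cases $k \geq 1$ and $k = 0$. In both cases the key point is that the operators with second index $0$ act on "odd data" (the exponent of $x_j$ with $j$ odd, together with the arc-label $n$), while the operators with second index $k \geq 1$ act on "even data" (the exponents of $x_{j \cdot 2^{k-1}}$ and $x_{j \cdot 2^k}$, both of even index).

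First, for $k \geq 1$: I would observe that $\alpha_{(j,k)}^*$ (and $\alpha_{(j,k)}$) only alter the exponents of $x_{j \cdot 2^{k-1}}$ and $x_{j \cdot 2^k}$ and leave the arc-label untouched, whereas $\alpha_{(i,0)}$ (and $\alpha_{(i,0)}^*$), with $i$ odd, only alter the exponent of $x_i$ and the arc-label $n$. Since an odd integer is never equal to $j \cdot 2^{k-1}$ or $j \cdot 2^k$ for $k \geq 1$, the two operators act on disjoint parts of a monomial; in particular the scalar factor ($e_i$, resp. $e_{j \cdot 2^k}$) picked up by one operator is unaffected by the other, so they commute. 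This yields $0$, which matches $\delta_{i,j}\delta_{k,0} = 0$ in this case.

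The remaining case $k = 0$ is the substantive one. I compute $\alpha_{(i,0)}\alpha_{(j,0)}^*(c_n x^e)$ and $\alpha_{(j,0)}^*\alpha_{(i,0)}(c_n x^e)$ directly from the defining formulas. Both orders shift the arc-label $n$ by $i - j$ and multiply $x^e$ by $x_i^{-1} x_j$ (which makes sense under the standard abuse of notation because the accompanying nonnegative-integer coefficient vanishes precisely when the would-be negative exponent would be illegitimate). The only difference is the scalar: applying $\alpha_{(i,0)}$ \emph{after} $\alpha_{(j,0)}^*$ sees the exponent of $x_i$ in $x_j x^e$, namely $e_i + \delta_{i,j}$, while the other order sees $e_i$; subtracting, the commutator equals $\delta_{i,j}\, c_{n+i-j}\, x_i^{-1} x_j x^e$, which is $0$ for $i \neq j$ and equals $c_n x^e$ for $i = j$ (since then $x_i^{-1} x_j = 1$ and $i - j = 0$). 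Hence $[\alpha_{(i,0)}, \alpha_{(j,0)}^*] = \delta_{i,j} = \delta_{i,j}\delta_{k,0}$. The second identity is symmetric: for $k \geq 1$ it is again a disjoint-support argument, and for $k = 0$ one runs the identical computation with $\alpha_{(j,0)}$ in place of $\alpha_{(i,0)}$ and $\alpha_{(i,0)}^*$ in place of $\alpha_{(j,0)}^*$, the roles of $i$ and $j$ interchanged.

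The only real care needed --- and the closest thing to an obstacle --- is the bookkeeping around the $x_i^{-1}$ shorthand in the case $i = j$: one must check that the coefficient $e_i + \delta_{i,j}$ multiplying the term containing $x_i^{-1}$ is $\geq 1$ exactly when that term is a genuine monomial, so that the cancellation producing $\delta_{i,j}$ is legitimate. This is immediate from the conventions already in place, so the argument is otherwise routine.
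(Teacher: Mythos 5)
Your overall strategy (direct computation on the basis monomials $c_n x^e$, splitting into $k\geq 1$ and $k=0$) is the same as the paper's, and your $k=0$ computation is correct and matches the paper's: both orders produce $c_{n+i-j}x_i^{-1}x_j x^e$ up to the scalar, and the scalar discrepancy $\left(e_i+\delta_{i,j}\right)-e_i=\delta_{i,j}$ gives exactly the claimed commutator.

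However, your disjoint-support argument for $k\geq 1$ has a genuine hole at $k=1$, $i=j$. You assert that an odd integer is never equal to $j\cdot 2^{k-1}$ for $k\geq 1$, but for $k=1$ we have $j\cdot 2^{k-1}=j$, which \emph{is} odd. So $\alpha_{(i,1)}^*$ and $\alpha_{(i,0)}$ do not have disjoint support: $\alpha_{(i,1)}^*$ is defined by a case distinction on whether $e_i\geq 2$ and multiplies by $x_i^{-2}x_{2i}$, while $\alpha_{(i,0)}$ carries the coefficient $e_i$ and multiplies by $x_i^{-1}$. Each operator therefore both reads and shifts the exponent of $x_i$ that the other one depends on, and the commutation is not automatic — it requires the explicit check the paper carries out: after $\alpha_{(i,0)}$ the exponent of $x_i$ drops by one, so the threshold for $\alpha_{(i,1)}^*$ to be nonzero is shifted, and one must verify that the mismatched borderline cases (e.g.\ $e_i=1,2$) are killed by the mod-$2$ coefficients, and that in the surviving cases the scalar $e_i-2\equiv e_i$ agrees. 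The same caveat applies to your ``symmetric'' treatment of $[\alpha_{(i,1)},\alpha_{(i,0)}^*]$, though there the verification is easier since $\alpha_{(i,0)}^*$ involves no case distinction. The conclusion (commutator zero) is correct in all these cases, but as written your argument does not establish it; you need to add the $k=1$, $i=j$ computation explicitly.
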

(Each $\delta$ here is a Kronecker delta.)

\begin{proof}
We first show that $\alpha_{(i,0)}, \alpha_{(j,k)}^*$ commute when $i \neq j$.
\begin{align*}
\alpha_{(i,0)} \alpha_{(j,k)}^* c_n x^e &= \alpha_{(i,0)} \left\{
\begin{array}{ll}
0 & e_{j \cdot 2^{k-1}} \leq 1 \\
c_n x_{j \cdot 2^{k-1}}^{-2} x_{j \cdot 2^k} x^e & e_{j \cdot 2^{k-1}} \geq 2
\end{array} \right. \\
&= \left\{ \begin{array}{ll}
0 & e_{j \cdot 2^{k-1}} \leq 1 \\
e_{i} c_{n+i} x_{i}^{-1} x_{j \cdot 2^{k-1}}^{-2} x_{j \cdot 2^k} x^e & e_{j \cdot 2^{k-1}} \geq 2.
\end{array} \right.
\end{align*}
\begin{align*}
\alpha_{(j,k)}^* \alpha_{(i,0)} c_n x^e &= \alpha_{(j,k)}^* e_{i} c_{n+i} x_{i}^{-1} x^e \\
&= \left\{ \begin{array}{ll}
0 & e_{j \cdot 2^{k-1}} \leq 1 \\
e_{i} c_{n+i} x_{j \cdot 2^{k-1}}^{-2} x_{j \cdot 2^k} x_{i}^{-1} x^e & e_{j \cdot 2^{k-1}} \geq 2
\end{array} \right.
\end{align*}
It is similar to check that $\alpha_{(i,0)}^*$ and $\alpha_{(j,k)}$ commute when $i \neq j$.

We next consider the commutativity of $\alpha_{(i,0)}$ and $\alpha_{(i,k)}^*$where $k \neq 0$.
\begin{align*}
\alpha_{(i,0)} \alpha_{(i,k)}^* c_n x^e &= \alpha_{(i,0)}
\left\{ \begin{array}{ll}
0 & e_{i \cdot 2^{k-1}} \leq 1 \\
c_n x_{i \cdot 2^{k-1}}^{-2} x_{i \cdot 2^k} x^e & e_{i \cdot 2^{k-1}} \geq 2
\end{array} \right. \\
&= \left\{ \begin{array}{ll}
0 & e_{i \cdot 2^{k-1}} \leq 1 \\
e_{i} c_{n+i} x_{i}^{-1} x_{i \cdot 2^{k-1}}^{-2} x_{i \cdot 2^k} x^e & e_{i \cdot 2^{k-1}} \geq 2
\end{array} \right.
\end{align*}
\begin{align*}
\alpha_{(i,k)}^* \alpha_{(i,0)} c_n x^e &= \alpha_{(i,k)}^* e_{i} c_{n+i} x_{i}^{-1} x^e \\
&= \left\{ \begin{array}{ll}
0 & e_{i \cdot 2^{k-1}} \leq 1 \\
e_{i} c_{n+i} x_{i \cdot 2^{k-1}}^{-2} x_{i \cdot 2^k} x_{i}^{-1} x^e & e_{i \cdot 2^{k-1}} \geq 2
\end{array} \right. 
\end{align*}
Hence $\alpha_{(i,0)}$ and $\alpha_{(i,k)}^*$ commute for $k \neq 0$. It is similar to check that $\alpha_{(i,0)}^*$ and $\alpha_{(i,k)}$ commute for $k \neq 0$.

Finally we consider the commutativity of $\alpha_{(i,0)}$ and $\alpha_{(i,0)}^*$.
\begin{align*}
\alpha_{(i,0)} \alpha_{(i,0)}^* c_n x^e &= \alpha_{(i,0)} \left( c_{n-i} x_{i} x^e \right) \\
&= \left( e_{i} + 1 \right) c_n x^e
\end{align*}
\begin{align*}
\alpha_{(i,0)}^* \alpha_{(i,0)} c_n x^e &= \alpha_{(i,0)}^* \left( e_{i} c_{n+i} x_{i}^{-1} x^e \right) \\
&= e_{i} c_n x^e
\end{align*}
\end{proof}

As the differential is the sum of the $\alpha_{(j,k)}$, the commutator of $\alpha_{(i,0)}^*$ with $\partial$ is now easily given.
\begin{prop}
For any odd integer $i$,
\[
[\partial, \alpha_{(i,0)}^*] = 1.
\]
\end{prop}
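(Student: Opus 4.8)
The plan is to reduce the commutator to a single surviving term by expanding $\partial$ into its constituent decay and source operators. First I would recall from the previous subsection that on $\widehat{CS}(\An, F_{1,1}) = \X \otimes \C$ we have
\[
\partial = \sum_{i \in \Z \backslash \{0\}} \alpha_i = \sum_{j \text{ odd}} \sum_{k=0}^\infty \alpha_{(j,k)}.
\]
Since any basis element $c_n x^e$ involves only finitely many of the variables $x_m$, only finitely many of the operators $\alpha_{(j,k)}$ act nontrivially on it; hence $\partial \alpha_{(i,0)}^*$ and $\alpha_{(i,0)}^* \partial$ are each computed term by term on basis elements, and the infinite sum may be commuted past $\alpha_{(i,0)}^*$ without any convergence concern. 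This gives the identity of operators
\[
[\partial, \alpha_{(i,0)}^*] = \sum_{j \text{ odd}} \sum_{k=0}^\infty [\alpha_{(j,k)}, \alpha_{(i,0)}^*].
\]

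Next I would invoke the commutation lemma immediately preceding the proposition, which states precisely that $[\alpha_{(j,k)}, \alpha_{(i,0)}^*] = \delta_{i,j}\delta_{k,0}$ for every odd $j$ and every $k \geq 0$. Applying this to each summand, the entire double sum collapses: every term vanishes except the one with $(j,k) = (i,0)$, which occurs in the sum since $i$ is odd. That surviving term is $[\alpha_{(i,0)}, \alpha_{(i,0)}^*] = 1$ — equivalently, one reads off from the explicit computations closing that lemma's proof that $\alpha_{(i,0)}\alpha_{(i,0)}^*(c_n x^e) = (e_i+1)c_n x^e$ and $\alpha_{(i,0)}^*\alpha_{(i,0)}(c_n x^e) = e_i c_n x^e$, whose difference is $c_n x^e$ modulo $2$. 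Hence $[\partial, \alpha_{(i,0)}^*] = 1$, as claimed.

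There is essentially no obstacle here: the statement is a formal bookkeeping consequence of the preceding commutation lemma. The only point deserving a word of care is the legitimacy of interchanging the infinite sum defining $\partial$ with the operator $\alpha_{(i,0)}^*$, and this is disposed of by the local-finiteness observation above. One may also remark, for the reader's orientation, on the contrast with Lemma \ref{lem:alpha_j_k_commutations}: whereas for $k \geq 1$ the self-commutator $[\alpha_{(j,k)}, \alpha_{(j,k)}^*]$ depends on the exponents $e_{j\cdot 2^{k-1}}, e_{j\cdot 2^k}$ of the monomial, at the bottom of the decay chain the source operator $\alpha_{(i,0)}^*$ satisfies the clean Weyl-type relation $[\alpha_{(i,0)}, \alpha_{(i,0)}^*] = 1$ identically, which is exactly what makes $\alpha_{(i,0)}^*$ a chain homotopy from $1$ to $0$.
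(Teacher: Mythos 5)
Your proof is correct and follows exactly the paper's argument: expand $\partial$ as the sum of the $\alpha_{(j,k)}$ and apply the preceding commutation lemma so that only the $(j,k)=(i,0)$ term survives. The added remark about local finiteness of the sum is a harmless (and valid) extra precaution.
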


\begin{proof}
\[
\left[ \partial, \alpha_{(i,0)}^* \right]
= \left[ \sum_{(j,k)} \alpha_{(j,k)}, \alpha_{(i,0)}^* \right] 
= \sum_{(j,k)} \left[ \alpha_{(j,k)}, \alpha_{(i,0)}^* \right] 
= \sum_{(j,k)} \delta_{j,i} \delta_{k,0} \\
= 1.
\]
\end{proof}

Thus any $\alpha_{(i,0)}^*$ satisfies
\[
\partial \alpha_{(i,0)}^* + \alpha_{(i,0)}^* \partial = 1,
\]
and hence is a chain homotopy from $1$ to $0$. We immediately obtain the homology of the complex.
\begin{thm}
\label{thm:one_one_homology}
\[
\widehat{HS}(\An,F_{1,1}) = 0.
\]
\qed
\end{thm}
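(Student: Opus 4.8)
Given the machinery assembled in sections~\ref{sec:one_marked_point_each_boundary} and~\ref{sec:source_creation}, the plan is to deduce the theorem directly from the chain-homotopy identity established just above. Recall that $\widehat{CS}(\An, F_{1,1}) = \X \otimes \C$ with differential $\partial = \sum_{i \in \Z \setminus \{0\}} \alpha_i$, and that for any chosen odd integer $i$ there is a source operator $\alpha^*_{(i,0)} \colon \X \otimes \C \to \X \otimes \C$. The Proposition immediately preceding the theorem asserts $[\partial, \alpha^*_{(i,0)}] = \partial\,\alpha^*_{(i,0)} + \alpha^*_{(i,0)}\,\partial = 1$, so $\alpha^*_{(i,0)}$ is a null-homotopy of the identity map on the entire chain complex.

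From here the conclusion is the standard argument: if $s \in \ker\partial$, then
\[
s = \partial\bigl(\alpha^*_{(i,0)}\, s\bigr) + \alpha^*_{(i,0)}\bigl(\partial s\bigr) = \partial\bigl(\alpha^*_{(i,0)}\, s\bigr),
\]
so every cycle is a boundary, $\ker\partial = \im\partial$, and $\widehat{HS}(\An, F_{1,1}) = \ker\partial / \im\partial = 0$. In contrast to the computation of $H(\X)$, no K\"{u}nneth or direct-limit argument is required.

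The content is of course in the Proposition, and I would record its proof as follows. Write $\partial = \sum_{j \text{ odd}} \sum_{k \geq 0} \alpha_{(j,k)}$, so $[\partial, \alpha^*_{(i,0)}] = \sum_{j,k} [\alpha_{(j,k)}, \alpha^*_{(i,0)}]$; by the commutation Lemma every term on the right vanishes except $[\alpha_{(i,0)}, \alpha^*_{(i,0)}] = 1$. The one computation needing care is this surviving commutator on a monomial $c_n x^e$: applying $\alpha^*_{(i,0)}$ first yields $c_{n-i}\,x_i\,x^e$, whose $x_i$-exponent is $e_i + 1$, so $\alpha_{(i,0)}$ then returns $(e_i+1)\,c_n\,x^e$; applying $\alpha_{(i,0)}$ first yields $e_i\,c_{n+i}\,x_i^{-1}\,x^e$, and then $\alpha^*_{(i,0)}$ returns $e_i\,c_n\,x^e$; hence the commutator equals $c_n x^e$ over $\Z_2$. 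Everything else is bookkeeping with the abusive $x_i^{-1}$ shorthand.

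The only place anything could go wrong --- and hence the ``main obstacle'' such as it is --- is the insistence that $i$ be odd. There is no source operator indexed by an even integer, because the corresponding Goldman-bracket contribution $[c_n, x_i] = i\, c_{n+i}$ vanishes mod $2$ when $i$ is even; a closed curve of homotopy class $i$ meets the open string $c_n$ exactly $|i|$ times, and only an odd count survives mod $2$. Thus it is precisely the presence of the open string, together with an odd winding class, that supplies a contracting homotopy for the full differential and annihilates the homology that persisted in the case $F = \emptyset$.
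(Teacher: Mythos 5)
Your proposal is correct and is essentially the paper's own argument: the paper establishes $[\partial, \alpha_{(i,0)}^*] = 1$ via exactly the commutator computations you describe (including the key $(e_i+1)$ versus $e_i$ count on $c_n x^e$), and then concludes that $\alpha_{(i,0)}^*$ is a contracting homotopy, so the theorem carries a \qed with no further proof. Your closing remark about why only odd $i$ works matches the paper's discussion of the Goldman bracket term $[c_n, x_i] = i\,c_{n+i}$ vanishing mod $2$ for even $i$.
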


This is as it should be, since such $(\An,F_{1,1})$ does not correspond to any sutures.

\subsection{Annuli with two marked points on single boundary}
\label{sec:two_points_single_boundary}

We now turn to $(\An,F)$ where both marked points lie on the same boundary component, $F=F_{0,2}$. An open string runs between the two marked points. With the marked points drawn as in figure \ref{fig:a_curves}, the arc runs an integer and a half $n + \frac{1}{2}$ times around the annulus and we denote its homotopy class by $a_{n+\frac{1}{2}}$, for $n \in \Z$, as shown.

\begin{figure}[ht]
\begin{center}
\begin{tikzpicture}[
scale=1.2, 
string/.style={thick, draw=red, postaction={nomorepostaction, decorate, decoration={markings, mark=at position 0.5 with {\arrow{>}}}}}]

\draw (-4.5,0) circle (1 cm); 	
\draw (-4.5,0) circle (0.2 cm);
\draw (-1.5,0) circle (1 cm);
\draw (-1.5,0) circle (0.2 cm);
\draw (1.5,0) circle (1 cm);
\draw (1.5,0) circle (0.2 cm);
\draw (4.5,0) circle (1 cm);
\draw (4.5,0) circle (0.2 cm);

\draw [xshift = -4.5 cm, string] (0,-1) .. controls (0,-0.8) and (0,-0.8) .. (-95:0.8) arc (265:90:0.8) arc (90:-90:0.7) arc (270:90:0.6) .. controls (0,0.6) and (0,0.8) .. (0,1);

\draw [xshift = -1.5 cm, string] (0,-1) .. controls (0,-0.8) and (0,-0.8) .. (-95:0.8) arc (265:95:0.8) .. controls (0,0.8) and (0,0.8) .. (0,1);

\draw [xshift = 1.5 cm, string] (0,-1) .. controls (0,-0.8) and (0,-0.8) .. (-85:0.8) arc (-85:85:0.8) .. controls (0,0.8) and (0,0.8) .. (0,1);

\draw [xshift= 4.5 cm, string] (0,-1) .. controls (0,-0.8) and (0,-0.8) .. (-85:0.8) arc (-85:90:0.8) arc (90:270:0.7) arc (-90:85:0.6) .. controls (0,0.6) and (0,0.8) .. (0,1);

\draw (-4.5,-1.5) node {$a_{-\frac{3}{2}}$};
\draw (-1.5,-1.5) node {$a_{-\frac{1}{2}}$};
\draw (1.5,-1.5) node {$a_{\frac{1}{2}}$};
\draw (4.5,-1.5) node {$a_{\frac{3}{2}}$};

\end{tikzpicture}
\caption{Open strings on $(\An, F_{0,2})$.}
\label{fig:a_curves}
\end{center}
\end{figure}

It will be useful later to make definitions as follows.
\begin{defn}
Let $\A = \widehat{CS}^O(\An,F_{0,2})$ be the free $\Z_2$-module with basis $\{a_{n+\frac{1}{2}} \; : \; n \in \Z  \}$.

Let $\A_+$ be the free $\Z_2$-module on $\{a_{n+\frac{1}{2}} \; : \; n \in \Z_{\geq 0} \}$, and $\A_-$ the free $\Z_2$-module on $\{a_{n-\frac{1}{2}} \; : \; n \in \Z_{\leq 0} \}$.
\end{defn}

Lemma \ref{lem:CS_as_module} gives $\widehat{CS}(\An,F_{0,2}) \cong \X \otimes_{\Z_2} \A$, a free $\X$-module with basis the $a_n$. A generator $s$ of $\widehat{CS}(\An,F_{0,2})$, i.e. string diagram (up to homotopy) without contractible closed curves, consists of a single $a_n$, together with some (possibly none) closed curves $x_i$, so can be written as $a_n x^e$.

We have $\A = \A_+ \oplus \A_-$, and moreover
\begin{equation}
\label{eqn:F02_decomposition}
\widehat{CS}(\An,F_{0,2}) \cong \left( \X \otimes_{\Z_2} \A_+ \right) \oplus \left( \X \otimes_{\Z_2} \A_- \right).
\end{equation}

As in previous cases, a string diagram on $(\An,F_{0,2})$ can be drawn so that all the closed curves $x_i$ are disjoint from each other. We can also draw any $a_n$ disjoint from all closed strings. This being so, the Goldman bracket vanishes, the differential obeys the Leibniz rule, $\widehat{CS}(\An, F_{0,2}) \cong \X \otimes \A$ is a differential $\X$-module, and $\widehat{HS}(\An,F_{0,2})$ is a $H(\X)$-module, as discussed in section \ref{sec:differential_algebra}. As such, $\partial$ is determined by its action on $\X$, $\A$ and the Leibniz rule. 

We thus consider $\partial a_n$. We can draw $a_n$ as in figure \ref{fig:a_curves} so that it has $|n|- \frac{1}{2}$ self-intersections, and resolving each such intersection splits $a_n$ into an arc $a_i$, where $i$ has the same sign as $n$ and $|i| < |n|$, and a closed curve $x_j$ where $i+j=n$.

Thus 
\[
\ldots, \partial a_{-\frac{3}{2}} = a_{-\frac{1}{2}} x_{-1}, \quad
a_{-\frac{1}{2}} = a_\frac{1}{2} = 0, \quad
\partial a_\frac{3}{2} = a_\frac{1}{2} x_1, \quad
\partial a_\frac{5}{2} = a_\frac{3}{2} x_1 + a_\frac{1}{2} x_2, \quad \ldots
\]
and in general we have, for a positive integer $n$,
\[
\partial a_{n-\frac{1}{2}} = \sum_{i=1}^{n-1} a_{i-\frac{1}{2}} x_{n-i}, \quad
\partial a_{-n+\frac{1}{2}} = \sum_{i=1}^{n-1} a_{-i+\frac{1}{2}} x_{-n+i},
\]
which can also be written, for any $n$, as
\[
\partial a_n = \sum_{i+j=n, \; ij>0} a_i x_j.
\]

For a general string diagram $s = a_n x^e $ we have, by the Leibniz rule,
\[
\partial s = \left( \partial a_n \right) x^e + a_n \left( \partial x^e \right)
= \sum_{i+j = n, \; ij>0}  a_i x_j  x^e + a_n \left( \partial x^e \right),
\]
with $\partial x^e$ given by the differential on $\X$. 

A general element of $\widehat{CS}(A,F) \cong \X \otimes \A$ can be written as a sum
\[
f = \sum_{n \in \Z + \frac{1}{2} } a_n p_n,
\]
with finitely many nonzero terms, where $p_n \in \X$ is a polynomial in $\{x_n \; : \; n \in \Z \backslash \{0\} \}$.

We may split $f$ into terms involving $a_n$ with positive and negative $n$ respectively, i.e. according to the decomposition (\ref{eqn:F02_decomposition}). Then $f=f_+ + f_-$ where
\[
f_+ = \sum_{n = 1}^\infty a_{n-\frac{1}{2}} p_{n-\frac{1}{2}}, \quad f_- = \sum_{n=1}^\infty a_{-n + \frac{1}{2}} p_{-n+\frac{1}{2}}
\]

Note that each $\partial a_n$ only includes terms with $a_m$, where $m$ has the same sign as $n$. Thus the decomposition (\ref{eqn:F02_decomposition}) is in fact a direct sum of chain complexes and differential $\X$-modules, and we have
\[
\widehat{HS}(\An, F_{0,2}) = H( \X \otimes \A_+ ) \oplus H(\X \otimes \A_-).
\]
Moreover the $\X$-linear maps defined by $a_n \leftrightarrow a_{-n}$ give mutually inverse chain maps $\X \otimes \A_+ \leftrightarrow \X \otimes \A_- $, so the two summands are isomorphic. In the next section we compute $H(\X \otimes \A_+ )$ and hence $\widehat{HS}(\An, F_{0,2})$.

\subsection{Computing homology by simplifying cycles}
\label{sec:homology_A_+_tensor_X}

The main idea of our computation of $H(\X \otimes \A_+)$ is to successively simplify cycles of $\X \otimes \A_+$, showing that a cycle is homologous to an element of smaller ``degree" in the $a_{n+\frac{1}{2}}$. This technique will also be employed subsequently, in a more involved way, to compute homology in $(\An, F_{2,2})$.

Throughout this section we use the notions of fermionic and (positively) clean polynomials from definition \ref{def:clean}.

\begin{defn}
A general element $f$ of $\A_+ \otimes \X$ has the form
\[
f = a_\frac{1}{2} p_\frac{1}{2} + a_\frac{3}{2} p_\frac{3}{2} + \cdots + a_{n+\frac{1}{2}} p_{n+\frac{1}{2}},
\]
where each $p_{i+\frac{1}{2}} \in \X$, for some positive integer $n$ such that $p_{n+\frac{1}{2}} \neq 0$. This $n$ is the \emph{degree} of $f$.

We write $O(a_m)$ to denote an element of $\A_+ \otimes \X$ of degree $\leq m$. 
\end{defn}
Note that as $\partial a_n$ only involves terms with $a_j$ with $\frac{1}{2} \leq j \leq n-1$, $\partial$ lowers degree by at least $1$.

\begin{lem}
\label{lem:a_simplification}
If $f \in \A_+ \otimes \X$ satisfies $f = O(a_n)$, $\partial f = 0$ and $n \geq \frac{3}{2}$, then $f = \partial g + O(a_{n-1})$ for some $g \in \A_+ \otimes \X$.
\end{lem}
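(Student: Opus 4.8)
The plan is to strip the top-degree term of $f$ off in two stages, using the structure of $H(\X)$ from Theorem~\ref{thm:homology_of_X}. If $f$ already has degree $\leq n-1$ there is nothing to prove, so assume the top term is $a_n p_n$ with $p_n\neq 0$. From the formula $\partial a_n=\sum_{i+j=n,\,ij>0}a_i x_j$ one reads off $\partial a_n=a_{n-1}x_1+O(a_{n-2})$, and $\partial$ on the $\X$-factor preserves $a$-degree while each $\partial a_m$ lowers it by at least $1$; hence the $a_n$-component of $\partial f=0$ is exactly $a_n\,\partial p_n$, so $\partial p_n=0$ in $\X$. By Theorem~\ref{thm:homology_of_X} every cycle of $\X$ is homologous to a fermionic polynomial, so write $p_n=q_n+\partial r$ with $q_n$ fermionic and $r\in\X$. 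Putting $g_1=a_n r$, a short computation using the shape of $\partial a_n$ gives $f-\partial g_1=a_n q_n+O(a_{n-1})$, still a cycle; if $q_n=0$ we are already done with $g=g_1$, so assume $q_n\neq 0$.

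The crux is to prove that $x_1$ divides $q_n$. Write $f-\partial g_1=a_n q_n+a_{n-1}p'_{n-1}+O(a_{n-2})$, where $a_{n-1}$ is a genuine basis element of $\A_+$ precisely because $n\geq\tfrac32$. Since $q_n$ is fermionic, $\partial q_n=0$, so the $a_{n-1}$-component of $\partial(f-\partial g_1)=0$ reads $x_1 q_n+\partial p'_{n-1}=0$; thus $x_1 q_n$ is a boundary in $\X$. Decompose the fermionic polynomial as $q_n=x_1 q'+q''$, where $q'$ and $q''$ are fermionic polynomials not involving $x_1$. Then, using $\partial x_2=x_1^2$ and $\partial q'=0$, we get $x_1 q_n=x_1^2 q'+x_1 q''=\partial(x_2 q')+x_1 q''$, so $x_1 q''$ is also a boundary in $\X$. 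But $x_1 q''$ is itself a fermionic polynomial, and a fermionic polynomial is a boundary only if it vanishes (uniqueness of fermionic representatives in $H(\X)$); hence $q''=0$ and $q_n=x_1 q'$.

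Finally, since $\partial a_{n+1}=a_n x_1+O(a_{n-1})$ (again from the explicit formula) and $\partial q'=0$, set $g_2=a_{n+1}q'$, so that $\partial g_2=(\partial a_{n+1})q'=a_n x_1 q'+O(a_{n-1})=a_n q_n+O(a_{n-1})$. Then $(f-\partial g_1)-\partial g_2=O(a_{n-1})$, and $g=g_1+g_2=a_n r+a_{n+1}q'\in\A_+\otimes\X$ does the job. The main obstacle is the divisibility claim $x_1\mid q_n$: it is exactly what allows the $a_{n+1}$-term to cancel the surviving top term without re-introducing a top term, and it is obtained by feeding the cycle condition one level below the top into the rigidity of $H(\X)$. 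Everything else is bookkeeping with the explicit description of $\partial a_m$.
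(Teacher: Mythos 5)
Your proof is correct and follows essentially the same approach as the paper's: extract the top coefficient $p_n$, observe it is a cycle in $\X$, use the cycle condition at degree $n-1$ to show $x_1 p_n$ is a boundary, split the fermionic representative by divisibility by $x_1$ and use uniqueness of fermionic representatives to kill the clean (non-$x_1$) part, then cancel the top term with $\partial(a_{n+1}t + a_n u)$. The only difference is that you subtract the boundary $\partial(a_n r)$ early so the top coefficient becomes the fermionic polynomial $q_n$ itself, whereas the paper carries the $\partial u$ term through to the end; the underlying argument and the final $g$ are the same.
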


\begin{proof}
Let the $a_n$ and $a_{n-1}$ terms of $f$ be $a_n p_n$ and $a_{n-1} p_{n-1}$ respectively, so
\[
f = a_n p_n + a_{n-1} p_{n-1} + O(a_{n-2}).
\]
(If $n=3/2$ then $O(a_{n-2})=0$.)  We examine the highest order terms of $\partial f$, namely the $a_n$ and $a_{n-1}$ terms.
\begin{align*}
\partial f &= (\partial a_n) p_n + a_n (\partial p_n ) + ( \partial a_{n-1} ) p_{n-1} + a_{n-1} ( \partial p_{n-1} ) + O(a_{n-2}) \\
&= \left( a_{n-1} x_1 + O(a_{n-2}) \right) p_n + a_n \partial p_n + O(a_{n-2}) p_{n-1} + a_{n-1} \partial p_{n-1} + O(a_{n-2}) \\
&= a_n \partial p_n + a_{n-1} \left( x_1 p_n + \partial p_{n-1} \right) + O(a_{n-2})
\end{align*}
Now as $\partial f = 0$, then the polynomials which are coefficients of each $a_i$ must be zero. Considering the coefficients of $a_n$ and $a_{n-1}$ then gives
\[
\partial p_n = 0, \quad x_1 p_n = \partial p_{n-1}.
\]
Thus $p_n$ is a cycle in $\X$ but $x_1 p_n$ is a boundary. From our computation of $H(\X)$, we know $p_n = r + \partial u$, where $r,u \in \X$ and $r$ is a fermionic polynomial. We can further decompose $r$ as $s + x_1 t$, where $x_1$ does not occur in $s$ or $t$, i.e. $s,t$ are clean polynomials. (Note $\partial r = \partial s = \partial t = 0$.) We then have $p_n = s + x_1 t + \partial u$ so
\[
\partial p_{n-1} = x_1 p_n = x_1 s + x_1^2 t + x_1 \partial u 
= x_1 s + \partial (x_2 t + x_1 u)
\]
Thus $x_1 s$ is a boundary in $\X$. But as $s$ is clean, $x_1 s$ is fermionic; so if $s \neq 0$ then $x_1 s$ is nonzero in $H(\X)$. Thus $s=0$ and we have
\[
p_n = x_1 t + \partial u
\quad \text{so}
\quad
f = a_n \left( x_1 t + \partial u \right) + a_{n-1} p_{n-1} + O(a_{n-2}).
\]
Now we note that there is an element whose differential has the same $a_n$ term:
\[
\partial ( a_{n+1} t + a_n u ) = a_{n} \left( x_1 t + \partial u \right) + a_{n-1} \left( x_2 t + x_1 u \right) + O(a_{n-2}).
\]
(Here we used $\partial t =0$.) It follows that
\begin{align*}
f &= a_n \left( x_1 t + \partial u \right) + a_{n-1} p_{n-1} + O(a_{n-2}) \\
&= \partial (a_{n+1} t + a_n u ) + a_{n-1} (p_{n-1} + x_2 t + x_1 u) + O(a_{n-2})
\end{align*}
so $f = \partial g + O(a_{n-1})$ where $g = a_{n+1} t + a_n u$, as desired.
\end{proof}

Successively applying this lemma, we may reduce any cycle $f \in \A_+ \otimes \X$ to a homologous cycle of degree $1/2$, so $f = \partial g + a_\frac{1}{2} p$ where $g \in \A_+ \otimes \X$ and $p \in \X$. And we may use our knowledge of $H(\X)$ to say a little more.

\begin{prop}
\label{prop:homology_with_a1}
\label{prop:strong_homology_with_a1}
Suppose $f \in \A_+ \otimes \X$ satisfies $\partial f = 0$. Then $f = \partial g + a_{\frac{1}{2}} p$ where $g \in \A_+ \otimes \X$ and $p \in \X$ is a clean polynomial.
\end{prop}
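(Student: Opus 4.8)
The plan is to combine the degree-reduction result of Lemma \ref{lem:a_simplification} with a final application of the structure of $H(\X)$, exactly as foreshadowed in the paragraph preceding the statement. First I would take a cycle $f \in \A_+ \otimes \X$ and apply Lemma \ref{lem:a_simplification} repeatedly: if $f = O(a_n)$ with $n \geq \tfrac32$, then $f$ is homologous to an element of degree $\leq n-1$. Since degrees of elements of $\A_+ \otimes \X$ are bounded below by $\tfrac12$, after finitely many steps we conclude $f = \partial g + a_{\frac12} p$ for some $g \in \A_+ \otimes \X$ and some $p \in \X$. (Note that each intermediate element remains a cycle, since $\partial f = 0$ and $\partial^2 = 0$, so Lemma \ref{lem:a_simplification} applies at each stage.)

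It remains to arrange that $p$ can be taken to be a clean polynomial. Since $f = \partial g + a_{\frac12} p$ is a cycle and $\partial g$ is a cycle, $a_{\frac12} p$ is a cycle; computing $\partial(a_{\frac12} p) = (\partial a_{\frac12}) p + a_{\frac12}(\partial p) = a_{\frac12}(\partial p)$ (using $\partial a_{\frac12} = 0$, since $a_{\frac12}$ has no self-intersections), so $\partial p = 0$ in $\X$. By our computation of $H(\X)$ (Theorem \ref{thm:homology_of_X}), $p = r + \partial u$ where $r$ is a fermionic polynomial and $u \in \X$. Write $r = s + x_1 t$ where $s, t$ are clean polynomials (so $x_1$ appears in neither), noting $\partial s = \partial t = \partial u$-terms behave as in the proof of Lemma \ref{lem:a_simplification}. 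Then
\[
a_{\frac12} p = a_{\frac12} s + a_{\frac12} x_1 t + a_{\frac12} \partial u.
\]
The last two terms are boundaries: $a_{\frac12} x_1 t + a_{\frac12}\partial u = \partial(a_{\frac32} t + a_{\frac12} u)$, using $\partial a_{\frac32} = a_{\frac12} x_1$ and $\partial t = 0$. Hence $f = \partial g' + a_{\frac12} s$ with $s$ clean, as desired.

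The one point requiring a little care — and the only real obstacle — is verifying that $x_1$ can genuinely be split off, i.e. that there is no hidden constraint forcing $s$ itself to have a further boundary component. Here there is nothing to prove: unlike in Lemma \ref{lem:a_simplification}, where one needed $x_1 p_n$ to be a boundary in $\X$ and hence $s = 0$, in the present statement we are content with \emph{any} clean representative $s$, so no vanishing argument is needed. Thus the proof is essentially a bookkeeping combination of the degree-reduction lemma and the known structure of $H(\X)$, and is short.
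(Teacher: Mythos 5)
Your proposal is correct and follows essentially the same route as the paper: repeated application of Lemma \ref{lem:a_simplification} to reduce to degree $\tfrac12$, then the observation that $\partial p = 0$, the decomposition $p = (\text{fermionic}) + \partial u$ with the fermionic part split as $s + x_1 t$ into clean polynomials, and absorption of $a_{\frac12} x_1 t + a_{\frac12}\partial u$ into the boundary $\partial(a_{\frac32} t + a_{\frac12} u)$. Your closing remark about why no vanishing argument is needed here (in contrast to the lemma) is also exactly right.
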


\begin{proof}
From above we have $f = \partial g_0 + a_{\frac{1}{2}} p_0$ with $g_0 \in \A_+ \otimes \X$ and $p_0 \in \X$. Differentiating gives $0 = \partial f = a_{\frac{1}{2}} \partial p_0$. Thus $\partial p_0 = 0$, so $p_0$ represents a homology class in $\X$, and hence, up to a boundary, is a fermionic polynomial. Thus $p_0 = q + \partial r$ where $q,r \in \X$ and $q$ is fermionic. We can further split $q$ into terms which contain $x_1$ and those which do not: $q = p + x_1 u$, where $p,u$ are clean polynomials. (Note $\partial q = \partial p = \partial u = 0$.) We then have
\[
f + \partial g_0 = a_{\frac{1}{2}} p_0 = a_\frac{1}{2} (q + \partial r) = a_{\frac{1}{2}} (p + x_1 u + \partial r) = a_{\frac{1}{2}} p + \partial (a_{\frac{3}{2}} u + a_{\frac{1}{2}} r).
\]
(Here we have used the fact that $\partial (a_{\frac{3}{2}} u) = (\partial a_{\frac{3}{2}}) u = a_{\frac{1}{2}} x_1 u$.) Thus $f$ has the desired form, with $g = g_0 + a_{\frac{3}{2}} u + a_{\frac{1}{2}} r$.
\end{proof}

Roughly then, the homology of $A_+ \otimes \X$ behaves something like $\{a_\frac{1}{2} \} \otimes \X$, and indeed rather like $\X$. We will in fact give a chain map $\A_+ \otimes \X \To \X$.

\begin{defn}
The map $\Phi: \A_+ \otimes \X \To \X$ sends $a_{n-\frac{1}{2}} p \mapsto x_n p$, for each positive integer $n$ and $p \in \X$, and extends linearly over $\A_+ \otimes \X$.
\end{defn}

The map $\Phi$ is in fact an $\X$-module homomorphism; we now show it is a chain map, hence a homomorphism of differential $\X$-modules and descends to homology as an $H(\X)$-module homomorphism.
\begin{lem}
The map $\Phi$ commutes with $\partial$.
\end{lem}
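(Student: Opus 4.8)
The plan is to verify $\Phi \partial = \partial \Phi$ by first reducing, via the $\X$-module structure and the Leibniz rule, to checking the equality on the basis arcs $a_{n-\frac12}$ with trivial closed-curve part, and then identifying the resulting expression with the formula for $\partial x_n$ in $\X$.

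Recall that $\Phi$ is an $\X$-module homomorphism, that $\widehat{CS}(\An, F_{0,2}) \cong \X \otimes \A$ is a differential $\X$-module (the Goldman bracket vanishes here, as noted in section \ref{sec:two_points_single_boundary}), and that $\X$ is itself a differential algebra, so $\partial$ obeys the Leibniz rule on both sides. Hence, for a general generator $a_{n-\frac12} p$ with $n$ a positive integer and $p \in \X$, I would compute
\[
\Phi \partial (a_{n-\frac12} p) = \Phi\bigl( (\partial a_{n-\frac12}) p + a_{n-\frac12}\, \partial p \bigr) = \Phi(\partial a_{n-\frac12})\, p + x_n\, \partial p,
\]
\[
\partial \Phi (a_{n-\frac12} p) = \partial(x_n p) = (\partial x_n)\, p + x_n\, \partial p,
\]
using $\X$-linearity of $\Phi$ in the first line. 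Comparing, the two sides agree for every $p$ as soon as $\Phi(\partial a_{n-\frac12}) = \partial x_n$ holds for each positive integer $n$; so the whole lemma reduces to this single identity.

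For the identity, I would use the description $\partial a_{n-\frac12} = \sum_{i=1}^{n-1} a_{i-\frac12} x_{n-i}$ from section \ref{sec:two_points_single_boundary}. Since $\Phi$ sends $a_{i-\frac12} \mapsto x_i$, this gives $\Phi(\partial a_{n-\frac12}) = \sum_{i=1}^{n-1} x_i x_{n-i}$, which is exactly the unsimplified expression for $\partial x_n$ obtained in the proof of Lemma \ref{lem:differential_no_marked_pts} by drawing $x_n$ with $n-1$ self-crossings and resolving each one (the sum collapses to $x_k^2$ when $n = 2k$ and to $0$ when $n$ is odd after mod-$2$ cancellation, but as an element of $\X$ it is literally $\sum_{i=1}^{n-1} x_i x_{n-i}$). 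Hence $\Phi(\partial a_{n-\frac12}) = \partial x_n$, completing the argument. There is no real obstacle: the only care needed is in the half-integer-versus-integer indexing of arcs versus closed curves, and in recognising that the resolution combinatorics of the arc $a_{n-\frac12}$ mirrors that of the closed curve $x_n$, so that $\Phi$ intertwines the two differentials.
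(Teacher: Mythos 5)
Your proposal is correct and follows essentially the same route as the paper: reduce via linearity and the Leibniz rule to checking the identity on the generators $a_{n-\frac12}$, then observe that $\Phi(\partial a_{n-\frac12}) = \sum_{i=1}^{n-1} x_i x_{n-i} = \partial x_n = \partial \Phi(a_{n-\frac12})$. The extra care you take about the unsimplified form of $\partial x_n$ and the half-integer indexing is sound but does not change the argument.
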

Geometrically, $\Phi$ ``closes off" the endpoints of $a_{n-\frac{1}{2}}$ by gluing an annulus to the boundary of $A$, joining the two marked points by an arc which turns $a_{n-\frac{1}{2}}$ into $x_n$. This lemma is an instance of a more general result about gluing string diagrams together.

Essentially this lemma holds because the rule $\partial a_{n-\frac{1}{2}} = \sum_{i=1}^{n-1} a_{i-\frac{1}{2}} x_{n-i}$ becomes, after applying $\Phi$, the rule  $\partial x_n = \sum_{i=1}^{n-1} x_i x_{n-i}$. 
\begin{proof}
Using linearity and the Leibniz rule, it is sufficient to check that $\Phi \partial a_{n-\frac{1}{2}} = \partial \Phi a_{n-\frac{1}{2}}$, for $n \in \Z_{>0}$:
\[
\Phi \partial a_{n-\frac{1}{2}} = \Phi \sum_{i=1}^{n-1} a_{i-\frac{1}{2}} x_{n-i} = \sum_{i=1}^{n-1} x_i x_{n-i} = \partial x_n = \partial \Phi a_{n-\frac{1}{2}}.
\]
\end{proof}

As in section \ref{sec:computing_total_homology}, we write $\bar{f}$ for the homology class of $f \in \A_+ \otimes \X$. We have seen in proposition \ref{prop:homology_with_a1} that a cycle $f \in \A_+ \otimes \X$ is homologous to some $a_\frac{1}{2} p$ where $p$ is a clean polynomial. Thus $\Phi \bar{f} = \Phi \overline{a_\frac{1}{2} p} = \bar{x}_1 \bar{p}$. It follows that on homology, $\Phi$ has image in $\bar{x}_1 H(\X)$.
\begin{prop}
\label{prop:AX_injective}
The map $\Phi: H(\A_+ \otimes \X) \To H(\X)$ is an isomorphism onto $\bar{x}_1 H(\X)$.
\end{prop}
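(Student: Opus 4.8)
The plan is to assemble the proposition from pieces already in hand: $\Phi$ is a chain map, hence descends to an $H(\X)$-module homomorphism $\Phi : H(\A_+ \otimes \X) \To H(\X)$, and the discussion preceding the statement shows $\im \Phi \subseteq \bar{x}_1 H(\X)$. So two things remain: surjectivity onto $\bar{x}_1 H(\X)$, and injectivity.

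For surjectivity, first observe that every element of $\bar{x}_1 H(\X)$ has the form $\bar{x}_1 \bar{q}$ for some \emph{clean} polynomial $q \in \X$: indeed $H(\X)$ has $\Z_2$-basis the fermionic monomials, and if a fermionic monomial $m$ contains $x_1$ then $\bar{x}_1 \bar{m} = 0$ (as $\bar{x}_1^2 = 0$), while if it does not, then $x_1 m$ is $x_1$ times a clean monomial. Given such a clean $q$, the element $a_{\frac{1}{2}} q \in \A_+ \otimes \X$ is a cycle, since $\partial a_{\frac{1}{2}} = 0$ and $q$, being fermionic, satisfies $\partial q = 0$; and $\Phi(\overline{a_{\frac{1}{2}} q}) = \overline{x_1 q} = \bar{x}_1 \bar{q}$. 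Hence $\Phi$ is onto $\bar{x}_1 H(\X)$.

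For injectivity, suppose $\bar{f} \in H(\A_+ \otimes \X)$ lies in $\ker \Phi$. By proposition \ref{prop:homology_with_a1} we may write $f = \partial g + a_{\frac{1}{2}} p$ with $g \in \A_+ \otimes \X$ and $p \in \X$ a clean polynomial; then $0 = \Phi \bar{f} = \overline{x_1 p} = \bar{x}_1 \bar{p}$ in $H(\X)$. Since $p$ is clean, $x_1 p$ is a fermionic polynomial, and every fermionic polynomial is its own unique representative of its homology class in $H(\X)$ (the fermionic monomials being a basis of $H(\X)$); thus $\overline{x_1 p} = 0$ forces $x_1 p = 0$ in $\X$, whence $p = 0$. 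Therefore $f = \partial g$ and $\bar{f} = 0$, so $\Phi$ is injective on homology and the proposition follows.

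There is no serious obstacle here: the substantive work is lemma \ref{lem:a_simplification} and proposition \ref{prop:homology_with_a1}, which reduce an arbitrary cycle to the form $\partial g + a_{\frac{1}{2}} p$ with $p$ clean, and these are already established. The only point requiring care is the bookkeeping with clean versus fermionic polynomials — in particular that $x_1 \cdot (\text{clean})$ is fermionic and hence detected faithfully in $H(\X)$, which is exactly what makes both the surjectivity and injectivity arguments go through cleanly.
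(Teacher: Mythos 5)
Your proof is correct and follows essentially the same route as the paper's: surjectivity by reducing an arbitrary element of $\bar{x}_1 H(\X)$ to $\bar{x}_1 \bar{r}$ with $r$ clean (using $\bar{x}_1^2 = 0$) and hitting it with $a_{\frac{1}{2}} r$, and injectivity by combining proposition \ref{prop:homology_with_a1} with the fact that $x_1$ times a clean polynomial is fermionic and hence faithfully detected in $H(\X)$. No gaps.
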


\begin{proof}
We have shown $\Im \Phi \subseteq \bar{x}_1 H(\X)$. For the reverse inclusion, take $\bar{x}_1 \bar{p} \in \bar{x}_1 H(\X)$, where $p$ is a fermionic polynomial. Splitting $p$ into terms with and without $x_1$, we may write $p = x_1 q + r$, where $q,r \in \X$ are both clean polynomials. Then $\bar{x}_1 \bar{p}= \bar{x}_1^2 \bar{q} + \bar{x}_1 \bar{r} = \bar{x}_1 \bar{r}$ as $\bar{x}_1^2 = 0$ in homology. 

Thus, any element of $\bar{x}_1 H(\X)$ is of the form $\bar{x}_1 \bar{r}$ where $r$ is a clean  polynomial. Such an element is certainly in the image of $\Phi$, as $\Phi( a_\frac{1}{2} r ) = x_1 r$. Thus $\Im \Phi = \bar{x}_1 H(\X)$.

To see $\Phi$ is injective, take a homology class $\bar{f} \in \ker \Phi$. By proposition \ref{prop:homology_with_a1} we have $\bar{f} = \bar{a}_{\frac{1}{2}} \bar{p}$ where $p \in \X$ is a clean polynomial. Then $0 = \Phi \bar{f} = \bar{x}_1 \bar{p}$. But as $p$ is clean, $x_1 p$ is fermionic, hence nonzero in homology unless $p=0$. Thus $\bar{f} = 0$ and $\Phi$ is injective.
\end{proof}

This result allows us to strengthen proposition \ref{prop:strong_homology_with_a1}. There we showed any cycle in $\A_+ \otimes \X$ is homologous to an element $a_{\frac{1}{2}} p$, where $p \in \X$ is a clean polynomial. We can now show this $p$ is unique: if $a_\frac{1}{2} p$ and $a_\frac{1}{2} p'$ are homologous, then $x_1 p$ and $x_1 p'$ are homologous in $\X$, so $\bar{x}_1 (\bar{p} - \bar{p}') = 0$ in $H(\X)$. It follows that $\bar{p} = \bar{p}'$ and hence $p=p'$.

\begin{prop}
\label{lem:stronger_homology_with_a1}
Suppose $f \in \A_+ \otimes \X$ satisfies $\partial f = 0$. Then $f = a_{\frac{1}{2}} p + \partial g$, where $g \in \A_+ \otimes \X$ and $p \in \X$ is a \emph{unique} clean polynomial.
\qed
\end{prop}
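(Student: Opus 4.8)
The existence half of this statement is already in hand: it is precisely Proposition \ref{prop:strong_homology_with_a1}, which gives, for any cycle $f \in \A_+ \otimes \X$, a decomposition $f = a_{\frac{1}{2}} p + \partial g$ with $g \in \A_+ \otimes \X$ and $p \in \X$ a clean polynomial. So the only new content is \emph{uniqueness} of $p$, and the plan is to deduce it directly from Proposition \ref{prop:AX_injective} (the injectivity of $\Phi$ on homology) together with the fact, coming from our computation of $H(\X)$ in Theorem \ref{thm:homology_of_X}, that $H(\X)$ is free over $\Z_2$ on the fermionic monomials.

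Concretely, I would argue as follows. Suppose $f = a_{\frac{1}{2}} p + \partial g = a_{\frac{1}{2}} p' + \partial g'$ are two decompositions of the stated form, with $p, p'$ clean. Adding them (over $\Z_2$), $a_{\frac{1}{2}}(p + p') = \partial(g + g')$ is a boundary in $\A_+ \otimes \X$, so $\overline{a_{\frac{1}{2}}(p+p')} = 0$ in $H(\A_+ \otimes \X)$. Applying the chain map $\Phi$, which descends to homology, yields $0 = \Phi\bigl(\overline{a_{\frac{1}{2}}(p+p')}\bigr) = \overline{x_1 (p+p')} = \bar x_1\,\overline{p+p'}$ in $H(\X)$.

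Now I would invoke cleanliness in the same way as in the proof of Proposition \ref{prop:AX_injective}. Since $p$ and $p'$ are clean, so is $p+p'$, a sum of clean monomials; hence $x_1(p+p')$ is a fermionic polynomial, since each clean monomial becomes a fermionic monomial on multiplication by $x_1$, and distinct clean monomials give distinct fermionic monomials. As $H(\X)$ is free as a $\Z_2$-module on the fermionic monomials, a fermionic polynomial represents the zero homology class only if it is already the zero polynomial in $\X$. Thus $x_1(p+p') = 0$ in $\X$, which forces $p+p' = 0$, i.e. $p = p'$. Combined with the existence statement, this establishes the proposition.

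I do not anticipate any real obstacle: existence is quoted verbatim, and the one point to keep straight is that ``$p$ clean $\Rightarrow$ $x_1 p$ fermionic'' really does let one read the coefficients of $p$ off from the class $\bar x_1 \bar p \in H(\X)$ --- the exact mechanism already used for Proposition \ref{prop:AX_injective}. Alternatively, one could phrase the whole argument through the injectivity of $\Phi$ on homology: $\overline{a_{\frac12}p} = \overline{a_{\frac12}p'}$ forces $\bar x_1 \bar p = \bar x_1 \bar p'$, after which the same cleanliness observation pins down $p$ uniquely.
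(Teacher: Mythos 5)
Your proposal is correct and matches the paper's own argument: existence is quoted from proposition \ref{prop:strong_homology_with_a1}, and uniqueness is deduced by applying the chain map $\Phi$ so that $a_{\frac{1}{2}}(p+p')$ being a boundary forces $\bar{x}_1\overline{(p+p')}=0$ in $H(\X)$, whence cleanliness of $p+p'$ makes $x_1(p+p')$ fermionic and hence zero only if $p=p'$. No changes needed.
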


By proposition \ref{prop:AX_injective} we now have $H(\A_+ \otimes \X) \cong \bar{x}_1 H(\X)$; as $\Phi$ is an $H(\X)$-module homomorphism, this is an isomorphism of $H(\X)$-modules. Replacing $\A_+, a_n, x_n$ with $\A_-, a_{-n}, x_{-n}$ we similarly obtain an $H(\X)$-module isomorphism $H(\A_- \otimes \X) \cong \bar{x}_{-1} H(\X)$.
\begin{thm}
\label{thm:A+-X_homology}
There are isomorphisms of $H(\X)$-modules
\begin{align*}
H(\A_+ \otimes \X) &\cong \bar{x}_1 H(\X) = \bar{x}_1 \frac{\Z[\ldots, \bar{x}_{-3}, \bar{x}_{-1}, \bar{x}_1, \bar{x}_3, \ldots]}{( \ldots, \bar{x}_{-3}^2, \bar{x}_{-1}^2, \bar{x}_1^2, \bar{x}_3^2, \ldots ),} \\
H(\A_- \otimes \X) &\cong \bar{x}_{-1} H(\X) = \bar{x}_{-1} \frac{\Z[\ldots, \bar{x}_{-3}, \bar{x}_{-1}, \bar{x}_1, \bar{x}_3, \ldots]}{( \ldots, \bar{x}_{-3}^2, \bar{x}_{-1}^2, \bar{x}_1^2, \bar{x}_3^2, \ldots )}.
\end{align*}
An explicit isomorphism is given by $\bar{a}_{\pm \frac{1}{2}} \bar{p} \mapsto \bar{x}_{\pm 1} \bar{p}$.
\qed
\end{thm}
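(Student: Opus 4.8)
The plan is to assemble the theorem directly from the results already in hand in this section, since all the substantive work has been done. First I would note that the map $\Phi : \A_+ \otimes \X \To \X$ sending $a_{n-\frac{1}{2}} p \mapsto x_n p$ is by construction $\X$-linear, and by the preceding lemma it commutes with $\partial$; hence it is a homomorphism of differential $\X$-modules and descends to an $H(\X)$-module homomorphism $\Phi : H(\A_+ \otimes \X) \To H(\X)$. Proposition \ref{prop:AX_injective} already asserts that this descended map is injective with image exactly $\bar{x}_1 H(\X)$. Therefore $\Phi$ restricts to a bijection $H(\A_+ \otimes \X) \to \bar{x}_1 H(\X)$, and since it is $H(\X)$-linear this bijection is automatically an isomorphism of $H(\X)$-modules. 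For the explicit description, I would invoke Proposition \ref{lem:stronger_homology_with_a1}: every class in $H(\A_+ \otimes \X)$ has a (unique) representative $a_{\frac{1}{2}} p$ with $p$ a clean polynomial, and $\Phi\left(\overline{a_{\frac{1}{2}} p}\right) = \bar{x}_1 \bar{p}$, which is precisely the asserted formula $\bar{a}_{\frac{1}{2}} \bar{p} \mapsto \bar{x}_1 \bar{p}$.

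For the negative summand, I would run the identical argument after the relabelling $x_n \leftrightarrow x_{-n}$, $a_{n+\frac{1}{2}} \leftrightarrow a_{-n-\frac{1}{2}}$. This relabelling is induced by an orientation-reversing self-homeomorphism of $\An$ and carries $\A_+ \otimes \X$ to $\A_- \otimes \X$ as chain complexes; it is a map of differential $\X$-modules because the differential formula $\partial a_n = \sum_{i+j=n,\ ij>0} a_i x_j$ (and likewise $\partial x_{2k} = x_k^2$, $\partial x_{2k+1}=0$) is invariant under $n \mapsto -n$. Transporting the isomorphism just obtained through this symmetry gives $H(\A_- \otimes \X) \cong \bar{x}_{-1} H(\X)$ via $\bar{a}_{-\frac{1}{2}} \bar{p} \mapsto \bar{x}_{-1} \bar{p}$.

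I do not expect a genuine obstacle here: the theorem is a repackaging of Propositions \ref{prop:AX_injective} and \ref{lem:stronger_homology_with_a1} together with the evident symmetry of $(\An, F_{0,2})$. The only point needing a line of care is the bookkeeping that the relabelling really is an isomorphism of differential $\X$-modules and that it interchanges the role of (positively) clean polynomials with that of negatively clean polynomials, which is immediate from the sign-symmetry of the differential formulas above.
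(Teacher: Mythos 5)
Your proposal is correct and follows essentially the same route as the paper: the paper likewise deduces the positive case by combining Proposition \ref{prop:AX_injective} (that $\Phi$ is an injective $H(\X)$-module map with image $\bar{x}_1 H(\X)$) with the standard-form representatives of Proposition \ref{lem:stronger_homology_with_a1}, and obtains the negative case by the substitution $\A_+, a_n, x_n \mapsto \A_-, a_{-n}, x_{-n}$. Your extra remark that this substitution must also send $x_n \mapsto x_{-n}$ to be compatible with the differential is a worthwhile point of care, but it does not constitute a different argument.
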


Since $\widehat{CS}(\Sigma,F)$ is the direct sum of the $\A_\pm \otimes \X$, we have now proved theorem \ref{thm:annulus_two_marked_points_same_boundary_calculation} and have an $H(\X)$-module isomorphism
\[
\widehat{HS}(\Sigma,F) \cong \bar{x}_1 H(\X) \oplus \bar{x}_{-1} H(\X) = (\bar{x}_1, \bar{x}_{-1})  \frac{\Z[\ldots, \bar{x}_{-3}, \bar{x}_{-1}, \bar{x}_1, \bar{x}_3, \ldots]}{( \ldots, \bar{x}_{-3}^2, \bar{x}_{-1}^2, \bar{x}_1^2, \bar{x}_3^2, \ldots )}.
\]

\section{Non-alternating annuli}
\label{sec:non-alternating_annuli}

In this section we prove theorem \ref{thm:non-sutured_zero_iso} that for any non-alternating weakly marked annulus $(\An,F)$, its string homology is zero. In \cite{Mathews_Schoenfeld12_string} we proved such a result for discs; and as we will see, the methods used there apply immediately here, for all cases except one, with which we have already dealt.

In section 5 of \cite{Mathews_Schoenfeld12_string} we introduced a \emph{switching operation} $W$ on a string diagram $s$ on a disc; we now define it more generally. Let $(\Sigma,F)$ be a weakly marked surface. Suppose that there are two distinct points $p, q$ of $F$, on the same boundary component of $\Sigma$, of the same sign; suppose $p,q \in F_{in}$ (resp. $F_{out}$). The switching operation $W$ alters $s$ near $p$ and $q$, so that the strand which began (resp. ended) at $p$ now begins (resp. ends) at $q$; and the strand which began (resp. ended) at $q$ now begins (resp. ends) at $p$. In the process we introduce precisely one new crossing. The switching operation extends linearly to a $\Z_2$-linear map $W: \widehat{CS}(\Sigma, F) \To \widehat{CS}(\Sigma, F)$. See figure \ref{fig:switching}.

\begin{figure}[ht]
\begin{center}
\begin{tikzpicture}[
scale=0.8, 
string/.style={thick, draw=red, -to},
boundary/.style={ultra thick}]

\draw [boundary] (0,0) -- (0,3);
\draw [string] (0,1) -- (2,1);
\draw [string] (0,2) -- (2,2);
\draw (-0.5,1) node {$p$};
\draw (-0.5,2) node {$q$};

\draw [shorten >=1mm, -to, decorate, decoration={snake,amplitude=.4mm, segment length = 2mm, pre=moveto, pre length = 1mm, post length = 2mm}]
(3,1.5) -- (5,1.5);
\draw (4,2) node {$W$};

\draw [boundary] (6,0) -- (6,3);
\draw [string] (6,1) .. controls (7,1) and (7,2) .. (8,2);
\draw [string] (6,2) .. controls (7,2) and (7,1) .. (8,1);
\draw (5.5,1) node {$p$};
\draw (5.5,2) node {$q$};

\end{tikzpicture}
\caption{Switching operation.} \label{fig:switching}
\end{center}
\end{figure}

Resolving crossings in $Ws$ we obtain $\partial W s = s + w \partial s$, as shown in figure \ref{fig:chain_homotopy}. So $\partial W + W \partial = 1$, and $W$ is a chain homotopy between the chain maps $1$ and $0$ on $\widehat{CS}(\Sigma, F)$.

\begin{figure}[h]
\begin{center}
\begin{tikzpicture}[
scale=0.8, 
string/.style={thick, draw=red, -to},
boundary/.style={ultra thick}]

\draw (-1,1.5) node {$\partial$};
\draw [boundary] (0,0) -- (0,3);
\draw [rounded corners=8pt] (0,3) -- (3,3) -- (3,0) -- (0,0);
\draw [string] (0,1) .. controls (0.5,1) and (0.5,2) .. (1,2);
\draw [string] (0,2) .. controls (0.5,2) and (0.5,1) .. (1,1);
\draw (2,1.5) node {$s$};

\draw (3.5,1.5) node {$=$};

\draw [xshift=4 cm, boundary] (0,0) -- (0,3);
\draw [xshift=4 cm, rounded corners=8pt] (0,3) -- (3,3) -- (3,0) -- (0,0);
\draw [xshift=4 cm, string] (0,1) -- (1,1);
\draw [xshift=4 cm, string] (0,2) -- (1,2);
\draw [xshift=4 cm] (2,1.5) node {$s$};

\draw (7.5,1.5) node {$+$};

\draw [xshift=8 cm, boundary] (0,0) -- (0,3);
\draw [xshift=8 cm, rounded corners=8pt] (0,3) -- (3,3) -- (3,0) -- (0,0);
\draw [xshift=8 cm, string] (0,1) .. controls (0.5,1) and (0.5,2) .. (1,2);
\draw [xshift=8 cm, string] (0,2) .. controls (0.5,2) and (0.5,1) .. (1,1);
\draw [xshift=8 cm] (2,1.5) node {$\partial s$};

\draw (1.5,-1) node {$\partial Ws$};
\draw (3.5,-1) node {$=$};
\draw (5.5,-1) node {$s$};
\draw (7.5,-1) node {$+$};
\draw (9.5,-1) node {$W \partial s$};

\end{tikzpicture}
\caption{The switching operation $W$ is a chain homotopy.} \label{fig:chain_homotopy}
\end{center}
\end{figure}

Clearly a similar argument applies if the two adjacent points $p,q$ lie in $F_{out}$ rather than $F_{in}$. We immediately obtain the following result.
\begin{prop}[\cite{Mathews_Schoenfeld12_string}]
\label{prop:alternating_distinct_zero}
If $(\Sigma, F)$ is a weakly marked surface such that some boundary component of $\Sigma$ contains two adjacent distinct points of $F$ of the same sign, then $\widehat{HS}(\Sigma,F) = 0$.
\qed
\end{prop}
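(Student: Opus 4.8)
The plan is to exhibit a \emph{chain homotopy} between the identity and zero on $\widehat{CS}(\Sigma,F)$, whence $\widehat{HS}(\Sigma,F)$ vanishes. Let $p,q$ be two adjacent distinct points of $F$ lying on a common boundary component of $\Sigma$ and having the same sign; without loss of generality $p,q\in F_{in}$, the case $p,q\in F_{out}$ being handled symmetrically by reversing all orientations. Since $p$ and $q$ are adjacent along $\partial\Sigma$, there is an embedded arc $\gamma$ in $\Sigma$, running close to the boundary, that joins $p$ to $q$ and meets no other marked point; I will work inside a disc-like collar neighbourhood of $\gamma$.

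First I would define a \emph{switching operation} $W$ on a generic string diagram $s$, generalising the construction of Section 5 of \cite{Mathews_Schoenfeld12_string}. Inside the collar, $s$ consists of the two strands emanating inward from $p$ and from $q$; replace this local picture by the one in which these two strands are interchanged, crossing each other exactly once inside the collar, as in Figure \ref{fig:switching}. This yields a string diagram $Ws$ with exactly one more crossing than $s$. I would then check that $W$ respects homotopy rel endpoints and carries string diagrams containing a contractible closed curve to string diagrams still containing one (the modification happens in a disc and creates no new closed components), so that $W$ descends to a well-defined $\Z_2$-linear map $W:\widehat{CS}(\Sigma,F)\To\widehat{CS}(\Sigma,F)$.

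Next I would establish the key identity $\partial W s = s + W\partial s$ for every generic $s$. The crossings of $Ws$ are precisely the crossings of $s$ together with the single new crossing introduced inside the collar. Resolving that new crossing, respecting the orientations of the two inward-pointing strands, simply ``uncrosses'' them and returns $s$ itself, with no spurious contractible loop. Resolving any other crossing $x$ of $Ws$ is unaffected by the local modification near $\gamma$, so $r_x(Ws)=W\bigl(r_x(s)\bigr)$. Summing over all crossings gives $\partial(Ws)=s+\sum_{x}W(r_x(s))=s+W(\partial s)$, i.e. $\partial W + W\partial = 1$ on $\widehat{CS}(\Sigma,F)$; compare Figure \ref{fig:chain_homotopy}.

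Finally, if $s\in\ker\partial$ then the identity gives $s=\partial(Ws)+W(\partial s)=\partial(Ws)\in\im\partial$, so $\ker\partial=\im\partial$ and $\widehat{HS}(\Sigma,F)=0$. I expect the only genuine care to be needed in the middle paragraph: confirming that $W$ descends to the quotient by contractible loops and that the oriented resolution of the new crossing really recovers $s$ with nothing extra; the rest is formal. Because everything is local near two adjacent same-sign marked points, the global topology of $\Sigma$ is irrelevant, and the disc argument of \cite{Mathews_Schoenfeld12_string} transfers without change — which is exactly why this statement is attributed there.
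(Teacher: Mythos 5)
Your proposal is correct and follows essentially the same route as the paper: both define the switching operation $W$ near the two adjacent same-sign marked points, verify the identity $\partial W + W\partial = 1$ by observing that resolving the single new crossing recovers $s$ while all other resolutions commute with $W$, and conclude that $W$ is a chain homotopy from $1$ to $0$. The points you flag for care (descent to the quotient by contractible loops, locality of the construction) are exactly the reasons the disc argument of the cited paper transfers verbatim.
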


\begin{proof}[Proof of theorem \ref{thm:non-sutured_zero_iso}]
Let $(\An,F)$ be a non-alternating weakly marked annulus. If $F$ has two distinct consecutive points of the same sign on some boundary component of $\An$, then by proposition \ref{prop:alternating_distinct_zero} $\widehat{HS}(\An,F)=0$; so now assume this is not the case. Then each boundary component either contains an even number of points of $F$ alternating in sign, or contains a single point of $F$. As $F$ is not alternating, the only possibility is $F=F_{1,1}$, and each boundary component of $\An$ contains precisely one point of $F$. But by theorem \ref{thm:one_one_homology} $\widehat{HS}(\An,F_{1,1})=0$.
\end{proof}

\section{Annuli with two marked points on both boundaries}
\label{sec:annuli_two_marked_points_both_boundaries}

We now turn to $(\An, F_{2,2})$, the annulus with two alternating marked points on each boundary component. As noted in the introduction, this is the most difficult case, and our results are partial.

\subsection{Description and decomposition of the chain complex}
\label{sec:ABCDX_chain_complex}

Denote the two components of $\partial \Sigma$ by $C_0$ and $C_1$. Let $F_i = F \cap C_i$ and $F_{i,in} = F_{in} \cap C_i$, $F_{i,out} = F_{out} \cap C_i$ so $|F_0| = |F_1| = 2$ and $|F_{0, in}| = |F_{0, out}| = |F_{1, in}| = |F_{1, out}| = 1$. We will draw annuli so that $C_0$ is the ``outside" and $C_1$ the ``inside". We will draw marked points with $F_{0,in}, F_{1,out}$ at the bottom and $F_{0,out}, F_{1,in}$ at the top. See figure \ref{fig:F22_schematic}


\begin{figure}[ht]
\begin{center}
\begin{tikzpicture}[
scale=1.5, 
string/.style={thick, draw=red, postaction={nomorepostaction, decorate, decoration={markings, mark=at position 0.5 with {\arrow{>}}}}}]

\draw (0,0) circle (1.2 cm);
\draw (0,0) circle (0.5 cm);

\fill [draw=red, fill=red] (0,-1.2) circle (2pt);
\fill [draw=red, fill=red] (0,-0.5) circle (2pt);
\fill [draw=red, fill=red] (0,0.5) circle (2pt);
\fill [draw=red, fill=red] (0,1.2) circle (2pt);

\draw (0,-1.4) node {$F_{0,in}$};
\draw (0,-0.25) node {$F_{1,out}$};
\draw (0,0.25) node {$F_{1,in}$};
\draw (0,1.4) node {$F_{0,out}$};

\draw (0.7,0) node {$C_1$};
\draw (1.4,0) node {$C_0$};

\end{tikzpicture}
\caption{Boundary components and marked points on $(\An, F_{2,2})$.}
\label{fig:F22_schematic}
\end{center}
\end{figure}

Homotopy classes of closed curves on $\An$ are again denoted by $x_n$. Homotopy classes of open strings on $(\An,F_{2,2})$ can be classified as follows.
\begin{defn}\
\begin{enumerate}
\item
An open string which begins and ends on the same $F_i$ is called \emph{insular}.
\begin{enumerate}
\item
An insular string joining the two points of $F_0$ runs $n + \frac{1}{2}$ times around the core of the annulus, for some $n \in \Z$. We denote the homotopy class of this curve by $a_{n + \frac{1}{2}}$. 

Let $\A, \A_+, \A_-$ be free $\Z_2$-modules on $\{a_n \; : \; n \in \Z + \frac{1}{2}\}$, and the positive and negative subsets thereof respectively.
\item
An insular string which joins the two points of $F_1$ also runs $n + \frac{1}{2}$ times around the core of the annulus for some $n \in \Z$, and we denote its homotopy class by $b_{n + \frac{1}{2}}$. 

Let $\B, \B_+, \B_-$ be free $\Z_2$-modules on $\{b_n \; : \; n \in \Z + \frac{1}{2}\}$, and the positive and negative subsets thereof respectively.
\end{enumerate}
\item
An open string which begins on $F_i$ and ends on $F_j$ for $i \neq j$ is called \emph{traversing}.
\begin{enumerate}
\item
A traversing string which joins $F_{1,in}$ to $F_{0,out}$ runs $n$ times around the core of the annulus, for some $n \in \Z$, and we denote its homotopy class by $c_n$.

Let $\C$ be the free $\Z_2$-module on $\{c_n \; : \; n \in \Z\}$.
\item
A traversing string which joins $F_{0,in}$ to $F_{1,out}$ runs $n$ times around the core of the annulus, for some $n \in \Z$, and we denote its homotopy class by $d_n$.

Let $\D$ be the free $\Z_2$-module on $\{d_n \; : \; n \in \Z\}$.
\end{enumerate}
\end{enumerate}
\end{defn}
Note the definition of $a_{n + \frac{1}{2}}$ and $\A, \A_\pm$ follows the notation of section \ref{sec:two_points_single_boundary}, and the $a_n$ are as shown in figure \ref{fig:a_curves}. The $\B, \B_\pm$ are defined in a similar vein. The $c_n$ follow the notation of section \ref{sec:one_marked_point_each_boundary}, and are as shown in figure \ref{fig:one_open_string_diagrams}. The $d_n$ are defined similarly. Some further examples are shown in figure \ref{fig:22_string_diagrams}.

\begin{figure}[ht]
\begin{center}
\begin{tikzpicture}[
scale=1.2, 
string/.style={thick, draw=red, postaction={nomorepostaction, decorate, decoration={markings, mark=at position 0.7 with {\arrow{>}}}}}]

\draw (-4.5,0) circle (1 cm); 	
\draw (-4.5,0) circle (0.2 cm);
\draw (-1.5,0) circle (1 cm);
\draw (-1.5,0) circle (0.2 cm);
\draw (1.5,0) circle (1 cm);
\draw (1.5,0) circle (0.2 cm);
\draw (4.5,0) circle (1 cm);
\draw (4.5,0) circle (0.2 cm);

\draw [xshift = -4.5 cm, string] (0,-1) .. controls (0,-0.8) and (0,-0.8) .. (-85:0.8) arc (-85:85:0.8) .. controls (0,0.8) and (0,0.8) .. (0,1);
\draw [xshift = -4.5 cm, string] (0,0.2) .. controls (0,0.4) and (0,0.4) .. (95:0.4) arc (95:265:0.4) .. controls (0,-0.4) and (0,-0.4) .. (0,-0.2);

\draw [xshift = -1.5 cm, string] (0,0.2) -- (0,1);
\draw [xshift = -1.5 cm, string] (0,-1) .. controls (0,-0.8) and (0,-0.8) .. (-85:0.8) arc (-85:90:0.7) arc (90:265:0.5) .. controls (0,-0.4) and (0,-0.4) .. (0,-0.2);

\draw [xshift= 1.5 cm, string] (0,0.2) .. controls (0,0.4) and (0,0.4) .. (95:0.4) arc (95:270:0.5) arc (-90:85:0.7) .. controls (0,0.8) and (0,0.8) .. (0,1);
\draw [xshift = 1.5 cm, string] (0,-1) .. controls (0,-0.8) and (0,-0.8) .. (-95:0.8) arc (265:90:0.7) arc (90:-85:0.5) .. controls (0,-0.4) and (0,-0.4) .. (0,-0.2);

\draw [xshift = 4.5 cm, string] (0,-1) .. controls (0,-0.8) and (0,-0.8) .. (-85:0.8) arc (-85:85:0.8) .. controls (0,0.8) and (0,0.8) .. (0,1);
\draw [xshift = 4.5 cm, string] (0,0.2) .. controls (0,0.4) and (0,0.4) .. (95:0.4) arc (95:265:0.4) .. controls (0,-0.4) and (0,-0.4) .. (0,-0.2);
\draw [xshift = 4.5 cm, string] (0.6,0) arc (0:-360:0.6);

\draw (-4.5,-1.5) node {$a_{\frac{1}{2}} b_{\frac{1}{2}}$};
\draw (-1.5,-1.5) node {$c_0 d_1$};
\draw (1.5,-1.5) node {$c_1 d_{-1}$};
\draw (4.5,-1.5) node {$a_{\frac{1}{2}} b_{\frac{1}{2}} x_{-1}$};

\end{tikzpicture}
\caption{Some string diagrams on $(\An, F_{2,2})$.}
\label{fig:22_string_diagrams}
\end{center}
\end{figure}

The two open strings in a string diagram on $(\An, F_{2,2})$ are either both insular or both traversing; we call the string diagram \emph{insular} or \emph{traversing} accordingly.

The (homotopy classes of) purely open string diagrams on $(\An, F_{2,2})$ are thus precisely given by $a_i b_j$ and $c_m d_n$, over all $i,j \in \Z + \frac{1}{2}$ and $m,n \in \Z$. Thus $\widehat{CS}^O(\An, F_{2,2}) \cong (\A \otimes_{\Z_2} \B) \oplus (\C \otimes_{\Z_2} \D)$. Note that monomials from $\A, \B, \C$ or $\D$ alone do not describe string diagrams: we require an $a_m$ together with a $b_n$, or a $c_m$ together with a $d_n$, to provide the required open strings. By lemma \ref{lem:CS_as_module} then
\begin{align*}
\widehat{CS}(\An,F_{2,2}) &\cong \X \otimes \left( \left( \A \otimes \B \right) \oplus \left( \C \otimes \D \right) \right) \\
&
\cong \left( \A \otimes \X \otimes \B \right) \oplus \left( \C \otimes \X \otimes \D \right)
\end{align*}
where all tensor products are over $\Z_2$.  In particular, an insular string diagram on $(\An,F_{2,2})$ can be written as $a_i b_j x^e$, and a traversing string diagram on $(\An,F_{2,2})$ as $c_m d_n x^e$, up to homotopy.

For the purposes of algebraic computations, it can be useful to think of elements of $\C \otimes \X \otimes \D$ as elements of $\X$ spaced on a $\Z \times \Z$ lattice: a general element of $\C \otimes \X \otimes \D$ can be given in the form $\sum_{m,n} c_m d_n p_{m,n}$, where each $p_{i,j} \in \X$, and we can think of the $p_{m,n}$ at the point $(i,j) \in \Z \times \Z$. Similarly, we can think of elements of $\A \otimes \X \otimes \B$, given in the form $\sum_{i,j} a_{i} b_{j} p_{i,j}$, as elements of $\X$ spaced on a $(\Z + \frac{1}{2}) \times (\Z + \frac{1}{2})$ lattice, placing $p_{i,j}$ at $(i,j)$.

\subsection{Description of differential}
\label{sec:insular_differential}
\label{sec:traversing_differential}

Consider an insular string diagram, of homotopy class $s = a_i b_j x^e$. By a homotopy relative to endpoints, we can separate the strings, i.e. make the strings pairwise disjoint. Thus on $\A \otimes \X \otimes \B$, the Goldman bracket vanishes and the differential obeys the Leibniz rule.

Since the open strings $a_i$ arose previously in considering $(\An, F_{0,2})$, $\partial a_i$ is as described in section \ref{sec:two_points_single_boundary}: for any $n \in \Z + \frac{1}{2}$ we have
\[
\partial a_i = \sum_{k+l=i, \; kl>0} a_k x_l.
\]
The calculation of $\partial b_j$ is similar; indeed $b_j$ is obtained from $a_j$ by a symmetry of the annulus, and there is an isomorphism of $\X$-modules, $\A \otimes \X \cong \B \otimes \X$ induced by $a_j \leftrightarrow b_j$, which commutes with $\partial$.
\[
\partial b_n = \sum_{i+j=n, \; ij>0} b_i x_j.
\]

By the Leibniz rule then we have, for a general insular string diagram $s = a_i b_j x^e)$,
\begin{align*}
\partial s &= \left( \partial a_i \right) b_j x^e + a_i \left( \partial b_j \right) x^e + a_i b_j \left( \partial x^e \right) \\
&= \sum_{k+l=i, \; kl>0} a_k b_j x_l x^e + \sum_{k+l=j, \; kl>0} a_i b_k x_l x^e + a_i b_j (\partial x^e),
\end{align*}
where $\partial x^e$ is given by the differential on $\X$. 

Note that the differential maps $\A \otimes \X \otimes \B$ into itself, so it is a subcomplex of $\widehat{CS}(\An, F_{2,2})$; as the Leibniz rule is satisfied, $\A \otimes \X \otimes \B$ is a differential $\X$-module and the homology $\widehat{HS}(\A \otimes \X \otimes \B)$ is an $H(\X)$-module.

Now consider a traversing string diagram, of homotopy class $s = c_i d_j x^e = c_i d_j \prod_{k \in \Z \backslash \{0\}} x_k^{e_k}$. The situation here is more complicated than the insular case. The $c_i$ and $d_j$ sometimes intersect each other and always intersect each $x_k$, so the Goldman bracket does not vanish and the Leibniz rule is not obeyed.

After a homotopy relative to endpoints, we may draw the string diagram so that all the curves intersect minimally. Then $c_i$ has no self-intersections; nor does $d_j$; and the intersection points are precisely as follows.
\begin{enumerate}
\item
Each $x_k$ has self-intersections; resolving them gives $\partial x_k$.
\item
Each $x_k$ intersects $c_i$ in $|k|$ points; resolving them gives $[c_i, x_k]$.
\item
Each $x_k$ intersects $d_j$ in $|k|$ points; resolving them gives $[d_j, x_k]$.
\item
The open strings $c_i$ and $d_j$ intersect in $|i+j|$ points; ;resolving them gives $[c_i, d_j]$.
\end{enumerate}
To see that $|c_i \cap d_j| = |i+j|$, first note that $|c_0 \cap d_j| = |j|$ for any integer $j$. Then note that under a Dehn twist on the annulus, $c_i \mapsto c_{i \pm 1}$ and $d_j \mapsto d_{j \mp 1}$; hence some number of Dehn twists takes $c_i d_j \mapsto c_0 d_{j+i}$, so $|c_i \cap d_j| = |c_0 \cap d_{i+j}| = |i+j|$.

For a general string diagram with homotopy class $c_i d_j x^e$, we can then write
\[
\partial  \left( c_i d_j x^e \right) = c_i d_j (\partial x^e) + \sum_{k \in \Z} e_k [c_i, x_k] d_j x_k^{-1} x^e + \sum_{k \in \Z} e_k c_i [d_j, x_k] x_k^{-1} x^e + [c_i, d_j] x^e ,
\]
where the four terms correspond to the four types of intersections listed above. It remains to compute the Goldman brackets in the above. 

Resolving an intersection point of $c_i$ and $x_k$, we obtain an open string running from $F_{1,in}$ to $F_{0,out}$, hence one of the $c_n$. As $c_i, x_k$ respectively run $i,k$ times around the annulus, we obtain $c_{i+k}$. Resolving all $|k|$ such crossings (mod 2) gives
\[
[c_i, x_k] = k c_{i+k}.
\]
Similarly, resolving the $|k|$ intersections of $x_k$ and $d_j$ gives
\[
[d_j, x_k] = k d_{j+k}.
\]

As for $[c_i, d_j]$, we have the following lemma.
\begin{lem}
\label{lem:bracket_ci_d_j}
Resolving an intersection between $c_i$ and $d_j$ produces a string diagram of the form $a_{i'} b_{j'}$, where $i', j'$ both have the same sign as $i+j$, and $i'+j' = i+j$. There are $|i+j|$ diagrams of this form and each appears precisely once as we resolve the $|i+j|$ crossings between $c_i$ and $d_j$. That is,
\[
[c_i, d_j] = \left\{ \begin{array}{ll}
a_{\frac{1}{2}} b_{i+j - \frac{1}{2}} + a_{\frac{3}{2}} b_{i+j - \frac{3}{2}} + \cdots + a_{i+j - \frac{1}{2}} b_{\frac{1}{2}} & i+j > 0 \\
0 & i+j = 0 \\
a_{-\frac{1}{2}} b_{i+j + \frac{1}{2}} + a_{-\frac{3}{2}} b_{i+j + \frac{3}{2}} + \cdots + a_{i+j + \frac{1}{2}} b_{-\frac{1}{2}} & i+j < 0
\end{array} \right\}
= \sum_{k+l=i+j, \; kl>0} a_k b_l
\]
\end{lem}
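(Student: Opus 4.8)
The plan is to analyze the intersection pattern of $c_i$ and $d_j$ explicitly on the annulus, using the reduction (via Dehn twists) already established in the surrounding text to the case $c_0 d_{i+j}$. First I would recall that a Dehn twist on $\An$ sends $c_i \mapsto c_{i\pm 1}$ and $d_j \mapsto d_{j\mp 1}$, and that this is realized by an ambient homeomorphism of $\An$ fixing $F$ setwise in the appropriate way; hence $[c_i,d_j]$ is carried to $[c_0,d_{i+j}]$ by this homeomorphism, and the homotopy classes of the resolved diagrams transform accordingly. So it suffices to prove the lemma for $[c_0,d_n]$ with $n=i+j$. The case $n=0$ is immediate: $c_0$ and $d_0$ are disjoint (they can be drawn as the two ``vertical'' arcs of figures \ref{fig:one_open_string_diagrams} and \ref{fig:22_string_diagrams} meeting no crossings), so $[c_0,d_0]=0$.

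Next I would treat $n>0$ (the case $n<0$ being symmetric, via the orientation-reversing symmetry of $\An$ that swaps $\A_+$ with $\A_-$ and $\B_+$ with $\B_-$). Draw $d_n$ as an arc from $F_{0,in}$ to $F_{1,out}$ winding $n$ times around the core, and $c_0$ as the short arc from $F_{1,in}$ to $F_{0,out}$. Arranging them to intersect minimally, there are exactly $|c_0\cap d_n| = n$ crossings, which can be labelled $p_1,\dots,p_n$ in the order they are encountered along $d_n$ (equivalently along $c_0$). The key observation is what happens when we resolve at $p_k$: since $c_0$ runs from the inner boundary to the outer boundary, and $d_n$ from the outer to the inner, the resolution respecting orientations splices the initial segment of one with the terminal segment of the other, producing two arcs, one joining the two points of $F_0$ and one joining the two points of $F_1$ — i.e. an insular diagram $a_{i'}b_{j'}$. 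A winding-number count (tracking how many times around the core each of the two resulting arcs goes, which is governed by where $p_k$ sits along $d_n$) shows $i' = k-\tfrac12$ and $j' = n - k + \tfrac12$, so $i'+j' = n$ and both are positive. As $k$ ranges over $1,\dots,n$ we obtain each of the $n$ diagrams $a_{1/2}b_{n-1/2}, a_{3/2}b_{n-3/2},\dots,a_{n-1/2}b_{1/2}$ exactly once. Summing gives $[c_0,d_n] = \sum_{k+l=n,\ kl>0} a_k b_l$, and unwinding the Dehn twist yields the stated formula for $[c_i,d_j]$.

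The main obstacle I expect is the bookkeeping in the winding-number count: one must verify carefully that the resolution at the $k$-th crossing genuinely produces $a_{k-1/2}$ and $b_{n-k+1/2}$ and not some other pair summing to $n$, and that the crossings really do occur in the monotone order claimed (so that no two values of $k$ give the same pair and all $n$ pairs are hit). This is best handled by a single good picture — extending figure \ref{fig:22_string_diagrams} — showing $c_0$ and $d_n$ in minimal position with the crossings labelled, together with a local model of the resolution at a crossing identifying which initial/terminal segments get spliced. One should also double-check the orientation conventions at $p_k$: because $c_0$ and $d_j$ point ``across'' the annulus in opposite radial senses, the oriented resolution (figure \ref{fig:resolving_crossing}) indeed yields two insular arcs rather than two traversing arcs, and this is the geometric reason $[c_i,d_j]$ lands in $\A\otimes\B$ rather than in $\C\otimes\D$. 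Once the picture is set up correctly, the identity is a direct read-off, and the $n<0$ case follows from the symmetry already invoked for $\A_\pm,\B_\pm$.
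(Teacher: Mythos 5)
Your proposal matches the paper's proof in essence: reduce to $[c_0,d_n]$ with $n=i+j$ via Dehn twists, then count the $n$ crossings and observe that each resolution produces an insular pair $a_{i'}b_{j'}$ with $i',j'$ of the sign of $n$ and $i'+j'=n$, hitting all $|n|$ such pairs once each. The one step the paper makes more explicit, and which your phrase ``unwinding the Dehn twist'' somewhat conceals, is that the boundary-parallel arcs $a_m, b_n$ are themselves invariant under Dehn twists, so the expression $s_{i+j}$ computed for $[c_0,d_{i+j}]$ literally coincides with, rather than is merely carried onto, $[c_i,d_j]$; you should state this invariance rather than leave it implicit.
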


The expressions arising as $[c_i, d_j]$ appear frequently in the sequel; we call them $s_n$.
\begin{defn}
\label{defn:s_n_defn}
For an integer $n$, let $s_n \in \A \otimes \X \otimes \B$ be 
\[
s_n = \left\{ \begin{array}{ll}
a_{\frac{1}{2}} b_{n - \frac{1}{2}} +  a_{\frac{3}{2}} b_{n - \frac{3}{2}} + \cdots + a_{n - \frac{1}{2}} b_{\frac{1}{2}} & n > 0 \\
0 & n = 0 \\
a_{-\frac{1}{2}} b_{n + \frac{1}{2}} + a_{-\frac{3}{2}} b_{n + \frac{3}{2}} + \cdots + a_{n + \frac{1}{2}} b_{-\frac{1}{2}} & n < 0
\end{array} \right\}
= \sum_{k+l=n, \; kl>0} a_k b_l
\]
\end{defn}

Note $s_n$ contains precisely $|n|$ nonzero terms in the sum; if we regard elements $\sum_{i,j} a_i b_j p_{i,j}$ of $\A \otimes \X \otimes \B$ as polynomials $p_{i,j} \in \X$ placed at $(i,j)$ on the lattice $(\Z + \frac{1}{2}) \times (\Z + \frac{1}{2})$, then $s_n$ consists of $1$s placed along the ``diagonal" $i+j=n$, at points $(i,j)$ where $i,j$ have the same sign as $n$. That is,
\[
\ldots, \quad
s_{-1} = a_{-\frac{1}{2}} b_{-\frac{1}{2}}, \quad
s_0 = 0, \quad
s_1= a_{\frac{1}{2}} b_{\frac{1}{2}}, \quad
s_2= a_{\frac{1}{2}} b_{\frac{3}{2}} + a_{\frac{3}{2}} b_{\frac{1}{2}}, \quad \ldots
\]

Thus lemma \ref{lem:bracket_ci_d_j}, which we now prove, states that
\[
[c_i, d_j] = s_{i+j}.
\]
\begin{proof}
First consider resolving one of the $|j|$ crossings of $c_0$ and $d_j$. This gives two open strings: one starting along $c_0$ at $F_{1,in}$ and ending along $d_j$ at $F_{1,out}$; and one starting along $d_j$ at $F_{0,in}$ and ending along $c_0$ at $F_{0,out}$; the resulting open strings are therefore of the form $a_m b_n$. Now we note that $m,n$ both have the same sign as $j$, and $m+n = j$. As we resolve the $|j|$ crossings we obtain all such $a_m b_n$. This gives $[c_0, d_j] = s_j$ as desired.

Now a general pair of open strings $c_i, d_j$ can be taken by some Dehn twists to the pair $c_0, d_{i+j}$. We know $[c_0, d_{i+j}] = s_{i+j}$, which has every term of the form $a_m, b_n$. But the boundary-parallel open strings $a_m, b_n$ are invariant under Dehn twists; thus $[c_i, d_j] = s_{i+j}$ as well.
\end{proof}

We have now explicitly computed the differential on $\widehat{CS}(\An,F_{2,2}) \cong (\A \otimes \X \otimes \B) \oplus (\C \otimes \X \otimes \D)$.
\begin{prop}
\label{prop:differential_on_ABCDX_complex}
The differential $\partial$ on $\widehat{CS}(A,F)$ is given by
\begin{align*}
\partial \left( a_i b_j x^e \right) &= (\partial a_i) b_j x^e + a_i (\partial b_j) x^e + a_i b_j (\partial x^e) \\
&= \sum_{i'+k'=i, \; i' k'>0} a_{i'} b_j x_{k'} x^e + \sum_{j'+k'=j, \; j' k' >0} a_i b_{j'} x_{k'} x^e + a_i b_j (\partial x^e) \\
\partial \left( c_i d_j x^e \right) &= c_i d_j (\partial x^e) + s_{i+j} x^e + \sum_{k \in \Z} k e_k \left( c_{i+k} d_j + c_i d_{j+k} \right) x_k^{-1} x^e.
\end{align*}
\qed
\end{prop}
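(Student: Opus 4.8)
The plan is to read off the formula from the case analysis already carried out in this section, handling the two summands of $\widehat{CS}(\An,F_{2,2}) \cong (\A \otimes \X \otimes \B) \oplus (\C \otimes \X \otimes \D)$ separately.

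For the insular summand, I would first observe that in a diagram $s = a_i b_j x^e$ the arc $a_i$, the arc $b_j$, and the closed curves of $x^e$ can, by a homotopy relative to endpoints, be made pairwise disjoint, so every crossing of $s$ is a self-crossing of one of its components. Hence the Goldman bracket vanishes on $\A \otimes \X \otimes \B$ and $\partial$ obeys the Leibniz rule there, giving $\partial(a_i b_j x^e) = (\partial a_i) b_j x^e + a_i (\partial b_j) x^e + a_i b_j (\partial x^e)$; in particular $\partial$ preserves this summand, so it is a subcomplex. I would then substitute $\partial a_i = \sum_{k+l=i,\ kl>0} a_k x_l$ from section \ref{sec:two_points_single_boundary}, and, using the symmetry of the annulus exchanging the two boundary components (which induces an $\X$-module isomorphism $\A \otimes \X \cong \B \otimes \X$ commuting with $\partial$ and sending $a_n \mapsto b_n$), the analogous $\partial b_n = \sum_{i+j=n,\ ij>0} b_i x_j$. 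This yields the first displayed formula.

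For the traversing summand, put $s = c_i d_j x^e$ in minimal position and enumerate the crossings. After a homotopy, $c_i$ and $d_j$ are embedded (no self-crossings), each copy of $x_k$ has its usual self-crossings, $c_i$ meets each copy of $x_k$ in $|k|$ points, $d_j$ meets each copy in $|k|$ points, and $c_i$ meets $d_j$ in $|i+j|$ points — the last count established by the Dehn-twist argument ($|c_0 \cap d_j| = |j|$ and a Dehn twist sends $c_i d_j \mapsto c_{i\pm1} d_{j\mp1}$). Resolving each class of crossings gives: the self-crossings of closed curves contribute $c_i d_j (\partial x^e)$; the crossings of $c_i$ with the copies of $x_k$ contribute, with multiplicity $e_k$ and using $[c_i, x_k] = k c_{i+k}$ and the standard $x_k^{-1}$ shorthand, $\sum_k k e_k\, c_{i+k} d_j\, x_k^{-1} x^e$; symmetrically the crossings of $d_j$ with the $x_k$ contribute $\sum_k k e_k\, c_i d_{j+k}\, x_k^{-1} x^e$; and the crossings of $c_i$ with $d_j$ contribute $[c_i, d_j] x^e = s_{i+j} x^e$ by lemma \ref{lem:bracket_ci_d_j}. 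Summing the four contributions gives the second displayed formula.

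The only genuinely geometric inputs are the count $|c_i \cap d_j| = |i+j|$ and the identification of the resolved $c_i$–$d_j$ crossings with $s_{i+j}$, both of which are already supplied by lemma \ref{lem:bracket_ci_d_j}; everything else is bookkeeping of multiplicities mod $2$ and the abusive $x_k^{-1}$ notation. The one point worth a sentence is that all components can be isotoped simultaneously into minimal position, which holds on the annulus because the pairwise geometric intersection numbers can be realized at once. Thus the main obstacle, such as it is, has effectively been dealt with before the proposition, and the proof is essentially an assembly of the preceding computations.
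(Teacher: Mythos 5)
Your proposal follows the paper's own argument essentially line for line: vanishing Goldman bracket and the Leibniz rule on the insular summand, and a crossing-by-crossing enumeration (using lemma~\ref{lem:bracket_ci_d_j} for $|c_i \cap d_j| = |i+j|$ and $[c_i,d_j]=s_{i+j}$) on the traversing summand. Your parenthetical remark about simultaneously realising all minimal intersection numbers on the annulus is a reasonable point which the paper glosses over, but otherwise there is no difference of approach.
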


We next turn to the homology of $\widehat{CS}(\Sigma,F)$. In sections \ref{sec:ABX_homology}--\ref{sec:completing_insular_calculation} we compute the homology of the subcomplex $\A \otimes \X \otimes \B$; in sections \ref{sec:towards_full_22_homology}--\ref{sec:homomorphism_from_disc} we consider the homology of the entire complex.

\subsection{Homology of insular string diagrams I: simplifying cycles}
\label{sec:ABX_homology}

We focus on the subcomplex $\A \otimes \X \otimes \B$ of $\widehat{CS}(\Sigma,F)$, generated by insular string diagrams. 

We have defined $\A_\pm, \B_\pm$ so that $\A = \A_+ \oplus \A_-$ and $\B = \B_+ \oplus \B_-$. From proposition \ref{prop:differential_on_ABCDX_complex}, $\partial (a_i b_j x^e)$ is a sum of terms $a_{i'} b_{j'} x^{e'}$ where $i,i'$ have the same sign, and $j,j'$ have the same sign.  Thus $\A \otimes \X \otimes \B$ splits as a direct sum of four differential $\X$-submodules:
\[
\A \otimes \X \otimes \B = \left( \A_+ \otimes \X \otimes \B_+ \right) \oplus \left( \A_+ \otimes \X \otimes \B_- \right) \oplus \left( \A_- \otimes \X \otimes \B_+ \right) \oplus \left( \A_- \otimes \X \otimes \B_- \right).
\]
We will deal with these submodules separately. We shall find that the modules $\A_\pm \otimes \X \otimes \B_\pm$ behave rather differently from the modules $\A_\pm \otimes \X \otimes \B_\mp$.

We will first compute the homology of $\A_+ \otimes \X \otimes \B_+$. The argument is quite long and takes up to the end of section \ref{sec:homology_of_++_computed}. The method is similar to section \ref{sec:homology_A_+_tensor_X}, simplifying cycles to have lower ``degree". 

\begin{defn}
\label{def:a-degree}
A general element $f$ of $\A_+ \otimes \X \otimes \B$ has the form
\[
f = a_{\frac{1}{2}} p_{\frac{1}{2}} + a_{\frac{3}{2}} p_{\frac{3}{2}} + \cdots + a_{n+\frac{1}{2}} p_{n+\frac{1}{2}},
\]
where each $p_{i+\frac{1}{2}} \in \X \otimes \B$, and $p_{n+\frac{1}{2}} \neq 0$. The \emph{$a$-degree} of $f$ is $n+\frac{1}{2}$.

We write $O(a_m)$ to denote an element of $\A_+ \otimes \X \otimes \B$ of $a$-degree $\leq m$.
\end{defn}
(We could equally define a $b$-degree; however we will not need it.) Note that this definition applies to both $\A_+ \otimes \X \otimes \B_+$ and $\A_+ \otimes \X \otimes \B_-$.

Each $p_i \in \X \otimes \B_+$ is a ``polynomial" in the $b_j$ and $x_k$, with each term containing precisely one $b_j$ factor. By the Leibniz rule, for an $f$ of degree $n-\frac{1}{2}$ we can write
\[
f = \sum_{i=1}^n a_{i-\frac{1}{2}} p_{i-\frac{1}{2}}
\quad \text{so} \quad
\partial f = \sum_{i=1}^n \left( \partial a_{i-\frac{1}{2}} \right) p_{i-\frac{1}{2}} + a_{i-\frac{1}{2}} \left( \partial p_{i-\frac{1}{2}} \right).
\]

The key to the computation of $H(\A_+ \otimes \X \otimes \B_+)$ is the following lemma. Analogously to lemma \ref{lem:a_simplification}, it shows how to simplify a given cycle $f \in \A_+ \otimes \X \otimes \B_+$ to a homologous one of smaller $a$-degree. Recall definition \ref{defn:s_n_defn} of $s_n$.
\begin{lem}
\label{lem:ABX_standard_form}
Suppose $f \in \A_+ \otimes \X \otimes \B_+$ has $a$-degree $n-\frac{1}{2}$ and satisfies $\partial f = 0$. Then 
\begin{align*}
f &= \left( a_{\frac{1}{2}} b_{\frac{1}{2}} \right) q_1 + \left( a_{\frac{1}{2}} b_{\frac{3}{2}} + a_{\frac{3}{2}} b_{\frac{1}{2}} \right) q_2 + \cdots +
\left( a_{\frac{1}{2}} b_{n - \frac{1}{2}} + a_{\frac{3}{2}} b_{n - 1} + \cdots + a_{n - \frac{1}{2}} b_{\frac{1}{2}} \right) q_n + \partial g \\
&= s_1 q_1 + s_2 q_2 + \cdots + s_n q_n + \partial g,
\end{align*}
for some $g = O(a_{n-\frac{1}{2}}) \in \A_+ \otimes \X \otimes \B_+$ and $q_j \in \X$, for $1 \leq j \leq n$, where each $q_j$ is a clean polynomial.
\end{lem}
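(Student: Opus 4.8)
The plan is to mimic the argument of Lemma~\ref{lem:a_simplification} (the $(\An,F_{0,2})$ case), working downward in $a$-degree, but now keeping track of the $\B_+$-factor and recognising the ``diagonal'' combinations $s_n$ as the natural building blocks. First I would set up the induction: it suffices to show that if $f$ has $a$-degree $n-\tfrac12$ and $\partial f=0$, then there is $g=O(a_{n-1/2})$ and a clean $q_n\in\X$ with $f - s_n q_n - \partial g = O(a_{n-3/2})$; repeatedly applying this and collecting the pieces gives the stated form (the residual term of $a$-degree $\tfrac12$ is handled exactly as in Proposition~\ref{prop:homology_with_a1}, which forces cleanness of $q_1$ as well). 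So the whole proof reduces to analysing the top two $a$-degree layers of $f$.

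Write $f = a_{n-1/2}p_{n-1/2} + a_{n-3/2}p_{n-3/2} + O(a_{n-5/2})$ with $p_i\in\X\otimes\B_+$. Using $\partial a_{n-1/2} = a_{n-3/2}x_1 + O(a_{n-5/2})$ from Section~\ref{sec:two_points_single_boundary}, I expand $\partial f$ and read off the coefficients of $a_{n-1/2}$ and $a_{n-3/2}$: the vanishing of $\partial f$ gives $\partial p_{n-1/2}=0$ and $x_1 p_{n-1/2} = \partial p_{n-3/2}$ (here $\partial$ on $\X\otimes\B_+$ acts by the Leibniz rule on both the $x$'s and the $b$'s). Now I run the same cleaning argument as in Lemma~\ref{lem:a_simplification}, but one variable at a time: since $x_1 p_{n-1/2}$ is a boundary in the differential $\X$-module $\X\otimes\B_+$, I use the homology of $\X\otimes\B_+$. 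This is exactly $H(\B_+\otimes\X)\cong \bar b_{1/2}H(\X)$ by Theorem~\ref{thm:A+-X_homology} (with $\A_+$ replaced by $\B_+$), so $p_{n-1/2}$ is homologous to $b_{1/2}r$ for a fermionic $r\in\X$; splitting off $x_1$ and arguing that $x_1\cdot(\text{clean})$ is nonzero in homology forces $p_{n-1/2}$ to be homologous to $b_{1/2}(x_1 t)$ with $t$ clean --- i.e. $p_{n-1/2} = b_{1/2}x_1 t + \partial(\text{something in }\X\otimes\B_+)$, equivalently $p_{n-1/2} = (\partial b_{3/2})t + \partial u$ for clean $t$ and some $u\in\X\otimes\B_+$ with $\partial t=0$.

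Given this, I produce the correcting element. The point is that $s_n$ was designed so that $\partial(a_{n+1/2}b_{3/2}t + a_{n+1/2}u + \dots)$ has $a_{n-1/2}$-coefficient equal to $p_{n-1/2}$: more precisely, because $\partial a_{n+1/2} = a_{n-1/2}x_1 + O(a_{n-3/2})$ and $\partial b_{3/2} = b_{1/2}x_1$, one checks that an element of the form $g = a_{n+1/2}\cdot(\text{stuff})$ has $\partial g = a_{n-1/2}p_{n-1/2} + O(a_{n-3/2})$; subtracting, $f - \partial g$ has $a$-degree $\le n-3/2$, and moreover the correction terms at degree $n-3/2$ are $\X$-multiples of $s_{n-1}$-type expressions, which I absorb into the $s_{n-1}q_{n-1}$ slot at the next inductive step --- this is why the statement is phrased with the $s_j$'s rather than individual $a_ib_j$'s. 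Actually, the cleanest route is a double induction also on the $b$-degree of $p_{n-1/2}$, peeling the $b_{1/2}$-term off first and then recursing; I would structure it that way so that the ``design'' of $s_n$ falls out rather than being imposed. The main obstacle I anticipate is bookkeeping: precisely matching the lower-order ($O(a_{n-3/2})$) terms generated by the correcting boundary $\partial g$ against the $s_{n-1}q_{n-1}$ term, so that the induction hypothesis applies cleanly --- and verifying that the polynomials $q_j$ that emerge really are clean (no $x_1$), which again uses that $x_1\cdot(\text{clean})$ is fermionic hence nonzero in $H(\X)$. Everything else is a routine expansion of $\partial$ via Proposition~\ref{prop:differential_on_ABCDX_complex} and the already-computed $H(\X)$ and $H(\B_\pm\otimes\X)$.
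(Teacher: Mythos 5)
Your overall strategy --- peel off one $s_nq_n$ at a time by downward induction on $a$-degree, using that $f - s_nq_n - \partial g$ is again a cycle of lower $a$-degree --- is a legitimate repackaging of the paper's argument (the paper instead builds the whole triangular array $\sum_k a_{n-k-\frac{1}{2}}\sum_l b_{l+\frac{1}{2}}q_{n-k+l}$ in a single inductive pass and only at the end recognises it as $\sum s_iq_i$). However, your treatment of the top coefficient contains a fatal error. You transplant the cleaning argument of Lemma \ref{lem:a_simplification} to conclude that $p_{n-\frac{1}{2}}$ is homologous to $b_{\frac{1}{2}}x_1t$ in $\B_+\otimes\X$. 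That argument does not survive the transplant: writing $p_{n-\frac{1}{2}} = b_{\frac{1}{2}}(s + x_1t) + \partial u$ with $s,t$ clean, the condition that $x_1p_{n-\frac{1}{2}}$ be a boundary reduces to asking that $b_{\frac{1}{2}}x_1s$ be a boundary in $\B_+\otimes\X$ --- but $b_{\frac{1}{2}}x_1s = \partial(b_{\frac{3}{2}}s)$ is \emph{always} a boundary there, for any clean $s$ (equivalently, under $H(\B_+\otimes\X)\cong\bar{x}_1H(\X)$ its class is $\bar{x}_1^2\bar{s}=0$). So the equation $x_1p_{n-\frac{1}{2}}=\partial p_{n-\frac{3}{2}}$ imposes no constraint on $s$, and you cannot conclude $s=0$.

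This is not a repairable slip in your framework: if $p_{n-\frac{1}{2}}$ really were homologous to $b_{\frac{1}{2}}x_1t$, then $\partial(a_{n+\frac{1}{2}}b_{\frac{1}{2}}t + a_{n-\frac{1}{2}}u)$ would absorb the entire top layer of $f$, the induction would terminate with no $s_nq_n$ term for any $n\geq 2$, and you would have proved that every cycle is homologous to one of $a$-degree $\frac{1}{2}$ --- contradicting Theorem \ref{thm:++_homology}, in which the classes $\bar{s}_n$ are independent over $H(\X)_{\neq 1}$. The correct conclusion from $\partial p_{n-\frac{1}{2}}=0$ is simply $p_{n-\frac{1}{2}} = b_{\frac{1}{2}}q_n + \partial r$ with $q_n$ clean and otherwise unconstrained; this $q_n$ \emph{is} the coefficient of $s_n$, and the top layer is matched by $s_nq_n + \partial(a_{n-\frac{1}{2}}r)$ rather than by a pure boundary. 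The second-layer equation $x_1p_{n-\frac{1}{2}}=\partial p_{n-\frac{3}{2}}$ is then used (as in the paper) not to constrain $q_n$ but to determine $p_{n-\frac{3}{2}}$ up to the next standard-form piece $b_{\frac{1}{2}}q_{n-1}$, and so on down the triangle.
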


Before commencing the proof, recall our computation of $H(\A_+ \otimes \X)$. We have
\[
H(\A_+ \otimes \X) \cong \bar{x}_1 H(\X) = \bar{x}_1 \frac{ \Z[\ldots, \bar{x}_{-3}, \bar{x}_{-1}, \bar{x}_1, \bar{x}_3, \ldots] }{ ( \ldots, \bar{x}_{-3}^2, \bar{x}_{-1}^2, \bar{x}_1^2, \bar{x}_3^2, \ldots )},
\]
with the isomorphism induced at the chain level by $a_{n - \frac{1}{2}} x^e \mapsto x_n x^e$. In particular (proposition \ref{prop:strong_homology_with_a1}), if $f \in \A_+ \otimes \X$ has $\partial f = 0$, then $f = a_{\frac{1}{2}} p + \partial g$, where $g \in \A_+ \otimes \X$ and $p$ is a clean polynomial.

The chain complex $\B_+ \otimes \X$ is isomorphic to $\A_+ \otimes \X$ via $b_i \mapsto a_i$. Then, similarly, $H(\B_+ \otimes \X) \cong \bar{x}_1 H(\X)$. Moreover, if $f \in B_+ \otimes \X$ and $\partial f = 0$ then $f = b_{\frac{1}{2}} p + \partial g$, where $g \in \B_+ \otimes \X$ and $p \in \X$ is a clean polynomial.

As a preliminary, we demonstrate lemma \ref{lem:ABX_standard_form} when $n - \frac{1}{2} = \frac{1}{2}$, i.e. $n=1$. In this case $f = a_{\frac{1}{2}} p_\frac{1}{2}$ with $p_{\frac{1}{2}} \in \X \otimes \B_+$, so $0 = \partial f = a_{\frac{1}{2}} \partial p_\frac{1}{2}$. Hence $p_\frac{1}{2} \in \B_+ \otimes \X$ satisfies $\partial p_\frac{1}{2} = 0$, so the previous paragraph gives $p_\frac{1}{2} = b_{\frac{1}{2}} q_1 + \partial r$, where $q_1$ is a clean polynomial. We then have
\[
f = a_{\frac{1}{2}} p_{\frac{1}{2}} = a_{\frac{1}{2}} b_{\frac{1}{2}} q_1 + a_{\frac{1}{2}} \partial r = s_1 q_1 + \partial \left( a_{\frac{1}{2}} r \right)
\]
as desired. The result for general $n$, though technically complicated, is based on a repetition of this argument.
\begin{proof}
As $f$ has $a$-degree $n-\frac{1}{2}$,
\[
f = a_{n - \frac{1}{2}} p_{n-\frac{1}{2}} + a_{n - \frac{3}{2}} p_{n-\frac{3}{2}} + \cdots + a_{\frac{1}{2}} p_{\frac{1}{2}}
\quad \text{where each } p_i \in \X \otimes \B_+.
\]
Differentiating $f$ gives
\begin{align*}
0 = \partial f &= a_{n - \frac{1}{2}} \left( \partial p_{n-\frac{1}{2}} \right) + \left( \partial a_{n - \frac{1}{2}} \right) p_{n-\frac{1}{2}} + a_{n - \frac{3}{2}} \left( \partial p_{n-\frac{3}{2}} \right) + + O(a_{n - \frac{5}{2}}) \\
&= a_{n - \frac{1}{2}} \left( \partial p_{n-\frac{1}{2}} \right) + a_{n - \frac{3}{2}} \left( x_1 p_{n-\frac{1}{2}} + \partial p_{n-\frac{3}{2}} \right) + O(a_{n - \frac{5}{2}}).
\end{align*}
Equating coefficients of $a_{n - \frac{1}{2}}$ and $a_{n - \frac{3}{2}}$ gives
\[
\partial p_{n-\frac{1}{2}} = 0 \quad \text{and} \quad x_1 p_{n-\frac{1}{2}} = \partial p_{n-\frac{3}{2}}.
\]
Using proposition \ref{prop:strong_homology_with_a1} on $p_{n-\frac{1}{2}} \in \B_+ \otimes \X$, we have $p_{n-\frac{1}{2}} = b_{\frac{1}{2}} q_n + \partial r$, where $q_n$ is a clean polynomial, and $r \in \B_+ \otimes \X$. We then have
\[
f = a_{n - \frac{1}{2}} b_{\frac{1}{2}} q_n + a_{n-\frac{1}{2}} \partial r + a_{n-\frac{3}{2}} p_{n-\frac{3}{2}} + \cdots + a_\frac{1}{2} p_\frac{1}{2}.
\]
Noting that $\partial \left( a_{n - \frac{1}{2}} r \right) = a_{n-\frac{1}{2}} \partial r + O(a_{n-\frac{3}{2}})$ produces a term $a_{n-\frac{1}{2}} \partial r$ as in the above, we have
\begin{align*}
f &= a_{n-\frac{1}{2}} b_{\frac{1}{2}} q_n + \partial \left( a_{n-\frac{1}{2}} r \right) + O(a_{n-\frac{3}{2}}) \\
&= a_{n - \frac{1}{2}} b_{\frac{1}{2}} q_n + f_{n-1} + \partial g_{n-1},
\end{align*}
where $f_{n-1}, g_{n-1} \in \A_+ \otimes \X \otimes \B_+$, $f_{n-1} = O(a_{n-\frac{3}{2}})$ and $g_{n-1} = a_{n-\frac{1}{2}} r = O(a_{n-\frac{1}{2}})$. (When $n=1$, this is just the preliminary demonstration given above.)

We claim now that, for each integer $i$ with $0 \leq i \leq n-1$, we can write
\begin{align*}
f &= a_{n - \frac{1}{2}} b_{\frac{1}{2}} q_n + a_{n - \frac{3}{2}} \left( b_{\frac{3}{2}} q_n + b_{\frac{1}{2}} q_{n-1} \right) + a_{n- \frac{5}{2}} \left( b_{\frac{5}{2}} q_n + b_{\frac{3}{2}} q_{n-1} + b_{\frac{1}{2}} q_{n-2} \right) \\
& \quad \quad + \cdots + a_{n - i - \frac{1}{2}} \left( b_{i+\frac{1}{2}} q_n + b_{i - \frac{1}{2}} q_{n-1} + \cdots + b_{\frac{1}{2}} q_{n-i} \right) + f_{n-i-1} + \partial g_{n-i-1} \\
&= \sum_{k=0}^{i} a_{n - k - \frac{1}{2}} \sum_{l=0}^{k} b_{l+\frac{1}{2}} q_{n-k+l} + f_{n-i-1} + \partial g_{n-i-1},
\end{align*}
where $q_n, q_{n-1}, \ldots q_{n-i} \in \X$ are clean polynomials, and $f_{n-i-1}, g_{n-i-1} \in \A_+ \otimes \X \otimes \B_+$, where $f_{n-i-1} = O(a_{n-i-\frac{3}{2}})$ and $g_{n-i-1} = O(a_{n-\frac{1}{2}})$. We have just shown this claim for $i=0$. So suppose that the claim holds for a particular value of $i$, where $0 \leq i \leq n-2$; we shall show it holds for $i+1$.

Let then $f$ be given as claimed. Consider differentiating $f$; we have $\partial f = 0$; moreover, differentiating the last two terms gives $\partial \left( f_{n-i-1} + \partial g_{n-i-1} \right) = O \left( a_{n-i-\frac{3}{2}} \right)$. From the other terms of $f$, we then obtain
\[
\partial \left( \begin{array}{c} a_{n - \frac{1}{2}} b_{\frac{1}{2}} q_n + a_{n - \frac{3}{2}} \left( b_{\frac{3}{2}} q_n + b_{\frac{1}{2}} q_{n-1} \right) + a_{n- \frac{5}{2}} \left( b_{\frac{5}{2}} q_n + b_{\frac{3}{2}} q_{n-1} + b_{\frac{1}{2}} q_{n-2} \right) \\
\quad \quad + \cdots + a_{n - i - \frac{1}{2}} \left( b_{i+\frac{1}{2}} q_n + b_{i - \frac{1}{2}} q_{n-1} + \cdots + b_{\frac{1}{2}} q_{n-i} \right)
\end{array} \right) = O(a_{n-i-\frac{3}{2}}).
\]
We thus consider the terms of $a$-degree $n-i-\frac{3}{2}$ in $\partial f$. Note that in any differential $\partial ( a_i b_j q_k)$, there is a unique term with $a$-degree $a_{l-\frac{1}{2}}$, for a positive integer $l < i$, namely $a_l b_j q_k x_{i-l}$. Further, since by assumption $f_{n-i-1} = O \left( a_{n-i - \frac{3}{2}} \right)$, let $f_{n-i-1} = a_{n-i-\frac{3}{2}} p_{n-i-\frac{3}{2}} + O(a_{n-i-\frac{5}{2}})$ where $p_{n-i-\frac{3}{2}} \in \B_+ \otimes \X$.  We obtain
\begin{align*}
0 = \partial f &= a_{n-i-\frac{3}{2}} \Big[ x_{i+1} b_{\frac{1}{2}} q_n  + x_i \left( b_{\frac{3}{2}} q_n + b_{\frac{1}{2}} q_{n-1} \right) + x_{i-1} \left( b_{\frac{5}{2}} q_n + b_{\frac{3}{2}} q_{n-1} + b_{\frac{1}{2}} q_{n-2} \right) \\
& \quad \quad + \cdots + x_1 \left( b_{i+\frac{1}{2}} q_n + b_{i-\frac{1}{2}} q_{n-1} + \cdots + b_{\frac{1}{2}} q_{n-i} \right) + \partial p_{n-i-\frac{3}{2}} \Big] +  O(a_{n-i-\frac{5}{2}}).
\end{align*}
The coefficient of $a_{n-i-\frac{3}{2}}$ must be zero, hence
\begin{align*}
\partial p_{n-i-\frac{3}{2}} &= 
x_{i+1} b_{\frac{1}{2}}   q_n  + x_i \left( b_{\frac{3}{2}} q_n + b_{\frac{1}{2}} q_{n-1} \right) + x_{i-1} \left( b_{\frac{5}{2}} q_n + b_{\frac{3}{2}} q_{n-1} + b_{\frac{1}{2}} q_{n-2} \right) \\
& \quad \quad + \cdots + x_1 \left( b_{i+\frac{1}{2}} q_n + b_{i-\frac{1}{2}} q_{n-1} + \cdots + b_{\frac{1}{2}} q_{n-i} \right) \\
&= \left( b_{\frac{1}{2}} x_{i+1} + b_{\frac{3}{2}} x_i  + \cdots + b_{i+\frac{1}{2}} x_1 \right) q_n
+ \left( b_{\frac{1}{2}} x_i + b_{\frac{3}{2}} x_{i-1} + \cdots + b_{i-\frac{1}{2}} x_1 \right) q_{n-1}
+ \cdots
+ \left( b_{\frac{1}{2}} x_1 \right) q_{n-i} \\
&= \left( \partial b_{i+\frac{3}{2}} \right) q_n + \left( \partial b_{i+\frac{1}{2}} \right) q_{n-1} + \cdots + \left( \partial b_{\frac{3}{2}} \right) q_{n-i} \\
&= \partial \left( b_{i+\frac{3}{2}} q_n + b_{i+\frac{1}{2}} q_{n-1} + \cdots + b_{\frac{3}{2}} q_{n-i} \right)
\end{align*}
In the second line we regrouped; in the third line used the formula for $\partial b_j$; and in the last line used $\partial q_j = 0$, which follows from our assumptions on the $q_j$.

Thus we have a cycle $p_{n-i-\frac{3}{2}} + b_{i+\frac{3}{2}} q_n + b_{i+\frac{1}{2}} q_{n-1} + \cdots + b_{\frac{3}{2}} q_{n-i}$ in $\B_+ \otimes \X$, and by proposition \ref{prop:strong_homology_with_a1} this is homologous to $b_{\frac{1}{2}} q_{n-i-1}$, where $q_{n-i-1}$ is a clean polynomial. This gives
\[
p_{n-i-\frac{3}{2}} = b_{i+\frac{3}{2}} q_n + b_{i+\frac{1}{2}} q_{n-1} + \cdots + b_{\frac{3}{2}} q_{n-i} + b_{\frac{1}{2}} q_{n-i-1} + \partial r,
\]
for some $r \in \B_+ \otimes \X$.

Returning to $f$ and substituting this expression for $p_{n-i-\frac{3}{2}}$, we have
\begin{align*}
f &=
a_{n - \frac{1}{2}} b_{\frac{1}{2}} q_n + a_{n - \frac{3}{2}} \left( b_{\frac{3}{2}} q_n + b_{\frac{1}{2}} q_{n-1} \right) + a_{n- \frac{5}{2}} \left( b_{\frac{5}{2}} q_n + b_{\frac{3}{2}} q_{n-1} + b_{\frac{1}{2}} q_{n-2} \right) \\
& \quad \quad + \cdots + a_{n - i - \frac{1}{2}} \left( b_{i+\frac{1}{2}} q_n + b_{i - \frac{1}{2}} q_{n-1} + \cdots + b_{\frac{1}{2}} q_{n-i} \right) + \\
& \quad \quad a_{n-i-\frac{3}{2}} \left( b_{i+\frac{3}{2}} q_n + b_{i+\frac{1}{2}} q_{n-1} + \cdots + b_{\frac{3}{2}} q_{n-i} + b_{\frac{1}{2}} q_{n-i-1} + \partial r \right) 
+ O(a_{n-i-\frac{5}{2}})
 + \partial g_{n-i-1}
\end{align*}
Finally, using $a_{n-i-\frac{3}{2}} \partial r = \partial \left( a_{n-i-\frac{3}{2}} r \right) + O(a_{n - i - \frac{5}{2}})$, we have
\begin{align*}
f &=
a_{n - \frac{1}{2}} b_{\frac{1}{2}} q_n + a_{n - \frac{3}{2}} \left( b_{\frac{3}{2}} q_n + b_{\frac{1}{2}} q_{n-1} \right) + a_{n- \frac{5}{2}} \left( b_{\frac{5}{2}} q_n + b_{\frac{3}{2}} q_{n-1} + b_{\frac{1}{2}} q_{n-2} \right) \\
& \quad \quad + \cdots + a_{n - i - \frac{1}{2}} \left( b_{i+\frac{1}{2}} q_n + b_{i - \frac{1}{2}} q_{n-1} + \cdots + b_{\frac{1}{2}} q_{n-i} \right)  \\
& \quad \quad + a_{n-i-\frac{3}{2}} \left( b_{i+\frac{3}{2}} q_n + b_{i+\frac{1}{2}} q_{n-1} + \cdots + b_{\frac{3}{2}} q_{n-i} + b_{\frac{1}{2}} q_{n-i-1} \right)
+ f_{n-i-2} + \partial g_{n-i-2}
\end{align*}
where $f_{n-i-2} = O(a_{n-i-\frac{5}{2}})$, and $g_{n-i-2} = a_{n-i-\frac{3}{2}} r + g_{n-i-1} = O(a_{n-\frac{1}{2}})$. This puts $f$ in the desired form for $i+1$, proving the claim. 

Now consider the claim with $i=n-1$. It says that 
\begin{align*}
f &= a_{n - \frac{1}{2}} b_{\frac{1}{2}} q_n + a_{n - \frac{3}{2}} \left( b_{\frac{3}{2}} q_n + b_{\frac{1}{2}} q_{n-1} \right) + a_{n- \frac{5}{2}} \left( b_{\frac{5}{2}} q_n + b_{\frac{3}{2}} q_{n-1} + b_{\frac{1}{2}} q_{n-2} \right) \\
& \quad \quad + \cdots + a_{\frac{1}{2}} \left( b_{n-\frac{1}{2}} q_n + b_{n - \frac{3}{2}} q_{n-1} + \cdots + b_{\frac{1}{2}} q_{1} \right) + f_{0} + \partial g_{0},
\end{align*}
where $f_0 = O(a_{-\frac{1}{2}})$, hence $f_0 = 0$, and $g_0 = O(a_{n-\frac{1}{2}})$. Writing $g=g_0$ this rearranges as
\[
f = s_n q_n + s_{n-1} q_{n-1} + \cdots + s_1 q_1 + \partial g
\]
as desired.
\end{proof}

This technical lemma shows that any cycle in $\A_+ \otimes \X \otimes \B_+$ is homologous to an element in the standard form $s_1 q_1 + \cdots + s_n q_n$. It is clear that, since $\partial s_i = \partial q_i = 0$, any element of this form is a cycle. We will next show that such representatives are unique, proving an analogy of proposition \ref{lem:stronger_homology_with_a1}. This will give us an explicit description of $H(\A_+ \otimes \X \otimes \B_+)$.

For this uniqueness result, however, we work in a truncated complex, and then take a direct limit.

\subsection{Homology of insular string diagrams II: truncated complex}

We now restrict to those elements of $\A_+ \otimes \X \otimes \B_+$ which have bounded $a$-degree. (We could equally well truncate with respect to $b$-degree, but we do not need it.)

\begin{defn}
Let $N$ be a positive integer. The $\Z_2$-module $\A_+^{< N}$ is the submodule of $\A$ generated by $a_n$ with $0 < n < N$.
\end{defn}

Note $\A_+^{<N}$ has $\Z_2$-rank $N$, with basis $\{a_\frac{1}{2}, a_\frac{3}{2}, \ldots, a_{N-\frac{1}{2}} \}$. Our strategy is to consider the homology in the ascending sequence 
\[
\A_+^{< 1} \otimes \X \otimes \B_+ \subset \A_+^{< 2} \otimes \X \otimes \B_+ \subset \cdots,
\]
whose direct limit is $\A_+ \otimes \X \otimes \B_+$. As the differential on $\A_+ \otimes \X \otimes \B_+$ lowers $a$-degree (or keeps it constant, proposition \ref{prop:differential_on_ABCDX_complex}), this is an ascending sequence of subcomplexes. As homology computes with direct limits, we will the homology of $\A_+ \otimes \X \otimes \B_+$ is the direct limit of the $H(\A_+^{<N} \otimes \X \otimes \B_+)$.

We first restate lemma \ref{lem:ABX_standard_form} in the truncated case.
\begin{lem}
\label{lem:standard_form_in_truncated_complex}
Suppose $f \in \A_+^{< n} \otimes \X \otimes \B_+$ and satisfies $\partial f = 0$. Then 
\[
f = s_1 q_1 + s_2 q_2 + \cdots + s_n q_n + \partial g,
\]
for some $g \in \A_+^{<n} \otimes \X \otimes \B_+$ and $q_j \in \X$, for $1 \leq j \leq n$, where each $q_j$ is a clean polynomial.
\end{lem}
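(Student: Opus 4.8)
The plan is to deduce this almost immediately from Lemma \ref{lem:ABX_standard_form}, the only thing needing attention being that the simplifying element $g$ produced there never raises the $a$-degree past that of $f$, so the whole argument stays inside the truncated complex $\A_+^{<n}\otimes\X\otimes\B_+$.

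First I would dispose of the trivial case $f=0$ by taking all $q_j=0$ and $g=0$. Otherwise, since $f\in\A_+^{<n}\otimes\X\otimes\B_+$, its basis expansion involves only $a_i$ with $0<i<n$, so the $a$-degree of $f$ equals $m-\tfrac12$ for some positive integer $m$ with $m\le n$. Applying Lemma \ref{lem:ABX_standard_form} with this $m$ in the role of $n$ yields clean polynomials $q_1,\dots,q_m\in\X$ and an element $g\in\A_+\otimes\X\otimes\B_+$ with $g=O(a_{m-\frac12})$ such that
\[
f = s_1 q_1 + s_2 q_2 + \cdots + s_m q_m + \partial g .
\]
Setting $q_{m+1}=\cdots=q_n=0$ (these are vacuously clean, and $s_j q_j=0$ for $j>m$) gives $f = s_1 q_1 + \cdots + s_n q_n + \partial g$ with each $q_j$ clean. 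Finally, $g=O(a_{m-\frac12})$ and $m\le n$ force the $a$-degree of $g$ to be at most $m-\tfrac12\le n-\tfrac12$, hence $g\in\A_+^{<n}\otimes\X\otimes\B_+$, which is exactly the required conclusion.

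There is no real obstacle here: the content is entirely front-loaded into Lemma \ref{lem:ABX_standard_form}, and the single point to verify is that the chain $g$ constructed in that lemma's proof has $a$-degree bounded by that of $f$ — which is already recorded in its statement via $g=O(a_{n-\frac12})$. The reason for isolating this truncated form is not that it is harder, but that it sets up the next step: working in the finite-dimensional-over-$\X$ pieces $\A_+^{<n}\otimes\X\otimes\B_+$ one can prove uniqueness of the standard-form representation and then pass to the direct limit $\A_+\otimes\X\otimes\B_+=\varinjlim\,\A_+^{<n}\otimes\X\otimes\B_+$ to recover the full statement.
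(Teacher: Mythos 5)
Your proof is correct and is essentially identical to the paper's: both apply Lemma \ref{lem:ABX_standard_form} to $f$ of $a$-degree $m-\tfrac12$ with $m\le n$, pad with $q_{m+1}=\cdots=q_n=0$, and observe that the bound $g=O(a_{m-\frac12})$ keeps $g$ inside $\A_+^{<n}\otimes\X\otimes\B_+$. No issues.
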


(Note that $s_1, s_2, \ldots, s_n$ are precisely the $s_i$ which lie in $\A_+^{< n} \otimes \X \otimes \B_+$.)

\begin{proof}
Let the given $f$ have $a$-degree $m-\frac{1}{2}$, for some integer $m$, $1 \leq m \leq n$. Lemma \ref{lem:ABX_standard_form} shows how to write $f$ in the form $s_1 q_1 + \cdots + s_m q_m + \partial g$, where the $s_i$ and $q_i$ have the desired form, and $g \in \A_+ \otimes \X \otimes \B_+$ has $a$-degree $\leq m-\frac{1}{2} \leq n - \frac{1}{2}$, hence $g \in \A_+^{<n} \otimes \X \otimes \B_+$.
\end{proof}

Thus every homology class of $\A_+^{<n} \otimes \X \otimes \B_+$ has a representative in the ``standard form" $s_1 q_1 + \cdots + s_n q_n$. Since $\partial s_i = \partial q_i = 0$, every such element is a cycle and represents some homology class. We will show that each homology class has precisely one representative in this standard form; equivalently, as the next proposition shows, the only ``standard form" element which is a boundary is $0$.
\begin{prop}
\label{prop:truncated_injectivity}
Let $n \in \Z_{>0}$, and suppose $q_1, \ldots, q_n \in \X$ are clean polynomials such that
\[
s_1 q_1 + s_2 q_2 + \cdots + s_n q_n = \partial r
\]
for some $r \in \A_+^{< n} \otimes \X \otimes \B_+$. Then $q_1 = q_2 = \cdots = q_n = 0$.
\end{prop}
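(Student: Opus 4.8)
The plan is to induct on $n$. The point of the truncation to $\A_+^{<n}\otimes\X\otimes\B_+$ is precisely that it gives every element a well-defined maximal $a$-degree, so that one can "peel off" the top row of $r$; this is what makes an induction on $n$ possible (in the full complex $\A_+\otimes\X\otimes\B_+$ there is no top row to start from). For the base case $n=1$, the module $\A_+^{<1}$ is one-dimensional, spanned by $a_{\frac12}$, so $r = a_{\frac12}\rho$ with $\rho\in\X\otimes\B_+$, and since $\partial a_{\frac12}=0$ we get $\partial\rho = b_{\frac12}q_1$ in $\B_+\otimes\X$; as $b_{\frac12}q_1$ is a cycle of the ``standard form'' $b_{\frac12}\cdot(\text{clean})$ which is also a boundary, the uniqueness statement (the $\B_+\otimes\X$-analogue of Proposition \ref{lem:stronger_homology_with_a1}) forces $q_1=0$.

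For the inductive step, suppose the claim holds for $n-1$, and write $r = a_{n-\frac12}p_{n-\frac12} + r'$ with $r'\in\A_+^{<n-1}\otimes\X\otimes\B_+$ and $p_{n-\frac12}\in\X\otimes\B_+$. First I would extract the coefficient of $a_{n-\frac12}$ on both sides of $s_1q_1+\cdots+s_nq_n = \partial r$. On the left, only $s_n$ reaches $a$-degree $n-\frac12$ (its top term being $a_{n-\frac12}b_{\frac12}$), so the coefficient is $b_{\frac12}q_n$. On the right, using Proposition \ref{prop:differential_on_ABCDX_complex}, every term of $r$ of $a$-index $<n-\frac12$ contributes only in $a$-degree $\le n-\frac32$, and $\partial(a_{n-\frac12}p_{n-\frac12}) = (\partial a_{n-\frac12})p_{n-\frac12} + a_{n-\frac12}(\partial p_{n-\frac12})$ contributes $\partial p_{n-\frac12}$ in $a$-degree $n-\frac12$. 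Hence $b_{\frac12}q_n = \partial p_{n-\frac12}$ in $\B_+\otimes\X$. Since $q_n$ is clean, applying the chain map $\B_+\otimes\X\to\X$ (the $\B$-analogue of $\Phi$, sending $b_{m-\frac12}x^e\mapsto x_mx^e$) turns this into the statement that $x_1q_n$ is a boundary in $\X$; but $x_1q_n$ is a fermionic polynomial, and a fermionic polynomial represents a nonzero class unless it is $0$, so $q_n=0$ (equivalently, invoke uniqueness of standard forms in $\B_+\otimes\X$ directly).

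Now $\partial p_{n-\frac12}=0$, so by the $\B_+\otimes\X$-analogue of Proposition \ref{prop:strong_homology_with_a1} we may write $p_{n-\frac12} = b_{\frac12}t + \partial u$ with $t\in\X$ clean and $u\in\B_+\otimes\X$. Set
\[
\rho \;=\; r + \partial\!\left(a_{n-\frac12}u\right) + s_n t .
\]
The coefficient of $a_{n-\frac12}$ in $\rho$ is $p_{n-\frac12} + \partial u + b_{\frac12}t = (b_{\frac12}t+\partial u)+\partial u+b_{\frac12}t = 0$ over $\Z_2$, and every other term of $r$, of $\partial(a_{n-\frac12}u)$, and of $s_nt$ has $a$-index $\le n-\frac32$, so $\rho\in\A_+^{<n-1}\otimes\X\otimes\B_+$. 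Moreover $\partial\rho = \partial r + (\partial s_n)t + s_n(\partial t) = \partial r = s_1q_1+\cdots+s_{n-1}q_{n-1}$ (using $\partial s_n=0=\partial t$ and $q_n=0$). Applying the inductive hypothesis to $\rho$ gives $q_1=\cdots=q_{n-1}=0$, completing the induction.

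I expect the main thing to get right is the bookkeeping in the definition of $\rho$: one cannot simply remove the top row $a_{n-\frac12}p_{n-\frac12}$ of $r$ by subtracting a boundary, since classes of the form $\overline{s_nq_n}$ are in general nonzero in $H(\A_+\otimes\X\otimes\B_+)$; it is essential first to deduce $q_n=0$ (so $p_{n-\frac12}$ is a cycle) and only then correct by the combination $\partial(a_{n-\frac12}u) + s_n t$, verifying that the $a$-degree $n-\frac12$ part cancels exactly and that the correction changes $\partial r$ by nothing. Everything else is routine given the computations of $H(\X)$ and of $H(\B_+\otimes\X)$ together with Lemma \ref{lem:ABX_standard_form} and its uniqueness refinement.
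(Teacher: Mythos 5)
Your proof is correct, and it is organized differently from the paper's. The paper fixes $n$ and runs a single downward pass through the coefficients of $a_{n-\frac12}, a_{n-\frac32},\ldots$, carrying an increasingly elaborate inductive claim that records explicit expressions for each $r_{n-i}$ in terms of auxiliary clean polynomials $t_{n},\ldots$ and elements $u_n,\ldots$ of $\B_+\otimes\X$; the vanishing of each $q_{n-i}$ is extracted along the way by recognizing an accumulated sum as a boundary. You instead induct on the truncation parameter $n$ itself: after killing $q_n$ by comparing top coefficients (exactly as the paper does, via uniqueness of standard forms in $\B_+\otimes\X$), you strip off the top row with the correction $\rho = r + \partial\bigl(a_{n-\frac12}u\bigr) + s_n t$ and land in $\A_+^{<n-1}\otimes\X\otimes\B_+$ with $\partial\rho = s_1q_1+\cdots+s_{n-1}q_{n-1}$ unchanged. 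Both arguments rest on the same two inputs --- Proposition \ref{lem:stronger_homology_with_a1} (transported to $\B_+\otimes\X$) and the identities $\partial s_n = \partial t = 0$ --- but your reduction step replaces the paper's nested bookkeeping with a single verifiable cancellation of the $a$-degree-$(n-\frac12)$ part, which is cleaner and less error-prone. The cancellation itself checks out: the top coefficient of $\rho$ is $p_{n-\frac12}+\partial u+b_{\frac12}t = 0$ over $\Z_2$, all remaining terms of $r$, of $\partial(a_{n-\frac12}u)$, and of $s_nt$ have $a$-index at most $n-\frac32$, and $\partial\rho=\partial r$ by $\partial^2=0$. Your closing remark is also the right emphasis: one must establish $q_n=0$ before attempting to remove the top row, since the classes $\bar{s}_n\bar{q}_n$ are generally nonzero in homology.
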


\begin{proof}
Let $f = s_1 q_1 + \cdots + s_n q_n$. As $r \in \A_+^{<n} \otimes \X \otimes \B_+$, we can write
\[
r = a_{n-\frac{1}{2}} r_n + a_{n-\frac{3}{2}} r_{n-1} + a_{n - \frac{5}{2}} r_{n-2} + \cdots + a_{\frac{3}{2}} r_2 + a_{\frac{1}{2}} r_1,
\]
where each $r_1, r_2, \ldots, r_n \in \B_+ \otimes \X$. Differentiating gives
\begin{align*}
\partial r &= a_{n - \frac{1}{2}} \left( \partial r_n \right) + a_{n - \frac{3}{2}} \left( x_1 r_n + \partial r_{n-1} \right)  + a_{n - \frac{5}{2}} \left( x_2 r_n + x_1 r_{n-1} + \partial r_{n-2} \right) \\
& \quad \quad
+ \cdots +
a_{n-i-\frac{1}{2}} \left( x_i r_n + x_{i-1} r_{n-1} + \cdots + x_1 r_{n-i+1} + \partial r_{n-i} \right)
+ \cdots \\
& \quad \quad 
+
a_{\frac{1}{2}} \left( x_{n-1} r_n + x_{n-2} r_{n-1} + \cdots + x_1 r_2 + \partial r_1 \right)
\end{align*}
On the other hand, we can write out the terms of $f$ by $a$-degree as follows. 
\begin{align*}
f &= a_{n - \frac{1}{2}} \left( b_{\frac{1}{2}} q_n \right) 
+ a_{n - \frac{3}{2}} \left( b_{\frac{3}{2}} q_n + b_{\frac{1}{2}} q_{n-1} \right) 
+ a_{n - \frac{5}{2}} \left( b_{\frac{5}{2}} q_n + b_{\frac{3}{2}} q_{n-1} + b_{\frac{1}{2}} q_{n-2} \right) \\
& \quad \quad
+ \cdots
+ a_{n - i - \frac{1}{2}} \left( b_{i + \frac{1}{2}} q_n + b_{i - \frac{1}{2}} q_{n-1} + \cdots + b_{\frac{3}{2}} q_{n-i+1} + b_{\frac{1}{2}} q_{n-i} \right)
+ \cdots \\
& \quad \quad
+ a_{\frac{1}{2}} \left( b_{n-\frac{1}{2}} q_n + b_{n-\frac{3}{2}} q_{n-1} + \cdots + b_{\frac{3}{2}} q_2 + b_{\frac{1}{2}} q_1 \right)
\end{align*}
Equating coefficients of $a_{n-\frac{1}{2}}$ in $f = \partial r$ gives
\[
b_{\frac{1}{2}} q_n = \partial r_n,
\]
so that $b_{\frac{1}{2}} q_n$ is a boundary in $\B_+ \otimes \X$. But by our computation of $H(\A_+ \otimes \X) \cong H(\B_+ \otimes \X)$ of section \ref{sec:homology_A_+_tensor_X} (specifically proposition \ref{lem:stronger_homology_with_a1}) then $q_n = 0$. Equating coefficients of $a_{n - \frac{3}{2}}$ then gives
\begin{equation}
\label{eqn:q_n-3/2_coeffs}
b_{\frac{1}{2}} q_{n-1} = x_1 r_n + \partial r_{n-1}.
\end{equation}

We note that $x_1 (\partial r_n) = \partial (x_1 r_n) = \partial ( \partial r_{n-1} + b_\frac{1}{2} q_{n-1}) = 0$ (as $\partial b_\frac{1}{2} = \partial q_{n-1} = 0$), so $r_n$ is a cycle in $B_+ \otimes \X$. Writing $r_n$ homologous to its standard form, we have
\[
r_n = b_{\frac{1}{2}} t_n + \partial u_n,
\]
where $t_n \in \X$ is a clean polynomial, and $u_n \in \B_+ \otimes \X$. Substituting this expression into (\ref{eqn:q_n-3/2_coeffs}) gives
\[
b_{\frac{1}{2}} q_{n-1} = b_{\frac{1}{2}} x_1 t_n + x_1 \partial u_n + \partial r_{n-1}
= \partial \left( b_{\frac{3}{2}} t_n + x_1 u_n + r_{n-1} \right),
\]
so $b_{\frac{1}{2}} q_{n-1}$ is a boundary. Applying proposition \ref{lem:stronger_homology_with_a1} again we have $q_{n-1} = 0$.

Returning to equation (\ref{eqn:q_n-3/2_coeffs}), we now have
\[
\partial r_{n-1} = x_1 r_n = x_1 \left( b_{\frac{1}{2}} t_n + \partial u_n \right) = \partial \left( b_{\frac{3}{2}} t_n + x_1 u_n \right)
\]
so that $r_{n-1} + b_{\frac{3}{2}} t_n + x_1 u_n$ is a cycle, hence homologous to a standard form element
\[
r_{n-1} + b_{\frac{3}{2}} t_n + x_1 u_n = b_{\frac{1}{2}} t_{n-1} + \partial u_{n-1},
\]
where $t_{n-1} \in \X$ is a clean polynomial and $u_n \in \B_+ \otimes \X$.

We claim now inductively that $q_n = q_{n-1} = \cdots = q_{n-i+1} = 0$, for all $1 \leq i \leq n$. We also claim that each of $r_n, r_{n-1}, \ldots, r_{n-i+1}$ satisfies
\begin{align*}
r_n &= b_{\frac{1}{2}} t_n + \partial u_n \\
r_{n-1} &= b_{\frac{3}{2}} t_n + b_{\frac{1}{2}} t_{n-1} + x_1 u_n + \partial u_{n-1} \\
r_{n-2} &= b_{\frac{5}{2}} t_n + b_{\frac{3}{2}} t_{n-1} + b_{\frac{1}{2}} t_{n-2} + x_2 u_n + x_1 u_{n-1} + \partial u_{n-2} \\
\cdots \\
r_{n-i+1} &= b_{i - \frac{1}{2}} t_n + b_{i-\frac{3}{2}} t_{n-1} +\cdots + b_{\frac{3}{2}} t_{n-i+2} + b_{\frac{1}{2}} t_{n-i+1} \\
& \quad \quad + x_{i-1} u_n + x_{i-2} u_{n-1} + \cdots + x_2 u_{n-i+3} + x_1 u_{n-i+2} + \partial u_{n-i+1},
\end{align*}
where each of $t_n, t_{n-1}, \ldots, t_{n-i+1} \in \X$ is a clean polynomial, and each of $u_n, u_{n-1}, \ldots, u_{n-i+1} \in \B_+ \otimes \X$.
We have these claims for for $i=1,2$; now suppose they are true for $i$ with $2 \leq i \leq n$, and we show they are true for $i+1$. 

Equating coefficients of $a_{n-i-\frac{1}{2}}$ in $f = \partial r$, and noting $q_n = \cdots = q_{n-i+1} = 0$, we obtain
\[
b_{\frac{1}{2}} q_{n-i} = x_i r_n + x_{i-1} r_{n-1} + \cdots + x_2 r_{n-i+2} + x_1 r_{n-i+1} + \partial r_{n-i}.
\]
Then, as we have each of $r_n, \ldots, r_{n-i+1}$ in terms of $t$'s and $u$'s, we have
\begin{align*}
b_{\frac{1}{2}} q_{n-i} &=
x_i \left( b_{\frac{1}{2}} t_n + \partial u_n \right)
+ x_{i-1} \left( b_{\frac{3}{2}} t_n + b_{\frac{1}{2}} t_{n-1} + x_1 u_n + \partial u_{n-1} \right) \\
& \quad \quad + \cdots
+ x_1 \Big( b_{i - \frac{1}{2}} t_n + b_{i-\frac{3}{2}} t_{n-1} +\cdots + b_{\frac{3}{2}} t_{n-i+2} + b_{\frac{1}{2}} t_{n-i+1} \\
& \quad \quad + x_{i-1} u_n + x_{i-2} u_{n-1} + \cdots + x_2 u_{n-i+3} + x_1 u_{n-i+2} + \partial u_{n-i+1} \Big) + \partial r_{n-i}.
\end{align*}
We may regroup according to the $t_n$ terms:
\begin{align*}
b_{\frac{1}{2}} q_{n-i} &= \left( b_{\frac{1}{2}} x_i + b_{\frac{3}{2}} x_{i-1} + \cdots + b_{i-\frac{3}{2}} x_2 + b_{i-\frac{1}{2}} x_1 \right) t_n
+ \left( b_{\frac{1}{2}} x_{i-1} + b_{\frac{3}{2}} x_{i-2} + \cdots + b_{i-\frac{5}{2}} x_2 + b_{i-\frac{3}{2}} x_1 \right) t_{n-1} \\
& \quad \quad + \cdots
+ \left( b_{\frac{1}{2}} x_2 + b_{\frac{3}{2}} x_1 \right) t_{n-i+2}
+ \left( b_{\frac{1}{2}} x_1 \right) t_{n-i+1} \\
& \quad \quad
+ \left( x_{i-1} x_1 + x_{i-2} x_2 + \cdots + x_2 x_{i-2} + x_1 x_{i-1} \right) u_n
+ \left( x_{i-2} x_1 + \cdots + x_1 x_{i-2} \right) u_{n-1} \\
& \quad \quad + \cdots
+ \left( x_2 x_1 + x_1 x_2 \right) u_{n-i+3}
+ \left( x_1 x_1 \right) u_{n-i+2} \\
& \quad \quad + x_i \partial u_n + x_{i-1} \partial u_{n-1} + \cdots + x_2 \partial u_{n-i+2} + x_1 \partial u_{n-i+1} + \partial r_{n-i} 
\end{align*}
We now recognise this as a boundary:
\begin{align*}
b_{\frac{1}{2}} q_{n-i}
&= \partial \left( b_{i+ \frac{1}{2}} t_n + b_{i-\frac{1}{2}} t_{n-1} + \cdots + b_{\frac{3}{2}} t_{n-i+1} \right. \\
&\quad \quad \left. + x_i u_n + x_{i-1} u_{n-1} + \cdots + x_3 u_{n-i+3} + x_2 u_{n-i+2} + x_1 u_{n-i+1} + r_{n-i} \right).
\end{align*}
Thus $q_{n-i} = 0$. Moreover we obtain a cycle, whose homology class has a standard form:
\[
b_{i+\frac{1}{2}} t_n + b_{i-\frac{1}{2}} t_{n-1} + \cdots + b_{\frac{3}{2}} t_{n-i+1} + x_i u_n + x_{i-1} u_{n-1} + \cdots + x_2 u_{n-i+2} + x_1 u_{n-i+1} + r_{n-i} = b_{\frac{1}{2}} t_{n-i} + \partial u_{n-i},
\]
for some clean polynomial $t_{n-i} \in \X$ and some $u_{n-i} \in \B_+ \otimes \X$. Rearranging this gives
\[
r_{n-i} = b_{i+\frac{1}{2}} t_n + \cdots + + b_{\frac{3}{2}} t_{n-i+1} + b_{\frac{1}{2}} t_{n-i} + x_i u_n + \cdots + x_1 u_{n-i+1} + \partial u_{n-i}.
\]
Thus the claims are proved for $i+1$.; by induction it is true for all $i$ up to $n$. With $i=n$ then we have $q_n = q_{n-1} = \cdots = q_1 =0$ as desired. 
\end{proof}

Now we can write down $H(\A_+^{<n} \otimes \X \otimes \B_+)$. By lemma \ref{lem:standard_form_in_truncated_complex}, any cycle $f$ in $\A_+^{< n} \otimes \X \otimes \B_+$ is homologous to one of the form $s_1 q_1 + \cdots s_n q_n$, where $q_i$ are clean polynomials. Conversely, as $\partial s_i = \partial q_i = 0$ every element of the form $s_1 q_1 + \cdots + s_n q_n$ is a cycle. Moreover, proposition \ref{prop:truncated_injectivity} says that any boundary of the form $s_1 q_1 + \cdots + s_n q_n$ must be zero. Writing $\bar{s}_i \bar{q}$ for the homology classes of each $s_i q$, we have obtained the following.
\begin{prop}
\label{prop:truncated_homology}
Let $n$ be a positive integer. As a $\Z_2$-module, $H(\A_+^{<n} \otimes \X \otimes \B_+)$ is freely generated by the elements $\bar{s}_i \bar{q}$, over all integers $i$ satisfying $1 \leq i \leq n$, and all clean monomials $\bar{q}$ in $H(\X)$.
\qed
\end{prop}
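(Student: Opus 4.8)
The plan is to assemble the statement from the two results that immediately precede it, Lemma~\ref{lem:standard_form_in_truncated_complex} and Proposition~\ref{prop:truncated_injectivity}, together with the description of $H(\X)$ from Theorem~\ref{thm:homology_of_X}. The preliminary point to record is that every ``standard form'' element $s_1 q_1 + \cdots + s_n q_n$, with each $q_j \in \X$ a clean polynomial (Definition~\ref{def:clean}), is automatically a cycle: we have $\partial s_j = 0$ (as observed after Definition~\ref{defn:s_n_defn}) and $\partial q_j = 0$ since clean polynomials are fermionic, so the Leibniz rule gives $\partial(s_j q_j) = 0$. Hence each pair $(i,q)$ with $1 \le i \le n$ and $q$ a clean monomial does determine a class $\bar s_i \bar q \in H(\A_+^{<n} \otimes \X \otimes \B_+)$, and it remains to show these classes form a $\Z_2$-basis.

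\emph{Spanning.} Given any homology class, represented by a cycle $f \in \A_+^{<n} \otimes \X \otimes \B_+$, Lemma~\ref{lem:standard_form_in_truncated_complex} provides $g$ with $f = s_1 q_1 + \cdots + s_n q_n + \partial g$ where each $q_j$ is a clean polynomial. Expanding each $q_j$ as a finite $\Z_2$-combination $\sum_q \lambda_{j,q}\, q$ of clean monomials --- which is legitimate since the clean monomials are a $\Z_2$-basis of the clean-polynomial subring of $\X$, and by Theorem~\ref{thm:homology_of_X} remain $\Z_2$-independent in $H(\X)$ --- one obtains
\[
\bar f = \sum_{i=1}^{n} \sum_q \lambda_{i,q}\, \bar s_i\, \bar q ,
\]
so the classes $\bar s_i \bar q$ span $H(\A_+^{<n} \otimes \X \otimes \B_+)$ over $\Z_2$.

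\emph{Independence.} Suppose $\sum_{i=1}^{n} \sum_q \lambda_{i,q}\, \bar s_i\, \bar q = 0$ for scalars $\lambda_{i,q} \in \Z_2$, almost all zero. Setting $q_i = \sum_q \lambda_{i,q}\, q$, a clean polynomial, this says $s_1 q_1 + \cdots + s_n q_n = \partial r$ for some $r \in \A_+^{<n} \otimes \X \otimes \B_+$, so Proposition~\ref{prop:truncated_injectivity} forces $q_1 = \cdots = q_n = 0$, and then linear independence of the clean monomials in $\X$ gives $\lambda_{i,q} = 0$ for all $i,q$. Combined with the spanning statement, this says precisely that the $\bar s_i \bar q$ are a free generating set, as claimed. (The same computation applied to a difference $\bar s_i \bar q - \bar s_j \bar q'$ shows in particular that distinct pairs $(i,q)$ give distinct classes.)

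I do not expect a genuine obstacle in this final step: it is bookkeeping on top of Lemma~\ref{lem:standard_form_in_truncated_complex} and Proposition~\ref{prop:truncated_injectivity}. The only things requiring care are the translation between clean polynomials and the clean-monomial $\Z_2$-basis, and invoking Theorem~\ref{thm:homology_of_X} to ensure no collapse happens when passing to homology. The real difficulty, of course, already sits upstream in Proposition~\ref{prop:truncated_injectivity}, whose inductive ``peel off one $a$-degree at a time'' argument --- repeatedly feeding cycles in $\B_+ \otimes \X$ into the $F_{0,2}$ computation (Proposition~\ref{lem:stronger_homology_with_a1}) --- is the engine behind the whole statement.
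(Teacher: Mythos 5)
Your proposal is correct and follows exactly the paper's argument: the paper also obtains spanning from Lemma \ref{lem:standard_form_in_truncated_complex}, independence from Proposition \ref{prop:truncated_injectivity}, and notes that standard-form elements are cycles since $\partial s_i = \partial q_i = 0$. Your extra care in passing between clean polynomials and the clean-monomial basis (via Theorem \ref{thm:homology_of_X}) is just the bookkeeping the paper leaves implicit.
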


\subsection{Homology of insular string diagrams III: direct limit and module structure}
\label{sec:homology_of_++_computed}

We have now down the hard work in computing the homology of $\A_+ \otimes \X \otimes \B_+$, the subcomplex of $\widehat{CS}(A, F)$ consisting of insular string diagrams. For the chain complex $\A_+ \otimes \X \otimes \B_+$ is the direct limit of the $\A_+^{<n} \otimes \X \otimes \B_+$, and direct limits commute with homology. Thus $H(\A_+ \otimes \X \otimes \B_+)$ is the direct limit of the $H(\A_+^{<n} \otimes \X \otimes \B_+)$. From proposition \ref{prop:truncated_homology}, we immediately obtain the following.

\begin{prop}
As a $\Z_2$-module, $H(\A_+ \otimes \X \otimes \B_+)$ is freely generated by the elements $\bar{s}_i \bar{q}$, over all positive integers $i$ and all clean monomials $\bar{q}$.
\qed
\end{prop}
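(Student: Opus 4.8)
The plan is to obtain this as an immediate consequence of Proposition~\ref{prop:truncated_homology} by passing to a direct limit. First I would observe that $\A_+ \otimes \X \otimes \B_+$ is the union of the ascending sequence of $\Z_2$-subcomplexes
\[
\A_+^{<1} \otimes \X \otimes \B_+ \subseteq \A_+^{<2} \otimes \X \otimes \B_+ \subseteq \cdots,
\]
each inclusion being a chain map: by Proposition~\ref{prop:differential_on_ABCDX_complex} the differential never raises $a$-degree, so each $\A_+^{<n} \otimes \X \otimes \B_+$ is genuinely a subcomplex. Thus $\A_+ \otimes \X \otimes \B_+ = \varinjlim_n \bigl( \A_+^{<n} \otimes \X \otimes \B_+ \bigr)$ as chain complexes.

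Next I would invoke the standard fact that homology commutes with direct limits along an ascending chain of subcomplexes, giving $H(\A_+ \otimes \X \otimes \B_+) \cong \varinjlim_n H(\A_+^{<n} \otimes \X \otimes \B_+)$. The substantive step is to identify the connecting maps in this system. The inclusion $\A_+^{<n} \otimes \X \otimes \B_+ \hookrightarrow \A_+^{<n+1} \otimes \X \otimes \B_+$ carries a standard-form cycle $s_1 q_1 + \cdots + s_n q_n$ (with each $q_j$ a clean polynomial) to the literally identical element of the larger complex, which is again in standard form there, since by Definition~\ref{defn:s_n_defn} the $s_i$ depend only on $i$ and lie in $\A \otimes \X \otimes \B$. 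Proposition~\ref{prop:truncated_homology} at level $n+1$ then shows that distinct such combinations have distinct homology classes, so the induced map on homology sends the $\Z_2$-basis $\{\bar{s}_i \bar{q} : 1 \le i \le n\}$ of $H(\A_+^{<n} \otimes \X \otimes \B_+)$ injectively onto the corresponding subset of the basis of $H(\A_+^{<n+1} \otimes \X \otimes \B_+)$.

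Because all connecting maps are injective and send basis elements to basis elements compatibly, the direct limit is simply the free $\Z_2$-module on the union $\bigcup_n \{\bar{s}_i \bar{q} : 1 \le i \le n\} = \{\bar{s}_i \bar{q} : i \ge 1,\ \bar{q} \text{ a clean monomial}\}$, which is the asserted description. I do not expect any real obstacle here: all of the hard work --- that every cycle in the truncated complex is homologous to a standard-form element (Lemmas~\ref{lem:ABX_standard_form} and~\ref{lem:standard_form_in_truncated_complex}), and that a standard-form boundary must vanish (Proposition~\ref{prop:truncated_injectivity}) --- has already been carried out. The only minor point requiring care is the compatibility of the truncated homology bases under the inclusions, which is exactly what the remarks about $s_i$ above address.
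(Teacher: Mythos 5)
Your proposal is correct and follows the same route as the paper: the paper likewise obtains this proposition by observing that $\A_+ \otimes \X \otimes \B_+$ is the direct limit of the subcomplexes $\A_+^{<n} \otimes \X \otimes \B_+$, that homology commutes with direct limits, and then citing Proposition~\ref{prop:truncated_homology}. Your extra remarks verifying that the inclusions carry standard-form basis elements compatibly (hence that the connecting maps are injective on the given bases) are a point the paper leaves implicit, and they are correct.
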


Thus, as $\Z_2$-modules at least,
\begin{equation}
\label{eqn:ABX_homology_as_free_module}
H(\A_+ \otimes \X \otimes \B_+) = \frac{ \Z[\ldots, \bar{x}_{-3}, \bar{x}_{-1}, \bar{x}_3, \bar{x}_5, \ldots] }{ (\ldots, \bar{x}_{-3}^2, \bar{x}_{-1}^2, \bar{x}_3^2, \bar{x}_5^2, \ldots) } \langle s_1, s_2, \ldots \rangle
= H(\X)_{\neq 1} \langle s_1, s_2, \ldots \rangle.
\end{equation}
Note the absence of $\bar{x}_1$; recall definition \ref{defn:polynomial_subrings} of $H(\X)_{\neq 1}$.

We would like to explain the module structure in $H(\A_+ \otimes \X \otimes \B_+)$, as well as the anomalous behaviour of $\bar{x}_1$. We have seen that $\A_+ \otimes \X \otimes \B_+$ is a differential $\X$-module, so $H(\A_+ \otimes \X \otimes \B_+)$ is an $H(\X)$-module; and we computed $H(\X)$ in theorem \ref{thm:homology_of_X} as
\[
\frac{ \Z_2 [ \ldots, \bar{x}_{-3}, \bar{x}_{-1}, \bar{x}_1, \bar{x}_3, \ldots ] }{ ( \ldots, \bar{x}_{-3}^2, \bar{x}_{-1}^2, \bar{x}_1^2, \bar{x}_3^2, \ldots ) },
\]
Now the $H(\X)$-module structure on $H(\A_+ \otimes \X \otimes \B_+)$ is inherited from the $\X$-module structure on $\A_+ \otimes \X \otimes \B_+$; multiplication by $x_j$ on $\A_+ \otimes \X \otimes \B_+$ becomes multiplication by $\bar{x}_j$ in $H(\A_+ \otimes \X \otimes \B_+)$. The multiplication by each $\bar{x}_j$, for $j$ odd and $j \neq 1$, is clear enough, since multiplication by $x_j$ sends each clean monomial either to another clean monomial, or to a monomial with a $x_j^2$ factor, which becomes zero in $H(\X)$. For $j$ even, $x_j$ does not appear in homology, so there is no $\bar{x}_j$ by which to multiply! 

As $H(\X)_{\neq 1}$ is a subring of $H(\X)$, $H(\A_+ \otimes \X \otimes \B_+)$ has the structure of a $H(\X)_{\neq 1}$-module. In fact we have now shown it is a free $H(\X)_{\neq 1}$-module with basis $\{s_i\}_{i=1}^\infty$, as equation (\ref{eqn:ABX_homology_as_free_module}) suggests.

To understand the $H(\X)$-module structure, it remains only to understand the action of multiplication by $\bar{x}_1$; we claim this action is as follows on the $\bar{s}_i$.
\begin{align*}
\bar{x}_1 \bar{s}_1 = 0, \quad \bar{x}_1 \bar{s}_2 = 0, \quad \bar{x}_1 \bar{s}_3 = \bar{x}_3 \bar{s}_1, \quad \bar{x}_1 \bar{s}_4 = \bar{x}_3 \bar{s}_2, \\
\bar{x}_1 \bar{s}_5 = \bar{x}_3 \bar{s}_3 + \bar{x}_5 \bar{s}_1, \quad
\bar{x}_1 \bar{s}_6 = \bar{x_3} \bar{s}_4 + \bar{x}_5 \bar{s}_2, \quad \ldots
\end{align*}
In general, the pattern continues as specified in the following lemma.
\begin{lem}
In $H(\A_+ \otimes \X \otimes \B_+)$ we have
\begin{align*}
\bar{x}_1 \bar{s}_n &= \bar{x}_3 \bar{s}_{n-2} + \bar{x}_5 \bar{s}_{n-4} + \cdots 
= \sum_{\substack{j \geq 3 \text{ odd}, \\ j+k=n+1 }} \bar{x}_j \bar{s}_k
= \sum_{k=1}^{\lfloor \frac{n-1}{2} \rfloor} \bar{x}_{2k+1} \bar{s}_{n-2k}.
\end{align*}
\end{lem}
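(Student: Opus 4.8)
The plan is to produce an explicit chain $g_n \in \A_+ \otimes \X \otimes \B_+$ whose boundary is exactly $x_1 s_n + \sum_{k\ge 1} x_{2k+1}\,s_{n-2k}$ (a finite sum, since $s_m = 0$ for $m \le 0$). Because the whole complex is over $\Z_2$, passing to homology then yields $\bar x_1 \bar s_n + \sum_{k\ge 1}\bar x_{2k+1}\bar s_{n-2k} = 0$, which is the stated identity.

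I would take
\[
g_n = \sum_{\substack{i+j = n+1,\ i,j>0\\ j\in\{\frac12,\frac52,\frac92,\ldots\}}} a_i b_j,
\]
the sum running over half-integers $i,j$; e.g. $g_1 = a_{3/2}b_{1/2}$, $g_2 = a_{1/2}b_{5/2}+a_{5/2}b_{1/2}$, $g_3 = a_{7/2}b_{1/2}+a_{3/2}b_{5/2}$. Then I would compute $\partial g_n$ via the Leibniz rule on $\A\otimes\X\otimes\B$ (Proposition \ref{prop:differential_on_ABCDX_complex}), using $\partial a_i = \sum_{k+l=i,\,kl>0}a_k x_l$ and the analogous formula for $\partial b_j$, and group the result by which closed curve $x_l$ occurs. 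Writing $S=\{\frac12,\frac52,\frac92,\ldots\}$ and $S'=\{\frac32,\frac72,\frac{11}{2},\ldots\}$ for the two parity classes of positive half-integers, a reindexing $i\mapsto i-l$ (resp. $j\mapsto j-l$) in the two halves of each $\partial(a_i b_j)$ shows that, for each integer $l\ge 1$, the coefficient of $x_l$ in $\partial g_n$ equals
\[
\Big(\sum_{\substack{i+j=n+1-l,\ i,j>0\\ j\in S}}a_i b_j\Big) + \Big(\sum_{\substack{i+j=n+1-l,\ i,j>0\\ j+l\in S}}a_i b_j\Big).
\]

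The crux is a parity dichotomy for the condition $j+l\in S$: if $l$ is even it is equivalent to $j\in S$, so the two sums are identical and cancel mod $2$; if $l$ is odd it is equivalent to $j\in S'$, so the second sum runs over $j\in S'$ and, since $S\sqcup S'$ is all of $\{\frac12,\frac32,\frac52,\ldots\}$, the two sums combine to $\sum_{i+j=n+1-l,\ i,j>0}a_i b_j = s_{n+1-l}$. Hence $\partial g_n = \sum_{l\ge 1\text{ odd}} x_l\,s_{n+1-l} = x_1 s_n + x_3 s_{n-2} + x_5 s_{n-4} + \cdots$, as desired.

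The argument is essentially bookkeeping, and I anticipate no genuine obstacle; the one delicate point is tracking the positivity constraints ($i-l>0$, $j-l>0$) under the reindexing, so that the even-$l$ terms cancel exactly and nothing survives for $l\ge n+1$. I would also note that, by uniqueness of the standard form (Proposition \ref{prop:truncated_injectivity}), it suffices to exhibit some $g_n$ with the stated boundary, so no further justification of this particular choice of $g_n$ is required.
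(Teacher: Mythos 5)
Your proposal is correct and is essentially the paper's own proof: the paper likewise takes ``every second term of $s_{n+1}$'' (indexed by $i$ rather than $j$, which gives the same element or its complement in $s_{n+1}$ --- either has the same boundary since $\partial s_{n+1}=0$) and derives $\partial h = x_1 s_n + x_3 s_{n-2} + \cdots$ by the same parity argument, just organized by counting the two occurrences of each $a_{i'}b_{j'}x_k$ rather than by your reindexing. The only quibble is the aside that ``$s_m=0$ for $m\le 0$'': in fact $s_m\neq 0$ for $m<0$, but those terms lie in $\A_-\otimes\X\otimes\B_-$ and cannot arise from $\partial g_n$, so the sum terminates as you claim.
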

The last two equalities above are just ways of rewriting the sum. These sums are linear combinations of clean monomials times $\bar{s}_i$, so are in standard form.

\begin{proof}
Consider the element $h \in \A_+ \otimes \X \otimes \B_+$ consisting of every second term in $s_{n+1}$ as shown:
\[
h = a_{\frac{1}{2}} b_{n+\frac{1}{2}} + a_{\frac{5}{2}} b_{n-\frac{3}{2}} + a_{\frac{9}{2}} b_{n-\frac{7}{2}} + \cdots = \sum_{\text{pos.}} a_{\frac{1}{2}+2m} b_{n+\frac{1}{2}-2m}.
\]
This $h$ consists of every second term in $s_{n+1}$; the ``pos" in the sum indicates to sum over integers $m$ such that the indices are positive, i.e.$\frac{1}{2}+2m >0$ and $n+\frac{1}{2}-2m >0$. 

When we take $\partial$ of $h$, we obtain a sum of terms of the form $a_{i+\frac{1}{2}} b_{j+\frac{1}{2}} x_k$, where $i,j,k$ are positive integers, and $i+j+k=n$. 

Now for each pair $i,j$ of positive integers with $i+j \leq n-1$, the term $a_{i+\frac{1}{2}} b_{j+\frac{1}{2}} x_{n-i-j}$ appears in the differential of two terms of $s_{n+1}$, namely $a_{i+\frac{1}{2}} b_{n+\frac{1}{2}-i}$ and $a_{n+\frac{1}{2}-j} b_j$. These two terms may or may not appear in $h$. However, if $(i+\frac{1}{2})-(n+\frac{1}{2}-j)=i+j-n$ is even, then they either both appear, or both do not appear, in $h$. And if $i+j-n$ is odd, then precisely one of them appears. Since $i+j-n \equiv k$ mod $2$, we see that $\partial h$ is precisely a sum of these $a_{i+\frac{1}{2}} b_{j+\frac{1}{2}} x_k$ where $k$ is odd, and $i,j,k$ are positive integers, with $i+j+k=n$. These are precisely the terms appearing in $x_1 s_n + x_3 s_{n-2} + x_5 s_{n-4} + \cdots$. We conclude that
\[
\partial h = x_1 s_n + x_3 s_{n-2} + x_5 s_{n-4} + \cdots,
\]
giving the desired result upon passing to homology.
\end{proof}

We now have a complete description of $H(\A_+ \otimes \X \otimes \B_+)$. To summarise:
\begin{thm}
\label{thm:++_homology}
The homology $H(\A_+ \otimes \X \otimes \B_+)$, is:
\begin{enumerate}
\item
a free $\Z_2$-module with basis $\bar{s}_i \bar{q}$, over all positive integers $i$ and clean monomials $\bar{q}$;
\item
a free $H(\X)_{\neq 1}$-module with basis $\bar{s}_i$, over all positive integers $i$;
\item
an $H(\X)$-module generated by the elements $\bar{s}_i$, over all positive integers $i$, where for any odd integer $j \neq 1$, $\bar{x}_j$ acts by polynomial multiplication, and $\bar{x}_1$ acts by
\[
\bar{x}_1 \bar{s}_n = \bar{x}_3 \bar{s}_{n-2} + \bar{x}_5 \bar{s}_{n-4} + \cdots.
\]
\end{enumerate}
\end{thm}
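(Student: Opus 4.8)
The plan is to assemble the three claims from the work already done in the preceding sections and one new lemma. Part (i) is essentially immediate: Proposition \ref{prop:truncated_homology} gives that each $H(\A_+^{<n} \otimes \X \otimes \B_+)$ is a free $\Z_2$-module with basis $\bar{s}_i \bar{q}$ for $1 \leq i \leq n$ and $\bar q$ ranging over clean monomials; since $\A_+ \otimes \X \otimes \B_+ = \varinjlim_n \A_+^{<n} \otimes \X \otimes \B_+$ and homology commutes with direct limits (and the transition maps send basis elements to basis elements), passing to the limit yields the free $\Z_2$-module on all $\bar s_i \bar q$, $i \geq 1$. This is exactly equation (\ref{eqn:ABX_homology_as_free_module}). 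For part (ii), I would observe that multiplication by $x_j$ on $\A_+ \otimes \X \otimes \B_+$ commutes with $\partial$ (it is a differential $\X$-module, as established in section \ref{sec:insular_differential}), so $H(\A_+ \otimes \X \otimes \B_+)$ is an $H(\X)$-module; restricting scalars to the subring $H(\X)_{\neq 1}$ (definition \ref{defn:polynomial_subrings}), the $\Z_2$-basis of part (i) is precisely $\{\bar s_i\}_{i \geq 1}$ times the monomial basis of $H(\X)_{\neq 1}$ (clean monomials), so $\{\bar s_i\}$ is an $H(\X)_{\neq 1}$-basis and the module is free over $H(\X)_{\neq 1}$.

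For part (iii), the $H(\X)$-module structure is determined by the $H(\X)_{\neq 1}$-module structure of (ii) together with the action of $\bar x_1$, since $H(\X)$ is generated as a ring by $H(\X)_{\neq 1}$ and $\bar x_1$ (every fermionic monomial either lies in $H(\X)_{\neq 1}$ or is $x_1$ times a clean monomial). For odd $j \neq 1$, multiplication by $x_j$ sends a clean monomial to a clean monomial or to one containing $x_j^2$, which vanishes in homology; hence $\bar x_j$ acts by polynomial multiplication on the $\bar s_i$, i.e.\ it is the obvious $H(\X)_{\neq 1}$-action. The content of (iii) is therefore the formula for $\bar x_1 \bar s_n$, which is the new Lemma stated just above the theorem: $\bar x_1 \bar s_n = \sum_{k \geq 1} \bar x_{2k+1} \bar s_{n-2k}$. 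I would prove this by exhibiting the explicit element $h = \sum_{\text{pos.}} a_{\frac12 + 2m} b_{n + \frac12 - 2m} \in \A_+ \otimes \X \otimes \B_+$ consisting of every second term of $s_{n+1}$, and computing $\partial h$ directly via Proposition \ref{prop:differential_on_ABCDX_complex}. Each term $a_{i+\frac12} b_{j+\frac12} x_k$ with $i+j+k = n$, $i,j,k \geq 1$, arises from resolving a self-intersection in exactly two terms of $s_{n+1}$, namely $a_{i+\frac12} b_{n+\frac12 - i}$ and $a_{n+\frac12 - j} b_{j+\frac12}$; these two terms have $a$-indices differing by $i+j-n \equiv k \pmod 2$, so both lie in $h$ or both miss $h$ when $k$ is even (contributions cancel mod $2$), while exactly one lies in $h$ when $k$ is odd (contribution survives). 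Thus $\partial h = x_1 s_n + x_3 s_{n-2} + x_5 s_{n-4} + \cdots$, and passing to homology gives the formula.

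The main obstacle is the bookkeeping in the computation of $\partial h$: one must carefully check that the differential of a term $a_{i+\frac12} b_{n+\frac12 - i}$ of $s_{n+1}$ produces precisely the terms $a_{i'+\frac12} b_{(n+\frac12 - i)} x_{i - i'}$ with $i' < i$ (from $\partial a$) together with $a_{i+\frac12} b_{j'+\frac12} x_{\cdot}$ terms (from $\partial b$), and then to match these up in pairs across the $s$-terms to see the parity cancellation. The closed-string part $\partial x^e$ plays no role here since $h$ involves no closed strings, which simplifies matters. Everything else — the limit argument and the reduction of the module structure to the $\bar x_1$-action — is routine once parts (i) and (ii) are in hand from the cited propositions.
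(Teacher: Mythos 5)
Your proposal is correct and follows essentially the same route as the paper: part (i) via the direct limit of the truncated homologies from proposition \ref{prop:truncated_homology}, part (ii) by restriction of scalars to $H(\X)_{\neq 1}$, and part (iii) via exactly the auxiliary element $h$ (every second term of $s_{n+1}$) and the mod-$2$ parity-matching of the two terms of $s_{n+1}$ whose differentials produce a given $a_{i+\frac{1}{2}} b_{j+\frac{1}{2}} x_k$. The paper proves the $\bar{x}_1 \bar{s}_n$ formula as a standalone lemma with precisely this argument, so there is nothing to add.
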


\subsection{Homology of insular string diagrams IV: completing the calculation}
\label{sec:completing_insular_calculation}

We have now computed the homology of $\A_+ \otimes \X \otimes \B_+$. But recall from section \ref{sec:ABX_homology} that this is just one of four summands of $\A \otimes \X \otimes \B$:
\[
\A \otimes \X \otimes \B
\cong
\left( \A_+ \otimes \X \otimes \B_+ \right)
\oplus
\left( \A_+ \otimes \X \otimes \B_- \right)
\oplus
\left( \A_- \otimes \X \otimes \B_+ \right)
\oplus
\left( \A_- \otimes \X \otimes \B_- \right),
\]
where $\A_\pm, \B_\pm$ are freely generated by the $a_i, b_j$ with $i,j$ positive and negative respectively.

After dealing with $\A_+ \otimes \X \otimes \B_+$ the other three summands are easier. In fact, the homology of $\A_- \otimes \X \otimes \B_-$ is now immediately isomorphic to $\A_+ \otimes \X \otimes \B_+$.
\begin{prop}
\label{prop:++_--_isomorphism}
The map $\iota: \A \otimes \X \otimes \B \To \A \otimes \X \otimes \B$ defined by $a_i \mapsto a_{-i}$, $b_j \mapsto b_{-j}$, $x_k \mapsto x_{-k}$ and extended linearly, gives isomorphisms of chain complexes
\[
\A_+ \otimes \X \otimes \B_+ \cong \A_- \otimes \X \otimes \B_-, \quad
\A_+ \otimes \X \otimes \B_- \cong \A_- \otimes \X \otimes \B_+.
\]
\end{prop}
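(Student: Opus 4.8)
The plan is to check three things about the map $\iota$: that it is a well-defined $\Z_2$-linear involution of $\A \otimes \X \otimes \B$; that it carries each summand $\A_\pm \otimes \X \otimes \B_\pm$ bijectively onto the summand with both signs reversed; and that it commutes with $\partial$. Since $\iota$ is its own inverse, once it is known to be a chain automorphism of $(\A \otimes \X \otimes \B, \partial)$ that permutes the four summands, its restriction to each summand is automatically a chain isomorphism onto the corresponding summand, which is exactly the two assertions of the proposition.

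First I would note that on the $\Z_2$-basis $\{a_i b_j x^e\}$ of $\A \otimes \X \otimes \B$ the map is given by $a_i b_j x^e \mapsto a_{-i}\, b_{-j}\, (\iota x^e)$ with $\iota x^e = \prod_{k} x_{-k}^{e_k}$. Since $n \mapsto -n$ is a bijection of $\Z + \tfrac12$ and of $\Z \backslash \{0\}$, the map $\iota$ permutes this basis, hence extends to a $\Z_2$-linear automorphism with $\iota^2 = \Id$. Because $a_i \in \A_+$ if and only if $a_{-i} \in \A_-$, and similarly for the $b_j$ and $\B_\pm$, the involution $\iota$ interchanges $\A_+ \otimes \X \otimes \B_+$ with $\A_- \otimes \X \otimes \B_-$ and interchanges $\A_+ \otimes \X \otimes \B_-$ with $\A_- \otimes \X \otimes \B_+$. (Conceptually, $\iota$ is induced by the self-homeomorphism of the marked annulus $(\An, F_{2,2})$ reflecting the $S^1$-direction, which fixes each boundary component and each marked point of Figure \ref{fig:F22_schematic} while reversing all winding numbers; one may give the whole argument geometrically on this basis, but the direct verification below is just as quick.)

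The only computation is that $\iota \partial = \partial \iota$ on $\A \otimes \X \otimes \B$, and this follows by inspection of the explicit insular differential in Proposition \ref{prop:differential_on_ABCDX_complex}. Applying $\iota$ to
\[
\partial\!\left(a_i b_j x^e\right) = \sum_{\substack{i'+k'=i \\ i'k'>0}} a_{i'} b_j x_{k'} x^e \;+\; \sum_{\substack{j'+k'=j \\ j'k'>0}} a_i b_{j'} x_{k'} x^e \;+\; a_i b_j\,(\partial x^e),
\]
the substitution $(i',k') \mapsto (-i',-k')$ preserves both $i'+k'=i$ and $i'k'>0$, and likewise in the second sum, so the first two sums are carried to the corresponding sums in $\partial(a_{-i} b_{-j}\,\iota x^e)$; and $\iota(\partial x^e) = \partial(\iota x^e)$ because the differential on $\X$ from Lemma \ref{lem:differential_no_marked_pts} satisfies $\iota(\partial x_{2k}) = x_{-k}^2 = \partial x_{-2k}$ and $\iota(\partial x_{2k+1}) = 0 = \partial x_{-(2k+1)}$, combined with the Leibniz rule, which $\partial$ obeys on $\X$ and on $\A \otimes \X \otimes \B$ since the Goldman bracket vanishes on insular diagrams (Section \ref{sec:insular_differential}). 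Hence $\iota\,\partial(a_i b_j x^e) = \partial\,\iota(a_i b_j x^e)$, and by linearity $\iota$ is a chain map. I do not expect a genuine obstacle here: the entire content is that the differential is symmetric under simultaneous negation of all winding numbers, reflecting the obvious orientation-reversing symmetry of the annulus, and the only care needed is the routine bookkeeping of indices in the sums above.
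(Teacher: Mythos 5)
Your proposal is correct and follows essentially the same route as the paper: observe that $\iota$ is a $\Z_2$-linear involution permuting the basis and swapping the four summands as claimed, then verify $\iota\partial = \partial\iota$ on generators via the sign-reversing index substitution together with the Leibniz rule. The only cosmetic difference is that the paper checks commutation separately on $\A_\pm$, $\B_\pm$ and $\X$ before invoking Leibniz, whereas you check it on the combined formula for $\partial(a_i b_j x^e)$; the content is identical.
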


\begin{proof}
It is clear that $\iota$ is an isomorphism of $\Z_2$-modules, and is an involution sending $\A_+ \otimes \X \otimes \B_+ \leftrightarrow \A_- \otimes \X \otimes \B_-$ and $\A_+ \otimes \X \otimes \B_- \leftrightarrow \A_- \otimes \X \otimes \B_+$; we check it commutes with $\partial$. 

We first consider $\iota$ on $\A_+$: explicitly, for a non-negative integer $n$,
\[
\iota \partial a_{n + \frac{1}{2}} = \iota \sum_{i,j>0, i+j = n} a_{i + \frac{1}{2}} x_j 
= \sum_{i,j>0, i+j = n} a_{-i - \frac{1}{2}} x_{-j}
= \sum_{i,j<0, i+j = -n} a_{i-\frac{1}{2}} x_j
= \partial a_{-n-\frac{1}{2}} = \partial \iota a_{n+\frac{1}{2}}.
\]
By a similar calculation we have $\iota \partial = \partial \iota$ on $\A_-, \B_+, \B_-$. We also have, by a similar argument, $\iota \partial x_k = \partial \iota x^e$ for monomials $x^e \in \X$. By the Leibniz rule then $\iota$ commutes with $\partial$ on $\A \otimes \X \otimes \B$. Thus $\iota$ gives an involution on homology which induces the desired isomorphisms.
\end{proof}

Thus, our description of $H(\A_+ \otimes \X \otimes \B_+)$ in theorem \ref{thm:++_homology} is also a description of $H(\A_- \otimes \X \otimes \B_-)$, upon exchanging each $a_i, b_j, x_k$ with $a_{-i}, b_{-j}, x_{-k}$. It is a free $\Z_2$-module with basis $\bar{s}_i \bar{q}$, over all negative integers $i$ and \emph{negatively clean} monomials $q$. It is also a free $H(\X)_{\neq -1}$-module (recall definition \ref{defn:polynomial_subrings}) with basis $\bar{s}_i$, over all negative integers $i$. And it is finally a $H(\X)$-module generated by the elements $\bar{s}_i$, over all negative integers $i$, where for any odd negative integer $j \leq -1$, $\bar{x}_j$ acts by polynomial multiplication, and $\bar{x}_{-1}$ acts on $\bar{s}_{-n}$, for $-n<0$, by $\bar{x}_{-1} \bar{s}_{-n} = \bar{x}_{-3} \bar{s}_{-n+2} + \bar{x}_{-5} \bar{s}_{-n+4} + \cdots$.

It remains to consider $\A_+ \otimes \X \otimes \B_-$; from above, $\A_- \otimes \X \otimes \B_+$ is similar. This complex behaves more simply than $\A_+ \otimes \X \otimes \B_+$. In particular, the presence of both positive and negative indices in $a_i b_j$ allows us to simplify cycles into a considerably more straightforward standard form, more like $\A_+ \otimes \X$ (section \ref{sec:homology_A_+_tensor_X}) than $\A_+ \otimes \X \otimes \B_+$. As in previous computations, the first and main step is a technical lemma (similar to lemma \ref{lem:a_simplification}) which, given a cycle in $\A_+ \otimes \X \otimes \B_-$, reduces it modulo a boundary to one of lower $a$-degree. (Note that definition \ref{def:a-degree} of $a$-degree applies to $\A_+ \otimes \X \otimes \B_-$.)

We start with a an element $f \in \A_+ \otimes \X \otimes \B_-$ of $a$-degree $n-\frac{1}{2}$, so
\[
f = a_{\frac{1}{2}} p_\frac{1}{2} + a_{\frac{3}{2}} p_\frac{3}{2} + \cdots + a_{n - \frac{1}{2}} p_{n-\frac{1}{2}}
= \sum_{i=1}^n a_{i - \frac{1}{2}} p_{i-\frac{1}{2}},
\]
where each $p_1, \ldots, p_n \in \B_- \otimes \X$. The reduction is as follows.

\begin{lem}
If $f \in \A_+ \otimes \X \otimes \B_-$ has $a$-degree $n-\frac{1}{2} \geq \frac{3}{2}$ and satisfies $\partial f = 0$, then $f = \partial g + O(a_{n-\frac{3}{2}})$ for some $g \in \A_+ \otimes \X \otimes \B_-$.
\end{lem}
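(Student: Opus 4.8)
The plan is to run the argument of Lemma~\ref{lem:a_simplification} with the rôle played there by $\X$ now played by the ``inner'' complex $\B_- \otimes \X$, whose homology is already known. Composing the isomorphism $b_i \mapsto a_i$ with the symmetry $a_n \leftrightarrow a_{-n}$, $x_n \leftrightarrow x_{-n}$, Theorem~\ref{thm:A+-X_homology} and Proposition~\ref{lem:stronger_homology_with_a1} give $H(\B_- \otimes \X) \cong \bar{x}_{-1} H(\X)$, with every cycle of $\B_- \otimes \X$ homologous to $b_{-\frac{1}{2}} t$ for a \emph{unique} negatively clean polynomial $t \in \X$. In particular, if $b_{-\frac{1}{2}} u$ is a boundary in $\B_- \otimes \X$ with $u$ negatively clean, then $x_{-1} u$ is fermionic, hence nonzero in $H(\X)$ unless $u = 0$, so $u = 0$. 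These facts are the only input from the $\B_-$ side.

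First I would write $f = a_{n-\frac{1}{2}} p_{n-\frac{1}{2}} + a_{n-\frac{3}{2}} p_{n-\frac{3}{2}} + O(a_{n-\frac{5}{2}})$ with each $p_i \in \B_- \otimes \X$, and use $\partial f = 0$ together with $\partial a_{n-\frac{1}{2}} = a_{n-\frac{3}{2}} x_1 + O(a_{n-\frac{5}{2}})$ (Proposition~\ref{prop:differential_on_ABCDX_complex}) to extract, from the coefficients of $a_{n-\frac{1}{2}}$ and $a_{n-\frac{3}{2}}$, the identities $\partial p_{n-\frac{1}{2}} = 0$ and $x_1 p_{n-\frac{1}{2}} = \partial p_{n-\frac{3}{2}}$. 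So $p_{n-\frac{1}{2}}$ is a cycle of $\B_- \otimes \X$, and I may write $p_{n-\frac{1}{2}} = b_{-\frac{1}{2}} t + \partial r$ with $t$ negatively clean and $r \in \B_- \otimes \X$. Substituting, $b_{-\frac{1}{2}} x_1 t = \partial\bigl(p_{n-\frac{3}{2}} + x_1 r\bigr)$ is a boundary. Splitting $t = x_1 t' + t''$ with $t', t''$ totally clean, and using $x_1^2 = \partial x_2$ and $\partial t' = 0$ to rewrite $b_{-\frac{1}{2}} x_1^2 t' = \partial(b_{-\frac{1}{2}} x_2 t')$, I find $b_{-\frac{1}{2}} x_1 t''$ is a boundary; since $x_1 t''$ is negatively clean (indeed fermionic), uniqueness forces $t'' = 0$, so $p_{n-\frac{1}{2}} = x_1\bigl(b_{-\frac{1}{2}} t'\bigr) + \partial r$.

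Finally, set $g = a_{n+\frac{1}{2}}\bigl(b_{-\frac{1}{2}} t'\bigr) + a_{n-\frac{1}{2}} r$. Since $\partial(b_{-\frac{1}{2}} t') = 0$ and $\partial a_{n+\frac{1}{2}} = a_{n-\frac{1}{2}} x_1 + O(a_{n-\frac{3}{2}})$, the Leibniz rule gives $\partial g = a_{n-\frac{1}{2}}\bigl(x_1 b_{-\frac{1}{2}} t' + \partial r\bigr) + O(a_{n-\frac{3}{2}}) = a_{n-\frac{1}{2}} p_{n-\frac{1}{2}} + O(a_{n-\frac{3}{2}})$, whence $f = \partial g + O(a_{n-\frac{3}{2}})$. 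The only non-routine point, and the one I expect to require the most care, is forcing the negatively clean polynomial $t$ to be divisible by $x_1$: this is precisely the place where the mixed chirality of $\A_+ \otimes \X \otimes \B_-$ causes a collapse that does not occur in $\A_+ \otimes \X \otimes \B_+$ (there the analogous $b_{\frac{1}{2}} q_n$ terms survive as the generators $\bar{s}_i$), and it rests on the incompatibility of the ``clean direction'' $x_1$ inherited from $\A_+$ with the ``clean direction'' $x_{-1}$ intrinsic to $\B_-$.
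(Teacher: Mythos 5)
Your proposal is correct and follows essentially the same route as the paper: extract $\partial p_{n-\frac{1}{2}}=0$ and $x_1 p_{n-\frac{1}{2}}=\partial p_{n-\frac{3}{2}}$ from the top two coefficients, put $p_{n-\frac{1}{2}}$ in standard form in $\B_-\otimes\X$, and then use the uniqueness of that standard form (together with $x_1^2=\partial x_2$) to force the non-$x_1$-divisible part of the clean coefficient to vanish, after which the explicit primitive $g$ is the same one the paper writes down. The only differences are notational (your $t=x_1t'+t''$ versus the paper's $q=t+x_1u$).
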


\begin{proof}
Let $f = a_{n-\frac{1}{2}} p_{n-\frac{1}{2}} + a_{n-\frac{3}{2}} p_{n-\frac{3}{2}} + O(a_{n-\frac{5}{2}})$, where $p_{n-\frac{1}{2}}, p_{n-\frac{3}{2}} \in \B_- \otimes \X$. Then
\begin{align*}
0 = \partial f
&= a_{n - \frac{1}{2}} \partial p_{n-\frac{1}{2}} + a_{n - \frac{3}{2}} \left( x_1 p_{n-\frac{1}{2}} + \partial p_{n-\frac{3}{2}} \right) + O(a_{n - \frac{5}{2}})
\end{align*}
Equating coefficients of $a_{n - \frac{1}{2}}$ and $a_{n - \frac{3}{2}}$ gives
\[
\partial p_{n-\frac{1}{2}} = 0,
\quad \quad
x_1 p_{n-\frac{1}{2}} = \partial p_{n-\frac{3}{2}}.
\]
Thus $p_{n-\frac{1}{2}}$ is a cycle in $\B_- \otimes \X$ and $x_1 p_{n-\frac{1}{2}}$ is a boundary. As remarked in section \ref{sec:insular_differential}, $\B \otimes \X \cong \A \otimes \X$ as differential $\X$-modules, and indeed $\B_- \otimes \X \cong \A_+ \otimes \X$ under the isomorphism $b_i x_k \leftrightarrow a_{-i} x_{-k}$. So by proposition \ref{prop:strong_homology_with_a1} applied to $\B_- \otimes \X$, $\partial p_{n-\frac{1}{2}} = 0$ implies
\[
p_{n-\frac{1}{2}} = b_{-\frac{1}{2}} q + \partial r
\]
where $q \in \X$ is a negatively clean polynomial and $r \in \B_- \otimes \X$.

We claim that $q$ is divisible by $x_1$. To see this, split $q$ into terms which do and do not contain $x_1$, i.e. $q = t + x_1 u$ where $t,u$ are \emph{totally clean} polynomials (definition \ref{def:clean}). Then we have
\[
x_1 p_{n-\frac{1}{2}} = b_{-\frac{1}{2}} x_1 \left( t + x_1 u \right) + x_1 \partial r = b_{-\frac{1}{2}} x_1 t + \partial \left( b_{-\frac{1}{2}} x_2 u + x_1 r \right),
\]
expressing $x_1 p_{n-\frac{1}{2}}$ in standard form, which by by proposition \ref{lem:stronger_homology_with_a1} is unique. As $x_1 p_{n-\frac{1}{2}}$ is a boundary we must have $b_{-\frac{1}{2}} x_1 t = 0$, so $t=0$. Hence $q = x_1 u$ and $q$ is indeed divisible by $x_1$.

From $q = x_1 u$, we now have $p_{n-\frac{1}{2}} = b_{-\frac{1}{2}} x_1 u + \partial r$. This gives our original $f$ as
\[
f = a_{n - \frac{1}{2}} \left( b_{-\frac{1}{2}} x_1 u + \partial r \right) + O(a_{n - \frac{3}{2}}) = a_{n - \frac{1}{2}} b_{-\frac{1}{2}} x_1 u + a_{n - \frac{1}{2}} \partial r + O \left( a_{n - \frac{3}{2}} \right).
\]
Now note $\partial \left( a_{n + \frac{1}{2}} b_{-\frac{1}{2}} u \right) = a_{n - \frac{1}{2}} b_{-\frac{1}{2}} x_1 u + O(a_{n - \frac{3}{2}})$ (here we used $\partial b_{-\frac{1}{2}} = \partial u = 0$) and $\partial \left( a_{n - \frac{1}{2}} r \right) = a_{n - \frac{1}{2}} \partial r + O(a_{n - \frac{3}{2}})$. Thus we have
\[
f = \partial \left( a_{n + \frac{1}{2}} b_{-\frac{1}{2}} u + a_{n - \frac{1}{2}} r \right) + O \left( a_{n- \frac{3}{2}} \right),
\]
giving the desired form for $f$, with $g = a_{n+\frac{1}{2}} b_{-\frac{1}{2}} u + a_{n-\frac{1}{2}} r$.
\end{proof}

Repeated use of this result allows us to reduce any cycle $f \in \A_+ \otimes \X \otimes \B_-$ to one of $a$-degree $\frac{1}{2}$, so
\[
f = a_{\frac{1}{2}} p' + \partial g',
\]
where $p' \in \B_- \otimes \X$ and $g' \in \A_+ \otimes \X \otimes \B_-$. We can then simplify further, ``reducing" (actually, increasing) the $b$-degree of $p'$. Since $f$ is a cycle, $0 = \partial f = a_{\frac{1}{2}} \partial p'$, so $\partial p' = 0$. But now $p' \in \B_- \otimes \X$ is a cycle, and proposition \ref{lem:stronger_homology_with_a1} puts $p'$ in a standard form,
\[
p' = b_{-\frac{1}{2}} q + \partial g'',
\]
where $q$ is a negatively clean polynomial, and $g'' \in \B_- \otimes \X$. We then have $f$ in the form
\[
f = a_{\frac{1}{2}} b_{-\frac{1}{2}} q + a_{\frac{1}{2}} \partial g'' + \partial g'
= a_{\frac{1}{2}} b_{-\frac{1}{2}} q + \partial g''',
\]
where $g''' = a_{\frac{1}{2}} g'' + g' \in \A_+ \otimes \X \otimes \B_-$.
The following proposition further improves $f$, removing factors of $x_{1}$ and providing a standard form for cycles in $\A_+ \otimes \X \otimes \B_-$.
\begin{prop}
\label{prop:+-_standard_form}
Suppose $f \in \A_+ \otimes \X \otimes \B_-$ satisfies $\partial f = 0$. Then
\[
f = a_{\frac{1}{2}} b_{-\frac{1}{2}} p + \partial g,
\]
where $p \in \X$ is a totally clean polynomial, and $g \in \A_+ \otimes \X \otimes \B_-$.
\end{prop}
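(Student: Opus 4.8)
The plan is to pick up from the normal form already established in the discussion preceding the statement. Repeatedly applying the reduction lemma brings $f$, modulo a boundary, down to $a$-degree $\frac{1}{2}$, and then proposition \ref{lem:stronger_homology_with_a1}, transported across the isomorphism $\B_- \otimes \X \cong \A_+ \otimes \X$ induced by $b_i x_k \leftrightarrow a_{-i} x_{-k}$, writes
\[
f = a_{\frac{1}{2}} b_{-\frac{1}{2}} q + \partial g''',
\]
with $q \in \X$ a negatively clean polynomial and $g''' \in \A_+ \otimes \X \otimes \B_-$. Thus the entire content of the proposition is to remove possible factors of $x_1$ from $q$.

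First I would split $q$ according to divisibility by $x_1$: since $q$ is a sum of products of distinct odd variables, none equal to $x_{-1}$, I collect the monomials not containing $x_1$ into a polynomial $p$ and write the remaining monomials as $x_1 u$. Then $q = p + x_1 u$ with $p$ and $u$ both totally clean polynomials in the sense of definition \ref{def:clean}; in particular each is fermionic, so $\partial p = \partial u = 0$.

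Next I would absorb $a_{\frac{1}{2}} b_{-\frac{1}{2}} x_1 u$ into a boundary. The complex $\A_+ \otimes \X \otimes \B_-$ is a differential $\X$-module (the Goldman bracket vanishes on insular diagrams, so the Leibniz rule holds), and using $\partial b_{-\frac{1}{2}} = 0$, $\partial u = 0$ and $\partial a_{\frac{3}{2}} = a_{\frac{1}{2}} x_1$ one gets $\partial\bigl(a_{\frac{3}{2}} b_{-\frac{1}{2}} u\bigr) = a_{\frac{1}{2}} b_{-\frac{1}{2}} x_1 u$. Hence
\[
f = a_{\frac{1}{2}} b_{-\frac{1}{2}} p + a_{\frac{1}{2}} b_{-\frac{1}{2}} x_1 u + \partial g''' = a_{\frac{1}{2}} b_{-\frac{1}{2}} p + \partial\bigl(a_{\frac{3}{2}} b_{-\frac{1}{2}} u + g'''\bigr),
\]
which is the asserted form with $g = a_{\frac{3}{2}} b_{-\frac{1}{2}} u + g'''$; note $a_{\frac{3}{2}} \in \A_+$ and $b_{-\frac{1}{2}} \in \B_-$, so $g$ indeed lies in $\A_+ \otimes \X \otimes \B_-$.

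I do not expect a genuine obstacle here: the substantive work of controlling the $a$-degree and pinning down the behaviour of cycles in $\B_- \otimes \X$ has already been carried out. The only points requiring a little care are that ``negatively clean with the $x_1$-divisible part removed'' is really totally clean, and that both pieces $p$ and $u$ remain fermionic (hence closed) — both immediate from definition \ref{def:clean} — together with keeping track that the correcting term $g$ stays in the correct summand $\A_+ \otimes \X \otimes \B_-$.
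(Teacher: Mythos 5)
Your argument is correct and coincides with the paper's own proof: both start from the normal form $f = a_{\frac{1}{2}} b_{-\frac{1}{2}} q + \partial g'''$ with $q$ negatively clean, split $q = p + x_1 u$ into totally clean pieces, and absorb the $x_1 u$ term via $\partial\bigl(a_{\frac{3}{2}} b_{-\frac{1}{2}} u\bigr) = a_{\frac{1}{2}} b_{-\frac{1}{2}} x_1 u$, ending with the same correcting term $g = a_{\frac{3}{2}} b_{-\frac{1}{2}} u + g'''$. No gaps.
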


\begin{proof}
From above, we have $f = a_{\frac{1}{2}} b_{-\frac{1}{2}} q + \partial g'''$, where $q \in \X$ is negatively clean and $g''' \in \A_+ \otimes \X \otimes \B_-$. We may separate the terms of $q$ which do and do not contain $x_1$, to write $q = p + x_1 u$, where $p,u$ are totally clean. We then have
\[
f = a_{\frac{1}{2}} b_{-\frac{1}{2}} p + a_{\frac{1}{2}} b_{-\frac{1}{2}} x_1 u + \partial g'''
= a_{\frac{1}{2}} b_{-\frac{1}{2}} p + \partial \left( a_{\frac{3}{2}} b_{-\frac{1}{2}} u + g''' \right)
\]
so, taking $g = a_{\frac{3}{2}} b_{-\frac{1}{2}} u + g'''$, we have the desired result.
\end{proof}

Thus, every cycle in $\A_+ \otimes \X \otimes \B_-$ is homologous to the standard form $a_{\frac{1}{2}} b_{-\frac{1}{2}} p$, with $p$ totally clean. We can easily check that each such $a_{\frac{1}{2}} b_{-\frac{1}{2}} p$ is a cycle. Thus every homology class in $H(\A_+ \otimes \X \otimes \B_-)$ has a representative of the form $a_{\frac{1}{2}} b_{-\frac{1}{2}} p$, and every $a_{\frac{1}{2}} b_{-\frac{1}{2}} p$ represents some homology class. We will show that these representatives are unique. We will do this by use of a map to the simpler chain complex $\X$.

\begin{defn}
The map $\Psi: \A_+ \otimes \X \otimes \B_- \To \X$ is defined by
\[
a_{i - \frac{1}{2}} b_{-j+\frac{1}{2}} p \mapsto x_i p x_{-j},
\]
for positive integers $i,j$ and $p \in \X$, and extended by $\Z_2$-linearity.
\end{defn}

Geometrically, $\Psi$ corresponds to gluing annuli to each boundary of the annulus $(\An, F_{2,2})$ with string diagrams closing off $a_{i - \frac{1}{2}}$ and $b_{-j+\frac{1}{2}}$ into closed curves $x_i$, $x_{-j}$ respectively. This gives a map $\widehat{CS}(\An, F_{2,2}) \To \widehat{CS}(\An, \emptyset)$. Since there are no crossings in the glued-on annuli, $\Psi$ is a chain map. We can also prove the result we need on the subcomplex $\A_+ \otimes \X \otimes \B_-$ purely algebraically.
\begin{lem}
The map $\Psi$ is a chain map: $\Psi \partial = \partial \Psi$.
\end{lem}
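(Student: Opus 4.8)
The plan is to verify $\Psi \partial = \partial \Psi$ on a set of generators of $\A_+ \otimes \X \otimes \B_-$ as a $\Z_2$-vector space, namely on monomials of the form $a_{i-\frac12} b_{-j+\frac12} x^e$ with $i,j$ positive integers and $x^e$ a monomial in $\X$. Since both $\Psi$ and $\partial$ are $\Z_2$-linear, checking on these generators suffices. I would first record the two relevant facts from section \ref{sec:insular_differential}: on the subcomplex $\A_+ \otimes \X \otimes \B_-$ the Goldman bracket vanishes, so $\partial$ obeys the Leibniz rule, and
\[
\partial a_{i-\frac12} = \sum_{k+l = i-\frac12,\; kl>0} a_k x_l = \sum_{m=1}^{i-1} a_{m-\frac12} x_{i-m}, \qquad
\partial b_{-j+\frac12} = \sum_{n=1}^{j-1} b_{-n+\frac12} x_{-(j-n)}.
\]
Correspondingly in $\X$ we have $\partial x_i = \sum_{m=1}^{i-1} x_m x_{i-m}$ and $\partial x_{-j} = \sum_{n=1}^{j-1} x_{-n} x_{-(j-n)}$, which are literally the images under $\Psi$ of the two formulas above (recalling $\Psi(a_{i-\frac12}) = x_i$, $\Psi(b_{-j+\frac12}) = x_{-j}$, and $\Psi$ is the identity on the $\X$-factor). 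This is the conceptual heart of the statement: the chain-level differential on the open strings $a_\bullet$ and $b_\bullet$ is modelled exactly on the differential on closed curves, and closing off the strings into closed curves carries one to the other.

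Next I would expand both sides using the Leibniz rule. On the one hand,
\[
\partial\bigl(a_{i-\frac12} b_{-j+\frac12} x^e\bigr)
= \bigl(\partial a_{i-\frac12}\bigr) b_{-j+\frac12} x^e
+ a_{i-\frac12}\bigl(\partial b_{-j+\frac12}\bigr) x^e
+ a_{i-\frac12} b_{-j+\frac12}\bigl(\partial x^e\bigr),
\]
and then applying $\Psi$ term by term, using that $\Psi$ is $\X$-linear in the obvious sense (it multiplies the $\X$-part on both sides of the inserted $x_i$, $x_{-j}$), the first term becomes $(\partial x_i) x^e x_{-j}$, the second $x_i x^e (\partial x_{-j})$, and the third $x_i (\partial x^e) x_{-j}$. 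On the other hand,
\[
\partial\Psi\bigl(a_{i-\frac12} b_{-j+\frac12} x^e\bigr)
= \partial\bigl(x_i x^e x_{-j}\bigr)
= (\partial x_i) x^e x_{-j} + x_i (\partial x^e) x_{-j} + x_i x^e (\partial x_{-j})
\]
by the Leibniz rule in $\X$ (which holds since $\Sigma = \An$ is an annulus and all closed curves can be made disjoint, as in section \ref{sec:differential_algebra}). The two expressions agree termwise, so $\Psi \partial = \partial \Psi$ on generators, hence everywhere.

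There is essentially no obstacle here: the only point requiring a word of care is that $\Psi$, although defined on monomials, genuinely respects the $\X$-module structure closely enough that applying it to each Leibniz term of $\partial(a_{i-\frac12} b_{-j+\frac12} x^e)$ produces exactly the corresponding Leibniz term of $\partial(x_i x^e x_{-j})$ — in particular that $\Psi$ commutes with multiplication by elements of $\X$ on the closed-string part. This is immediate from the definition $a_{i-\frac12} b_{-j+\frac12} p \mapsto x_i p x_{-j}$ and the commutativity of $\X$. The geometric remark already in the text — that $\Psi$ glues crossing-free annuli onto each boundary component, so introduces no new crossings — provides the conceptual reason, and could be invoked in place of, or alongside, the algebraic verification; I would present the short algebraic check as above since it is self-contained and matches the style of the analogous lemma for $\Phi$ in section \ref{sec:homology_A_+_tensor_X}.
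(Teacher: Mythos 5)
Your proposal is correct and follows essentially the same route as the paper: verify the identity on generators $a_{i-\frac{1}{2}} b_{-j+\frac{1}{2}} p$, expand $\partial$ by the Leibniz rule, and observe that $\Psi$ carries the formulas for $\partial a_{i-\frac{1}{2}}$ and $\partial b_{-j+\frac{1}{2}}$ exactly onto those for $\partial x_i$ and $\partial x_{-j}$. The paper's proof is precisely this termwise computation, so there is nothing to add.
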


\begin{proof}
It suffices to show that the result holds on generators $a_{i - \frac{1}{2}} b_{-j+\frac{1}{2}} p$, where $i,j$ are positive integers and $p \in \X$. We have
\begin{align*}
\Psi \partial \left( a_{i - \frac{1}{2}} b_{-j+\frac{1}{2}} p \right)
&= \Psi \left[ \left( \partial a_{i- \frac{1}{2}} \right) b_{-j+\frac{1}{2}} p + a_{i - \frac{1}{2}} \left( \partial b_{-j+\frac{1}{2}} \right) p + a_{i - \frac{1}{2}} b_{-j+\frac{1}{2}} \left( \partial p \right) \right] \\
&= \Psi \left[ \left( \sum_{k=1}^{i-1} a_{k-\frac{1}{2}} x_{i-k} \right) b_{-j+\frac{1}{2}} p + a_{i - \frac{1}{2}} \left( \sum_{k=1}^{j-1} b_{-k + \frac{1}{2}} x_{-j+k} \right) p + a_{i - \frac{1}{2}} b_{-j+\frac{1}{2}} \left( \partial p \right) \right] \\
\\
&= \left( \sum_{k=1}^{i-1} x_k x_{i-k} \right) x_{-j} p + a_{i - \frac{1}{2}} \left( \sum_{k=1}^{j-1} x_{-k} x_{-j+k} \right) p + a_{i - \frac{1}{2}} x_{-j} \left( \partial p \right) \\
&= \left( \partial x_i \right) x_{-j} p + a_{i - \frac{1}{2}} \left( \partial x_{-j} \right) p + a_{i - \frac{1}{2}} x_{-j} \left( \partial p \right) \\
&=  \partial \left( x_i x_{-j} p \right) 
= \partial \Psi \left( a_{i - \frac{1}{2}} b_{-j+\frac{1}{2}} p \right).
\end{align*}
\end{proof}

Thus, $\Psi$ gives a map on homology, which by abuse of notation we also call $\Psi$.
\begin{prop}
The map $\Psi: H(\A_+ \otimes \X \otimes \B_-) \To H(\X)$ is injective and has image $\bar{x}_1 \bar{x}_{-1} H(\X)$.
\end{prop}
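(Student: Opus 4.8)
The plan is to establish the two claims --- injectivity and identification of the image --- separately, using the standard form from Proposition \ref{prop:+-_standard_form} together with the structure of $H(\X)$ computed in Theorem \ref{thm:homology_of_X}.

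First I would compute the image. Every homology class in $H(\A_+ \otimes \X \otimes \B_-)$ has a representative $a_{\frac{1}{2}} b_{-\frac{1}{2}} p$ with $p \in \X$ totally clean, by Proposition \ref{prop:+-_standard_form}. Applying $\Psi$ gives $\Psi(a_{\frac{1}{2}} b_{-\frac{1}{2}} p) = x_1 p x_{-1}$, so on homology $\Psi$ sends the class of $a_{\frac{1}{2}} b_{-\frac{1}{2}} p$ to $\bar{x}_1 \bar{x}_{-1} \bar{p}$; hence $\Im \Psi \subseteq \bar{x}_1 \bar{x}_{-1} H(\X)$. For the reverse inclusion, take any $\bar{x}_1 \bar{x}_{-1} \bar{q} \in \bar{x}_1 \bar{x}_{-1} H(\X)$ with $q$ a fermionic polynomial. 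Splitting $q$ into its parts with and without the variables $x_1$ and $x_{-1}$, write $q = p + x_1 u + x_{-1} v + x_1 x_{-1} w$ with $p,u,v,w$ totally clean. Then, using $\bar{x}_1^2 = \bar{x}_{-1}^2 = 0$ in $H(\X)$, we get $\bar{x}_1 \bar{x}_{-1} \bar{q} = \bar{x}_1 \bar{x}_{-1} \bar{p}$, and this is $\Psi$ of the class of $a_{\frac{1}{2}} b_{-\frac{1}{2}} p$. So $\Im \Psi = \bar{x}_1 \bar{x}_{-1} H(\X)$.

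Next I would prove injectivity. Suppose $\bar{f} \in \ker \Psi$. By Proposition \ref{prop:+-_standard_form} we may take $f = a_{\frac{1}{2}} b_{-\frac{1}{2}} p$ with $p$ totally clean. Then $0 = \Psi \bar{f} = \bar{x}_1 \bar{x}_{-1} \bar{p}$ in $H(\X)$. Now $p$ totally clean means $x_1 p x_{-1}$ is a fermionic polynomial (the variables $x_1$ and $x_{-1}$ each appear exactly once, and all other variables are odd and appear at most once); by our computation that $H(\X)$ is free on fermionic monomials, $\overline{x_1 p x_{-1}} = 0$ forces $x_1 p x_{-1} = 0$, hence $p = 0$, hence $\bar{f} = 0$. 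Thus $\Psi$ is injective on homology.

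The main obstacle here is not any single step but rather making sure the uniqueness of the standard form is actually being established en route: the argument above shows $\Psi$ is injective, and combined with the fact that every class has a standard-form representative $a_{\frac{1}{2}} b_{-\frac{1}{2}} p$, this \emph{implies} the representative is unique (if $a_{\frac{1}{2}} b_{-\frac{1}{2}} p$ and $a_{\frac{1}{2}} b_{-\frac{1}{2}} p'$ are homologous then $\bar{x}_1 \bar{x}_{-1} (\bar{p} - \bar{p}') = 0$, so $p = p'$ by the freeness of $H(\X)$ on fermionic monomials and the total cleanness of $p,p'$). So the delicate point is purely bookkeeping: keeping careful track of which variables may appear in the "clean" polynomials at each stage so that the relevant products remain genuine fermionic monomials, allowing the freeness of $H(\X)$ to be invoked. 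Everything else is a direct consequence of $\Psi$ being a chain map and of the already-established standard form.
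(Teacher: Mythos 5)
Your proposal is correct and follows essentially the same route as the paper: use the standard form $a_{\frac{1}{2}} b_{-\frac{1}{2}} p$ with $p$ totally clean from Proposition \ref{prop:+-_standard_form}, observe that $\Psi$ sends it to the fermionic polynomial $x_1 p x_{-1}$, which is nonzero in $H(\X)$ unless $p=0$, giving injectivity and the image $\bar{x}_1 \bar{x}_{-1} H(\X)$. Your extra step of splitting a general fermionic $q$ as $p + x_1 u + x_{-1} v + x_1 x_{-1} w$ to verify the reverse inclusion is just a more explicit version of what the paper leaves implicit (and mirrors the paper's own argument in the earlier $\A_+ \otimes \X$ case), and your remark that uniqueness of the standard form follows a posteriori matches the paper's ordering of Theorem \ref{thm:+-_homology}.
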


\begin{proof}
By proposition \ref{prop:+-_standard_form}, a nonzero homology class in $H(\A_+ \otimes \X \otimes \B_-)$ has a representative of the form $a_{\frac{1}{2}} b_{-\frac{1}{2}} p$, where $p \neq 0$ is totally clean. 

Under $\Psi$ this homology class $\bar{a}_\frac{1}{2} \bar{b}_{-\frac{1}{2}} \bar{p}$ maps to $\bar{x}_1 \bar{x}_{-1} \bar{p}$, which is a fermionic polynomial, hence nonzero in $H(\X)$. Thus $\Psi$ is injective. Moreover any element $\bar{x}_1 \bar{x}_{-1} \bar{p} \in \bar{x}_1 \bar{x}_{-1} H(\X)$ is the image of $\bar{a}_{\frac{1}{2}} \bar{b}_{-\frac{1}{2}} \bar{p}$.
\end{proof}

In fact, $\Psi$ is actually an $\X$-module homomorphism and gives an $H(\X)$-module homomorphism on homology. Thus we obtain an an explicit description of the homology as an $H(\X)$-module. 
\begin{thm}
\label{thm:+-_homology}
The homology $H(\A_+ \otimes \X \otimes \B_-)$ is isomorphic to $\bar{x}_{-1} \bar{x}_1 H(\X)$ as an $H(\X)$-module via $\Psi$. Every homology class has a unique representative of the form $a_{\frac{1}{2}} b_{-\frac{1}{2}} p$, where $p \in \X$ is totally clean.
\qed
\end{thm}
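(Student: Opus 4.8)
The plan is to assemble Theorem~\ref{thm:+-_homology} from the results already established, the only genuinely new observation being that $\Psi$ respects the $\X$-module structure on both sides.

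First I would verify that $\Psi \colon \A_+ \otimes \X \otimes \B_- \To \X$ is an $\X$-module homomorphism. On a generator $a_{i-\frac{1}{2}} b_{-j+\frac{1}{2}} p$ and an element $x_k \in \X$ we have
\[
\Psi\left( x_k \cdot a_{i-\frac{1}{2}} b_{-j+\frac{1}{2}} p \right) = \Psi\left( a_{i-\frac{1}{2}} b_{-j+\frac{1}{2}} x_k p \right) = x_i x_k p x_{-j} = x_k \, \Psi\left( a_{i-\frac{1}{2}} b_{-j+\frac{1}{2}} p \right),
\]
and the general case follows by $\Z_2$-linearity. Since $\Psi$ is also a chain map (by the lemma just proved), the induced map $\Psi \colon H(\A_+ \otimes \X \otimes \B_-) \To H(\X)$ is a homomorphism of $H(\X)$-modules. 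Combining this with the two facts already proved --- that this induced $\Psi$ is injective and has image $\bar{x}_1 \bar{x}_{-1} H(\X)$ --- we conclude that $\Psi$ restricts to an isomorphism of $H(\X)$-modules $H(\A_+ \otimes \X \otimes \B_-) \cong \bar{x}_1 \bar{x}_{-1} H(\X)$, which is the first assertion.

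For the second assertion, the existence of a representative $a_{\frac{1}{2}} b_{-\frac{1}{2}} p$ with $p$ totally clean is exactly Proposition~\ref{prop:+-_standard_form}, and every such element is visibly a cycle since $\partial a_{\frac{1}{2}} = \partial b_{-\frac{1}{2}} = \partial p = 0$. For uniqueness, suppose $a_{\frac{1}{2}} b_{-\frac{1}{2}} p$ and $a_{\frac{1}{2}} b_{-\frac{1}{2}} p'$ are homologous with $p, p' \in \X$ totally clean. Applying $\Psi$ gives $\bar{x}_1 \bar{x}_{-1} \bar{p} = \bar{x}_1 \bar{x}_{-1} \bar{p}'$ in $H(\X)$. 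Since $p$ and $p'$ are totally clean, $x_1 x_{-1} p$ and $x_1 x_{-1} p'$ are fermionic polynomials, hence (by Theorem~\ref{thm:homology_of_X} and the subsequent discussion that $H(\X)$ is free on the fermionic monomials, each fermionic polynomial being its own unique homology representative) we may conclude $x_1 x_{-1} p = x_1 x_{-1} p'$ as elements of $\X$, and therefore $p = p'$.

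The argument presents no serious obstacle: the substantive work was done in Proposition~\ref{prop:+-_standard_form} and the preceding lemmas on $\Psi$. The only point that requires a little care is the uniqueness statement, where one must invoke that $H(\X)$ is freely generated by fermionic monomials in order to pull the homology relation $\bar{x}_1 \bar{x}_{-1} \bar{p} = \bar{x}_1 \bar{x}_{-1} \bar{p}'$ back to an honest equality of polynomials --- this is precisely what makes the normal form $a_{\frac{1}{2}} b_{-\frac{1}{2}} p$ genuinely canonical.
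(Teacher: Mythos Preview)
Your proposal is correct and follows essentially the same route as the paper: the theorem is stated with a \qed immediately after noting that $\Psi$ is an $\X$-module homomorphism, so the result is assembled from Proposition~\ref{prop:+-_standard_form} and the preceding proposition on injectivity and image of $\Psi$, exactly as you do. Your explicit verification of $\X$-linearity and your uniqueness argument via fermionic representatives simply spell out what the paper leaves implicit.
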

Thus $H(\A_+ \otimes \X \otimes \B_-)$ is a free $H(\X)_{-1,1}$-module (definition \ref{defn:polynomial_subrings}) with basis $1$ (isomorphic to $H(\X)_{-1,1}$ as a module over itself). As a $\Z_2$-module, $H(\A_+ \otimes \X \otimes \B_-)$ is free with basis $\bar{a}_{\frac{1}{2}} \bar{b}_{-\frac{1}{2}} \bar{q}$, over all totally clean monomials $\bar{q}$. 

To summarise:
\[
H(\A_+ \otimes \X \otimes \B_-) = \bar{a}_{\frac{1}{2}} \bar{b}_{-\frac{1}{2}} H(\X)_{\neq -1,1} \cong H(\X)_{\neq -1,1} \cong \bar{x}_{-1} \bar{x}_1 H(\X) \cong 
\bar{x}_1 \bar{x}_{-1} \frac{ \Z[\ldots, x_{-3}, x_{-1}, x_1, x_3, \ldots] }{ \left( \ldots, x_{-3}^2, x_{-1}^2, x_1^2, x_3^2, \ldots \right) }
\]

Using the isomorphism $\A_+ \otimes \X \otimes \B_- \cong \A_- \otimes \X \otimes \B_+$ we immediately also have the homology of $\A_- \otimes \X \otimes \B_+$: it is isomorphic to $\bar{x}_{-1} \bar{x}_1 H(\X)$ as an $H(\X)$-module; every homology class has a unique representative $a_{-\frac{1}{2}} b_{\frac{1}{2}} p$, where $p \in \X$ is totally clean; and as a $\Z_2$-module it is free with basis $\bar{a}_{-\frac{1}{2}} b_{\frac{1}{2}} \bar{q}$ over all totally clean monomials $\bar{q}$.

\subsection{Full homology}
\label{sec:towards_full_22_homology}

Let us now return to the sutured background $(A,F_{2,2})$, and recall that
\[
\widehat{CS}(A,F) \cong \left( \A \otimes \X \otimes \B \right) \oplus \left( \C \otimes \X \otimes \D \right)
\]
where $\A \otimes \X \otimes \B$, describing insular string diagrams, is a subcomplex; while $\C \otimes \X \otimes \D$, describing traversing string diagrams, is not. We have the further decomposition into subcomplexes
\[
\A \otimes \X \otimes \B \cong
\left( \A_+ \otimes \X \otimes \B_+ \right)
\oplus
\left( \A_+ \otimes \X \otimes \B_- \right)
\oplus
\left( \A_- \otimes \X \otimes \B_+ \right)
\oplus
\left( \A_- \otimes \X \otimes \B_- \right).
\]
After sections \ref{sec:ABX_homology} to \ref{sec:completing_insular_calculation}, specifically theorems \ref{thm:++_homology} and \ref{thm:+-_homology}, we know the homology of these complexes explicitly. Over $\Z_2$ they have bases, respectively
\begin{enumerate}
\item
$H(\A_+ \otimes \X \otimes \B_+)$: basis $\bar{s}_i \bar{q}$, over all positive integers $i$ and positively clean monomials $\bar{q}$;
\item
$H(\A_+ \otimes \X \otimes \B_-)$: basis $\bar{a}_{\frac{1}{2}} \bar{b}_{-\frac{1}{2}} \bar{q}$, over all totally clean monomials $\bar{q}$;
\item
$H(\A_- \otimes \X \otimes \B_+)$: basis $\bar{a}_{-\frac{1}{2}} \bar{b}_{\frac{1}{2}} \bar{q}$, over all totally clean monomials $\bar{q}$.
\item
$H(\A_- \otimes \X \otimes \B_-)$: basis $\bar{s}_i \bar{q}$, over all negative integers $i$ and negatively clean monomials $\bar{q}$.
\end{enumerate}
Over $H(\X)$, these modules are non-free, with rank $\infty, 1, 1, \infty$ respectively. We have now proved parts (i) and (ii) of theorem \ref{thm:22_annulus_description}.

Unfortunately, the situation becomes more difficult when we extend to the full chain complex $(\A \otimes \X \otimes \B) \oplus (\C \otimes \X \otimes \D)$. We are unable to give a complete description of its homology. The differential on $\C \otimes \X \otimes \D$, as we saw in section \ref{sec:traversing_differential}, does not obey a Leibniz rule, and the differential of a term $c_i d_j x^e$ will in general include terms of both types $c_i d_j x^e$ and $a_i b_j x^e$. So the full homology $\widehat{HS}(\An, F_{2,2})$ has the structure of a $\Z_2$-module, but not an $H(\X)$-module. 

We will however give some partial results regarding the full complex.

First, the homology $H(\A \otimes \X \otimes \B)$ does not escape unscathed from the effect of the differential on $\C \otimes \X \otimes \D$: for instance, for any integer $n \neq 0$, the element $s_n \in \A \otimes \X \otimes \B$ has nonzero homology class $\bar{s}_n \in H(\A \otimes \X \otimes \B)$, but is a boundary in $\widehat{HS}(A, F_{2,2})$ since $\partial c_0 d_n = s_n$. There are certainly however elements of $H(\A \otimes \X \otimes \B)$ that do survive in $\widehat{HS}(A, F_{2,2})$; and there are cycles in $\A \otimes \X \otimes \B$ not homologous to any elements of $\C \otimes \X \otimes \D$ . Indeed, as the differential $\partial c_m d_n x^e$ of a generator of $\C \otimes \X \otimes \D$ only involves terms of the form $c_{m'} d_{n'} x^{e'}$ and $a_i b_j x^e$ with $i,j$ of the same sign, we have a decomposition
\[
\widehat{CS}(\An, F_{2,2}) \cong \left[ \A_+ \otimes \X \otimes \B_- \right] \oplus \left[ \A_- \otimes \X \otimes \B_+ \right] \oplus \left[ \X \otimes \left( \left( \A_+ \otimes \B_+ \right) \oplus \left( \A_- \otimes \B_- \right) \oplus \left( \C \otimes \D \right) \right) \right].
\]
into three chain complexes, the first two of which are differential $\X$-modules. Thus $H(\A_+ \otimes \X \otimes \B_-)$ and $H(\A_- \otimes \X \otimes \B_+)$ are summands of $\widehat{HS}(\An, F_{2,2})$. In particular, $\bar{a}_{\frac{1}{2}} \bar{b}_{-\frac{1}{2}} \bar{q}$ and $\bar{a}_{-\frac{1}{2}} \bar{b}_{\frac{1}{2}} \bar{q}$, for any totally clean polynomial $\bar{q}$, is nonzero in $\widehat{HS}(\An, F_{2,2})$. This proves theorem \ref{thm:22_annulus_description}(iii).

We will also show (proposition \ref{prop:some_more_nonzero_homology}) that any $\bar{a}_i \bar{b}_j \bar{x}_{-i-j}$, for $i,j \in \Z + \frac{1}{2}$ of the same sign, is nonzero in $\widehat{HS}(\An, F_{2,2})$.

On the other hand, $\C \otimes \X \otimes \D$ certainly contributes some homology on its own account: there are cycles in $\C \otimes \X \otimes \D$ which are nonzero in $\widehat{HS}(\An, F_{2,2})$. For instance, we shall show below (propositions \ref{prop:some_nonzero_homology} and \ref{prop:some_more_nonzero_homology}) that the elements
\[ \begin{array}{c}
c_n d_{-n}, \quad c_0 d_0 + c_{1} d_{-1} \\
c_0 d_0 x_3 + \left( c_2 d_0 + c_1 d_1 + c_0 d_2 \right) x_1 + a_{\frac{7}{2}} b_{\frac{1}{2}} + a_{\frac{3}{2}} b_{\frac{5}{2}}
\end{array}
\]
for any $n \in \Z$, are all cycles, but not boundaries, in $\widehat{HS}(\An, F_{2,2})$. We will also see (proposition \ref{prop:some_nonzero_homology}) that $c_n d_{-n}$ is not homologous to any element of $\A \otimes \X \otimes \B$.

We will demonstrate these nonzero elements using two tools: firstly, in section \ref{sec:diagonal_sums}, the \emph{diagonal sum sequence}; and secondly, in section \ref{sec:homomorphism_from_disc}, by finding a ($\Z_2$-module) homomorphism $\Phi$ from $\widehat{HS}(A, F_{2,2})$ to a $\Z_2$-module $H(\E)$. But neither of these tools is complete; for instance the diagonal sum sequence cannot detect that $\bar{c}_0 \bar{d}_0 + \bar{c}_1 \bar{d}_{-1}$ is nonzero in $\widehat{HS}(\An, F_{2,2})$; and the homomorphism $\Phi$ cannot detect that $\bar{c}_0 \bar{d}_0 \bar{x}_3 + \left( \bar{c}_2 \bar{d}_0 + \bar{c}_1 \bar{d}_1 + \bar{c}_0 \bar{d}_2 \right) \bar{x}_1 + \bar{a}_{\frac{7}{2}} \bar{b}_{\frac{1}{2}} + \bar{a}_{\frac{3}{2}} \bar{b}_{\frac{5}{2}}$ is nonzero.

\subsection{Properties of full string homology; diagonal sums}
\label{sec:diagonal_sums}

We will attempt to gain some insight into $\C \otimes \X \otimes \D$ by considering the ``diagonals" $c_i d_j$, over $i,j$ such that $i+j$ is constant.

\begin{lem}
\label{lem:c_i_d_j_boundary_facts}
For all integers $i,j$:
\begin{enumerate}
\item
$c_i d_j$ and $c_{i+2} d_{j-2}$ differ by a boundary;
\item
if $i+j$ is odd, then $c_i d_j$ and $c_{i+1} d_{j-1}$ differ by a boundary.
\end{enumerate}
\end{lem}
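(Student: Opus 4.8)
The strategy is to write down, for each part, an explicit element of $\C \otimes \X \otimes \D$ (indeed of $\widehat{CS}(\An, F_{2,2})$) whose differential equals $c_i d_j + c_{i+2} d_{j-2}$ (resp.\ $c_i d_j + c_{i+1} d_{j-1}$), using the formula for $\partial(c_i d_j x^e)$ from proposition \ref{prop:differential_on_ABCDX_complex}. Recall that
\[
\partial \left( c_i d_j x^e \right) = c_i d_j (\partial x^e) + s_{i+j} x^e + \sum_{k \in \Z} k e_k \left( c_{i+k} d_j + c_i d_{j+k} \right) x_k^{-1} x^e,
\]
so on a ``bare'' monomial $c_i d_j x_2$ (taking $x^e = x_2$, so $e_2 = 1$, all other $e_k = 0$) the middle and last terms combine: $\partial x_2 = x_1^2$, and $k e_k = 2 \equiv 0 \pmod 2$ kills the last sum. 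So that choice is useless; instead the right input is $c_i d_j x_1$ or similar, where the odd factor survives.

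For part (i), the plan is to take $g = c_{i+1} d_{j-1} x_1$. Then $\partial x_1 = 0$, and the only surviving term in the last sum is $k=1$, $e_1 = 1$, giving $\partial(c_{i+1}d_{j-1}x_1) = s_{i+j} x_1 + (c_{i+2}d_{j-1} + c_{i+1}d_j)x_1^{-1}x_1 = s_{i+j}x_1 + c_{i+2}d_{j-1} + c_{i+1}d_j$. That is not quite $c_i d_j + c_{i+2}d_{j-2}$, so I expect to need a small combination — perhaps $g = c_{i+1}d_{j-1}x_1 + (\text{correction in } \C\otimes\X\otimes\D \text{ and } \A\otimes\X\otimes\B)$ — iterating the identity $\partial(c_p d_q x_1) = s_{p+q}x_1 + c_{p+1}d_q + c_p d_{q+1}$ and cancelling in pairs. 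Concretely: using $c_p d_q \mapsto c_{p+1}d_q + c_p d_{q+1}$ twice (once with $(p,q)=(i,j)$, once with $(p,q) = (i+1, j-1)$), the $c_{i+1}d_j$ terms cancel mod $2$, leaving $c_{i+2}d_{j-1} + c_i d_{j+1}$; adjusting indices and absorbing the $s_{i+j}x_1$ and any insular remainder into $\partial$ of an element of $\A\otimes\X\otimes\B$ (whose differential stays in $\A\otimes\X\otimes\B$) gives the claim. For part (ii), when $i+j$ is odd we have $s_{i+j}x_1$ itself hit by $\partial$ modulo lower-degree insular terms (since $s_n$-type expressions are $[c_\cdot, d_\cdot]$ brackets), and more directly $\partial(c_i d_j x_1) = s_{i+j}x_1 + c_{i+1}d_j + c_i d_{j+1}$; combining this with the observation that $s_{i+j}x_1$ is itself a boundary — it equals $[c_i, d_j]x_1$, and one can produce it via $\partial$ of a suitable $c\,d\,x$ element when $i+j$ is odd, exploiting parity so that the count of terms is even — should yield that $c_{i+1}d_j + c_i d_{j+1}$ is a boundary. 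Relabelling $j \to j-1$ then gives $c_{i+1}d_{j-1} + c_i d_j$ is a boundary, which is (ii).

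The main obstacle I anticipate is bookkeeping the insular ``tails'': every $\partial$ of a traversing element carries an $s_{i+j}x^e$ term living in $\A\otimes\X\otimes\B$, and to conclude ``differ by a boundary'' I must show these insular tails are themselves boundaries of insular elements (using lemma \ref{lem:ABX_standard_form} or the explicit formulas for $\partial a_n, \partial b_n$), or else arrange the combination so the $s$-terms cancel outright mod $2$. The parity hypothesis in (ii) is exactly what makes the relevant collection of $s$-terms cancel, so the delicate point is verifying that cancellation carefully; I'd do this by writing $s_{i+j}$ explicitly as $\sum_{k+\ell = i+j,\, k\ell>0} a_k b_\ell$ and checking the symmetry $(k,\ell)\leftrightarrow(\ell,k)$ pairs up terms when $i+j$ is odd (so $k\neq \ell$ always), exactly as in the proof of lemma \ref{lem:differential_no_marked_pts}.
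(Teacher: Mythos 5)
Part (i) of your proposal is correct and is essentially the paper's own argument: summing $\partial(c_i d_{j-1} x_1)$ and $\partial(c_{i+1} d_{j-2} x_1)$, the two $s_{i+j-1}x_1$ terms and the two $c_{i+1}d_{j-1}$ terms cancel mod $2$, leaving exactly $c_i d_j + c_{i+2} d_{j-2}$, so no insular ``tail'' needs to be absorbed.

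Part (ii) has a genuine gap --- two, in fact. First, a parity slip: in $\partial(c_i d_j x_1) = s_{i+j}x_1 + c_{i+1}d_j + c_i d_{j+1}$ with $i+j$ odd, the traversing terms $c_{i+1}d_j$ and $c_i d_{j+1}$ lie on the diagonal $i+j+1$, which is \emph{even}; after your relabelling $j \to j-1$ you would be proving that adjacent terms on an \emph{even} diagonal differ by a boundary. That statement is false: $c_0 d_0 + c_1 d_{-1}$ is a cycle that is not a boundary (it is homologous to $a_{\frac{1}{2}} b_{\frac{1}{2}} x_{-1}$, which is nonzero in $\widehat{HS}(\An,F_{2,2})$; see proposition \ref{prop:some_more_nonzero_homology}). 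Second, the pivot of your argument --- that $s_{i+j}x_1$ is a boundary --- is unjustified and cannot be obtained as you suggest: the terms $a_k b_\ell$ and $a_\ell b_k$ of $s_n$ are distinct basis elements, so the $(k,\ell)\leftrightarrow(\ell,k)$ symmetry produces no cancellation (and $s_n$ has $|n|$ terms, which is \emph{odd} when $n$ is odd); while producing $s_n x^e$ as the insular part of some $\partial(c_p d_q x^e)$ reintroduces traversing terms $c_{p+k}d_q + c_p d_{q+k}$ whose vanishing up to boundary is precisely what you are trying to prove, so that route is circular. (Within the insular subcomplex the class $\bar{x}_1\bar{s}_n$ is in general nonzero, e.g.\ $\bar{x}_1\bar{s}_3 = \bar{x}_3\bar{s}_1 \neq 0$ in $H(\A_+\otimes\X\otimes\B_+)$.) The paper's route for (ii) sidesteps all of this: for odd $m$ one has $\partial(c_0 d_0 x_m) = c_0 d_m + c_m d_0$, because the bracket term is $s_0 x_m = 0$ and $\partial x_m = 0$. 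This connects two points an odd distance apart on the odd diagonal $m$, and combined with part (i), which moves by even steps along a diagonal, it connects all points of that diagonal, which is exactly (ii).
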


Recall (section \ref{sec:ABCDX_chain_complex}) that $c_i d_j p$, for $p \in \X$, can be considered as $p \in \X$ placed at $(i,j) \in \Z \times \Z$. Part (i) of this lemma says that along a diagonal $i+j =$ constant in this lattice, a $1$ at each second point is equal, up to a boundary. Part (ii) says that along an \emph{odd} diagonal $i+j = $ odd constant, a $1$ at any point is equal to any other, up to boundaries. Note however, that $c_i d_j$ are not generally cycles, so do not generally have homology classes.

\begin{proof}
The first part is immediate from 
\[
\partial ( c_i d_{j-1} x_1 + c_{i+1} d_{j-2} x_1 ) = c_i d_j + c_{i+2} d_{j-2}
\]
(the two $s_{i+j-1} x_1$ terms in the differential cancel). If $m$ is an odd integer then we have
\[
\partial ( c_0 d_0 x_m ) = c_0 d_m + c_m d_0,
\]
so two terms $c_i d_j$ on the diagonal $i+j = m$, spaced an odd distance apart, are equal, up to a boundary. Combining this with (i) gives that all $c_i d_j$ on the diagonal are equal up to boundaries.
\end{proof}

\begin{defn}
Given $n \in \Z$ and $f \in \widehat{CS}(\An, F_{2,2})$ with $\C \otimes \X \otimes \D$ component $\sum_{(i,j) \in \Z \times \Z} c_i d_j p_{i,j}$, where each $p_{i,j} \in \X$, the \emph{$n$'th diagonal sum} map $\sigma_n: \widehat{CS}(\An, F_{2,2}) \To \X$ is defined by
\[
\sigma_n f = \sum_{i+j = n} p_{i,j} \in \X.
\]
The \emph{diagonal sum sequence} of $f$ is the sequence $(\sigma_n f)_{n \in \Z}$.
\end{defn}
Thus each $\sigma_n$ takes the sum of the coefficients of $f$ in the diagonal $i+j=n$ of the ``$(c_i, d_j)$-lattice". Clearly $\sigma_n$ is a $\Z_2$-module homomorphism; we now show it is a chain map.

\begin{lem}
For each $n$, $\sigma_n \partial = \partial \sigma_n$. 
\end{lem}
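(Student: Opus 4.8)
The plan is to verify the identity $\sigma_n \partial = \partial \sigma_n$ directly on generators of $\widehat{CS}(\An, F_{2,2})$, using the explicit description of the differential from proposition \ref{prop:differential_on_ABCDX_complex}. Since both sides are $\Z_2$-linear, it suffices to check the equation on each generator, i.e. on string diagrams $a_i b_j x^e$ and $c_i d_j x^e$. For insular generators $a_i b_j x^e$, the $\C \otimes \X \otimes \D$ component is zero, and $\partial(a_i b_j x^e)$ is again insular, so both $\sigma_n \partial (a_i b_j x^e)$ and $\partial \sigma_n (a_i b_j x^e)$ vanish; this case is trivial. The real content is the traversing case.

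For a traversing generator $s = c_i d_j x^e$, recall that
\[
\partial (c_i d_j x^e) = c_i d_j (\partial x^e) + s_{i+j} x^e + \sum_{k \in \Z} k e_k \left( c_{i+k} d_j + c_i d_{j+k} \right) x_k^{-1} x^e.
\]
Here $s_{i+j} x^e$ lies in $\A \otimes \X \otimes \B$, so it does not contribute to any diagonal sum $\sigma_n$. First I would compute $\sigma_n \partial s$: the term $c_i d_j (\partial x^e)$ contributes $\partial x^e$ to $\sigma_{i+j}$ and nothing elsewhere; the term $c_{i+k} d_j x_k^{-1} x^e$ lies on the diagonal $(i+k)+j = i+j+k$, and similarly $c_i d_{j+k} x_k^{-1} x^e$ lies on the diagonal $i+(j+k) = i+j+k$. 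Hence for each $k$, both terms $k e_k (c_{i+k} d_j + c_i d_{j+k}) x_k^{-1} x^e$ contribute to the same diagonal sum $\sigma_{i+j+k}$, giving a total contribution of $2 k e_k x_k^{-1} x^e = 0$ modulo $2$. Therefore $\sigma_n \partial s = 0$ unless $n = i+j$, in which case $\sigma_{i+j} \partial s = \partial x^e$. On the other side, $\sigma_n s = 0$ unless $n = i+j$, in which case $\sigma_{i+j} s = x^e$, so $\partial \sigma_{i+j} s = \partial x^e$ and $\partial \sigma_n s = 0$ for $n \neq i+j$. The two sides agree in all cases.

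The only subtlety — and the step I expect to require the most care — is bookkeeping the Goldman-bracket contributions: one must be sure that the pair of terms $c_{i+k} d_j$ and $c_i d_{j+k}$ really land on the \emph{same} diagonal (which they do, both on $i+j+k$) so that they cancel mod $2$, rather than reinforcing or scattering across distinct diagonals; and one must note that the purely insular output $s_{i+j} x^e$ is annihilated by every $\sigma_n$ by definition, since $\sigma_n$ only reads the $\C \otimes \X \otimes \D$ component. Once this cancellation is observed, the argument reduces to the commutativity of the ordinary differential on $\X$ with itself on the single surviving diagonal, which is immediate. I would present the proof as: reduce to generators by linearity; dispose of the insular case; for the traversing case write out $\partial(c_i d_j x^e)$, apply $\sigma_n$ term by term, observe the mod-$2$ cancellation of the bracket terms on each diagonal, and compare with $\partial \sigma_n(c_i d_j x^e)$.
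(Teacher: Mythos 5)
Your proof is correct and follows essentially the same route as the paper: both arguments reduce to the explicit formula for $\partial(c_i d_j x^e)$, note that the insular output $s_{i+j}x^e$ is invisible to every $\sigma_n$, and observe that the two Goldman-bracket terms $c_{i+k}d_j$ and $c_i d_{j+k}$ land on the same diagonal $i+j+k$ and hence cancel mod $2$, leaving only $c_i d_j(\partial x^e)$ to match $\partial\sigma_{i+j}$.
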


\begin{proof}
Each term $c_i d_j p_{i,j}$ of $f$ contributes $p_{i,j}$ to the diagonal sum $\sigma_{i+j}$. Now $\partial f$ has $\C \otimes \X \otimes \D$ component given by $c_i d_j \partial p_{i,j}$, plus some terms (possibly none) of the form $(c_{i+k} d_j + c_i d_{j+k}) x_k^{-1} p_{i,j}$. The term $c_i d_j \partial p_{i,j}$ contributes $\partial p_{i,j}$ to the $(i+j)$'th diagonal sum of $\partial f$. Any terms of the form $(c_{i+k} d_j + c_i d_{j+k}) x_k^{-1} p_{i,j}$ give the same coefficient $x_k^{-1} p_{i,j}$ occurring in two locations $(i+k,j)$, $(i,j+k)$ on the same diagonal. Hence they cancel and contribute zero to the diagonal sums of $\partial f$. Terms of the form $a_i b_j p$ contribute neither to $\sigma_n f$ nor $\sigma_n \partial f$. So the diagonal sum sequence of $\partial f$ is $(\partial \sigma_n)_{n \in \Z}$ as desired.
\end{proof}

It follows that, for any $f \in \widehat{CS}(\An, F_{2,2})$, $\partial f$ has diagonal sum sequence $(\partial \sigma_n f)_{n \in \Z}$, giving the following proposition immediately.
\begin{prop}
If $f$ is a cycle in $\widehat{CS}(\An, F_{2,2})$, then every $\sigma_n f$ is a cycle in $\X$. If $f$ is a boundary in $\widehat{CS}(\An, F_{2,2})$, then every $\sigma_n f$ is a boundary in $\X$.
\qed
\end{prop}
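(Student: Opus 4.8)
The plan is to deduce this directly from the chain-map property $\sigma_n \partial = \partial \sigma_n$ established in the preceding lemma, so that the proposition becomes a purely formal consequence. Recall that each $\sigma_n$ is a $\Z_2$-linear map $\widehat{CS}(\An, F_{2,2}) \To \X$, so it carries elements of $\widehat{CS}(\An, F_{2,2})$ to elements of $\X$, and it commutes with the differential on both sides.

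First I would treat the cycle case: if $\partial f = 0$, then applying $\sigma_n$ and using the commutation relation gives $\partial(\sigma_n f) = \sigma_n(\partial f) = \sigma_n(0) = 0$, so $\sigma_n f$ is a cycle in $\X$. Second, for the boundary case, write $f = \partial g$ for some $g \in \widehat{CS}(\An, F_{2,2})$; then $\sigma_n f = \sigma_n(\partial g) = \partial(\sigma_n g)$, and since $\sigma_n g \in \X$ this exhibits $\sigma_n f$ as a boundary in $\X$.

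There is no genuine obstacle: all the work has already been done in proving the lemma $\sigma_n \partial = \partial \sigma_n$ (the key point there being that in $\partial f$ the correction terms $(c_{i+k} d_j + c_i d_{j+k}) x_k^{-1} p_{i,j}$ contribute the same coefficient twice on a single diagonal and hence cancel mod $2$, while $a_i b_j$ terms contribute nothing). Given that lemma, the proposition follows immediately by the two displayed computations above, so the proof is short.
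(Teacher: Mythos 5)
Your proof is correct and is exactly the route the paper takes: the proposition is stated as an immediate formal consequence of the chain-map identity $\sigma_n \partial = \partial \sigma_n$ from the preceding lemma, which is precisely the two one-line computations you give for the cycle and boundary cases.
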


We can now show that two of our claimed elements are nonzero in $\widehat{HS}(\Sigma,F)$.
\begin{prop}
\label{lem:some_nonzero_homology}
\label{prop:some_nonzero_homology}
The elements
\[
c_m d_{-m}
\quad \text{and} \quad
 c_0 d_0 x_3 + \left( c_2 d_0 + c_1 d_1 + c_0 d_2 \right) x_1 + a_{\frac{7}{2}} b_{\frac{1}{2}} + a_{\frac{3}{2}} b_{\frac{5}{2}}
\]
for any $m \in \Z$, are cycles in $\widehat{CS}(\An, F_{2,2})$, but not boundaries, and are not homologous to any element of $\A \otimes \X \otimes \B$.
\end{prop}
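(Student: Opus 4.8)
The plan is to verify the three assertions in order: first that the elements are cycles, then that they are not boundaries, and finally that they are not homologous to any element of $\A \otimes \X \otimes \B$.

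\textbf{Cycles.} For $c_m d_{-m}$: from proposition \ref{prop:differential_on_ABCDX_complex}, $\partial(c_m d_{-m}) = c_m d_{-m}(\partial 1) + s_{m+(-m)} + \sum_k k e_k(\cdots)$; since $1 \in \X$ has no crossings, $\partial 1 = 0$, $s_0 = 0$, and $e_k = 0$ for all $k$, so $\partial(c_m d_{-m}) = 0$. For the second element I would compute $\partial$ term by term using proposition \ref{prop:differential_on_ABCDX_complex}: $\partial(c_0 d_0 x_3) = c_0 d_0 \partial x_3 + s_0 x_3 + \sum_k k e_k(c_k d_0 + c_0 d_k) x_k^{-1} x_3$; here $\partial x_3 = 0$ (odd spin), $s_0 = 0$, and the only nonzero exponent is $e_3 = 1$, giving $(c_3 d_0 + c_0 d_3) \cdot 3 \equiv c_3 d_0 + c_0 d_3 \pmod 2$. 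Wait — but the target element has $c_2 d_0 + c_1 d_1 + c_0 d_2$, not $c_3 d_0 + c_0 d_3$; I should recompute carefully. Actually $\partial(c_0 d_0 x_3) = (c_3 d_0 + c_0 d_3) x_0^{-1}\cdot 3 x_3 $ is wrong; the term is $e_3 [c_0,x_3] d_0 x_3^{-1} x_3 + \cdots = 3(c_3 d_0 + c_0 d_3)$, i.e. $c_3 d_0 + c_0 d_3$ mod 2 (with no residual $x$). Meanwhile $\partial((c_2 d_0 + c_1 d_1 + c_0 d_2)x_1) = s_2 x_1 + s_2 x_1 + s_2 x_1 + (c_3 d_0 + c_2 d_1)x_1^{-1}x_1 + (c_2 d_1 + c_1 d_2)x_1^{-1}x_1 + (c_1 d_2 + c_0 d_3)x_1^{-1}x_1 = s_2 x_1 + c_3 d_0 + c_0 d_3$ (the middle $c$-terms telescope, three copies of $s_2 x_1$ give $s_2 x_1$ mod 2). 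And $\partial(a_{7/2}b_{1/2} + a_{3/2}b_{5/2})$: using $\partial a_{7/2} = a_{1/2}x_3 + a_{3/2}x_2 + a_{5/2}x_1$ and $\partial b_{5/2} = b_{1/2}x_2 + b_{3/2}x_1$, this equals $(a_{1/2}x_3 + a_{3/2}x_2 + a_{5/2}x_1)b_{1/2} + (a_{1/2}x_2+a_{3/2}x_1)b_{5/2} + a_{3/2}(b_{1/2}x_2 + b_{3/2}x_1)$; the two $a_{3/2}b_{1/2}x_2$ terms cancel, leaving $a_{1/2}b_{1/2}x_3 + a_{5/2}b_{1/2}x_1 + a_{1/2}b_{5/2}x_2 + a_{3/2}b_{5/2}x_1 + a_{3/2}b_{3/2}x_1$. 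I then need this to equal $s_2 x_1 + s_1 x_3 + (\text{something})x_2$ matching what is left over; the careful bookkeeping will show everything cancels. The key structural point is that $s_n x^e$-type terms arising from the $c$--$d$ part are exactly cancelled by the insular contributions, which is why the $a b$-terms were included.

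\textbf{Not boundaries / not homologous into $\A\otimes\X\otimes\B$.} This is where the diagonal sum sequence does the work. For $c_m d_{-m}$, all coefficients $p_{i,j}$ vanish except $p_{m,-m} = 1$, so $\sigma_n(c_m d_{-m}) = 1$ if $n = 0$ and $0$ otherwise; in particular $\sigma_0(c_m d_{-m}) = 1 \in \X$, which is not a boundary in $\X$ (since $\bar 1 \neq 0$ in $H(\X)$ by theorem \ref{thm:homology_of_X}). By the preceding proposition, a boundary has every diagonal sum a boundary, so $c_m d_{-m}$ is not a boundary. Moreover any element of $\A \otimes \X \otimes \B$ has zero $\C\otimes\X\otimes\D$-component, hence all diagonal sums zero; if $c_m d_{-m} = w + \partial g$ with $w \in \A\otimes\X\otimes\B$, then $\sigma_0(c_m d_{-m}) = \sigma_0 w + \sigma_0 \partial g = 0 + \partial \sigma_0 g$ would be a boundary in $\X$, contradiction. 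For the second element, I compute its diagonal sums: the $c d$-part is $c_0 d_0 x_3 + (c_2 d_0 + c_1 d_1 + c_0 d_2)x_1$, lying on diagonals $i+j = 0$ and $i+j = 2$; so $\sigma_0 = x_3$, $\sigma_2 = x_1 + x_1 + x_1 = x_1$, all others $0$ (the $a b$-terms contribute nothing to any $\sigma_n$). Then $\sigma_0 = x_3$ is a cycle but not a boundary in $\X$ ($\bar x_3 \neq 0$), so again the element is neither a boundary nor homologous into $\A\otimes\X\otimes\B$, by the same argument.

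\textbf{Main obstacle.} The routine-but-delicate part is the cycle verification for the second element: it requires assembling contributions from $\partial$ applied to five or six monomials across both the $\C\otimes\X\otimes\D$ and $\A\otimes\X\otimes\B$ summands and checking that the $s_n x^e$ terms (from the Goldman bracket $[c_i,d_j]$), the $c_\bullet d_\bullet$ terms, and the $a_\bullet b_\bullet x^e$ terms each cancel in pairs mod $2$. I would organize this by separating $\partial f$ into its $\C\otimes\X\otimes\D$-part and its $\A\otimes\X\otimes\B$-part and showing each is zero. The conceptual content, by contrast, is light: once we have the diagonal sum sequence as a chain map to $\X$ (already established) and the computation of $H(\X)$, non-triviality is immediate because a single diagonal sum lands on a nonzero homology class of $\X$. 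I do not expect the $\A\otimes\X\otimes\B$-non-homologous claim to require anything beyond observing that elements of that subcomplex have identically zero diagonal sums.
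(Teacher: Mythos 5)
Your overall strategy --- direct verification that the elements are cycles, followed by the diagonal sum sequence $\sigma_n$ to rule out boundaries and homology into $\A \otimes \X \otimes \B$ --- is exactly the paper's, and your treatment of the second and third assertions is correct and complete (your $\sigma_2 = x_1$ is in fact right; the paper's own proof misprints this as $\sigma_1$, though the conclusion is unaffected). The computation for $c_m d_{-m}$ is also fine.

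The gap is in the cycle verification for the second element, which you defer with ``the careful bookkeeping will show everything cancels.'' Carried out, it does not cancel. As you correctly compute, the traversing part of the differential reduces to the single leftover $s_2 x_1 = a_{\frac{1}{2}} b_{\frac{3}{2}} x_1 + a_{\frac{3}{2}} b_{\frac{1}{2}} x_1$; there is no $s_1 x_3$ term and no $x_2$ term waiting to be absorbed, contrary to what you anticipate. Meanwhile, using $\partial a_{\frac{3}{2}} = a_{\frac{1}{2}} x_1$ (your expansion mistakenly substitutes $\partial a_{\frac{5}{2}} = a_{\frac{1}{2}} x_2 + a_{\frac{3}{2}} x_1$ for it), one gets
\[
\partial\left( a_{\frac{7}{2}} b_{\frac{1}{2}} + a_{\frac{3}{2}} b_{\frac{5}{2}} \right) = a_{\frac{1}{2}} b_{\frac{1}{2}} x_3 + a_{\frac{5}{2}} b_{\frac{1}{2}} x_1 + a_{\frac{1}{2}} b_{\frac{5}{2}} x_1 + a_{\frac{3}{2}} b_{\frac{3}{2}} x_1,
\]
which is not $s_2 x_1$. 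So the displayed element, as literally printed, is not a cycle. The insular correction whose boundary actually equals $s_2 x_1$ is $a_{\frac{1}{2}} b_{\frac{5}{2}} + a_{\frac{5}{2}} b_{\frac{1}{2}}$ (the element $h$ with $n=2$ in the lemma computing $\bar{x}_1 \bar{s}_n$), and the proposition should be read with that replacement. A complete proof must exhibit this cancellation rather than assert it, precisely because the assertion fails for the element as written; once the insular part is corrected, the rest of your argument (diagonal sums $\sigma_0 = x_3$, $\sigma_2 = x_1$, both representing nonzero classes in $H(\X)$, and the vanishing of all $\sigma_n$ on $\A \otimes \X \otimes \B$) goes through unchanged.
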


\begin{proof}
Direct computation shows that these elements are cycles. For any $m$, $c_m d_{-m}$ has diagonal sum $\sigma_0 = 1$, which is not a boundary in $\X$. The second element has diagonal sums $\sigma_0 = x_3, \sigma_1 = x_1$, which are also not boundaries. Two cycles which differ by a boundary map under $\sigma_n$ to cycles in $\X$ which differ by boundaries, i.e. to homologous elements of $\X$. An element $\A \otimes \X \otimes \B$ maps under any $\sigma_n$ to zero, hence is not homologous to any of the elements stated here.
\end{proof}
The proves theorem \ref{thm:22_annulus_description}(iv).

Note that $c_0 d_0 + c_1 d_{-1}$ is a cycle with zero diagonal sequence, but (as we will see in proposition \ref{prop:some_more_nonzero_homology}) is nonzero in $\widehat{HS}(\An,F_{2,2})$; the diagonal sequence cannot distinguish this element from $0$.

\subsection{Homomorphism from a disc}
\label{sec:homomorphism_from_disc}

We will define a chain complex $\E$, and chain maps $\widehat{CS}(\An, F_{2,2}) \leftrightarrow \E$. The chain complex $\E$ is motivated by considering string diagrams on the disc with $6$ alternating endpoints, glued into $(\An, F_{2,2})$. Drawing the disc as a rectangle, up to homotopy relative to endpoints there are precisely $6$ diagrams without contractible loops on the disc, which we label as $A_+, A_-, B, T_0, T_1, U$ as shown in figure \ref{fig:E_diagrams}. Gluing left and right sides together, we respectively obtain string diagrams $a_{\frac{1}{2}} b_{-\frac{1}{2}}$, $a_{-\frac{1}{2}} b_{\frac{1}{2}}$, $a_{\frac{1}{2}} b_{\frac{1}{2}} x_{-1}$, $c_0 d_0$, $c_{-1} d_1$ and $c_0 d_1 x_{-1}$ on $(\An, F_{2,2})$. 

\begin{figure}[ht]
\begin{center}
\begin{tikzpicture}[
scale=1.2, 
string/.style={thick, draw=red, postaction={nomorepostaction, decorate, decoration={markings, mark=at position 0.5 with {\arrow{>}}}}}]

\draw [xshift = -4 cm] (0,0) -- (1.5,0) -- (1.5,1) -- (0,1) -- cycle;
\draw [xshift = -4 cm, string] (1,0) arc (0:180:0.25);
\draw [xshift = -4 cm, string] (0,0.5) arc (-90:0:0.5);
\draw [xshift = -4 cm, string] (1,1) arc (180:270:0.5);

\draw [xshift = -2 cm] (0,0) -- (1.5,0) -- (1.5,1) -- (0,1) -- cycle;
\draw [xshift = -2 cm, string] (1,1) arc (0:-180:0.25);
\draw [xshift = -2 cm, string] (0,0.5) arc (90:0:0.5);
\draw [xshift = -2 cm, string] (1,0) arc (180:90:0.5);

\draw [xshift = 0 cm] (0,0) -- (1.5,0) -- (1.5,1) -- (0,1) -- cycle;
\draw [xshift = 0 cm, string] (1,1) arc (0:-180:0.25);
\draw [xshift = 0 cm, string] (0,0.5) -- (1.5,0.5);
\draw [xshift = 0 cm, string] (1,0) arc (0:180:0.25);

\draw [xshift = 2 cm] (0,0) -- (1.5,0) -- (1.5,1) -- (0,1) -- cycle;
\draw [xshift = 2 cm, string] (1,1) -- (0.5,0);
\draw [xshift = 2 cm, string] (0,0.5) arc (-90:0:0.5);
\draw [xshift = 2 cm, string] (1,0) arc (180:90:0.5);

\draw [xshift = 4 cm] (0,0) -- (1.5,0) -- (1.5,1) -- (0,1) -- cycle;
\draw [xshift = 4 cm, string] (1,0) -- (0.5,1);
\draw [xshift = 4 cm, string] (0,0.5) arc (90:0:0.5);
\draw [xshift = 4 cm, string] (1,1) arc (180:270:0.5);

\draw [xshift = 6 cm] (0,0) -- (1.5,0) -- (1.5,1) -- (0,1) -- cycle;
\draw [xshift = 6 cm, string] (0,0.5) to [bend right = 30] (1.5,0.5);
\draw [xshift = 6 cm, string] (1,0) to [bend right = 30] (0.5,1);
\draw [xshift = 6 cm, string] (1,1) to [bend right = 30] (0.5,0);

\draw (-3.25,-0.5) node {$A_+$};
\draw (-1.25,-0.5) node {$A_-$};
\draw (0.75,-0.5) node {$B$};
\draw (2.75,-0.5) node {$T_0$};
\draw (4.75,-0.5) node {$T_1$};
\draw (6.75,-0.5) node {$U$};

\end{tikzpicture}
\caption{Diagrams motivating $\E$.}
\label{fig:E_diagrams}
\end{center}
\end{figure}

\begin{defn}
The $\Z_2$-module $\E$ is freely generated by $\{A_+, A_-, B, T_0, T_1, U\}$, and $\partial: \E \To \E$ is defined on generators and extended linearly as
\[
\begin{array}{c}
\partial A_+ = \partial A_- = \partial B = \partial T_0 = \partial T_1 = 0, \\
\partial U = B + T_0 + T_1.
\end{array}
\]
\end{defn}

\begin{defn}
The map $\Psi: \E \To \widehat{CS}(\An,F_{2,2})$ is defined on generators, extended linearly, by
\[
\begin{array}{c}
A_+ \mapsto a_{\frac{1}{2}} b_{-\frac{1}{2}}, \quad
A_- \mapsto a_{-\frac{1}{2}} b_{\frac{1}{2}}, \quad
B \mapsto a_{\frac{1}{2}} b_{\frac{1}{2}} x_{-1} \\
T_0 \mapsto c_0 d_0, \quad
T_1 \mapsto c_{-1} d_1, \quad
U = c_0 d_1 x_{-1}.
\end{array}
\]
\end{defn}

It's clear that $\partial^2 = 0$ on $\E$, so $\E$ is a chain complex. Its homology is easily computed: $H(\E) \cong \Z_2^4$, with free basis given by the homology classes of $A_+, A_-, T_0$ and $T_1$.

\begin{lem}
$\Psi$ is a chain map.
\end{lem}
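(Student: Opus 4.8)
The plan is to verify that $\Psi$ commutes with $\partial$ by checking the defining relation $\Psi \partial = \partial \Psi$ on each of the six generators $A_+, A_-, B, T_0, T_1, U$ of $\E$ and extending by $\Z_2$-linearity. For the five generators $A_+, A_-, B, T_0, T_1$ we have $\partial = 0$ in $\E$, so the left-hand side $\Psi \partial$ vanishes, and we must simply check that the images $a_{\frac{1}{2}} b_{-\frac{1}{2}}$, $a_{-\frac{1}{2}} b_{\frac{1}{2}}$, $a_{\frac{1}{2}} b_{\frac{1}{2}} x_{-1}$, $c_0 d_0$, and $c_{-1} d_1$ are cycles in $\widehat{CS}(\An, F_{2,2})$. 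The only nontrivial computation is for $U$: we must check that $\partial \Psi U = \Psi \partial U$, i.e. that $\partial (c_0 d_1 x_{-1}) = a_{\frac{1}{2}} b_{\frac{1}{2}} x_{-1} + c_0 d_0 + c_{-1} d_1$.

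First I would dispense with the cycle checks. For $A_+ \mapsto a_{\frac{1}{2}} b_{-\frac{1}{2}}$, by proposition \ref{prop:differential_on_ABCDX_complex} we have $\partial(a_{\frac{1}{2}} b_{-\frac{1}{2}}) = (\partial a_{\frac{1}{2}}) b_{-\frac{1}{2}} + a_{\frac{1}{2}} (\partial b_{-\frac{1}{2}})$, and since both $\partial a_{\frac{1}{2}}$ and $\partial b_{-\frac{1}{2}}$ involve empty sums (there are no $k,l$ with $k+l = \pm\frac{1}{2}$ and $kl>0$), this is zero; similarly for $A_-$. For $B \mapsto a_{\frac{1}{2}} b_{\frac{1}{2}} x_{-1}$, again $\partial a_{\frac{1}{2}} = \partial b_{\frac{1}{2}} = 0$ and $\partial x_{-1} = 0$ by lemma \ref{lem:differential_no_marked_pts}(ii), so by the Leibniz rule on $\A \otimes \X \otimes \B$ the whole thing is a cycle. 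For $T_0 \mapsto c_0 d_0$, proposition \ref{prop:differential_on_ABCDX_complex} gives $\partial(c_0 d_0) = c_0 d_0 (\partial 1) + s_{0} \cdot 1 + 0 = 0$ since $s_0 = 0$ and there are no closed-curve factors; similarly $\partial(c_{-1} d_1) = s_0 = 0$, so $T_1$ maps to a cycle.

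Then the main computation: applying proposition \ref{prop:differential_on_ABCDX_complex} to $s = c_0 d_1 x_{-1}$, writing $x^e = x_{-1}$ so that $e_{-1} = 1$ and all other exponents vanish, we get
\[
\partial(c_0 d_1 x_{-1}) = c_0 d_1 (\partial x_{-1}) + s_{1} x_{-1} + \sum_{k} k e_k (c_{0+k} d_1 + c_0 d_{1+k}) x_k^{-1} x_{-1}.
\]
Here $\partial x_{-1} = 0$; $s_1 = a_{\frac{1}{2}} b_{\frac{1}{2}}$ by definition \ref{defn:s_n_defn}, giving the term $a_{\frac{1}{2}} b_{\frac{1}{2}} x_{-1} = \Psi B$; and the sum has only the $k=-1$ term, contributing $(-1)(1)(c_{-1} d_1 + c_0 d_0) x_{-1}^{-1} x_{-1} = c_{-1} d_1 + c_0 d_0$ mod $2$, which is $\Psi T_1 + \Psi T_0$. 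Adding up, $\partial \Psi U = \Psi B + \Psi T_0 + \Psi T_1 = \Psi(B + T_0 + T_1) = \Psi \partial U$, as required. The only subtlety — and the place I would be most careful — is the bookkeeping with the abusive $x_k^{-1}$ notation and the mod-$2$ reduction of the coefficient $k e_k$, but since $e_{-1}=1$ and $-1 \equiv 1 \pmod 2$ this is routine; there is no real obstacle here, just the need to track signs of indices correctly so that $c_{0+k} = c_{-1}$ and $d_{1+k} = d_0$ when $k = -1$.
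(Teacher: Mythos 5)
Your proof is correct and follows the same route as the paper's: check the identity on the six generators, note that the five with vanishing differential need only be sent to cycles, and verify $\partial(c_0 d_1 x_{-1}) = c_0 d_0 + c_{-1} d_1 + a_{\frac{1}{2}} b_{\frac{1}{2}} x_{-1}$ for $U$. You merely spell out the cycle checks and the mod-$2$ bookkeeping that the paper leaves as "clear", and these details are all accurate.
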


\begin{proof}
We check explicitly on generators. Since $\partial A_+ = \partial A_- = \partial B= \partial T_0 = \partial T_1 = 0$, for these generators it is sufficient to check that $\Psi$ maps them to cycles, which is clear. For the remaining generator $U$ we have $\partial \Psi U = \partial (c_0 d_1 x_{-1}) = c_0 d_0 + c_{-1} d_1 + a_{\frac{1}{2}} b_{\frac{1}{2}} x_{-1} = \Psi \left( T_0 + T_1 + B \right) = \Psi \partial U$.
\end{proof}

The key to making deductions about $\widehat{HS}(\An, F_{2,2})$ is to have a homomorphism in the other direction $\Phi: \widehat{CS}(\An, F_{2,2}) \To \E$, which we now define. We define $\Phi$ on the $\Z_2$-basis of $\widehat{CS}(A, F_{2,2})$, which consists of elements of the form $a_i b_j x^e$ and $c_k d_l x^e$, over all $i,j \in \Z + \frac{1}{2}$, all $k,l \in \Z$ and all monomials $x^e \in \X$.
\begin{defn}
The map $\Phi: \widehat{CS}(\Sigma, F_{2,2}) \To \E$ is defined on generators and extended linearly by
\begin{align*}
\Phi a_{\frac{1}{2}} b_{-\frac{1}{2}} &= A_+ \\
\Phi a_{-\frac{1}{2}} b_{\frac{1}{2}} &= A_- \\
\Phi a_i b_j x_{-i-j} &= B \quad \text{for all $i,j$ of the same sign} \\
\Phi a_i b_j p &= 0 \quad \text{for any $i,j$ and monomial $p$ not covered by the previous cases} \\
\Phi c_{2n} d_{-2n} &= T_0 \quad \text{for all integers $n$} \\
\Phi c_{2n-1} d_{1-2n} &= T_1 \quad \text{for all integers $n$} \\
\Phi c_i d_j x_{-i-j} &= U \quad \text{for all pairs of integers $(i,j)$ such that $i+j$ is odd} \\
\Phi c_i d_j p &= 0 \quad \text{for any $i,j$ and monomial $p$ not covered by the previous cases}.
\end{align*}
\end{defn}

It is clear from the definition that $\Phi$ and $\Psi$ are partial inverses:
\begin{equation}
\label{eqn:composition_identity}
\Phi \circ \Psi = 1.
\end{equation}

In order to prove that $\Phi$ is a chain map, we develop some lemmas. The first lemma is about two elements $c_i d_j p$ and $c_{i+n} d_{j-n} p$ in the same diagonal.
\begin{lem}
\label{lem:phi_on_diagonal}
Suppose $i,j \in \Z$ such that $i+j \neq 0$, and $p \in \X$ is any monomial. Then for any $n \in \Z$,
\[
\Phi \left( \left( c_i d_j + c_{i+n} d_{j-n} \right) p \right) = 0.
\]
\end{lem}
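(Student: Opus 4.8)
The plan is to prove the statement by a case analysis according to the position $(i,j)$ and the monomial $p$, reducing everything to the two basic facts: (a) $p$ equals $x_{-i-j}$ or not, and (b) for $n$ even versus $n$ odd, whether $c_{i+n}d_{j-n}$ lands in the same ``special value'' bucket as $c_i d_j$ under $\Phi$. Since $\Phi$ is $\Z_2$-linear, it suffices to evaluate $\Phi(c_i d_j p) + \Phi(c_{i+n} d_{j-n} p)$ and show it vanishes. Both terms lie on the diagonal $i+j = $ const (note $i+j = (i+n)+(j-n)$), so the condition ``$p = x_{-i-j}$'' is the same for both terms simultaneously.

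\textbf{First I would} dispose of the case $p \neq x_{-i-j}$: by definition $\Phi c_i d_j p = 0$ and $\Phi c_{i+n} d_{j-n} p = 0$, so the sum is zero. This leaves $p = x_{-i-j}$, where $i+j \neq 0$ by hypothesis, so $-i-j$ is a genuine (nonzero) index and $x_{-i-j} \in \X$. Now split on the parity of $i+j$. If $i+j$ is odd, then both $c_i d_j x_{-i-j}$ and $c_{i+n} d_{j-n} x_{-i-j}$ map to $U$ (the rule $\Phi c_i d_j x_{-i-j} = U$ when $i+j$ odd), so the sum is $U + U = 0$. If $i+j$ is even (and nonzero), then $c_i d_j x_{-i-j}$ maps to $T_0$ if $i$ is even, $T_1$ if $i$ is odd; similarly $c_{i+n} d_{j-n} x_{-i-j}$ maps to $T_0$ or $T_1$ according to the parity of $i+n$. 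If $n$ is even, then $i$ and $i+n$ have the same parity, so both terms map to the same generator and the sum is zero. If $n$ is odd, then $i+j$ even forces... wait, here I must be careful: the claim as stated has no hypothesis on $n$, so I need the sum to vanish even when $n$ is odd. In that case $i$ and $i+n$ have opposite parity, so one term is $T_0$ and the other is $T_1$, and the sum would be $T_0 + T_1 \neq 0$.

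\textbf{The main obstacle} is exactly this last subcase, and I expect the resolution to be that the hypothesis is implicitly ``$p$ a monomial'' together with the observation that $T_0 + T_1$ is itself not relevant because $n$ odd with $i+j$ even cannot occur in the intended application — more precisely, I suspect the statement should be read with the understanding that we only need it for $n$ such that the two points lie in the same $\Phi$-bucket, OR the lemma is genuinely about $n$ even, OR (most likely) I have the parity bookkeeping backwards and in fact when $i+j$ is even the relevant invariant under $n \mapsto n$ is the parity of $i$ itself, which the author intends to vary only by even $n$. The clean way to write the proof is therefore: handle $p \neq x_{-i-j}$ trivially; handle $i+j$ odd via $U+U=0$; for $i+j$ even and nonzero, observe that $\Phi c_i d_j x_{-i-j}$ depends only on the parity of $i$, and since $i+j$ is even, $i \equiv j \pmod 2$, so the value is symmetric; then note $c_i d_j + c_{i+n}d_{j-n}$ with $n$ even gives equal contributions, and handle odd $n$ by writing $c_{i+n}d_{j-n} = c_{i+1}d_{j-1} + (c_{i+1}d_{j-1} + c_{i+n}d_{j-n})$ and peeling off even steps, reducing to a single odd step; for that single step I would use Lemma~\ref{lem:c_i_d_j_boundary_facts}(ii) — but that is about boundaries, not about $\Phi$. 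I will need to check against the author's definition once more, but the skeleton above (trivial case, $U$-case, $T$-case split by parity of $i$, reduction of general $n$ to $n\in\{0,1\}$) is the structure I would follow, with the $T_0+T_1$ subcase being the point requiring the most care.
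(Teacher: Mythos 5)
Your proof breaks down on a misreading of the definition of $\Phi$, exactly in the subcase you flagged as needing ``the most care.'' You wrote that for $i+j$ even and nonzero, $\Phi c_i d_j x_{-i-j}$ equals $T_0$ or $T_1$ according to the parity of $i$, and then worried that an odd shift $n$ produces $T_0+T_1\neq 0$. But re-read the definition: $T_0 = \Phi c_{2n} d_{-2n}$ and $T_1 = \Phi c_{2n-1} d_{1-2n}$ are the images of the bare generators $c_m d_{-m}$, with trivial monomial $p=1$ and $i+j=0$ (indeed $x_{-i-j}=x_0$ does not even exist in $\X$ there, so ``$c_i d_j x_{-i-j}$'' is meaningless for $i+j=0$). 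The $T$-values therefore only arise when $i+j=0$, which the lemma explicitly excludes. For $i+j\neq 0$, the only nonzero case in the definition is $\Phi c_i d_j x_{-i-j}=U$ when $i+j$ is odd; if $i+j$ is even and nonzero, then $\Phi c_i d_j p = 0$ for every monomial $p$, including $p=x_{-i-j}$.

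Once this is corrected, the lemma is immediate and no hypothesis on $n$ is needed, which is essentially the paper's (two-line) argument: if $i+j$ is even or $p\neq x_{-i-j}$, both terms map to $0$; otherwise $i+j$ is odd and $p=x_{-i-j}$, and since $i+j=(i+n)+(j-n)$ is shared, both terms map to $U$ and cancel mod $2$. The surrounding scaffolding in your write-up (the reduction of $n$ to $\{0,1\}$, the appeal to Lemma~\ref{lem:c_i_d_j_boundary_facts}(ii), the parity-of-$i$ analysis) is unnecessary and in part misdirected: Lemma~\ref{lem:c_i_d_j_boundary_facts} is a statement about boundaries in the chain complex, not about $\Phi$, and would not help you compute $\Phi$ of a fixed element.
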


\begin{proof}
If $i+j$ is even or $p \neq x_{-i-j}$ then $\Phi c_i d_j p = \Phi c_{i+n} d_{j-n} p = 0$ and the result holds. Otherwise $i+j \neq 0$ and $p = x_{-i-j}$, in which case $\Phi c_i d_j p = \Phi c_{i+n} d_{j-n} p = U$, so $\Phi(c_i d_j p + c_{i+n} d_{j-n} p ) = 0$.
\end{proof}

The second lemma concerns $\Phi$ applied to $s_n$, which (definition \ref{defn:s_n_defn}) is a linear combination of the $a_i b_j$ along the diagonal $i+j=n$. 
\begin{lem}
\label{lem:phi_s_n_p}
Let $n$ be an integer, and $p \in \X$ a monomial. Then
\[
\Phi \left( s_n p \right) = \left\{ \begin{array}{ll}
B & \text{$n$ odd and $p = x_{-n}$} \\
0 & \text{otherwise}.
\end{array} \right.
\]
\end{lem}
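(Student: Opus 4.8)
The plan is a direct computation from the definitions, since $s_n$ is an explicit finite sum of the generators $a_k b_l$ and $\Phi$ is defined on products of such generators with monomials. First I would recall from Definition~\ref{defn:s_n_defn} that
\[
s_n = \sum_{k+l=n,\; kl>0} a_k b_l,
\]
a sum of exactly $|n|$ terms when $n\neq 0$ (and $s_0=0$), in each of which $k$ and $l$ have the same sign. Multiplying by the monomial $p$ and using $\Z_2$-linearity of $\Phi$ gives
\[
\Phi(s_n p) = \sum_{k+l=n,\; kl>0} \Phi\!\left( a_k b_l p \right).
\]

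Next I would evaluate each summand using the defining cases of $\Phi$. Because every term of $s_n$ has $kl>0$, none of them equals $a_{\frac12}b_{-\frac12}$ or $a_{-\frac12}b_{\frac12}$, so the clauses of the definition of $\Phi$ producing $A_+$ or $A_-$ never apply; the only relevant clauses are $\Phi a_k b_l x_{-k-l}=B$ (which is legitimate here since $k,l$ have the same sign) and $\Phi a_k b_l p = 0$ otherwise. Since $k+l=n$ throughout, $x_{-k-l}=x_{-n}$, so each summand is $B$ if $p=x_{-n}$ and $0$ if $p\neq x_{-n}$. Hence: if $p\neq x_{-n}$, or if $n=0$, then $\Phi(s_n p)=0$; while if $p=x_{-n}$ (which forces $n\neq 0$), then $\Phi(s_n p)$ is a sum of $|n|$ copies of $B$, equal to $B$ when $n$ is odd and to $0$ when $n$ is even, working mod $2$. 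Assembling these cases yields exactly the stated formula.

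I do not anticipate a genuine obstacle: the proof is a short unwinding of Definition~\ref{defn:s_n_defn} together with the definition of $\Phi$, in the same spirit as Lemma~\ref{lem:phi_on_diagonal}. The only points worth stating carefully are that $s_0=0$ (so that the ``$n$ even'' case uniformly subsumes $n=0$) and that the $A_\pm$ clauses of $\Phi$ are irrelevant because the monomials $a_{\pm\frac12}b_{\mp\frac12}$ have $kl<0$ and therefore never appear in any $s_n$.
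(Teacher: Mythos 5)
Your proof is correct and follows essentially the same direct computation as the paper: expand $s_n$, observe that every term $a_k b_l$ has $kl>0$ so only the $B$ or $0$ clauses of $\Phi$ can apply, and count the $|n|$ copies of $B$ mod $2$. Your extra remarks about $s_0=0$ and the irrelevance of the $A_\pm$ clauses are sound but add nothing beyond the paper's argument.
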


\begin{proof}
The element $s_n p$ is the sum of all $a_i b_j p$, over $i,j$ with the same sign as $n$ and $i+j= n$. If $p \neq x_{-n} = x_{-i-j}$, then the image under $\Phi$ of each $a_i b_j p$ is zero, giving the result.

We may now assume $p = x_{-n} = x_{-i-j}$. In this case each $a_i b_j p$ maps to $B$ under $\Phi$. There are $|n|$ such terms, so $\Phi(s_n p)$ is $0$ if $n$ is even, and $B$ if $n$ is odd.
\end{proof}

The third lemma shows that $\Phi \partial = 0$ for many elements.
\begin{lem}
\label{lem:phi_partial_often_zero}
For any $i,j$ and any monomial $p \in \X$:
\begin{enumerate}
\item $\Phi \partial \left( a_i b_j p \right) = 0$.
\item $\Phi \left( c_i d_j (\partial p) \right) = 0$
\end{enumerate}
\end{lem}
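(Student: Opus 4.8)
The plan is to verify both parts by direct inspection of the differential formulas in Proposition \ref{prop:differential_on_ABCDX_complex}, reducing each to an application of Lemmas \ref{lem:phi_on_diagonal} and \ref{lem:phi_s_n_p}. For part (i), I would expand
\[
\partial(a_i b_j p) = \sum_{i'+k'=i,\ i'k'>0} a_{i'} b_j x_{k'} p + \sum_{j'+k'=j,\ j'k'>0} a_i b_{j'} x_{k'} p + a_i b_j (\partial p),
\]
which lies entirely in $\A\otimes\X\otimes\B$, and apply $\Phi$ term by term. A term $a_{i'} b_j x_{k'} p$ maps to something nonzero (namely $B$) under $\Phi$ only if the total closed-curve monomial $x_{k'}p$ equals $x_{-i'-j}$; since $k' = i - i'$, this forces $x_{i-i'}p = x_{-i'-j}$, i.e. $p = x_{-i-j}\,x_{i'-i}\cdots$ — I would track the monomial exponents carefully and argue that if any one term of the first sum maps to $B$, then the terms pair up (the first sum over $i'$ has an even number of contributions for each fixed value of the monomial, because resolving splits $a_i$ symmetrically), so they cancel mod $2$; similarly for the second sum. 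The term $a_i b_j(\partial p)$: here $\partial p = \sum_k x_k^2 x_k^{-1}p\cdot(\text{stuff})$ consists of monomials, each either mapping to $0$ or, if it equals $x_{-i-j}$, appearing with a coefficient that is $0$ mod $2$ by the same cancellation that makes $\partial^2=0$ on $\X$ (the differential $\partial p$ of a monomial in $\X$ has the property that no monomial summand survives with odd multiplicity in a way that would produce an odd count of $B$'s). Actually the cleanest route here is to note that $\partial$ restricted to $\A\otimes\X\otimes\B$ commutes with the diagonal-type bookkeeping, so I will instead argue: $\Phi|_{\A\otimes\X\otimes\B}$ sends $a_ib_jp$ to $B$ iff $p=x_{-i-j}$, and a short check shows $\Phi\circ\partial$ on such generators always produces an even number of $B$-summands.

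For part (ii), $c_i d_j(\partial p)$ is a sum of terms $c_i d_j q$ where $q$ ranges over the monomials appearing in $\partial p$. Each such $q$ maps to $0$ under $\Phi$ unless $i+j$ is odd and $q = x_{-i-j}$. But $\partial p$, being the differential in $\X$, produces only monomials of the form $x_k^2 x_k^{-1} p$ for various $k$ with $k$ even (since $\partial x_{2k} = x_k^2$, $\partial x_{2k+1}=0$); such a monomial can equal the squarefree-in-the-relevant-slot... — more simply, $\partial p$ as an element of $\X$ is itself a boundary and in particular a cycle, and by our computation of $H(\X)$ it has no fermionic (in particular no degree-one-monomial) representative obstruction; but what we actually need is just: for each fixed $q$, the monomial $q$ appears in $\partial p$ with multiplicity that is $0$ mod $2$ whenever $\Phi(c_id_jq)\neq 0$ would matter. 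The honest statement is that $\partial p$ for a monomial $p$ is a sum of monomials, each occurring once; so $\Phi(c_id_j\partial p) = \sum_q \Phi(c_i d_j q)$ over the monomials $q$ of $\partial p$, and this is $U$ times the number (mod $2$) of monomials $q$ in $\partial p$ equal to $x_{-i-j}$. I would then observe that $x_{-i-j}$ can appear in $\partial p$ only if $p = x_{-i-j}x_m^2 x_m^{-1}\cdots$ for some even $m$... and in fact the key point: if $i+j$ is odd then $x_{-i-j}$ is an odd-index variable, hence $x_{-i-j}$ is not in the image of the differential on generators (as $\partial x_{\text{odd}}=0$), so no monomial summand of $\partial p$ can equal $x_{-i-j}$ unless $p$ already contained it, and even then the leftover factor would have to be a boundary monomial $x_m^2$, forcing an even count by the involution $m\leftrightarrow m$. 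Either way $\Phi(c_id_j\partial p)=0$.

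The main obstacle is part (i): one must be careful that the cancellation producing $0$ is genuinely mod-$2$ cancellation of $B$-terms and not merely that individual terms vanish. The resolution is the pairing argument: along the diagonal $i'+j = i+j-k'$... wait, rather, along the diagonal of constant $i'+j'$ in the $\A\otimes\B$-lattice, the terms of $\partial(a_ib_jp)$ that carry the closed-curve monomial $x_{-i'-j'}$ come in pairs symmetric under the reflection interchanging the two open strings' roles, exactly as in the proof of Lemma \ref{lem:phi_s_n_p} where $|n|$ terms map to $B$; here the relevant count is likewise even. I would formalize this by grouping the summands of $\partial(a_ib_jp)$ according to their image diagonal and invoking the same parity count used in Lemma \ref{lem:phi_s_n_p}, so that part (i) follows with essentially no new computation.
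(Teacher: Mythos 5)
The conclusion is right, but the mechanism you propose for part (i) is not the one that makes it true, and as stated it would not survive scrutiny. The paper's proof is entirely termwise: no mod-$2$ cancellation of $B$-summands occurs anywhere, because \emph{no} term of $\partial(a_ib_jp)$ maps to $B$ (or to $A_\pm$) in the first place. Concretely, every term of $(\partial a_i)b_jp$ has the form $a_{i'}b_jx_{k'}p$ with $i'k'>0$; for $\Phi$ to give $B$ you would need $x_{k'}p=x_{-i'-j}$ with $i',j$ of the same sign, forcing $p=1$ and $k'=-i'-j$, which has the \emph{opposite} sign to $i'$ --- a contradiction. The same applies to $a_i(\partial b_j)p$. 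The pairing you invoke does not exist: for a fixed closed-curve monomial $x_{k'}p$ there is exactly one term of the first sum (since $k'$ determines $i'=i-k'$), and the analogy with Lemma \ref{lem:phi_s_n_p} points the wrong way --- there the number of $B$-contributions is $|n|$, which is odd precisely when the answer is $B$ rather than $0$. So ``the relevant count is likewise even'' is an assertion, not an argument; the only way to make it one is to check the sign constraint, at which point you find the count is zero.

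For the remaining term $a_ib_j(\partial p)$ and for part (ii), the clean observation is that every monomial of $\partial p$ (for $p$ a monomial in $\X$) contains at least two variable factors, since the differential replaces one $x_{2k}$ by $x_k^2$; hence no such monomial can equal $1$ or the single variable $x_{-i-j}$, and $\Phi$ kills each term individually. Your discussion of part (ii) circles around this (the ``leftover factor $x_m^2$'') but dresses it in unnecessary considerations about odd-index variables and the homology of $\X$; the degree count alone suffices, and it also disposes of the $a_ib_j(\partial p)$ term in part (i), which your write-up leaves resting on the same unsupported parity claim.
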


\begin{proof}
First consider $\partial (a_i b_j p) = (\partial a_i) b_j p + a_i (\partial b_j) p + a_i b_j (\partial p)$. Every term in $\partial a_i$ or $\partial b_j$ contains a factor $a_k x_l$ where $k,l$ are of the same sign; and hence very term of $(\partial a_i) b_j p$ or $a_i (\partial b_j) p$ also contains a factor $a_k x_l$ where $k,l$ are of the same sign. On all such terms $\Phi = 0$. 

Thus it remains to show $\Phi (a_i b_j (\partial p)) = \Phi (c_i d_j (\partial p)) = 0$. Now any term of $\partial p$ (if there is any such term) is a product of at least two $x_k$. Thus every term of $a_i b_j (\partial p)$ or $c_i d_j (\partial p)$ has at least two $x_k$ factors, and on any such term $\Phi = 0$. 
\end{proof}

\begin{prop}
$\Phi$ is a chain map.
\end{prop}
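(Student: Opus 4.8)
The plan is to verify $\Phi \partial = \partial \Phi$ directly on the $\Z_2$-basis of $\widehat{CS}(\An, F_{2,2})$, which consists of the insular generators $a_i b_j x^e$ and the traversing generators $c_i d_j x^e$; by $\Z_2$-linearity this suffices. For an insular generator $a_i b_j x^e$ the verification is immediate: the left side $\Phi \partial (a_i b_j x^e)$ vanishes by Lemma \ref{lem:phi_partial_often_zero}(i), and the right side vanishes because $\Phi(a_i b_j x^e)$ is by definition always one of $A_+$, $A_-$, $B$ or $0$, each of which is a cycle in $\E$.

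The traversing case is where the work lies. I would expand $\partial(c_i d_j x^e)$ using Proposition \ref{prop:differential_on_ABCDX_complex} as the sum of $c_i d_j (\partial x^e)$, the term $s_{i+j} x^e$, and the Goldman-bracket sum $\sum_{k} k e_k (c_{i+k} d_j + c_i d_{j+k}) x_k^{-1} x^e$, and apply $\Phi$ termwise. The first summand dies by Lemma \ref{lem:phi_partial_often_zero}(ii). For the bracket sum, each $k$ with $k \neq -(i+j)$ gives a pair $c_{i+k} d_j$, $c_i d_{j+k}$ lying on the diagonal $(i+k)+j = i+(j+k) = i+j+k \neq 0$, so that summand maps to $0$ by Lemma \ref{lem:phi_on_diagonal}; hence only the term $k = -(i+j)$ can survive, and it contributes a nonzero coefficient only when $i+j$ is odd (otherwise $k e_k$ is even). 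The middle term $s_{i+j} x^e$ is controlled by Lemma \ref{lem:phi_s_n_p}. I would then split on the parity of $i+j$: when $i+j$ is even, $\Phi \partial(c_i d_j x^e) = 0$ and also $\partial \Phi(c_i d_j x^e) = 0$ since $\Phi(c_i d_j x^e)$ is then $T_0$, $T_1$ or $0$, all cycles; when $i+j$ is odd, a short computation using the values $\Phi c_{-j} d_j$ and $\Phi c_i d_{-i}$ — one being $T_0$ and the other $T_1$, precisely because an odd diagonal forces exactly one of $i,j$ to be even — shows $\Phi \partial(c_i d_j x^e)$ equals $B + T_0 + T_1$ if $x^e = x_{-i-j}$ and $0$ otherwise, matching $\partial \Phi(c_i d_j x^e) = \partial U$ or $0$ according to whether $\Phi(c_i d_j x^e) = U$ or $0$.

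The main obstacle is the parity-and-exponent bookkeeping in the odd case: one must check that the critical bracket term $k = -(i+j)$ contributes nonzero only when $e_{-i-j}$ is odd, that higher powers of $x_{-i-j}$ in $x^e$ kill the $\Phi$-image rather than producing a stray $T_0$, $T_1$ or $U$, and that the two surviving pieces $c_{-j}d_j$ and $c_i d_{-i}$ map to \emph{distinct} generators $T_0$ and $T_1$ (so their sum is $T_0+T_1$, not $0$). Once these are pinned down, the $B$ from $\Phi(s_{i+j} x^e)$ together with this $T_0+T_1$ reassembles exactly $\partial U = B + T_0 + T_1$, establishing $\Phi\partial = \partial\Phi$; combined with equation (\ref{eqn:composition_identity}) this makes $\Phi$ available as a tool to detect nonzero classes in $\widehat{HS}(\An, F_{2,2})$.
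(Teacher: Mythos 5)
Your proposal is correct and follows essentially the same route as the paper's own proof: checking $\Phi\partial=\partial\Phi$ on basis elements, disposing of the insular case and the $c_id_j(\partial x^e)$ term via Lemma \ref{lem:phi_partial_often_zero}, handling $s_{i+j}x^e$ via Lemma \ref{lem:phi_s_n_p}, killing all bracket terms with $k\neq -(i+j)$ via Lemma \ref{lem:phi_on_diagonal}, and resolving the surviving term by the parity argument showing $\Phi(c_{-j}d_j+c_id_{-i})=T_0+T_1$. The bookkeeping issues you flag (oddness of $e_{-i-j}$, higher powers of $x_{-i-j}$ giving zero) are resolved in the paper exactly as you anticipate, by noting $\Phi(c_ld_{-l}q)=0$ for any monomial $q\neq 1$.
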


\begin{proof}
We check on the $\Z_2$-basis of $\widehat{CS}(\Sigma,F_{2,2})$ that $\partial \Phi = \Phi \partial$.

First, take $a_i b_j p$ where $p$ is a monomial. By lemma \ref{lem:phi_partial_often_zero}, we have $\Phi \partial \left( a_i b_j p \right) = 0$. And $\Phi(a_i b_j p)$ is either $A_+$, $A_-$ or $B$, all of which map to $0$ under $\partial$.

Next, take $c_i d_j p$, where $p$ is a monomial. We have $\Phi (c_i d_j p)$ is either $T_0, T_1$ $U$ or $0$, depending on $i,j$ and $p$. However $\partial T_0 = \partial T_1 = 0$. So $\partial \Phi (c_i d_j p)$ is nonzero precisely when $\Phi (c_i d_j p) = U$, in which case $i+j$ is odd, $p = x_{-i-j}$ and $\partial \Phi(c_i d_j p) = T_0 + T_1 + B$. Thus, we must show $\Phi \partial (c_i d_j p)$ is nonzero precisely when $i+j$ is odd and $p=x_{-i-j}$, in which case $\Phi \partial (c_i d_j p) = T_0 + T_1 + B$.

So, consider $\partial (c_i d_j p)$. Let $p = x^e = \prod_{k \in \Z \backslash \{0\}} x_k^{e_k}$. Then
\[
\partial \left( c_i d_j p \right) = c_i d_j (\partial p) + s_{i+j} p + \sum_{k \in \Z \backslash \{0\}} k e_k \left( c_{i+k} d_j + c_i d_{j+k} \right) x_k^{-1} p.
\]
Applying $\Phi$ to this expression, the first term maps to $0$ by lemma \ref{lem:phi_partial_often_zero}, and the second term, by lemma \ref{lem:phi_s_n_p}, maps to $B$ when $i+j$ is odd and $p=x_{-i-j}$, otherwise maps to $0$. So it remains to show that
\[
\Phi \left( \sum_{k \in \Z \backslash \{0\}} k e_k \left( c_{i+k} d_j + c_i d_{j+k} \right) x_k^{-1} p \right) = \left\{ \begin{array}{ll}
T_0 + T_1 & \text{$i+j$ odd and $p = x_{-i=j}$} \\
0 & \text{otherwise.} \end{array} \right.
\]
Now by lemma \ref{lem:phi_on_diagonal}, when $k \neq -i-j$, we have $\Phi( (c_{i+k} d_j + c_i d_{j+k}) q)=0$ for any monomial $q$. Thus we only need consider the terms with $k = -i-j$. In this case the expression above reduces (mod 2) to
\[
(i+j) e_{-i-j}  \; \Phi \left(  \left( c_{-j} d_j + c_i d_{-i} \right) x_{-i-j}^{-1} p \right).
\]
If $i+j$ is even then we obtain $0$ mod $2$, so we may assume $i+j$ is odd. By definition, $\Phi(c_l d_{-l} q) = 0$ for any integer $l$ and any monomial $q \neq 1$; so we may assume $p=x_{-i-j}$. Then $e_{-i-j} = 1$ so the above expression becomes $\Phi (c_{-j} d_j + c_i d_{-i})$. With $i+j$ odd, one of $i,j$ is even and other is odd, so $\Phi(c_{-j} d_j + c_i d_{-i}) = T_0 + T_1$ as desired.
\end{proof}

As a chain map, $\Phi$ descends to homology and we obtain a map $\widehat{HS}(\An, F_{2,2}) \To H(\E)$, also denoted $\Phi$. Thus $\Phi$ and $\Psi$ are partial inverses on homology, $\Phi \circ \Psi = 1$. In particular, $\Psi$ is injective and $\Phi$ is surjective on homology.

\begin{prop}
\label{prop:some_more_nonzero_homology}
The elements 
\[
\bar{a}_i \bar{b}_j \bar{x}_{-i-j}, \quad
\bar{c}_n \bar{d}_{-n},
\]
for any $i,j \in \Z + \frac{1}{2}$ of the same sign, and any $n \in \Z$, are nonzero in $\widehat{HS}(\An, F_{2,2})$. The elements
\[
\bar{c}_0 \bar{d}_0,  \quad
\bar{c}_1 \bar{d}_{-1}, \quad
\bar{c}_0 \bar{d}_0 + \bar{c}_1 \bar{d}_{-1} =
\bar{a}_{\frac{1}{2}} \bar{b}_{\frac{1}{2}} \bar{x}_{-1}, \quad
\bar{a}_{\frac{1}{2}} \bar{b}_{-\frac{1}{2}}, \quad
\bar{a}_{-\frac{1}{2}} \bar{b}_{\frac{1}{2}}, \quad
\]
are all distinct nonzero elements of $\widehat{HS}(A, F_{2,2})$.
\end{prop}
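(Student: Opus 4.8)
The plan is to detect every one of these homology classes by pushing it through the chain map $\Phi : \widehat{CS}(\An, F_{2,2}) \To \E$ constructed above. The key facts I would use are that $\Phi$ descends to homology (it is a chain map), that $H(\E) \cong \Z_2^4$ with basis $\bar{A}_+, \bar{A}_-, \bar{T}_0, \bar{T}_1$, and that $\partial U = B + T_0 + T_1$ forces $\bar{B} = \bar{T}_0 + \bar{T}_1 \neq 0$ in $H(\E)$. A class mapping to something nonzero in $H(\E)$ is then automatically nonzero in $\widehat{HS}(\An, F_{2,2})$, and classes with distinct $\Phi$-images are distinct. Before applying $\Phi$ I would check the listed elements are cycles: by proposition \ref{prop:differential_on_ABCDX_complex}, $\partial(c_n d_{-n}) = s_0 = 0$ (no closed-curve factor, and $s_0 = 0$), while $a_{\pm\frac{1}{2}} b_{\mp\frac{1}{2}}$ and $a_{\pm\frac{1}{2}} b_{\pm\frac{1}{2}} x_{\mp 1}$ are cycles since $\partial a_{\pm\frac{1}{2}} = \partial b_{\pm\frac{1}{2}} = \partial x_{\mp 1} = 0$; for same-sign $i,j$ one works with the diagonal combination $s_{i+j} x_{-i-j}$ (a cycle, as $\partial s_n = 0$) rather than the individual monomial $a_i b_j x_{-i-j}$, which need not be closed.

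Next I would compute $\Phi$ on each representative directly from its definition: $\Phi(c_{2m} d_{-2m}) = T_0$ and $\Phi(c_{2m-1} d_{1-2m}) = T_1$, so $\Phi \bar{c}_n \bar{d}_{-n}$ equals $\bar{T}_0$ or $\bar{T}_1$ according to the parity of $n$; $\Phi \bar{a}_{\frac{1}{2}} \bar{b}_{-\frac{1}{2}} = \bar{A}_+$ and $\Phi \bar{a}_{-\frac{1}{2}} \bar{b}_{\frac{1}{2}} = \bar{A}_-$; and by Lemma \ref{lem:phi_s_n_p} (equivalently, directly from the definition of $\Phi$ on $a_i b_j x_{-i-j}$) we get $\Phi \bar{a}_i \bar{b}_j \bar{x}_{-i-j} = \bar{B} = \bar{T}_0 + \bar{T}_1$ for $i,j$ of the same sign. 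Each of these lies in the nonzero part of $H(\E) \cong \Z_2^4$, which gives the first assertion of the proposition. (As a cross-check, $\bar{c}_n \bar{d}_{-n} \neq 0$ also follows from $\sigma_0(c_n d_{-n}) = 1$, which is not a boundary in $\X$.)

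For the distinctness statement I would first establish the claimed identity $\bar{c}_0 \bar{d}_0 + \bar{c}_1 \bar{d}_{-1} = \bar{a}_{\frac{1}{2}} \bar{b}_{\frac{1}{2}} \bar{x}_{-1}$: applying $\Psi$ to $\partial U = B + T_0 + T_1$ gives $\partial(c_0 d_1 x_{-1}) = c_0 d_0 + c_{-1} d_1 + a_{\frac{1}{2}} b_{\frac{1}{2}} x_{-1}$, and Lemma \ref{lem:c_i_d_j_boundary_facts}(i) gives $\bar{c}_{-1} \bar{d}_1 = \bar{c}_1 \bar{d}_{-1}$ (both are cycles on the diagonal $i+j=0$, differing by a boundary); combining the two yields the relation. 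Then the five classes $\bar{c}_0 \bar{d}_0$, $\bar{c}_1 \bar{d}_{-1}$, $\bar{c}_0 \bar{d}_0 + \bar{c}_1 \bar{d}_{-1}$, $\bar{a}_{\frac{1}{2}} \bar{b}_{-\frac{1}{2}}$, $\bar{a}_{-\frac{1}{2}} \bar{b}_{\frac{1}{2}}$ map under $\Phi$ to $\bar{T}_0$, $\bar{T}_1$, $\bar{T}_0 + \bar{T}_1$, $\bar{A}_+$, $\bar{A}_-$, which are five pairwise-distinct (and nonzero) elements of $\Z_2^4$; since $\Phi$ is well-defined on homology, the five original classes are pairwise distinct and nonzero.

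I expect the only genuinely delicate points to be: (a) being careful that one is working with an honest cycle when writing "$\bar{a}_i \bar{b}_j \bar{x}_{-i-j}$" for same-sign $i,j$, since the raw monomial is generally not closed and the correct object is the diagonal combination $s_{i+j} x_{-i-j}$ (and, correspondingly, the range of $i,j$ for which this produces a nonzero cycle); and (b) the short bookkeeping identifying $\bar{c}_0 \bar{d}_0 + \bar{c}_1 \bar{d}_{-1}$ with $\bar{a}_{\frac{1}{2}} \bar{b}_{\frac{1}{2}} \bar{x}_{-1}$ by combining the explicit boundary coming from $\Psi U$ with the diagonal-shift relation of Lemma \ref{lem:c_i_d_j_boundary_facts}. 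Everything else is an immediate consequence of $\Phi$ being a chain map and the computation $H(\E) \cong \Z_2^4$.
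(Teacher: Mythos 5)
Your proposal follows essentially the same route as the paper: both arguments detect the classes via the chain maps $\Phi$ and $\Psi$ between $\widehat{CS}(\An,F_{2,2})$ and $\E$, together with the computation $H(\E)\cong\Z_2^4$. The only cosmetic differences are that the paper obtains the identity $\bar{c}_0\bar{d}_0+\bar{c}_1\bar{d}_{-1}=\bar{a}_{\frac{1}{2}}\bar{b}_{\frac{1}{2}}\bar{x}_{-1}$ in one step from $\partial(c_1d_0x_{-1})=c_0d_0+c_1d_{-1}+a_{\frac{1}{2}}b_{\frac{1}{2}}x_{-1}$, whereas you reach it via $\partial(c_0d_1x_{-1})$ plus the diagonal-shift lemma (equally valid, one extra step); and the paper deduces distinctness of the five listed classes from injectivity of $\Psi$ on homology, whereas you use distinctness of their $\Phi$-images --- the two are equivalent given $\Phi\circ\Psi=1$.

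Your caveat (a) is well taken and is in fact a point the paper's own proof glosses over. For $i,j$ of the same sign with $|i+j|\geq 2$ the monomial $a_ib_jx_{-i-j}$ is \emph{not} a cycle: for example $\partial(a_{\frac{3}{2}}b_{\frac{1}{2}}x_{-2})=a_{\frac{1}{2}}b_{\frac{1}{2}}x_1x_{-2}+a_{\frac{3}{2}}b_{\frac{1}{2}}x_{-1}^2\neq 0$, so the symbol $\bar{a}_i\bar{b}_j\bar{x}_{-i-j}$ does not literally denote a homology class there, and the paper's opening assertion that ``direct computation shows that these elements are cycles'' fails for those monomials. Your replacement $s_{i+j}x_{-i-j}$ is the right object, but note that it too is a cycle only when $i+j$ is odd (since $\partial x_{-n}=x_{-n/2}^2\neq 0$ for $n$ even), and by lemma \ref{lem:phi_s_n_p} its $\Phi$-image is $B$ precisely when $i+j$ is odd; so the detection argument genuinely establishes nonvanishing only of $\bar{s}_{i+j}\bar{x}_{-i-j}$ for odd $i+j$ --- which does include the case $i=j=\pm\frac{1}{2}$ actually used in the second list, where the monomial and the diagonal sum coincide and are honest cycles. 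For all the remaining listed elements your verification is complete and matches the paper's.
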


\begin{proof}
We first verify that the corresponding elements of $\widehat{CS}(A, F_{2,2})$ are cycles, hence represent homology classes; and $\partial (c_1 d_0 x_{-1}) = c_0 d_0 + c_1 d_{-1} + a_{\frac{1}{2}} b_{\frac{1}{2}} x_{-1}$ explains the claimed equality.

Under $\Phi$, $a_i b_j x_{-i-j} \mapsto B$ for any $i,j$ of the same sign, and $c_n d_{-n} \mapsto T_0$ or $T_1$ depending on the parity of $n$. As the homology classes of $B,T_0,T_1$ are all nonzero in $H(\E)$, the homology classes of $a_i b_j x_{-i-j}$ and $c_n d_{-n}$ must be nonzero in $\widehat{HS}(\An, F_{2,2})$. 

Under the injective map $\Psi$, the elements $T_0, T_1, T_0 + T_1, A_+, A_-$, which represent distinct nonzero homology classes of $\E$, map to the second list of elements of $H(\widehat{HS}(\An, F_{2,2})$, which must therefore be nonzero and distinct.
\end{proof}

Note that $\Phi$ certainly has nonzero kernel; for instance, for any totally clean monomial other than $1$, $a_{\frac{1}{2}} b_{-\frac{1}{2}} q$ and $a_{-\frac{1}{2}} b_{\frac{1}{2}} q$ map to zero under $\Phi$, but are nonzero in $\widehat{HS}(\An,F_{2,2})$ as discussed in section \ref{sec:towards_full_22_homology}. Also, the element $c_0 d_0 x_3 + \left( c_2 d_0 + c_1 d_1 + c_0 d_2 \right) x_1 + a_{\frac{7}{2}} b_{\frac{1}{2}} + a_{\frac{3}{2}} b_{\frac{5}{2}}$, shown to be nonzero in $\widehat{HS}(\An, F_{2,2})$ in proposition \ref{prop:some_nonzero_homology}, maps to zero under $\Phi$.

\section{Adding marked points}
\label{sec:adding_marked_points}

We now show how to extend our results to weakly marked annuli with more fixed points. Having proved theorem \ref{thm:non-sutured_zero_iso} in section \ref{sec:non-alternating_annuli}, we restrict our attention to \emph{alternating} weakly marked annuli.

We will show how, in favourable circumstances, we can add marked points to a boundary component, and keep track of the effect on $\widehat{HS}$. In fact our main result in this regard (theorem \ref{prop:HS_doubling}) applies not just to annuli, but to general alternating weakly marked surfaces.

We will show that once there are two marked points on a boundary component $C$ of an alternating weakly marked surface $(\Sigma, F)$, we can add two more marked points to $C$ (keeping the points alternating), and the effect on string homology is to tensor (over $\Z_2$) with $\Z_2^2$. In fact the results to show this essentially appeared in our previous paper \cite{Mathews_Schoenfeld12_string}, although in that paper the result was stated only for discs.

To this end, we recall the constructions and results of sections 7--8 of \cite{Mathews_Schoenfeld12_string}, but in a more general context. In section \ref{sec:creation_annihilation} we will recall the notions of creation and annihilation operators defined there. Then in section \ref{sec:effect_of_creation} we will use them to prove the main proposition in this section, and in section \ref{sec:results_for_annuli_with_more} deduce results for annuli.

\subsection{Creation and annihilation operators}
\label{sec:creation_annihilation}

Let $(\Sigma, F)$ be an alternating weakly marked surface. Let $C$ be a boundary component of $\Sigma$ which contains at least one (hence at least two) marked points. 

Suppose we add two new adjacent marked points $f_{in}, f_{out}$ on $C$, labelled ``in" and ``out" respectively, to obtain a new alternating weakly marked surface $(\Sigma, F')$. A \emph{creation operator} $\widehat{CS}(\Sigma, F) \To \widehat{CS}(\Sigma,F')$ takes a (homotopy class of) string diagram and inserts an extra boundary-parallel open string from $f_{in}$ to $f_{out}$, not intersecting itself or any other strings. Since there are pre-existing points of $F$ on $C$, the homotopy class of this newly ``created" boundary-parallel string is unique.

(Note that if $F$ has no marked points on $C$, then there are two choices for the new non-intersecting boundary-parallel open string from $f_{in}$ to $f_{out}$, proceeding around $C$ in either direction. The requirement of marked points on $C$ is essential for a well-defined creation operator.)

Similarly, suppose $(\Sigma,F)$ contains at least \emph{four} points on the boundary component $C$, and let $(\Sigma,F')$ be an alternating weakly marked surface obtained from $F$ by \emph{removing} two adjacent marked points $f_{in} \in F_{in} \cap C$, $f_{out} \in F_{out} \cap C$. An \emph{annihilation operator} $\widehat{CS}(\Sigma,F) \To \widehat{CS}(\Sigma,F')$ takes a (homotopy class of) string diagram and joins the strings previously ending at $f_{in}, f_{out}$, without introducing any new intersections of strings. The other points of $F$ on $C$ constrain the homotopy class of the new string diagram to be unique. The ``annihilation" closes off the strings at $f_{in}, f_{out}$ by a small arc.

(Note again that if $|F \cap C| = 2$, then there are two choices for the new arc, proceeding either direction around $C$. Again, marked points on $C$ are essential for a well-defined annihilation operator.)

Suppose one of the ``in" marked points on $F \cap C$ is chosen as a basepoint $f_0 \in F_{in} \cap C$. Then we consider two specific creation operators on $(\Sigma, F)$, creating new strings in two sites adjacent to the basepoint, which we call $a_-^*, a_+^*$; and we consider two specific annihilation operators, annihilating strings at the two possible sites including the basepoint. These operators are defined explicitly in figure \ref{fig:a_and_a_star}. After creating or annihilating we choose a new basepoint in the resulting diagram, adjacent to the original location. Basepoints are shown with a dot. (We use the orientation on $\Sigma$ to orient $C$ and obtain well-defined maps.)

\begin{figure}[ht]
\begin{center}
\begin{tikzpicture}[
scale=0.8, 
string/.style={thick, draw=red, postaction={nomorepostaction, decorate, decoration={markings, mark=at position 0.5 with {\arrow{>}}}}},
boundary/.style={ultra thick}]

\draw [boundary] (0,0) circle (1.5 cm);
\draw [string] (30:1.5) -- (30:1);
\draw [string] (60:1) -- (60:1.5);
\draw [string] (90:1.5) -- (90:1);
\draw [string] (120:1) -- (120:1.5);
\draw [string] (150:1.5) -- (150:1);
\fill [draw=red, fill=red] (90:1.7) circle (2pt);

\draw [shorten >=1mm, -to, decorate, decoration={snake,amplitude=.4mm, segment length = 2mm, pre=moveto, pre length = 1mm, post length = 2mm}]
(2,1) -- (3,1.5);
\draw (2,1.5) node {$a_+^*$};

\draw [xshift = 5 cm, yshift = 2 cm, boundary] (0,0) circle (1.5 cm);
\draw [xshift = 5 cm, yshift = 2 cm, string] (30:1.5) -- (30:1);
\draw [xshift = 5 cm, yshift = 2 cm, string] (50:1) -- (50:1.5);
\draw [xshift = 5 cm, yshift = 2 cm, string] (70:1.5) -- (70:1);
\draw [xshift = 5 cm, yshift = 2 cm, string] (110:1.5) .. controls (110:1) and (90:1) .. (90:1.5);
\draw [xshift = 5 cm, yshift = 2 cm, string] (130:1) -- (130:1.5);
\draw [xshift = 5 cm, yshift = 2 cm, string] (150:1.5) -- (150:1);
\fill [xshift = 5 cm, yshift = 2 cm, draw=red, fill=red] (110:1.7) circle (2pt);

\draw [shorten >=1mm, -to, decorate, decoration={snake,amplitude=.4mm, segment length = 2mm, pre=moveto, pre length = 1mm, post length = 2mm}]
(2,-1) -- (3,-1.5);
\draw (2,-1.5) node {$a_+$};

\draw [xshift = 5 cm, yshift = -2 cm, boundary] (0,0) circle (1.5 cm);
\draw [xshift = 5 cm, yshift = -2 cm, string] (30:1.5) -- (30:1);
\draw [xshift = 5 cm, yshift = -2 cm, string] (60:1) -- (60:1.5);
\draw [xshift = 5 cm, yshift = -2 cm, string] (90:1) .. controls (90:1.5) and (120:1.5) .. (120:1);
\draw [xshift = 5 cm, yshift = -2 cm, string] (150:1.5) -- (150:1);
\fill [xshift = 5 cm, yshift = -2 cm, draw=red, fill=red] (30:1.7) circle (2pt);

\draw [shorten >=1mm, -to, decorate, decoration={snake,amplitude=.4mm, segment length = 2mm, pre=moveto, pre length = 1mm, post length = 2mm}]
(-2,1) -- (-3,1.5);
\draw (-2,1.5) node {$a_-^*$};

\draw [xshift = -5 cm, yshift = 2 cm, boundary] (0,0) circle (1.5 cm);
\draw [xshift = -5 cm, yshift = 2 cm, string] (30:1.5) -- (30:1);
\draw [xshift = -5 cm, yshift = 2 cm, string] (50:1) -- (50:1.5);
\draw [xshift = -5 cm, yshift = 2 cm, string] (70:1.5) .. controls (70:1) and (90:1) .. (90:1.5);
\draw [xshift = -5 cm, yshift = 2 cm, string] (110:1.5) -- (110:1);
\draw [xshift = -5 cm, yshift = 2 cm, string] (130:1) -- (130:1.5);
\draw [xshift = -5 cm, yshift = 2 cm, string] (150:1.5) -- (150:1);
\fill [xshift = -5 cm, yshift = 2 cm, draw=red, fill=red] (70:1.7) circle (2pt);

\draw [shorten >=1mm, -to, decorate, decoration={snake,amplitude=.4mm, segment length = 2mm, pre=moveto, pre length = 1mm, post length = 2mm}]
(-2,-1) -- (-3,-1.5);
\draw (-2,-1.5) node {$a_-$};

\draw [xshift = -5 cm, yshift = -2 cm, boundary] (0,0) circle (1.5 cm);
\draw [xshift = -5 cm, yshift = -2 cm, string] (30:1.5) -- (30:1);
\draw [xshift = -5 cm, yshift = -2 cm, string] (60:1) .. controls (60:1.5) and (90:1.5) .. (90:1);
\draw [xshift = -5 cm, yshift = -2 cm, string] (120:1) -- (120:1.5);
\draw [xshift = -5 cm, yshift = -2 cm, string] (150:1.5) -- (150:1);
\fill [xshift = -5 cm, yshift = -2 cm, draw=red, fill=red] (150:1.7) circle (2pt);

\end{tikzpicture}
\caption{Creation/annihilation operators. We only show the boundary component $C$ of $(\Sigma,F)$; $\Sigma$ in general has more topology than shown.} 
\label{fig:a_and_a_star}
\end{center}
\end{figure}

Thus, given an alternating weakly marked surface $(\Sigma, F)$, once we chose a boundary component $C$ with at least two marked points, and a basepoint $f_0 \in F_{in} \cap C$, we have two well-defined creation operators
\[
a_\pm^* \; : \; \widehat{CS}(\Sigma,F) \To \widehat{CS}(\Sigma,F'),
\]
where $F'$ is obtained from $F$ by adding two adjacent marked points on $C$, and $(\Sigma, F')$ has a well-defined basepoint in $F'_{in} \cap C$.

Similarly, given an alternating weakly marked $(\Sigma, F)$, once we choose a boundary component $C$ with at least \emph{four} marked points, and a basepoint $f_0 \in F_{in} \cap C$, we have two well-defined annihilation operators
\[
a_\pm \; : \; \widehat{CS}(\Sigma, F) \To \widehat{CS}(\Sigma,F'),
\]
where $F'$ is obtained from $F$ by deleting two marked points on $C$, and $(\Sigma, F')$ has a well-defined basepoint in $F'_{in} \cap C$.

Thus, starting from an alternating weakly marked $(\Sigma, F)$ and a choice of basepoint as above, we can compose $a_\pm^*, a_\pm$ operators, applied to the resulting chain complexes, as long as our basepoint continues to share its boundary component with a sufficient number of marked points.

The four maps $\widehat{a}_\pm^*, \widehat{a}_{\pm}$ satisfy various relations, including the following.
\[
a_- a_-^* = a_+ a_+^* = 1,
\quad
a_- a_+^* = a_+ a_-* = 0
\]
Identity maps arise as a ``$\pm$-creation" followed by ``$\pm$-annihilation" result in a string diagram homotopic to the original; and zeroes arise as a ``$\pm$-creation" followed by ``$\mp$-annihilation" produce a closed contractible string. 

As neither creation nor annihilation operators create any new crossings in a string diagram, they commute with the differential and hence give maps on homology, also denoted $a_\pm^*, a_\pm$. The above identities also hold on homology. In particular, creation operators are injective on both the chain level and on homology.

\subsection{Effect of creation on string homology}
\label{sec:effect_of_creation}

We now use the creation and annihilation operators described above to give an explicit result about string homology. When we add two marked points to $F$ to obtain $F'$, with creation operators as described above, it turns out we can describe $\widehat{HS}(\Sigma,F')$ rather simply and explicitly in terms of $\widehat{HS}(\Sigma,F)$.
\begin{thm}
\label{prop:HS_doubling}
\label{thm:HS_doubling}
Let $(\Sigma,F)$ be an alternating weakly marked surface, and $C$ a boundary component of $\Sigma$ with $F \cap C \neq \emptyset$. Let $f_0 \in F_{in} \cap C$ be a basepoint, and $(\Sigma,F')$ an alternating weakly marked surface obtained from $(\Sigma,F)$ by adding two marked points on $C$. Let $a_\pm^*$ be the corresponding creation operators. Then (as a $\Z_2$-module)
\begin{align*}
\widehat{HS} (\Sigma, F') &= a_+^* \widehat{HS} (\Sigma, F) \oplus a_-^* \widehat{HS}(\Sigma,F) \\
&\cong \widehat{HS}(\Sigma, F) \oplus \widehat{HS}(\Sigma,F) \\
&\cong \left( \Z_2 \oplus \Z_2 \right) \otimes_{\Z_2} \widehat{HS}(\Sigma,F).
\end{align*}
\end{thm}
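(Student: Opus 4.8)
The plan is to reduce the statement to a single homology-level identity between the creation and annihilation operators, and then to extract the direct sum decomposition from purely formal properties of those operators. Write $a_\pm^* \colon \widehat{CS}(\Sigma,F) \To \widehat{CS}(\Sigma,F')$ and $a_\pm \colon \widehat{CS}(\Sigma,F') \To \widehat{CS}(\Sigma,F)$ for the creation and annihilation operators attached to the chosen boundary component $C$ and basepoint $f_0$; these are chain maps and satisfy $a_- a_-^* = a_+ a_+^* = 1$ and $a_- a_+^* = a_+ a_-^* = 0$, with the same relations holding on homology. On $\widehat{CS}(\Sigma,F')$ put $P_\pm = a_\pm^* a_\pm$. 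Each $P_\pm$ is a chain map, and the relations immediately give $P_\pm^2 = P_\pm$ and $P_+ P_- = P_- P_+ = 0$, so $P_+$ and $P_-$ are orthogonal idempotent chain maps; the induced maps on $\widehat{HS}(\Sigma,F')$ have the same properties. The heart of the argument is the claim that $P_+ + P_- = 1$ on $\widehat{HS}(\Sigma,F')$.

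Granting this claim, the theorem follows formally. For any homology class $\bar z \in \widehat{HS}(\Sigma,F')$ we have $\bar z = P_+ \bar z + P_- \bar z = a_+^*(a_+ \bar z) + a_-^*(a_- \bar z)$, so $\widehat{HS}(\Sigma,F') = a_+^* \widehat{HS}(\Sigma,F) + a_-^* \widehat{HS}(\Sigma,F)$; and if $a_+^* \bar u + a_-^* \bar v = 0$, applying $a_+$ and then $a_-$ and using the relations above gives $\bar u = 0$ and $\bar v = 0$, so the sum is direct. Since each $a_\pm^*$ is injective on homology, $a_\pm^* \widehat{HS}(\Sigma,F) \cong \widehat{HS}(\Sigma,F)$ as $\Z_2$-modules, whence $\widehat{HS}(\Sigma,F') \cong \widehat{HS}(\Sigma,F) \oplus \widehat{HS}(\Sigma,F)$; finally $\widehat{HS}(\Sigma,F) \oplus \widehat{HS}(\Sigma,F) \cong (\Z_2 \oplus \Z_2) \otimes_{\Z_2} \widehat{HS}(\Sigma,F)$ is automatic over the field $\Z_2$.

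To prove $P_+ + P_- = 1$ on homology I would exhibit a $\Z_2$-linear chain homotopy $K \colon \widehat{CS}(\Sigma,F') \To \widehat{CS}(\Sigma,F')$ with $\partial K + K \partial = 1 + a_+^* a_+ + a_-^* a_-$ (recall we work over $\Z_2$). The operator $K$ is defined \emph{locally} near the two new marked points on $C$, in the spirit of the switching operator $W$ of section~\ref{sec:non-alternating_annuli} and the operators of sections~7--8 of \cite{Mathews_Schoenfeld12_string}: given a string diagram $s$ on $(\Sigma,F')$, one reroutes the two strands incident to the new pair of marked points through a small boundary-parallel band near $f_0$, introducing exactly one new crossing; resolving that crossing in $\partial(Ks)$ and $K(\partial s)$ recovers $s$ together with the two ``recreated'' diagrams $a_+^* a_+ s$ and $a_-^* a_- s$, while every remaining resolved crossing appears twice and cancels mod~$2$, exactly as in the disc computation. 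Two points must be checked: (i) $K$ descends to homotopy classes modulo contractible closed curves, the contractible loops arising precisely in the $a_\pm^* a_\mp$-type terms that vanish by the relations; and (ii) the verification of the homotopy identity is insensitive to the topology of $\Sigma$ away from $C$, so that the argument of \cite{Mathews_Schoenfeld12_string} for discs carries over verbatim to a general alternating weakly marked surface.

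The main obstacle is therefore entirely in this last step: writing down $K$ carefully enough that every resolved crossing is accounted for and the mod~$2$ cancellations are manifest, and confirming that introducing the extra local band never interacts with crossings elsewhere in the diagram. Once $K$ and the identity $\partial K + K\partial = 1 + a_+^* a_+ + a_-^* a_-$ are in place, everything else is the short formal bookkeeping of the second paragraph.
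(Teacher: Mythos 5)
Your formal bookkeeping is exactly the paper's: the decomposition $\widehat{HS}(\Sigma,F') = a_+^*\widehat{HS}(\Sigma,F)\oplus a_-^*\widehat{HS}(\Sigma,F)$ is extracted from the relations $a_\pm a_\pm^*=1$, $a_\pm a_\mp^*=0$ in the same way (spanning, plus directness by applying $a_+$ and $a_-$), and your claim that $P_++P_-=1$ on homology is precisely equivalent to the paper's ``crossed wires lemma'': given a cycle $x$ with $x=a_-^*y+a_+^*z+\partial u$, applying the annihilation operators shows $y$ and $z$ are homologous to $a_-x$ and $a_+x$, and conversely. So the only substantive question is how that key claim is established. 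The paper does not prove it here either --- it states the crossed wires lemma and asserts that the proof of lemma 8.4 of \cite{Mathews_Schoenfeld12_string} carries over to general alternating weakly marked surfaces --- whereas you propose to prove it by exhibiting a chain homotopy $K$ with $\partial K+K\partial=1+a_+^*a_++a_-^*a_-$.

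The gap is in that construction. As described, $Ks$ introduces ``exactly one new crossing'', and resolving one crossing produces one diagram, not three; a single-crossing reroute can only realise an identity of the form $\partial K+K\partial=R$ for a single operator $R$ (this is what the switching operator $W$ of section \ref{sec:non-alternating_annuli} achieves), never the three-term right-hand side you need. Moreover $P_\pm s$ is not a small perturbation of $s$: in $P_+s$ the strands at two of the three middle marked points are concatenated into one strand, the remaining strand is transplanted to a different marked point, and a fresh boundary-parallel arc is inserted. A correct $K$ must therefore introduce at least three new crossings (or be a sum of local operators), arranged so that three distinguished resolutions yield $s$, $P_+s$ and $P_-s$, all other new resolutions cancel in pairs mod $2$, the old crossings reassemble into $K\partial s$, and the contractible-loop quotient is respected. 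None of this is routine --- it is the entire content of the crossed wires lemma --- so as written the proposal replaces the one nontrivial input by an equivalent but unproven claim. Everything downstream of that claim is correct and coincides with the paper's argument.
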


To prove this theorem, we need the following ``crossed wires lemma". On discs, it is stated as lemma 8.4 of \cite{Mathews_Schoenfeld12_string}; the same methods establish the result in our more general setting.
\begin{lem}
Let $\Sigma, F, F'$ and $a_\pm^*$ be as above. Suppose $x \in \widehat{CS}(\Sigma,F')$ satisfies $\partial x = 0$. Then there exist $y, z \in \widehat{CS}(\Sigma,F)$ and $u \in \widehat{CS}(\Sigma,F')$ such that
\[
\partial y = \partial z = 0
\quad \text{and} \quad
x = a_-^* y + a_+^* z + \partial u.
\]
\qed
\end{lem}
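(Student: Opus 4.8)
The plan is to mimic the proof of the ``crossed wires lemma'' (lemma 8.4) from \cite{Mathews_Schoenfeld12_string}, transported to the weakly marked setting. First I would set up the algebraic framework near the basepoint: every string diagram on $(\Sigma, F')$ can be written, up to homotopy, according to how the newly inserted strand behaves relative to the basepoint region. Concretely, I would partition the $\Z_2$-basis of $\widehat{CS}(\Sigma,F')$ into two families — those in the image of $a_-^*$ and those in the image of $a_+^*$ — together with a remaining family of diagrams whose newly created strand crosses some other strand at least once near the basepoint. Since $a_\pm^*$ are injective and $a_- a_-^* = a_+ a_+^* = 1$, $a_- a_+^* = a_+ a_-^* = 0$, the first two families span complementary subspaces, and the content of the lemma is that, modulo a boundary, any cycle can be pushed off the ``crossing'' family.

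The key step is the local move: a diagram $w$ whose new strand crosses a neighbouring strand exactly once near $f_0$ (the ``crossed wires'' configuration) is, up to a boundary, expressible via diagrams where the crossing has been resolved. Just as in the switching operation $W$ of section \ref{sec:non-alternating_annuli}, one introduces a chain-homotopy-type operator: resolving the single crossing in $w$ yields two diagrams, one of which is obtained by ``uncrossing'' (landing in $a_-^*$ or $a_+^*$ image) and one of which has the crossing pushed further into the rest of $\Sigma$. Applying $\partial$ to a suitable one-parameter family shows $w = (\text{uncrossed diagram}) + (\text{pushed diagram}) + \partial(\text{something})$. Iterating — since $\Sigma$ is compact and each string diagram has finitely many strands and crossings, the crossing can only be pushed past finitely many strands before it either cancels or returns to an uncrossed position — lets one reduce an arbitrary element of $\widehat{CS}(\Sigma,F')$ to the form $a_-^* y' + a_+^* z' + \partial u$ for chains $y', z' \in \widehat{CS}(\Sigma,F)$.

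Finally I would impose the cycle condition. Given $\partial x = 0$ with $x = a_-^* y' + a_+^* z' + \partial u$, apply $\partial$ and use that $a_\pm^*$ are chain maps to get $a_-^* (\partial y') + a_+^* (\partial z') = 0$; since the images of $a_-^*$ and $a_+^*$ are complementary (a consequence of $a_\pm a_\mp^* = 0$, $a_\pm a_\pm^* = 1$) and both maps are injective, this forces $\partial y' = \partial z' = 0$. Setting $y = y'$, $z = z'$ gives the statement. The main obstacle I anticipate is verifying that the local ``uncrossing'' move and its iteration genuinely terminate and are well-defined up to homotopy in the annular (and general surface) setting rather than just the disc — in \cite{Mathews_Schoenfeld12_string} the disc's simple connectivity makes homotopy classes of the relevant local arcs unique, whereas here one must check that the presence of pre-existing marked points on $C$ still pins down the created/uncrossed arcs uniquely (which is exactly the hypothesis $F \cap C \neq \emptyset$ that makes the creation operators well-defined in the first place). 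Once that is in hand, the rest is a bookkeeping argument identical in spirit to the disc case.
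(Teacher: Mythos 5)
Your overall architecture follows the route the paper intends (it simply cites Lemma 8.4 of \cite{Mathews_Schoenfeld12_string} and asserts the same methods work), and your final step is fine: from $a_-^*(\partial y') + a_+^*(\partial z') = 0$, applying $a_-$ and $a_+$ and using $a_\pm a_\pm^* = 1$, $a_\pm a_\mp^* = 0$ does force $\partial y' = \partial z' = 0$. You also correctly identify the one point where the generalisation from discs needs checking, namely that the pre-existing marked points on $C$ pin down the homotopy class of the created/uncrossed arc.

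However, there is a genuine gap in the middle of your argument: you claim that the uncrossing iteration reduces an \emph{arbitrary} element of $\widehat{CS}(\Sigma,F')$ to the form $a_-^* y' + a_+^* z' + \partial u$, and only impose the cycle condition afterwards. That intermediate claim is false, and a dimension count already shows it on the disc. Take $\Sigma = D^2$, $F$ with $4$ alternating points and $F'$ with $6$; then $\widehat{CS}(D^2,F')$ has dimension $3! = 6$ (a diagram is determined by the pairing of in- with out-points), $\widehat{HS}(D^2,F') \cong \Z_2^4$, so $\dim \im \partial = 1$, while $\dim(\im a_-^* + \im a_+^*) = 4$; hence $\im a_-^* + \im a_+^* + \im \partial$ has dimension at most $5 < 6$ and cannot contain every chain. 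Equivalently, for a single non-cycle diagram $s$ your local relation has the shape $s = \partial(Ws) + W(\partial s)$, and the ``pushed'' term $W(\partial s)$ need neither vanish nor simplify, so the iteration does not terminate for arbitrary chains. The hypothesis $\partial x = 0$ must therefore be used \emph{during} the reduction --- e.g.\ by filtering diagrams according to the behaviour of the strands at the two new marked points and arguing that, because $x$ is a cycle, the top-filtration (``crossed wires'') part of $x$ is constrained so that it can be cancelled against a boundary --- rather than only at the end to upgrade $y'$, $z'$ to cycles. As written, the step ``reduce an arbitrary element'' would prove a statement that is simply not true.
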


\begin{proof}[Proof of theorem \ref{prop:HS_doubling}]
The ideas of the proof are contained in \cite{Mathews_Schoenfeld12_string}.  Take $\bar{x} \in \widehat{HS}(\Sigma,F')$, represented by $x \in \widehat{CS}(\Sigma, F')$. Then $\partial x = 0$, so by the crossed wires lemma we have $y,z \in \widehat{CS}(\Sigma,F)$ and $u \in \widehat{CS}(\Sigma,F')$ such that $x = a_-^* y + a_+^* z + \partial u$. In homology then we have $\bar{x} = a_-^* \bar{y} + a_+^* \bar{z}$. Thus $a_-^* \widehat{HS}(\Sigma,F)$ and $a_+^* \widehat{HS}(\Sigma,F)$ span $\widehat{HS}(\Sigma,F')$.

Now suppose we have an element $t$ in the intersection $a_+^* \widehat{HS}(\Sigma,F) \cap a_-^* \widehat{HS}(\Sigma,F)$. So there exist $p,q \in \widehat{HS}(\Sigma,F)$ such that
\[
t = a_-^* p = a_+^* q.
\]
Now applying $a_-$ and $a_+$ respectively, we obtain
\[
a_- t = p = 0
\quad \text{and} \quad
a_+ t = 0 = q.
\]
Here we have used the relations $a_- a_-^* = a_+ a_+^* = 1$ and $a_- a_+^* = a_+ a_-^* = 0$. Since $p = q = 0$ we have $t = 0$, so $a_+^* \widehat{HS}(\Sigma,F) \cap a_-^* \widehat{HS}(\Sigma,F) = 0$ and we have the first direct sum claimed. As creation operators are injective, we have the first claimed isomorphism, and the final isomorphism then follows.
\end{proof}

We have now proved theorem \ref{thm:tensor_with_Z_2_squared}.

It follows from this proposition that if $\{v_i \; : \; i \in I \}$ is a basis for $\widehat{HS}(\Sigma,F)$, then all $a_-^* v_i, a_+^* v_i$ are distinct and $\{a_-^* v_i, a_+^* v_i \; : \; i \in I \}$ forms a basis for $\widehat{HS}(\Sigma,F')$.

\subsection{Results for annuli with more marked points}
\label{sec:results_for_annuli_with_more}

For present purposes, we only need theorem \ref{prop:HS_doubling} in so far as it relates to annuli. We can use it to immediately deduce the string homology of $(\An, F_{0,2n+2})$ from that of $(\An, F_{0,2})$; and to deduce the string homology of $(\An, F_{2m+2,2n+2})$ from that of $(\An, F_{2,2})$.

We will need to apply creation operators $a_\pm^*$ repeatedly. Following notation of \cite{Me09Paper}, for any word $w$ on the symbols $\{-,+\}$, we define $a_w^*$ to be the corresponding composition of $a_-^*$ and $a_+^*$. Thus for instance $a_{-++}^* = a_-^* a_+^* a_+^*$. We denote the set of such words of length $n$ by $\{-,+\}^n$.

First consider $(\An, F_{0,2n+2})$, which by proposition \ref{prop:HX-modules} is an $H(\X)$-module. When $n=0$ we have (theorem \ref{thm:annulus_two_marked_points_same_boundary_calculation}), as an $H(\X)$-module,
\[
\widehat{HS}(A, F_{0,2}) = H(\A \otimes \X) = \bar{x}_1 H(\X) \oplus \bar{x}_{-1} H(\X)
\]
where
\[
H(\X) = \frac{ \Z[ \ldots, \bar{x}_{-3}, \bar{x}_{-1}, \bar{x}_1, \bar{x}_3, \ldots] }{ \ldots, \bar{x}_{-3}^2, \bar{x}_{-1}^2, \bar{x}_1^2, \bar{x}_3^2, \ldots }.
\]
We note that, on an annulus $(\An, F_{0,2n+2})$, a creation operator is compatible with the differential $\X$-module structure: each creation operator inserts an arc and commutes with inserting closed curves, without introducing any new intersections. Thus the operators
\[
a_\pm^* \; : \; \widehat{CS}(\An, F_{0,2n+2}) \To \widehat{CS}(\An, F_{0,2n+4})
\]
are in fact differential $\X$-module homomorphisms.

Repeatedly applying proposition \ref{prop:HS_doubling} gives the following.
\begin{prop}
\label{prop:F_0_2n+2_homology}
Let $n \geq 0$. Then as $H(\X)$-module,
\begin{align*}
\widehat{HS}(A, F_{0,2n+2}) 
&= \bigoplus_{w \in \{-,+\}^n} a_w^* \widehat{HS}(A, F_{0,2}) \\
&= \bigoplus_{w \in \{-,+\}^n} a_w^* \left( \bar{x}_1 H(\X) \oplus \bar{x}_{-1} H(\X) \right) \\
&\cong \left( \bar{x}_1 H(\X) \oplus \bar{x}_{-1} H(\X) \right)^{\oplus 2^n} \\
&\cong \left( \Z_2 \oplus \Z_2 \right)^{\otimes n} \otimes_{\Z_2} \widehat{HS}(A, F_{0,2}).
\end{align*}
\end{prop}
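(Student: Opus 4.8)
The plan is to argue by induction on $n$, with the base case $n=0$ being exactly Theorem~\ref{thm:annulus_two_marked_points_same_boundary_calculation}, which identifies $\widehat{HS}(\An,F_{0,2})$ with $\bar{x}_1 H(\X)\oplus\bar{x}_{-1}H(\X)$ as an $H(\X)$-module. For the inductive step I would apply Theorem~\ref{prop:HS_doubling} to the (unique) boundary component of $\An$ carrying marked points, passing from $F_{0,2n+2}$ to $F_{0,2n+4}$; since the other boundary component is empty there is no ambiguity in the choice of component, and once $n\geq 0$ that component already carries marked points so the creation operators $a_\pm^*$ are well defined. The theorem gives, as $\Z_2$-modules,
\[
\widehat{HS}(\An,F_{0,2n+4}) = a_+^*\,\widehat{HS}(\An,F_{0,2n+2})\oplus a_-^*\,\widehat{HS}(\An,F_{0,2n+2}),
\]
each summand being isomorphic to $\widehat{HS}(\An,F_{0,2n+2})$ since creation operators are injective. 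Iterating and composing over words $w\in\{-,+\}^n$ yields $\widehat{HS}(\An,F_{0,2n+2})=\bigoplus_{w\in\{-,+\}^n} a_w^*\,\widehat{HS}(\An,F_{0,2})$; the relations $a_-a_-^*=a_+a_+^*=1$ and $a_-a_+^*=a_+a_-^*=0$ recalled in Section~\ref{sec:creation_annihilation} are what make the $2^n$ summands independent, since applying the appropriate composition of annihilation operators projects onto any prescribed one.

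To upgrade this to a statement about $H(\X)$-modules I would use the observation made just before the proposition: on an annulus a creation operator $a_\pm^*\colon \widehat{CS}(\An,F_{0,2k})\to\widehat{CS}(\An,F_{0,2k+2})$ inserts a boundary-parallel arc in a neighbourhood of two new adjacent marked points, without creating any crossings and disjointly from every closed string; hence it commutes with multiplication by each $x_j\in\X$ and with $\partial$, so it is a homomorphism of differential $\X$-modules and induces an $H(\X)$-module homomorphism on homology. Therefore each $a_w^*\,\widehat{HS}(\An,F_{0,2})$ is an $H(\X)$-submodule isomorphic to $\widehat{HS}(\An,F_{0,2})=\bar{x}_1H(\X)\oplus\bar{x}_{-1}H(\X)$, and the decomposition above is one of $H(\X)$-modules. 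Counting the $2^n$ words of length $n$ gives $\widehat{HS}(\An,F_{0,2n+2})\cong\bigl(\bar{x}_1H(\X)\oplus\bar{x}_{-1}H(\X)\bigr)^{\oplus 2^n}$, and since $(\Z_2\oplus\Z_2)^{\otimes n}\cong\Z_2^{\oplus 2^n}$ as $\Z_2$-modules while $\Z_2^{\oplus 2^n}\otimes_{\Z_2}M\cong M^{\oplus 2^n}$ functorially in a $\Z_2$-module $M$, the final line $\bigl(\Z_2\oplus\Z_2\bigr)^{\otimes n}\otimes_{\Z_2}\widehat{HS}(\An,F_{0,2})$ follows.

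I do not expect a genuine obstacle here. The substantive input — that every cycle in the enlarged complex is, modulo a boundary, of the form $a_-^*y+a_+^*z$ with $y,z$ cycles — is the ``crossed wires lemma'' already imported from \cite{Mathews_Schoenfeld12_string}, and the disjointness of the $2^n$ summands is pure algebra from the creation/annihilation relations. The only point that requires an explicit (if brief) check is the $\X$-linearity and chain-map property of the creation operators on the annulus, which reduces to the fact that the newly created boundary-parallel arc can be drawn disjoint from all pre-existing closed strings and has a unique homotopy class once $C$ carries marked points; I would record that verification and then let the induction run.
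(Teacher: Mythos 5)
Your proposal is correct and follows essentially the same route as the paper: repeated application of Theorem \ref{prop:HS_doubling} starting from the base case of Theorem \ref{thm:annulus_two_marked_points_same_boundary_calculation}, with injectivity of the creation operators giving the $2^n$ independent summands and the observation (already recorded in the paper just before the proposition) that $a_\pm^*$ are differential $\X$-module homomorphisms upgrading the isomorphisms to $H(\X)$-modules. No gaps.
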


\begin{proof}
As a $\Z_2$-module, the first equality follows immediately from repeatedly applying proposition \ref{prop:HS_doubling}. The second equality then follows from theorem \ref{thm:annulus_two_marked_points_same_boundary_calculation}. As each $a_\pm^*$ is injective, so too is each $a_w^*$, so each $a_w^*(\bar{x}_1 H(\X) \oplus \bar{x}_{-1} H(\X)) \cong \bar{x}_1 H(\X) \oplus \bar{x}_{-1} H(\X)$, giving direct sum in the third isomorphism. As $(\Z_2 \oplus \Z_2)^{\otimes n}$ is a free $\Z_2$-module of rank $2^n$, we then have the final isomorphism.

The operators $a_\pm^*$ are compatible with the $\X$-module structure on each $\widehat{CS}(A, F_{0,2n})$, so we have isomorphisms of of $H(\X)$-modules.
\end{proof}

Next consider alternating marked annuli of the form $(\An, F_{2m+2,2n+2})$. Although the computation of $\widehat{HS} (\An, F_{2,2})$ in the foregoing presents various difficulties and we have not completed it, we know explicitly how to go from $(\An, F_{2,2})$ to a higher number of marked points. Recall that $\widehat{CS}(\An, F_{2,2})$ is not an $\X$-module, so $\widehat{HS}(\An, F_{2,2})$ is only a $\Z_2$-module, not an $H(\X)$-module.

To increase the number of marked points on both boundary components of $\An$, we consider creation operators on each boundary. Choosing a basepoint on each boundary component we obtain creation operators
\begin{align*}
a_{\pm}^{0*} \; &: \; \widehat{CS}(\An, F_{2m+2,2n+2}) \To \widehat{CS}(\An, F_{2m+4, 2n+4}),  \\
a_{\pm}^{1*} \; &: \; \widehat{CS}(\An, F_{2m+2,2n+2}) \To \widehat{CS}(\An, F_{2m+2,2n+4}).
\end{align*}
Applying proposition \ref{prop:HS_doubling} repeatedly to both boundary components gives the following.
\begin{prop}
\label{prop:F_2m+2_2n+2_homology}
Let $m,n \geq 0$. Then as a $\Z_2$-module,
\begin{align*}
\widehat{HS}(\An, F_{2m+2,2n+2}) 
&= \bigoplus_{w_0 \in \{-,+\}^m} \bigoplus_{w_1 \in \{-,+\}^n} a_{w_0}^{0*} a_{w_1}^{1*} \widehat{HS} (\An, F_{2,2}) \\
&\cong \left( \widehat{HS}(\An, F_{2,2}) \right)^{\oplus 2^{m+n}} \\
&\cong \left( \Z_2 \oplus \Z_2 \right)^{\otimes (m+n)} \otimes_{\Z_2} \widehat{HS}(\An, F_{2,2})
\end{align*}
\end{prop}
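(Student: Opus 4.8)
The plan is to obtain Proposition \ref{prop:F_2m+2_2n+2_homology} as a direct corollary of Theorem \ref{prop:HS_doubling} by applying it $m+n$ times, working one boundary component at a time. First I would fix a basepoint on the boundary component $C_0$ of $\An$ and another on $C_1$, both chosen in $F_{in}$. Since $F_{2,2}$ has two (hence a nonempty set of) marked points on each boundary component, and each intermediate marking $F_{2k+2, 2l+2}$ still has marked points on both boundary components, the creation operators $a_\pm^{0*}$ and $a_\pm^{1*}$ are well-defined at every stage, and Theorem \ref{prop:HS_doubling} applies at each step.

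The main step is a bookkeeping induction. Applying Theorem \ref{prop:HS_doubling} to the boundary component $C_0$, starting from $(\An, F_{2,2})$ and adding two marked points at a time, gives after $m$ steps
\[
\widehat{HS}(\An, F_{2m+2,2}) = \bigoplus_{w_0 \in \{-,+\}^m} a_{w_0}^{0*} \, \widehat{HS}(\An, F_{2,2}),
\]
where $a_{w_0}^{0*}$ denotes the composition of the $a_\pm^{0*}$ prescribed by the word $w_0$, exactly as in the notation already set up for $\{-,+\}^n$ in section \ref{sec:results_for_annuli_with_more}. The only subtlety is that after each creation the basepoint moves to an adjacent site, so one must check that the successive creation operators compose coherently; but this is precisely the framework recalled in section \ref{sec:creation_annihilation}, where compositions of $a_\pm^*$ operators are defined as long as the basepoint shares its boundary component with enough marked points, which holds here throughout. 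Then applying Theorem \ref{prop:HS_doubling} to the boundary component $C_1$, now starting from $(\An, F_{2m+2,2})$ and iterating $n$ times, and noting that the $C_1$-creation operators act independently of the $C_0$-creation operators (they insert arcs near disjoint boundary components and introduce no new crossings, hence commute), yields the claimed decomposition
\[
\widehat{HS}(\An, F_{2m+2,2n+2}) = \bigoplus_{w_0 \in \{-,+\}^m} \bigoplus_{w_1 \in \{-,+\}^n} a_{w_0}^{0*} a_{w_1}^{1*} \, \widehat{HS}(\An, F_{2,2}).
\]

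The two isomorphisms in the statement then follow formally: since each $a_\pm^{0*}$ and $a_\pm^{1*}$ is injective (creation operators are injective on homology, as recalled at the end of section \ref{sec:creation_annihilation}), each composite $a_{w_0}^{0*} a_{w_1}^{1*}$ is injective, so each summand is isomorphic to $\widehat{HS}(\An, F_{2,2})$; there are $2^m \cdot 2^n = 2^{m+n}$ such summands, one for each pair $(w_0, w_1)$, giving $\widehat{HS}(\An, F_{2,2})^{\oplus 2^{m+n}}$; and since $(\Z_2 \oplus \Z_2)^{\otimes(m+n)}$ is a free $\Z_2$-module of rank $2^{m+n}$, the final isomorphism $\widehat{HS}(\An, F_{2,2})^{\oplus 2^{m+n}} \cong (\Z_2 \oplus \Z_2)^{\otimes(m+n)} \otimes_{\Z_2} \widehat{HS}(\An, F_{2,2})$ is immediate. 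I expect no genuine obstacle here: unlike Proposition \ref{prop:F_0_2n+2_homology}, where one additionally tracks an $H(\X)$-module structure, here $\widehat{CS}(\An, F_{2,2})$ is not an $\X$-module, so there is no extra compatibility to verify — the statement is purely at the level of $\Z_2$-modules. The only point requiring a sentence of care is the independence of the two families of creation operators, which is geometrically clear since they modify the string diagram near disjoint portions of $\partial \An$.
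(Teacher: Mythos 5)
Your proof is correct and follows essentially the same route as the paper: iterate Theorem \ref{prop:HS_doubling} $m$ times on one boundary component and $n$ times on the other, then use injectivity of the creation operators and the count $2^{m+n}$ to obtain the two isomorphisms. The extra remarks about basepoint bookkeeping and the independence of the two families of creation operators are fine but not something the paper dwells on.
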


\begin{proof}
The first equality follows immediately from applying proposition \ref{prop:HS_doubling} $m$ times to one boundary component and $n$ times to the other. The next isomorphism follows since creation operators are injective and the set of pairs of creation operators $(w_0, w_1)$ with $w_0 \in \{-,+\}^m$ and $w_1 \in \{-,+\}^n$ has cardinality $2^{m+n}$. The final isomorphism follows since $(\Z_2 \oplus \Z_2)^{\otimes (m+n)}$ is free over $\Z_2$ of rank $2^{m+n}$.
\end{proof}

Having proved propositions \ref{prop:F_0_2n+2_homology} and \ref{prop:F_2m+2_2n+2_homology}, we have proved theorem \ref{thm:general_annulus_computations}.

\addcontentsline{toc}{section}{References}

\small

\bibliography{danbib}
\bibliographystyle{amsplain}

\end{document}